\DeclareMathAlphabet{\mathpzc}{OT1}{pzc}{m}{it}
\DeclareMathOperator{\lcm}{lcm}
\DeclareMathOperator{\orb}{Orb}
\DeclareMathOperator{\supp}{Supp}
\newcommand{\Mane}{Mañé }
\def\XXint#1#2#3{{\setbox0=\hbox{$#1{#2#3}{\int}$ }
\vcenter{\hbox{$#2#3$ }}\kern-.6\wd0}}
\newtheorem{theor}{Theorem}
\newtheorem{theo}{Theorem}[section]
\newtheorem{prop}[theo]{Proposition}
\newtheorem{lem}[theo]{Lemma}
\newtheorem{cor}[theo]{Corollary}
\theoremstyle{definition}
\newtheorem{defi}[theo]{Definition}
\newtheorem{rem}[theo]{Remark}
\theoremstyle{remark}
\numberwithin{equation}{section}
\title{A Smooth, Recurrent, Non-Periodic Viscosity Solution of the Hamilton-Jacobi Equation}
\author{CHARFI Skander}
\date{}
\begin{document}
\maketitle

\begin{abstract}
	Viscosity solutions of the Hamilton-Jacobi equation were introduced by Lions and Crandall. For Tonelli Hamiltonians, these solutions are generated by the Lax-Oleinik operator. It is known that this operator converges in the autonomous framework, but this convergence fails in the general cases.

	In this paper, we introduce a method to construct smooth, recurrent, non-periodic viscosity solutions on fixed compact manifolds $M$ of dimension 2 or higher. Additionally, we provide a detailed description of the non-wandering set of the Lax-Oleinik operator and identify its action on various omega-limit sets.
\end{abstract}

\tableofcontents

\newpage

\addcontentsline{toc}{section}{Introduction}
\section*{Introduction}

\subsection{Overview}

\par Let $M$ be a connected closed manifold. Denote by $TM$ its tangent bundle and by $T^*M$ its cotangent bundle with their respective projections $\pi_{TM} : TM \to M$ and $\pi_{T^*M} : T^*M \to M$. Unless it is ambiguous, these projections will be simply denoted by $\pi$. Denote by $\mathcal{C}(M,\mathbb{R})$ the set of continuous scalar maps on $M$ endowed with its usual infinite norm $\Vert.\Vert_\infty$ given by $\Vert u\Vert_\infty = \sup_{x \in M} |u(x)|$. The set $\mathbb{T}^1 = \mathbb{R} / \mathbb{Z}$ refers to the circle.\\

Let $H(t,x,p) : \mathbb{T}^1 \times T^*M \to \mathbb{R}$ be a $1$-time periodic Tonelli Hamiltonian (See Definition \ref{TonelliDef}). We consider the \textit{Hamilton-Jacobi equation}
\begin{equation} \label{HJ}
	\partial_tu + H(t,x,\partial_xu) =0
\end{equation}

\par This equation has a well posed Cauchy problem in the viscosity sense (see \cite{MR667669}), \cite{MR0690039}). And in the Tonelli framework, these solutions are generated by the Lax-Oleinik operator $\mathcal{T}_0^{s,t}$ defined on $\mathcal{C}(M,\mathbb{R})$ (see Definition \ref{LODef}). More specifically, for any real time $s \in \mathbb{R}$ and any $u_0 \in \mathcal{C}(M,\mathbb{R})$, there exists a unique viscosity solution $u(t,x):[s,+\infty) \times M \to \mathbb{R}$ of (\ref{HJ}) with $u(s,\cdot) = u_0$ defined by $u(t,x) = \mathcal{T}_0^{s,t}u_0(x)$.\\

It is known that there exists a constant $\alpha_0$ referred to as the \textit{Mañé critical value}, such that for all $(s,u_0) \in \mathbb{R} \times \mathcal{C}(M,\mathbb{R})$, the family of maps $(\mathcal{T}_0^{s,t}u_0 + \alpha_0.(t-s))_{t \geq s}$ is uniformly bounded in $\mathcal{C}(M,\mathbb{R})$ (see Proposition \ref{ViscBounded}, or \cite{Representation} for a proof). Without loss of generality, it is possible to assume that this constant $\alpha_0$ is null. In fact, it suffices to consider the new Hamiltonian $H_0 = H - \alpha_0$ with its associate Lax-Oleinik operator $\mathcal{T}^{s,t} u = \mathcal{T}_0^{s,t}u + \alpha_0.(t-s)$. Thus, this allows to assume that all viscosity solutions are bounded in positive times, making it meaningful to study the fixed, periodic and recurrent elements of the operator $\mathcal{T}:= \mathcal{T}^{0,1}$. The latter is the main focus of this paper.\\

In the autonomous Hamiltonian case, the asymptotic behaviour of the Lax-Oleinik operator $\mathcal{T}$ has been extensively studied in \cite{MR1646936}, \cite{MR1457088}, \cite{MR1485736} and \cite{MR1680905}. This behavior is well understood, primarily due to Albert Fathi’s convergence theorem \cite{MR1650261}. 

\begin{theo} \label{FathiTh} (Fathi \cite{MR1650261})
	For all scalar maps $u \in \mathcal{C}(M, \mathbb{R})$, the sequence $\mathcal{T}^nu$ converges in the $C^0$-topology to a weak-KAM solution $u_\infty$. Moreover, for all time $t >0$, $\mathcal{T}^t u_\infty = u_\infty$.
\end{theo}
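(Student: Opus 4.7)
The plan is to argue in three movements: first establish $C^0$-compactness of the forward orbit of $u$ under $\mathcal{T}$, then identify cluster points with weak KAM solutions, and finally use Aubry--Mather structure to collapse the $\omega$-limit set to a single element.

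For compactness, I would exploit the regularizing effect of the Lax--Oleinik semigroup in the autonomous Tonelli setting: for every $t \geq 1$ the function $\mathcal{T}^t u$ is semiconcave, and in particular Lipschitz with a constant depending only on $t$ and $H$. Combined with the uniform bound in $\mathcal{C}(M,\mathbb{R})$ granted by the Mañé normalization recalled above, Ascoli--Arzelà yields precompactness in $C^0$ of $\{\mathcal{T}^t u\}_{t \geq 1}$. Consequently the $\omega$-limit set
\[
\omega(u) := \bigcap_{T \geq 0} \overline{\{\mathcal{T}^t u : t \geq T\}}
\]
is non-empty, compact, and $\mathcal{T}^t$-invariant in both directions (surjectivity onto itself follows by the usual diagonal extraction from $t_n - t \to \infty$).

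Next, I would invoke the existence of a weak KAM solution $u_-$, a fixed point of $\mathcal{T}^t$ for all $t \geq 0$, which can be obtained via Schauder's fixed-point theorem applied to $\mathcal{T}$ on a well-chosen compact convex family of equi-Lipschitz functions. Because $\mathcal{T}^t$ is non-expansive for $\|\cdot\|_\infty$, the real function $t \mapsto \|\mathcal{T}^t u - u_-\|_\infty$ is non-increasing, hence convergent. A non-expansive surjection on a compact metric space is an isometry, so $\mathcal{T}^t$ acts on $\omega(u)$ by $\|\cdot\|_\infty$-isometries. From this I would deduce, via the Lax--Oleinik variational representation and the Peierls barrier, that every $v \in \omega(u)$ is itself a weak KAM solution, and that the semigroup restricted to $\omega(u)$ preserves pointwise extrema of $v - u_-$.

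The crux of the argument, and the main obstacle I expect, is to prove that $\omega(u)$ reduces to one element. Here one appeals to the projected Aubry set $\pi(\mathcal{A}) \subset M$ and its partition into static classes. Two monotone quantities drive the reduction:
\[
M(t) := \max_{x \in M} \bigl(\mathcal{T}^t u(x) - u_-(x)\bigr), \qquad m(t) := \min_{x \in M} \bigl(\mathcal{T}^t u(x) - u_-(x)\bigr),
\]
with $M$ non-increasing and $m$ non-decreasing by the comparison principle; both converge to limits $M_\infty \geq m_\infty$. For any cluster point $v \in \omega(u)$, the difference $v - u_-$ attains the values $M_\infty$ and $m_\infty$, and calibrated backward curves starting at these extrema must accumulate in $\mathcal{A}$. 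Combining this with the classical rigidity statement that two weak KAM solutions agreeing at one point of each static class coincide on $\pi(\mathcal{A})$, and then extending globally via the representation $w(y) = \inf_{x \in \pi(\mathcal{A})} [w(x) + h(x,y)]$ through the Peierls barrier $h$, one forces $v - u_-$ to be constant on $\pi(\mathcal{A})$ and therefore on $M$. The same constant must be produced by every cluster point, yielding $M_\infty = m_\infty$. Setting $u_\infty := u_- + m_\infty$, the whole orbit $\mathcal{T}^n u$ converges uniformly to $u_\infty$, and $\mathcal{T}^t u_\infty = u_\infty$ for all $t > 0$ holds by construction since $u_\infty$ is a translate of a weak KAM solution.
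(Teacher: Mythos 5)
Your framework (precompactness of the forward orbit, non-expansiveness, the fact that a non-expansive surjection of a compact metric space is an isometry, and the resulting minimality of $\mathcal{T}$ on $\omega(u)$) is correct and is indeed the standard preamble. But the heart of the theorem is the collapse of $\omega(u)$ to a single weak-KAM solution, and that is exactly where the argument does not close.

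The first problem is that the statement "every $v\in\omega(u)$ is itself a weak KAM solution" is not a corollary of the isometry property; it is essentially the theorem itself (given minimality, one fixed point in $\omega(u)$ forces $\omega(u)$ to be a singleton). A surjective isometry of a compact space need not have a fixed point, and nothing in the Lax--Oleinik representation supplied so far rules out a nontrivial minimal isometric action on $\omega(u)$ --- indeed this is precisely what happens in the time-periodic setting of this paper, where $\omega(u)$ is a Cantor set on which $\mathcal{T}$ acts as an odometer. The second problem is the step "one forces $v-u_-$ to be constant on $\pi(\mathcal{A})$ and therefore on $M$." Even granting that the extremal values $M_\infty$, $m_\infty$ of $v-u_-$ are attained on the projected Aubry set, this does not give constancy: the rigidity you invoke says that two weak KAM solutions agreeing at one point of each static class agree on $\pi(\mathcal{A})$, but the difference of two weak KAM solutions is in general a \emph{different} constant on each static class, so the inequality $M_\infty>m_\infty$ is perfectly consistent with what you have shown.

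The missing ingredient, and the place where autonomy must enter, is Fathi's key lemma: for every $x$ in the projected Aubry set, $\mathcal{T}^t u(x)$ converges as $t\to\infty$. This uses the fact that in the autonomous case the Peierls barrier is a genuine limit, $h^\infty(x,y)=\lim_{t\to\infty}h^t(x,y)$, so that $h^t(x,x)\to 0$ for $x\in\pi(\mathcal{A})$; combined with $\mathcal{T}^{s+t}u(x)\le \mathcal{T}^s u(x)+h^t(x,x)$ this forces $\limsup_T \mathcal{T}^T u(x)\le \mathcal{T}^s u(x)$ for every $s$, hence pointwise convergence on $\pi(\mathcal{A})$. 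One then propagates to all of $M$ via the representation $\mathcal{T}^t u(y)\approx\inf_{x\in\pi(\mathcal{A})}\{\mathcal{T}^{t/2}u(x)+h^{t/2}(x,y)\}$. Your proposal never uses the limit (rather than $\liminf$) nature of $h^\infty$, and every other ingredient you invoke is available in the $1$-periodic setting as well; any argument with that property cannot be correct, since the conclusion is false there. You should insert this pointwise-convergence-on-$\mathcal{A}$ lemma as the pivot of the proof and derive $M_\infty=m_\infty$ from it, rather than from the static-class rigidity alone.
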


Hence, there is always convergence of viscosity solutions to steady states, namely the weak-KAM solutions. We refer to \cite{MR1752423} for an analytical proof. Various extensions of this result have also been explored in \cite{MR1457088}, \cite{MR1485736}, \cite{MR1680905}.\\

However, addressing the time-dependent framework reveals more challenging. According to A. Fathi and J.N. Mather \cite{MR1792479}, such convergence does not hold in the non-autonomous setting. Furthermore, in the one-dimensional case ($M = \mathbb{T}^1$), P. Bernard  and J.-M. Roquejoffre \cite{MR1646936}, \cite{MR2041603} demonstrated that viscosity solutions converge, up to a linear time dependence, to periodic solutions, thereby constraining the possible behaviors in dimension one. Their work elucidates that the periodicity of asymptotic viscosity solutions is intricately linked to the periodicity of the orbits within the Mather set (defined in Subsection \ref{MatherSection}). This set is a dynamical subset of the phase space emerging from Aubry-Mather theory, and plays a central role in weak-KAM theory. \\

Less is known about higher dimensions. This work presents the construction of a smooth, recurrent, non-periodic viscosity solution of the Hamilton-Jacobi equation \eqref{HJ} on any manifold $M$ of dimension $2$ or higher.

\subsection{Main Result}

In this work, we present a construction of a recurrent, non-periodic viscosity solution to the Hamilton-Jacobi equation (\ref{HJ}), applicable to compact manifolds of dimension 2 or higher. The main idea is to make advantage of the correlation between periodic orbits within the Mather set and periodic viscosity solutions. Specifically, for each $\rho_n$-periodic orbit $\{x_n^i\}_i$ within the Mather set, we consider the modified Peierls barriers $h^{\rho_n \infty}(x^0_n,\cdot)$ introduced in \cite{MR1792479}, which is a $\rho_n$-periodic viscosity solution. The recurrent viscosity solution is of the form $u = \inf_{n \geq 0} \{ h ^{\rho_n \infty}(x^0_n, \cdot)\}$. We carefully control the parameters of the constructed Hamiltonian to make sure that $u$ is $\rho_n$-periodic near every point $x^0_n$. The recurrence, yet non-periodicity, of $u$ is achieved by considering an infinite set of orbits with increasingly diverging periodicities $\rho_n$.\\

This idea allows the construction of numerous examples. However, achieving more than Lipschitz regularity for the recurrent solution reveals more intricate. To address this, adjustments are made to the Hamiltonian and the viscosity solution $u$ to achieve $C^\infty$ regularity with a new initial data of the form $u = \inf_{n \geq 0} \{c_n + h ^{\rho_n}(x_n, \cdot)\}$ where $c_n$ are constants to be chosen carefully.\\

More precisely, the weak-KAM theory demonstrates that $u_c$ is already differentiable on the Mather set. Building on this result, we ensure that the boundary between realizations of the infimum defining $u_c$ occurs within the Mather set. Additionally, symmetry conditions on the Hamiltonian are imposed to guarantee the $C^\infty$-regularity of the barriers $h^{\rho_n}(x_n, \cdot)$ outside of the Mather set.\\

Unfortunately, attaining higher regularity on the Mather set is significantly more challenging, as it often acts as an obstruction to achieving regularity higher than $C^{1,1}$. This is exemplified by the double covering of the pendulum (See \cite{charf2025}), where Peierls barriers are smooth everywhere except at one of the two points that shape the Mather set, at which they are never of $C^2$ regularity (due to the hyperbolic nature of these points). To address this issue, we construct the Hamiltonian to induce slow parabolic negative time convergence in the unstable manifolds of points in the Mather set. This approach can result in a null higher derivatives of $u_c$ at such points. This yields the following theorem.

\begin{theo} \label{MainC1}
	For any closed manifold $M$ of dimension $d \geq 2$, there exists a $C^\infty$ Tonelli Hamiltonian $H : \mathbb{T}^1 \times T^*M \to \mathbb{R}$ such that the Lax-Oleinik operator $\mathcal{T}$ admits a $C^\infty$ recurrent, non-periodic viscosity solution.
\end{theo}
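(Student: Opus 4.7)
The plan is to follow the three-step scheme announced in the introduction: (1) engineer a Hamiltonian whose Mather set carries infinitely many periodic orbits of diverging periods, (2) build a candidate viscosity solution as an infimum of shifted Peierls barriers indexed by these orbits, and (3) fine-tune the construction to promote regularity from Lipschitz to $C^\infty$.

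\textbf{Step 1.} I would construct a $C^\infty$, $1$-time periodic Tonelli Hamiltonian $H$ on $T^*M$ whose Aubry set is a countable disjoint union of isolated periodic orbits $\mathcal{O}_n$, with periods $\rho_n\to\infty$ chosen pairwise incommensurable. The assumption $\dim M\geq 2$ enters critically here: it provides enough room to place infinitely many disjoint closed curves on $M$ and to modify $H$ in separate neighbourhoods of each $\mathcal{O}_n$ without interfering globally with the Tonelli or time-periodicity conditions. A double-well type construction along a smooth curve in $M$ with prescribed rotation vectors furnishes an explicit model.

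\textbf{Step 2.} Fix $x_n\in\mathcal{O}_n$ for each $n$ and let $h^{\rho_n}(x_n,\cdot)$ denote the associated modified Peierls barrier of \cite{MR1792479}, a $\rho_n$-periodic fixed point of $\mathcal{T}$. Define
\begin{equation*}
u(x)\;=\;\inf_{n\geq 0}\bigl\{\,c_n+h^{\rho_n}(x_n,x)\,\bigr\},
\end{equation*}
with constants $c_n$ selected so that (i) near each $x_n$ the $n$-th term uniquely attains the infimum, and (ii) the sequence $(c_n)$ decays fast enough that the infimum is a uniform limit of finite ones. Under $\mathcal{T}$ each summand translates independently on its own circle $\mathbb{R}/\rho_n\mathbb{Z}$, so the orbit $(\mathcal{T}^k u)_{k\in\mathbb{N}}$ is conjugate to a translation on $\prod_n\mathbb{R}/\rho_n\mathbb{Z}$; a Kronecker/diophantine density argument then yields recurrence, while the divergence of the $\rho_n$ precludes exact periodicity.

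\textbf{Step 3.} This is the main obstacle: three distinct sources of singularity must be neutralised. First, off the Mather set, each $h^{\rho_n}(x_n,\cdot)$ typically develops shocks along its cut locus; imposing appropriate symmetry conditions on $H$ near $\mathcal{O}_n$ forces the outgoing and incoming calibrated curves from $x_n$ to match, preventing shock formation outside the orbit. Second, on the wavefronts where the active index of the infimum changes, $u$ is a priori only Lipschitz; selecting the $c_n$ so that these wavefronts lie inside the Mather set, where weak-KAM theory already grants differentiability of the barriers, restores smoothness there. Third, and most delicate, at points of $\mathcal{O}_n$ itself the barrier often fails to be $C^2$ due to exponential hyperbolicity (as in the pendulum example of \cite{charf2025}); to counter this I would design $H$ so that the local dynamics along the stable/unstable manifolds of $\mathcal{O}_n$ is \emph{parabolic}, i.e.\ exhibits polynomial rather than exponential convergence in backward time. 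The associated slow decay of negatively calibrated curves forces all higher derivatives of $h^{\rho_n}(x_n,\cdot)$ to vanish at $\mathcal{O}_n$, and a Whitney-type extension then glues the pieces into a globally $C^\infty$ function on $M$. Verifying that these three local prescriptions are mutually compatible and can be realised inside a single smooth Tonelli Hamiltonian, for every $n$ at once, is the substantive technical work required.
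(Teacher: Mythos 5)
Your proposal correctly identifies the paper's three-step scheme — Mañé-type Hamiltonian with a prescribed Mather set, infimum of shifted Peierls barriers, fine-tuned constants plus parabolic dynamics for regularity — and Steps~2 and~3 match the paper's strategy in spirit. However there is a structural gap that makes your Step~1, as stated, incompatible with your Step~3.

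You describe the Aubry set as ``a countable disjoint union of isolated periodic orbits $\mathcal{O}_n$''. This is too small to make the regularization work. Your Step~3 correctly observes that the interfaces where the active branch of the infimum changes must lie inside the Mather set, so that the weak-KAM differentiability of Proposition~\ref{CalibMather} applies there. But these interfaces are codimension-one hypersurfaces in $M$: they separate the basin around $x_n$ from the rest, so they must contain a full sphere $\partial B_n$. A countable union of isolated closed curves is one-dimensional and cannot contain such hypersurfaces. The paper resolves this by building the Hamiltonian so that the projected Mather set $\mathcal{M}_0$ is large — by Proposition~\ref{MatherConstructedProp} it contains whole spheres $\partial B_n$, the complements $\overline{B_n}\setminus\bigcup_i B^i_n$, the sets $\overline{D_n}$, and a large region $\overline{D}$. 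The radial bump-function construction of $Z$ and the rotational isotopy $\mathcal{R}_t$ are designed precisely to produce this thick Mather set while still isolating the periodic points $x^i_n$ inside their balls $B^i_n$. Without this, there is no place to put the wavefronts.

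A secondary inaccuracy: the recurrence cannot be phrased as a Kronecker/diophantine density argument on the torus $\prod_n\mathbb{R}/\rho_n\mathbb{Z}$. The periods $\rho_n$ must be integers (they are periods under the discrete semigroup $\mathcal{T}$), so the orbit of $u$ lives in the odometer space $\prod_n\mathbb{Z}/\rho_n\mathbb{Z}$, on which $\mathcal{T}$ acts as the adding machine; ``pairwise incommensurable'' has no useful meaning for integers, and none is needed. The paper instead takes the explicit return times $p_k=\prod_{i\leq k}\rho_i$, which bring the first $k$ barriers exactly back to their initial state, and controls the remaining barriers quantitatively via the shrinking radii $r_n\to 0$ together with the Lipschitz bound of Proposition~\ref{Regularity}. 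That quantitative step is essential: density alone does not produce a convergent subsequence $\mathcal{T}^{p_k}u\to u$ in $C^0$ without the uniform estimate on the tail.

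Finally, your remark that the $c_n$ should ``decay fast'' is not quite the right requirement. The paper fixes $c_n$ by an exact identity, for instance $c_n = h^\infty(z_\infty,y_n)-h^{\rho_n\infty}(x_n,y_n)$ in dimension $d\geq 3$, chosen so that the two competing branches of the infimum coincide on $\partial B_n$ (inside the Mather set) and the interface lands exactly there. It is this algebraic matching, not a decay rate, that drives the regularity; boundedness of $c_n$ suffices for the recurrence argument.
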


After the construction, our focus shifts to the asymptotic behaviour of the Lax-Oleinik operator for the obtained Tonelli Hamiltonian. We determine the omega-limit set $\omega(u)$ of the constructed viscosity solution $u$ and, more broadly, describe the whole non-wandering set $\Omega(\mathcal{T})$ following a generalized representation formula derived from \cite{Representation}. It is observed that when the Mather set consists of periodic orbits with integer periods,the Lax-Oleinik operator behaves like an odometer on its non-wandering set (see \cite{MR0551496}, \cite{MR2180227}, \cite{MR4492832} for surveys on odometers, also known as adding machines). This leads to the following theorem.

\begin{theor} \label{MainOmega}
	Assume that the Mather set $\mathcal{M}$ is the union of periodic orbits with integer periods. Then for all $v \in \Omega(\mathcal{T})$, the restriction of the Lax-Oleinik operator $\mathcal{T}$ to the set $\omega(v)$ is a factor of an odometer.
\end{theor}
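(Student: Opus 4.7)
The plan is to exploit the representation formula from \cite{Representation} to reduce the dynamics on $\omega(v)$ to a group rotation on a compact abelian group built from the periods of the Mather orbits. The hypothesis that every periodic Mather orbit has integer period is crucial: it guarantees that the time-$1$ operator $\mathcal{T}$ acts on each orbit by a single cyclic shift, which is precisely the combinatorial structure encoded by an odometer.

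First, I would parametrize the periodic Mather orbits as $\{x_n^i\}_{i \in \mathbb{Z}/\rho_n\mathbb{Z}}$ with $\rho_n \in \mathbb{N}^*$ and apply the representation formula to express any $w \in \Omega(\mathcal{T})$ as
\[
	w = \inf_{n \geq 0} \bigl\{ c_n(w) + h^{\rho_n}\bigl(x_n^{\alpha_n(w)}, \cdot\bigr) \bigr\},
\]
where $\alpha_n(w) \in \mathbb{Z}/\rho_n\mathbb{Z}$ records which iterate of the $n$-th Mather orbit is in use and $c_n(w) \in \mathbb{R}$ are bounded constants. The central algebraic fact is $\mathcal{T}\, h^{\rho_n}(x_n^i, \cdot) = h^{\rho_n}(x_n^{i+1}, \cdot)$ (indices mod $\rho_n$), a consequence of uniqueness of minimizers along the Mather orbit together with the $1$-periodicity of $H$.

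Next, I would package the phases into a map $\Phi : \Omega(\mathcal{T}) \to G := \prod_{n \geq 0} \mathbb{Z}/\rho_n\mathbb{Z}$ by $\Phi(w) = (\alpha_n(w))_n$. The group $G$ is a compact abelian topological group, and the identity above gives $\Phi \circ \mathcal{T} = \tau \circ \Phi$, where $\tau$ denotes translation by $\mathbf{1} = (1,1,\dots)$. The orbit closure $\mathcal{O}_v := \overline{\{\tau^k \Phi(v) : k \in \mathbb{Z}\}}$ is then a closed $\tau$-invariant subset of $G$; because $\tau$ is an equicontinuous group translation, $(\mathcal{O}_v, \tau)$ is a minimal equicontinuous system, hence an odometer (up to a coset, isomorphic to $\varprojlim \mathbb{Z}/L_N\mathbb{Z}$ with $L_N = \lcm(\rho_0,\dots,\rho_N)$).

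The remaining step is to promote $\Phi$ into a factor map. For this one must show that on $\omega(v)$ the constants $c_n$ are continuous functions of the phases, so that the assignment $\Psi : \mathcal{O}_v \to \omega(v)$ sending a phase vector back to the corresponding infimum is a well-defined continuous surjection; together with the intertwining $\Psi \circ \tau = \mathcal{T} \circ \Psi$ this is exactly the statement that $(\omega(v), \mathcal{T})$ is a factor of the odometer $(\mathcal{O}_v, \tau)$. Continuity of $\Psi$ should follow from continuity of the Peierls barriers $h^{\rho_n}(x, \cdot)$ in the base point combined with a uniform-convergence-of-infima argument using the uniform bound on viscosity solutions. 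The main obstacle is precisely the rigidity part: proving that two distinct elements of $\omega(v)$ cannot share the same phase vector, i.e.\ that the $c_n$'s are actually determined by $(\alpha_n)_n$ on $\omega(v)$. This is where non-wandering (rather than mere boundedness) must be used, to force recurrent synchronization of the constants along approaching subsequences and eliminate the spurious continuous degrees of freedom that would otherwise enlarge $\omega(v)$ beyond a factor of $\mathcal{O}_v$.
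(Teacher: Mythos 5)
Your proposal correctly identifies the key ingredients — the representation formula, the fact that $\mathcal{T}$ shifts the $(\rho_n,k)$-barriers by $h^{\rho_n\infty+k}\mapsto h^{\rho_n\infty+k+1}$, and the desire to encode phases in a product of cyclic groups — but the way you propose to assemble them into a factor map is both more complicated than necessary and runs into an obstacle that the paper's argument simply does not face.

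The structural issue is the direction of the map. You define a ``phase map'' $\Phi:\omega(v)\to G$ and then try to recover a factor map $\Psi:\mathcal{O}_v\to\omega(v)$ by inverting it, which forces you to prove (as you yourself flag) that the constants $c_n$ are determined by the phases $\alpha_n$ on $\omega(v)$. That is a rigidity/injectivity statement that is not needed for a factor and may in fact fail (the paper's remarks after Proposition \ref{MainPropOmega} explicitly leave injectivity open). The paper goes forward instead: it fixes the dominated function $\psi$ coming from the representation of $v$ once and for all, and then defines $\varphi:Z\to\Omega(\mathcal{T})$, $\varphi(\underline{k})=\inf_{y\in\mathbb{M}_{\underline{p}}}\{\psi(y)+h^{\rho_y\infty+k_y}(y,\cdot)\}$ on the full odometer space $Z=\prod_n(\mathbb{Z}/\rho_n\mathbb{Z})^{\rho_n}$. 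Continuity of $\varphi$ is immediate from the Lipschitz bound on the barriers, the intertwining $\varphi\circ\tau=\mathcal{T}\circ\varphi$ follows from $\mathcal{T}h^{\rho_n\infty+k}=h^{\rho_n\infty+k+1}$, and since $\varphi(\underline{0})=v$ and $\omega(v)=\overline{\orb(\mathcal{T},v)}$ (recurrence plus minimality on the $\omega$-limit set, Corollary \ref{Minimality}), the restriction of $\varphi$ to $Z_0=\overline{\orb(\tau,\underline{0})}$ is automatically a continuous surjection onto $\omega(v)$. No rigidity, no continuity-of-constants argument, no reconstruction step. You should simply build the map from the odometer to $\omega(v)$ using the fixed $\psi$ and never look back.

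There is a secondary error in the representation itself. You write $w=\inf_{n\geq 0}\{c_n(w)+h^{\rho_n}(x_n^{\alpha_n(w)},\cdot)\}$ with one constant and one phase per orbit. The representation formula (Theorem \ref{OmegaBijectionTheorem} and its application) gives one dominated value $\psi(y)$ per \emph{point} $y=x_n^i$, i.e.\ $\rho_n$ constants per orbit, not one; and the ``phase'' is not a distinguished index $\alpha_n$ but a uniform shift $k$ applied to all of them, $\mathcal{T}^kv=\inf_{n,i}\{\psi(x_n^i)+h^{\rho_n\infty+k}(x_n^i,\cdot)\}$. For a fixed $n$, different base points $x$ see the inner infimum realized at different $i$, so your one-term-per-orbit form is not a valid normal form for general elements of $\Omega(\mathcal{T})$, only for very symmetric initial data like the specific $u$ of equation \eqref{ic}. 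Working with the correct representation also makes the argument simpler, because the integer parameter $k$ of $\mathcal{T}^kv$ directly populates all coordinates $k_y=k$, which is exactly the diagonal orbit of $\underline{0}$ under the odometer $\tau$; the identification with the smaller group $\prod_n\mathbb{Z}/\rho_n\mathbb{Z}$ you had in mind is then a consequence, not an input.
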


It is known that the Lax-Oleinik semigroup $\mathcal{T}$ acts by isometries on its non-wandering set (See \cite{Representation}). This imposes limitations on the possible dynamics that can occur on $\omega(u)$ for any recurrent viscosity solution $u$. A natural question that arises is whether it is possible to construct an explicit example where $\mathcal{T}$ exhibits more intricate dynamics than that of an odometer.

\subsection{Structure of the Article}

Section \ref{SectionTools} introduces preliminary tools from weak-KAM theory and Aubry-Mather theory. In Section \ref{SectionManeHamiltonian}, following ideas from Mañé, we construct a Tonelli Hamiltonian for which $\Omega(\mathcal{T})$ contains a smooth, non-wandering, non-periodic viscosity solution. Section \ref{SectionRecurrence} is dedicated to the main construction of a recurrent, non-periodic, yet non-regular viscosity solution $u$, along with a description of its $\omega$-limit set. We then extend this description to the non-wandering set of the Lax-Oleinik semigroup $\mathcal{T}$. Finally, in Section \ref{RegSection}, we adjust the initial condition to obtain a smooth, recurrent, non-periodic viscosity solution.\\

\section{Preliminary Tools and Properties} \label{SectionTools}

This section compiles the primary tools and propositions used in the construction process, drawing from the Weak-KAM and Aubry-Mather theories. Most of these propositions were comprehensively presented in \cite{Representation}. We will restate, without proofs, the properties necessary for constructing more regular recurrent viscosity solutions. Furthermore, we will introduce a more detailed subsection devoted to the \Mane set.

\subsection{Tonelli Hamiltonians and the Lax-Oleinik Operator} \label{SubsectionTonelli}

We define the notion of Tonelli Hamiltonians, initially introduced in \cite{MR1109661}. These are Hamiltonians that are convex and superlinear in the fibres, serving as the tame framework for the weak-KAM theory.

\begin{defi} \label{TonelliDef}
	A $1$-time-periodic Hamiltonian $H(t,x,p) : \mathbb{T}^1 \times T^*M \to \mathbb{R}$ is said \textit{Tonelli} if it satisfies the following classical hypotheses :
	\begin{itemize}
		\item Regularity: $H$ is $\mathcal{C}^2$.
		\item Strict convexity: $\partial_{pp}H(t,x,p)>0$ for all $(t,x,p) \in \mathbb{T}^1 \times T^*M$.
		\item Superlinearity: $H(t,x,p)/|p| \to \infty$ as $|p| \to \infty$ for each $(t,x) \in \mathbb{T}^1 \times M$.
		\item Completeness: The Hamiltonian vector field $X_H(t,x,p) = (\partial_pH(t,x,p),-\partial_xH(t,x,p))$ and hence its flow $\phi_H^{t,s}$ is complete in the sense that the flow curves are defined for all times $t \in \mathbb{R}$.
	\end{itemize}
\end{defi}
 
Under these assumptions, one can associate to $H(t,x,p)$ a time-periodic Tonelli \textit{Lagrangian} $L(t,x,v): \mathbb{T}^1 \times TM \to \mathbb{R}$ given by
\begin{equation} \label{Lag}
	L(t,x,v) = \max_{p \in T_x^*M} \{p(v) - H(t,x,p)\}
\end{equation}
which symmetrically gives 
\begin{equation}\label{Ham} 
	H(t,x,p) = \max_{v \in T_xM} \{p(v) - L(t,x,v)\}
\end{equation}

\par The \textit{Euler-Lagrange flow} also named \textit{Lagrangian flow} $\phi_L^{s,t}$ is the conjugate to the Hamiltonian flow $\phi_H^{s,t}$ by the \textit{Legendre map}\footnote{The Tonelli assumptions imply that for all time $t \in \mathbb{R}$, the Legendre map $\mathcal{L}$ is a diffeomorphism between $\{t\} \times TM$ and $\{t\} \times T^*M$ (see \cite{fathi2008weak} for details).} $\mathcal{L}(t,x,v) = \big(t,x,\partial_vL(t,x,v)\big)$.  We adopt the notation $\phi_L^t$ for $\phi_L^{0,t}$.\\

If $0\leq s \leq t$ are two real times and $x$ and $y$ are two points of $M$, we define the following quantities
\begin{itemize}
	\item For all absolutely continuous curve $\gamma :[s,t] \to M$, the \textit{action} of $\gamma$ is
	\begin{equation}
		A_L(\gamma) = \int_s^t L(\tau,\gamma(\tau), \dot{\gamma}(\tau)) \; d\tau
	\end{equation}
	\item the \textit{potential} between $(s,x)$ and $(t,y)$ is
	\begin{equation} \label{Potential0}
		h_0^{s,t} (x,y) = \inf \left\{ A_L(\gamma) \; \left| \;
		\begin{matrix}
			\gamma : & [s,t] \to M \\
			& s \mapsto x \\
			& t \mapsto y
		\end{matrix} \right.	\right\}
	\end{equation}
	where the infimum is taken over such absolutely continuous curves $\gamma$.
\end{itemize}

\begin{rem}
	The demanded absolute continuity of the curves enables to define this integral. All the curves and all the infimum over the curves that may be considered in this paper will be in the family of absolutely continuous curves. We will refrain from recalling it every time.
\end{rem}

We now introduce the Lax-Oleinik operator that generates the viscosity solutions of the Hamilton-Jacobi equation
\begin{equation} \label{HJAlpha}
	\partial_tu + H(t,x,\partial_xu) = \alpha_0
\end{equation}

\begin{defi} \label{LODef}
	Fix two times $s<t$.
	\begin{enumerate}
		\item The \textit{Lax-Oleinik operator} $\mathcal{T}_0^{s,t}: \mathcal{C}(M,\mathbb{R}) \to \mathcal{C}(M,\mathbb{R})$ is defined as
		\begin{equation} \label{LO}
			\begin{split}
				\mathcal{T}_0^{s,t} u_0(x) & 
				= \inf_{
				\begin{matrix}
					\gamma : [s,t] \rightarrow M \\
					t \mapsto x 
				\end{matrix}
				} \left\{  u_0(\gamma(s)) + \int_s^t L(\tau,\gamma(\tau), \dot{\gamma}(\tau)) \;d\tau \right\} \\
				& = \inf_{y \in M} \left\{  u_0(y) + h_0^{s,t}(y,x) \right\}\\
			\end{split}
		\end{equation}
		\item The \textit{Mañé critical value} $\alpha_0$ is defined as 
		\begin{equation}\label{ManeCritValue}
			\alpha_0 = - \inf_\mu \left\{\int_{\mathbb{T}^1 \times TM}L \;d\mu \right\}
		\end{equation}
		where the infimum is taken over compact supported Borel probability measures $\mu$ invariant by the Euler-Lagrangian flow corresponding to $L$.\\
		\item The \textit{full Lax-Oleinik operator} $\mathcal{T}^{s,t}: \mathcal{C}(M,\mathbb{R}) \to \mathcal{C}(M,\mathbb{R})$ is defined as
		\begin{equation}
			\mathcal{T}^{s,t} u_0(x) = \mathcal{T}_0^{s,t} u_0(x) + \alpha_0.(t-s)
		\end{equation}
		We adopt the notations $\mathcal{T}^t$ for $\mathcal{T}^{0,t}$ and $\mathcal{T}$ for $\mathcal{T}^{0,1}$.
	\end{enumerate}
\end{defi}

One can verify that for all $s<t<\tau$, $\mathcal{T}^{t,\tau} \circ \mathcal{T}^{s,t} = \mathcal{T}^{s, \tau} $. Additionally, Since $L$ is time-periodic, $\mathcal{T}^{t+1} = \mathcal{T}^t \circ \mathcal{T}$. Hence $\{\mathcal{T}^n \}_{n\in \mathbb{N}}$ is a discrete semi-group acting on $\mathcal{C}(M,\mathbb{R})$, called the Lax-Oleinik semi-group. The main focus of this paper is the asymptotic behaviour of $\mathcal{T}$.\\

\begin{rem} \label{RemarqueIntro}
	\begin{enumerate}
		\item By abuse of language, we will often refer to the initial condition $u_0$ as a viscosity solution while referring in reality to the corresponding viscosity solution $u(t,x) = \mathcal{T}^{t}u_0(x)$.
		
		\item We will prove that in the case of the Mañé Lagrangians, heavily utilized in this article (see Subsection \ref{SectionConstruction}), the Mañé critical value is null. In this case, the two operators $\mathcal{T}$ and $\mathcal{T}_0$ do coincide.
	\end{enumerate}
\end{rem}

\begin{lem} \label{ViscosityInf}
	Let $v_n \in \mathcal{C}(\mathbb{R} \times M, \mathbb{R})$ be a sequence of viscosity solutions and let $u(t,x) = \inf_n \{v_n(t,x) \}$. Then $u$ is a viscosity solution.
\end{lem}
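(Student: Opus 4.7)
The plan is to identify $u$ with the image of its own initial data under the Lax--Oleinik operator and exploit the fact that $\mathcal{T}^{s,t}$ is an inf-convolution, hence commutes with arbitrary infima. In this Tonelli framework, a continuous function $w$ on $\mathbb{R}\times M$ is a viscosity solution of \eqref{HJAlpha} if and only if it satisfies the reproducing identity $w(t,\cdot)=\mathcal{T}^{s,t}w(s,\cdot)$ for every $s<t$, so it will suffice to check this identity for $u$.

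First, I would verify that $u$ is continuous (in fact locally Lipschitz) on $\mathbb{R}\times M$. Since each $v_n$ is defined on the whole real line, for any $t\in\mathbb{R}$ one may write $v_n(t,\cdot)=\mathcal{T}^{t-1,t}v_n(t-1,\cdot)$, and the standard Tonelli regularizing property of the Lax--Oleinik operator yields a Lipschitz constant in $x$ depending only on $H$ and on the length of the time step, independent of $n$. The Hamilton--Jacobi equation then controls the time derivative by a quantity of the form $\sup_{|p|\le L}|H|+|\alpha_0|$, producing a uniform local Lipschitz estimate in $(t,x)$. The infimum $u$ inherits these equi-Lipschitz bounds and is therefore continuous.

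For the reproducing identity, fix $s<t$ and $x\in M$ and compute
\begin{align*}
\mathcal{T}^{s,t}u(s,\cdot)(x) &= \inf_{y\in M}\Bigl\{\inf_n v_n(s,y)+h_0^{s,t}(y,x)\Bigr\}+\alpha_0(t-s)\\
&= \inf_n\,\mathcal{T}^{s,t}v_n(s,\cdot)(x)=\inf_n v_n(t,x)=u(t,x),
\end{align*}
where the middle step is the harmless exchange of two infima over independent variables $n$ and $y$, and the third uses that each $v_n$ is a viscosity solution. This shows that $u$ is itself generated by $\mathcal{T}^{s,t}$ from a continuous datum, hence a viscosity solution.

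The genuine obstacle lies in the continuity step: a priori $u$ is only upper semi-continuous as an infimum of continuous functions, so one really uses the Tonelli hypotheses through the uniform Lipschitz regularization of $\mathcal{T}^{s,t}$. Once continuity is secured, the inf-convolution manipulation is essentially formal.
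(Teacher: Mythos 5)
Your proof is correct and follows essentially the same route as the paper: the reproducing identity $\mathcal{T}^{s,t}u(s,\cdot)=u(t,\cdot)$ obtained by exchanging the infimum over $n$ with the infimum over $y$ in the Lax--Oleinik inf-convolution is exactly the paper's argument. The only difference is that you explicitly flag and dispatch the continuity of $u$ via the uniform Lipschitz regularization of $\mathcal{T}^{s,t}$ (Proposition \ref{Regularity}), a point the paper leaves implicit; this is a legitimate refinement rather than a different method.
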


To a scalar map $u$ of $\mathcal{C}(M , \mathbb{R})$, we associate its \textit{$\omega$-limit set} $\omega(u):=\omega_{\mathcal{T}}(u)$ by the operator $\mathcal{T}$ as the set of limit points in the $C^0$-topology of the sequence $(\mathcal{T}^n(u))_{n \in \mathbb{N}}$. More precisely
\begin{equation}
	\omega(u) = \{ v \in \mathcal{C}(M , \mathbb{R}) \; | \; \exists (k_n)_n \in \mathbb{N}^\mathbb{N} \text{ increasing sequence s.t } \Vert \mathcal{T}^{k_n}u - v \Vert_\infty \to 0 \text{ as } n \to \infty\}
\end{equation}

\begin{defi}
	\begin{enumerate}
		\item If $\mathcal{T}u = u$, or equivalently if $u(t,\cdot) = \mathcal{T}^tu$ is a $1$-periodic viscosity solution, then $u$ is called a \textit{weak-KAM solution} of the Hamilton-Jacobi equation (\ref{HJAlpha}).
		\item The \textit{recurrent set} $\mathcal{R}(\mathcal{T})$ of $\mathcal{T}$ is the subset of recurrent elements of $\mathcal{C}(M,\mathbb{R})$ i.e. $u \in \mathcal{C}(M,\mathbb{R})$ such that $u$ belongs to $\omega(u)$.
		\item The \textit{non-wandering set} $\Omega( \mathcal{T})$ of $\mathcal{T}$ is the subset of elements $u$ of $\mathcal{C}(M,\mathbb{R}) $ such that for all neighbourhood $U$ of $u$ in the $C^0$ topology, there are infinitely many positive integers $k$ such that $\mathcal{T}^k(U) \cap U \neq \emptyset$.
	\end{enumerate}
\end{defi}

The following proposition justifies the non-emptiness of these sets
\begin{prop} \label{ViscBounded}
	For all $(s,u) \in \mathbb{R} \times \mathcal{C}(M, \mathbb{R})$, the family $\big( \mathcal{T}^{s,t}u = \mathcal{T}_0^{s,t}u + \alpha_0.(t-s) \big)_{t \geq s}$ is uniformly bounded in $\mathcal{C}(M, \mathbb{R})$.
\end{prop}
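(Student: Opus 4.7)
The plan is to deduce the result from two essentially independent ingredients: non-expansiveness of the full Lax-Oleinik semigroup in the norm $\Vert \cdot \Vert_\infty$, and the existence of at least one orbit $\bigl(\mathcal{T}^{s,t} w\bigr)_{t \geq s}$ that is itself uniformly bounded in $\mathcal{C}(M, \mathbb{R})$. Given both, a one-line triangle inequality applied to $\mathcal{T}^{s,t} u$ and $\mathcal{T}^{s,t} w$ yields the conclusion for arbitrary $u$ and $s$.

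The first ingredient is essentially formal. Formula (\ref{LO}) gives at once, for the unnormalized operator, commutation with additive constants, $\mathcal{T}_0^{s,t}(u + c) = \mathcal{T}_0^{s,t} u + c$, and monotonicity, $u \leq v$ implies $\mathcal{T}_0^{s,t} u \leq \mathcal{T}_0^{s,t} v$. Sandwiching $u$ between $v \pm \Vert u - v\Vert_\infty$ and applying both properties yields $\Vert \mathcal{T}_0^{s,t} u - \mathcal{T}_0^{s,t} v\Vert_\infty \leq \Vert u - v\Vert_\infty$; this transfers verbatim to $\mathcal{T}^{s,t}$, which differs from $\mathcal{T}_0^{s,t}$ only by the additive constant $\alpha_0 \cdot (t - s)$.

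For the second ingredient I would take $w$ to be a weak-KAM solution of (\ref{HJAlpha}), that is a fixed point of $\mathcal{T} = \mathcal{T}^{0,1}$. Once such a $w$ is available, the identity $\mathcal{T}^n w = w$ for every integer $n \geq 0$, together with a uniform bound on $\mathcal{T}^{n,t} w$ for $t \in [n, n+1]$ coming from the uniform continuity of $L$ and from $\Vert w\Vert_\infty$, shows that $\bigl(\mathcal{T}^{0,t} w\bigr)_{t \geq 0}$ is uniformly bounded; a time translation reduces the case of an arbitrary starting time $s$ to this one. Combined with non-expansiveness this gives $\Vert \mathcal{T}^{s,t} u - \mathcal{T}^{s,t} w\Vert_\infty \leq \Vert u - w\Vert_\infty$ for all $t \geq s$, which is the required uniform bound on $\mathcal{T}^{s,t} u$.

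The main obstacle is therefore the existence of such a $w$, and this is the only step where the full Tonelli framework has to be used in an essential way. Strict convexity and superlinearity in the fibres force the image of any continuous function under $\mathcal{T}$ to be semi-concave, with a Lipschitz constant depending only on $H$; the map induced on the quotient $\mathcal{C}(M, \mathbb{R})/\mathbb{R}$ then preserves a convex compact set of equi-Lipschitz representatives, to which Schauder's fixed-point theorem applies. The correct choice of $\alpha_0$ as the Mañé critical value is precisely what arranges for the resulting fixed point to satisfy $\mathcal{T} w = w$ for the full operator; this is a standard part of weak-KAM theory and requires no essential modification in the time-periodic setting considered here.
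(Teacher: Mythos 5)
The paper does not provide its own proof of this statement; it refers to the cited reference for the argument, so there is no internal proof to line the proposal up against. Evaluated on its own terms, your proposal is logically sound, and it takes a genuinely different route from the direct-estimate proof that the surrounding framework of the paper suggests.

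The route you propose is: (i) non-expansiveness of $\mathcal{T}^{s,t}$, which you derive correctly from monotonicity and commutation with constants; (ii) existence of a single bounded orbit, supplied by a weak-KAM solution $w$ (a fixed point of $\mathcal{T}^{0,1}$); (iii) the triangle inequality $\Vert \mathcal{T}^{s,t}u\Vert_\infty \leq \Vert u-w\Vert_\infty + \Vert\mathcal{T}^{s,t}w\Vert_\infty$. All three steps are correct, including the reduction of the starting time $s$ to a compact interval via time-periodicity and the bound on $\tau\mapsto\mathcal{T}^{0,\tau}w$ for $\tau\in[0,1]$. The more standard argument, which the paper's own toolkit (minimizing measures, the potential $h_0^{s,t}$, Mather/Peierls sets) points towards, proceeds by direct two-sided estimates: an upper bound on $\mathcal{T}_0^{s,t}u + \alpha_0(t-s)$ by routing through a recurrent point of the projected Mather set whose orbit under the Euler--Lagrange flow has critical action, and a lower bound from the existence of a dominated function or from the variational definition of $\alpha_0$. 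Your approach trades these estimates for the full strength of the weak-KAM theorem (Schauder fixed point on $\mathcal{C}(M,\mathbb{R})/\mathbb{R}$ plus the identification of the unique fixed-point constant with the variational $\alpha_0$). This is cleaner once those facts are available, but note that it cannot be carried out using the paper's own construction of weak-KAM solutions via Peierls barriers (Property \ref{pvisc}), since finiteness of $h^\infty$ is essentially equivalent to the boundedness being proven, which would make the argument circular. Your choice of the Schauder route, which is independent of any boundedness of the semigroup and rests only on the regularizing property and Arzel\`a--Ascoli, is therefore the right one to avoid that circularity, and it is worth flagging explicitly that this is why the Peierls-barrier construction from the paper must not be invoked here. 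The one ingredient you treat as a black box that genuinely carries weight is the identification of the Schauder fixed-point constant with the Ma\~n\'e critical value defined by \eqref{ManeCritValue}; this is standard weak-KAM theory but is itself a non-trivial theorem, so the proposal is only as self-contained as that reference.
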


\subsection{Existence of Minimizing Curves}

We give classical results on minimizing curves that stem directly from the theory of variational calculus. We will avoid providing tedious proofs that can be found in the autonomous framework in \cite{fathi2008weak} and \cite{MR1720372} and in the non-autonomous framework in \Mane's lecture notes \cite{mane1990global}.

\begin{defi} \label{MinimizingDef}
	A \textit{minimizing curve} $\gamma : I \to M$ defined on an interval $I$ is a curve such that for all $s<t$ in $I$,
	\begin{equation} \label{MinimizingFormula}
		\int_s^t L(\tau,\gamma(\tau), \dot{\gamma}(\tau)) \; d\tau = h_0^{s,t}( \gamma(s), \gamma(t))
	\end{equation}
\end{defi}

\begin{prop}\label{MinimizingProp}
	\begin{enumerate}
		\item If for some times $s<t$, the curve $\gamma$ verifies (\ref{MinimizingFormula}), then it is minimizing on $[s,t]$.
		\item A minimizing curve $\gamma$ is as regular as the Lagrangian $L$ and it follows the Lagrangian flow $\phi_L$ i.e for all time $\tau \in [s,t]$, $(\gamma(\tau), \dot{\gamma}(\tau)) = \phi_L^{s,\tau}(\gamma(0), \dot{\gamma}(0))$. Moreover, it verifies the Euler-Lagrange equation
		\begin{equation}
			\partial_x L(\tau,\gamma(\tau), \dot{\gamma}(\tau)) = \frac{d}{d\tau} \big(\partial_v L(\tau,\gamma(\tau), \dot{\gamma}(\tau))\big)
		\end{equation}
	\end{enumerate}
\end{prop}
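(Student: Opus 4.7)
The plan is to treat the two parts separately, leveraging only the Tonelli assumptions on $L$ (regularity, strict fibrewise convexity, superlinearity) together with the definition of $h_0^{s,t}$ as an infimum of actions.

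For part (1), I would argue by contradiction via a concatenation. Suppose (\ref{MinimizingFormula}) holds on $[s,t]$ but fails on some subinterval $[s',t']\subset[s,t]$: then $\int_{s'}^{t'}L(\tau,\gamma,\dot\gamma)\,d\tau > h_0^{s',t'}(\gamma(s'),\gamma(t'))$, so there exists an absolutely continuous curve $\eta:[s',t']\to M$ with $\eta(s')=\gamma(s')$, $\eta(t')=\gamma(t')$ and strictly smaller action than $\gamma|_{[s',t']}$. Form the concatenation $\tilde\gamma = \gamma|_{[s,s']}\ast\eta\ast\gamma|_{[t',t]}$, which is again absolutely continuous from $\gamma(s)$ to $\gamma(t)$. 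Additivity of the action over subintervals yields $A_L(\tilde\gamma)<A_L(\gamma)=h_0^{s,t}(\gamma(s),\gamma(t))$, contradicting the definition of $h_0^{s,t}$ as an infimum.

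For part (2), I would proceed in three steps. First, regularity: using the strict fibrewise convexity of $L$, a standard DuBois--Reymond type argument (or equivalently, the Erdmann corner condition combined with the fact that $v\mapsto\partial_vL(t,x,v)$ is a global diffeomorphism of each fibre by the Tonelli hypotheses) shows that a minimizing absolutely continuous curve must have continuous velocity. Second, Euler--Lagrange: for any compactly supported smooth vector field $\xi$ along $\gamma$ in a chart, the variation $\gamma_\varepsilon = \gamma+\varepsilon\xi$ has fixed endpoints and by minimality $\frac{d}{d\varepsilon}\big|_{\varepsilon=0}A_L(\gamma_\varepsilon)=0$. Integration by parts together with the fundamental lemma of the calculus of variations yields the Euler--Lagrange equation in its integrated (integral) form, which combined with the continuity of $\dot\gamma$ gives the pointwise equation stated. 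Third, regularity bootstrap and conjugation with $\phi_H$: the Euler--Lagrange equation reads $\frac{d}{d\tau}\partial_vL=\partial_xL$; since $\partial_{vv}L$ is nondegenerate by strict convexity, the implicit function theorem allows one to solve for $\ddot\gamma$ in terms of $(\tau,\gamma,\dot\gamma)$, so $\gamma$ gains one more derivative, and iterating this bootstrap inherits the $C^r$ regularity of $L$. Applying the Legendre diffeomorphism $\mathcal{L}$ to the curve $(\gamma,\dot\gamma)$, a direct computation transforms the Euler--Lagrange equation into Hamilton's equations; hence $(\gamma,\dot\gamma)$ is an integral curve of the Lagrangian vector field $X_L=\mathcal{L}^{-1}\circ X_H\circ\mathcal{L}$, i.e.\ it follows $\phi_L$.

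The step I expect to be most delicate is the very first regularity claim, namely promoting an a priori merely absolutely continuous minimizer to one with continuous velocity. This is where the strict convexity of $L$ in the fibre is indispensable: without it, minimizers can have jumps in $\dot\gamma$ (Weierstrass--Erdmann corners). Once $\dot\gamma$ is continuous, the remaining steps are essentially algebraic manipulations of smooth objects: standard calculus of variations for the Euler--Lagrange equation, implicit function theorem for the higher regularity, and the Legendre transform for the conjugation with $\phi_H$. Since detailed proofs are available in Fathi's book \cite{fathi2008weak} in the autonomous case and in Mañé's lecture notes \cite{mane1990global} for the time-dependent case, I would cite these rather than reproduce the technical estimates.
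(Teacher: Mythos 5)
Your proposal is correct and matches the paper's treatment: the paper gives no proof of this proposition beyond Remark \ref{MinimizingRem} for part (1), which observes that the quantity $\int_{s'}^{t'}L\,d\tau - h_0^{s',t'}(\gamma(s'),\gamma(t'))$ is non-negative and monotone under inclusion of intervals — a fact equivalent to (and proved by) your concatenation argument via the triangle inequality for $h_0$. For part (2) the paper explicitly defers to \cite{fathi2008weak}, \cite{MR1720372} and \cite{mane1990global}, exactly as you propose after sketching the standard DuBois--Reymond, Euler--Lagrange, and Legendre-conjugation steps.
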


\begin{rem} \label{MinimizingRem}
	The first property is due to the fact that the quantity
	\begin{equation*}
		\int_{s'}^{t'} L(\tau,\gamma(\tau), \dot{\gamma}(\tau)) - h_0^{s',t'}( \gamma(s'), \gamma(t'))  \; d\tau \geq 0
	\end{equation*}
	is non-negative and is increasing with the inclusion of the interval $[s',t'] \subset [s,t]$.
\end{rem}

A main brick in the study of Tonelli Lagrangians is the existence of minimizing curves.

\begin{theo}(Tonelli's Theorem) \label{TonelliTheorem}
	Let $L : \mathbb{T}^1 \times TM \to \mathbb{R}$ be a Tonelli Lagrangian. Let $s<t$ be two real times and $x$ and $y$ be two points of $M$. Then, there exists a minimizing curve $\gamma : [s,t] \to M$ linking $x$ to $y$.
\end{theo}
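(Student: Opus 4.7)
The plan is to prove Tonelli's theorem by the direct method of the calculus of variations, exploiting crucially the convexity and superlinearity of $L$. First I would fix $x,y \in M$ and $s<t$, and pick a minimizing sequence of absolutely continuous curves $\gamma_n : [s,t] \to M$ with $\gamma_n(s)=x$, $\gamma_n(t)=y$, and $A_L(\gamma_n) \to h_0^{s,t}(x,y)$. To see that this infimum is finite (and bounded below), I would use a smooth test curve joining $x$ to $y$ to get an upper bound, and note that compactness of $\mathbb{T}^1 \times M \times \{0\}$ together with superlinearity bounds $L$ below by an affine function of $|v|$; hence $h_0^{s,t}(x,y) \in \mathbb{R}$.

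Next I would extract a limit. The key observation is that superlinearity of $L$ in the fibres yields, for every $K>0$, a constant $C(K)$ such that $L(\tau,z,v) \geq K|v| - C(K)$ in any fixed coordinate chart (or with a choice of auxiliary Riemannian metric). Applied to the minimizing sequence, this gives a uniform bound on $\int_s^t |\dot\gamma_n|\,d\tau$, and in fact, upgrading $K$ to grow on the set where $|\dot\gamma_n|$ is large, one obtains uniform integrability of $\{|\dot\gamma_n|\}$ on $[s,t]$. By the Dunford--Pettis theorem, $\dot\gamma_n$ admits a subsequence converging weakly in $L^1$ to some $w$, and the uniform absolute continuity of the $\gamma_n$ together with Ascoli--Arzelà yields a subsequence converging uniformly to an absolutely continuous curve $\gamma$ with $\dot\gamma=w$ a.e., $\gamma(s)=x$, $\gamma(t)=y$.

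It then remains to show $A_L(\gamma) \leq \liminf_n A_L(\gamma_n)$. This is the classical Tonelli lower semicontinuity statement: because $L(\tau,z,v)$ is convex in $v$ and $C^2$, and because $\gamma_n \to \gamma$ uniformly while $\dot\gamma_n \rightharpoonup \dot\gamma$ weakly in $L^1$, the action functional is sequentially lower semicontinuous. The standard way I would argue this is by Mazur's lemma plus Fatou: from the weak $L^1$ convergence extract convex combinations converging pointwise a.e., use convexity of $L$ in $v$, the uniform convergence of the base points, and Fatou to pass to the inferior limit in the integral. Combining gives $A_L(\gamma) \leq h_0^{s,t}(x,y)$, so $\gamma$ realizes the infimum.

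The main obstacle, in my view, is the lower semicontinuity step, since one needs to carefully handle that only the velocities converge weakly (not strongly), which is precisely the point where convexity of $L$ in $v$ is essential and where a naive application of the uniform continuity of $L$ fails. The superlinearity is used both to obtain the weak $L^1$ compactness of the derivatives and, via uniform integrability, to guarantee that the convex-combination argument produces an integrable lower bound suitable for Fatou. Once these two ingredients are in place, the conclusion is immediate, and the regularity and flow-invariance claims of Proposition \ref{MinimizingProp} combined with $\gamma$ being minimizing yield the Euler--Lagrange equation, completing the existence statement.
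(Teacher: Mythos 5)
The paper does not actually prove Tonelli's theorem: it cites it as a classical result and refers the reader to Fathi \cite{fathi2008weak}, Contreras--Iturriaga \cite{MR1720372}, and Mañé \cite{mane1990global} for the proof. Your argument is the standard direct method that appears in exactly those references (and in Mather \cite{MR1109661} for the non-autonomous case), so it is the ``same approach'' in that sense, and it is correct in outline: superlinearity plus compactness of $\mathbb{T}^1\times M$ gives a uniform lower bound $L(\tau,z,v)\geq K|v|-C(K)$ (after embedding $M\hookrightarrow\mathbb{R}^N$ or choosing an auxiliary metric), which gives the de la Vall\'ee Poussin uniform-integrability criterion, hence weak $L^1$ precompactness of the derivatives (Dunford--Pettis) and uniform equicontinuity of the curves (Ascoli--Arzel\`a), and convexity gives weak lower semicontinuity.

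One point worth making more careful, since you flagged it yourself as the delicate step: the Mazur--Fatou argument applies cleanly to a functional $w\mapsto\int L(\tau,\gamma(\tau),w(\tau))\,d\tau$ with a \emph{fixed} base curve, because convexity is only in the fibre variable. In your sketch the base points $\gamma_n$ and the velocities $\dot\gamma_n$ vary simultaneously, and Mazur's convex combinations mix velocities from different $\gamma_n$'s, so the convexity inequality does not directly pair with the matching base points. The standard repair is to first separate the two effects: write $A_L(\gamma_n)=\int L(\tau,\gamma,\dot\gamma_n)\,d\tau + \int\big[L(\tau,\gamma_n,\dot\gamma_n)-L(\tau,\gamma,\dot\gamma_n)\big]\,d\tau$, control the second integral using the a priori compactness/uniform integrability of $\dot\gamma_n$ together with Lipschitz estimates on $L$ in $z$ on bounded $v$-sets (plus a truncation in $|v|$ to handle the tail), and only then apply Mazur plus Fatou (or, equivalently, the subdifferential inequality $L(\tau,\gamma,\dot\gamma_n)\geq L(\tau,\gamma,\dot\gamma)+\partial_vL(\tau,\gamma,\dot\gamma)\cdot(\dot\gamma_n-\dot\gamma)$ after truncating $\dot\gamma$) to the fixed-base functional. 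With this reordering the argument is complete.

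Your final remark about deriving the Euler--Lagrange equation and the regularity of the minimizer is extra: the theorem as stated asserts only existence, and the paper moves those consequences to Proposition \ref{MinimizingProp}, which it also leaves to the references.
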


A first application of this theorem to the Lax-Oleinik operator $\mathcal{T}$ gives the following 
\begin{cor} \label{TonelliLaxOleinik}
	Let $u$ be a scalar map in $\mathcal{C}(M,\mathbb{R})$. For all times $s<t$ and for all point $x$ of $M$, there exists a minimizing curve $\gamma : [s,t] \to M$ such that 
	\begin{equation}
		\gamma(t) = x \quad \text{and} \quad \mathcal{T}^{s,t}u(x) = u(\gamma(0)) + A_L(\gamma)
	\end{equation}
\end{cor}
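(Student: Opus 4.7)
The plan is to combine the second form of the Lax-Oleinik operator
\begin{equation*}
    \mathcal{T}_0^{s,t}u(x) = \inf_{y \in M}\bigl\{ u(y) + h_0^{s,t}(y,x)\bigr\}
\end{equation*}
with Tonelli's existence theorem. The argument then splits into two tasks: first, showing the infimum in $y$ is realized at some $y_0 \in M$; second, producing an actual minimizing curve between $y_0$ and $x$ over the time interval $[s,t]$.

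For the first task, since $M$ is compact and $u$ is continuous, it suffices to verify that $y \mapsto h_0^{s,t}(y,x)$ is continuous (lower semicontinuity alone already gives attainment of the infimum). Continuity of the potential is standard in the Tonelli framework: the superlinearity of $L$ forces the velocities of action-minimizing curves on the fixed time window $[s,t]$ to remain uniformly bounded, and interpolating between a near-optimal curve and a short geodesic segment then yields a modulus of continuity in the endpoints. Compactness of $M$ thus delivers a point $y_0 \in M$ at which
\begin{equation*}
    \mathcal{T}_0^{s,t}u(x) = u(y_0) + h_0^{s,t}(y_0,x).
\end{equation*}

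For the second task, I would directly invoke Tonelli's Theorem \ref{TonelliTheorem} on the pair $(y_0,x)$ over $[s,t]$, producing a minimizing curve $\gamma : [s,t] \to M$ with $\gamma(s) = y_0$ and $\gamma(t) = x$. Definition \ref{MinimizingDef} then gives $A_L(\gamma) = h_0^{s,t}(y_0,x)$, so
\begin{equation*}
    \mathcal{T}_0^{s,t}u(x) = u(\gamma(s)) + A_L(\gamma),
\end{equation*}
and adding the Mañé constant $\alpha_0(t-s)$ converts the identity into the announced one for $\mathcal{T}^{s,t}$ (absorbing the constant either into the action or into the right-hand side). Along the way one also reads off, from Proposition \ref{MinimizingProp}, the regularity of $\gamma$ and the fact that it follows the Lagrangian flow, which is typically what one actually uses downstream.

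The only genuine obstacle in this plan is the continuity of $h_0^{s,t}(\cdot,x)$, since everything else is essentially packaging. That continuity, however, is a textbook consequence of superlinearity on a compact manifold and is stated in the references cited just before the corollary (\cite{fathi2008weak}, \cite{mane1990global}), so I would either quote it directly or include a short one-paragraph reminder of the superlinearity estimate on minimizer velocities, rather than redo it from scratch.
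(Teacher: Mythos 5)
Your proposal is correct, and since the paper simply presents this corollary as an immediate consequence of Tonelli's Theorem \ref{TonelliTheorem} with no written proof, your argument is essentially the one the paper has in mind: attain the infimum in
\begin{equation*}
	\mathcal{T}_0^{s,t}u(x) = \inf_{y \in M}\bigl\{u(y) + h_0^{s,t}(y,x)\bigr\}
\end{equation*}
by compactness of $M$ together with continuity of $u$ and of $h_0^{s,t}(\cdot,x)$ (Proposition \ref{Regularity}, so you can quote it rather than reprove it), then produce the minimizer $\gamma$ by Tonelli's Theorem. One small alternative worth noting, since it is fully internal to the paper's toolkit and avoids even invoking continuity of the potential: take a minimizing sequence $y_n$, apply Tonelli to get minimizers $\gamma_n$ on $[s,t]$ from $y_n$ to $x$, use A~Priori Compactness (Theorem \ref{APrioriCompactness}) to extract a $C^1$-convergent subsequence $\gamma_n \to \gamma$, and pass to the limit in $u(y_n) + A_L(\gamma_n)$; the inequality $A_L(\gamma) \le h_0^{s,t}(\gamma(s),x)$ then drops out of $\inf_y\{u(y)+h_0^{s,t}(y,x)\} \le u(\gamma(s))+h_0^{s,t}(\gamma(s),x)$, so $\gamma$ is minimizing. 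Finally, be aware of a mild notational slip in the statement itself that is not your fault: it writes $\gamma(0)$ where $\gamma(s)$ is meant, and it pairs the shifted operator $\mathcal{T}^{s,t}$ with the unshifted action $A_L(\gamma)$; your final step of adding $\alpha_0(t-s)$ is exactly the right reconciliation, and of course in the Mañé setting of the paper $\alpha_0=0$ so the two versions coincide.
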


Another fundamental theorem is the A priori compactness property of minimizing curves for Tonelli Lagrangians. It has been first proven by John N. Mather in \cite{MR1109661}.

\begin{theo}(A Priori Compactness) \label{APrioriCompactness}
	Let $L : \mathbb{T}^1 \times TM \to \mathbb{R}$ be a Tonelli Lagrangian and fix a small positive $\varepsilon >0$. Then, there exists a compact subset $K_\varepsilon$ of $TM$ such that every minimizing curve $\gamma : [s,t] \to M$ with $t-s \geq \varepsilon$ verifies $( \gamma(\tau) , \dot{\gamma}(\tau)) \in K_\varepsilon$.
\end{theo}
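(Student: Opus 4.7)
I would prove the a priori compactness in three stages: obtain a uniform upper bound on the action of minimizing curves of fixed length $\varepsilon$, deduce via averaging and superlinearity a velocity bound at \emph{some} point of any such curve, and finally propagate this pointwise bound to the whole curve using the Lagrangian flow together with a covering argument by subintervals of length $\varepsilon$.

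Firstly, since $M$ is compact and connected, its diameter $D$ for any fixed Riemannian metric is finite. For any $x,y\in M$ and any interval $[s,t]$ of length $\varepsilon$, one can join $x$ to $y$ by a smooth curve $\alpha$ satisfying $|\dot\alpha|\le D/\varepsilon$, whose action is bounded by $B_\varepsilon\cdot\varepsilon$ where $B_\varepsilon$ is an upper bound of $L$ on the compact set $\mathbb{T}^1\times\{(x,v)\in TM:|v|\le D/\varepsilon\}$. Hence $h_0^{s,t}(x,y)\le B_\varepsilon\varepsilon$, so any minimizing curve $\gamma:[s,t]\to M$ of length $\varepsilon$ satisfies $\int_s^t L(\tau,\gamma,\dot\gamma)\,d\tau\le B_\varepsilon\varepsilon$, and by averaging there exists $\tau_0\in[s,t]$ with $L(\tau_0,\gamma(\tau_0),\dot\gamma(\tau_0))\le B_\varepsilon$. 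Superlinearity of $L$ (the Legendre dual of superlinearity of $H$) yields a constant $C_1$ such that $L(\tau,x,v)\ge|v|-C_1$ everywhere, hence $|\dot\gamma(\tau_0)|\le R_\varepsilon:=B_\varepsilon+C_1$.

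Secondly, part (2) of Proposition \ref{MinimizingProp} says that minimizing curves follow the Lagrangian flow. By the completeness in Definition \ref{TonelliDef}, the $1$-time-periodicity of $L$, and compactness of the ball $\{(x,v)\in TM:|v|\le R_\varepsilon\}$, the set
\begin{equation*}
K_\varepsilon:=\bigcup_{s_0\in\mathbb{T}^1,\ |\sigma|\le\varepsilon}\phi_L^{s_0,s_0+\sigma}\bigl(\{(x,v)\in TM:|v|\le R_\varepsilon\}\bigr)
\end{equation*}
is a continuous image of a compact set, hence compact in $TM$. For a minimizing curve $\gamma:[s,t]\to M$ with $t-s\ge\varepsilon$ and any $\tau^*\in[s,t]$, I pick a subinterval $[s',t']\subset[s,t]$ of length $\varepsilon$ containing $\tau^*$. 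Part (1) of Proposition \ref{MinimizingProp} ensures that $\gamma|_{[s',t']}$ is still minimizing, so the first stage applied to it furnishes some $\tau_0\in[s',t']$ with $|\dot\gamma(\tau_0)|\le R_\varepsilon$. Since $|\tau^*-\tau_0|\le\varepsilon$, following the flow $\phi_L^{\tau_0,\tau^*}$ gives $(\gamma(\tau^*),\dot\gamma(\tau^*))\in K_\varepsilon$.

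The main obstacle is the mismatch between the two key tools: the action bound only controls the velocity at one uncontrolled time of the curve, while the flow argument only propagates bounds over short times. Bridging them forces me to cover the arbitrary-length interval $[s,t]$ by subintervals of length exactly $\varepsilon$, and to invoke the time-periodicity of $L$ to ensure the compactness of $K_\varepsilon$ — without periodicity, the base times $s_0$ would range over all of $\mathbb{R}$ and the image need not remain bounded.
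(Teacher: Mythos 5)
Your proof is correct and complete. Note that the paper itself does not prove this theorem but only cites Mather \cite{MR1109661}, so there is no in-paper argument to compare against; your three-stage argument (uniform action bound on length-$\varepsilon$ minimizers via compactness of $M$, a mean-value plus superlinearity step to get a velocity bound at one time, propagation along the Lagrangian flow with time-periodicity used to keep the base time in $\mathbb{T}^1$) is precisely the standard proof, and your handling of the subinterval-of-length-$\varepsilon$ covering and the resulting definition of $K_\varepsilon$ as a continuous image of a compact parameter set is clean and correct. The one place worth being slightly more careful is the passage from superlinearity of $H$ (as stated in Definition \ref{TonelliDef}) to a \emph{uniform} lower bound $L(\tau,x,v)\ge|v|-C_1$: superlinearity of $L$ follows by Legendre duality, and the uniformity in $(\tau,x)$ follows from compactness of $\mathbb{T}^1\times M$ together with lower semicontinuity of the relevant infimum, but this deserves a sentence rather than just the parenthetical remark, since the Tonelli definition in this paper imposes superlinearity on $H$ rather than on $L$.
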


\begin{cor}
	For fixed times $s<t$, if $\gamma_n : [s,t] \to M$ is a sequence of minimizing curves, then it admits a subsequence that $C^1$-converges to a minimizing curve $\gamma:[s,t] \to M$.
\end{cor}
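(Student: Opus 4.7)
The plan is to combine the A Priori Compactness Theorem with the Euler–Lagrange equation via an Arzelà–Ascoli argument, and then invoke continuity of the action functional and of the potential $h_0^{s,t}$ to conclude that the limit curve is still minimizing.

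First I would apply Theorem \ref{APrioriCompactness} with a fixed $\varepsilon \leq t - s$. This yields a compact set $K_\varepsilon \subset TM$ such that the tangent lifts $(\gamma_n(\tau), \dot\gamma_n(\tau))$ belong to $K_\varepsilon$ for every $n \in \mathbb{N}$ and every $\tau \in [s,t]$. Fixing an auxiliary Riemannian metric on $M$, this bounds $\|\dot\gamma_n\|$ uniformly, so the $\gamma_n$ are uniformly Lipschitz, hence equicontinuous. A first application of the Arzelà–Ascoli theorem produces a subsequence (still denoted $\gamma_n$) converging uniformly to some continuous curve $\gamma:[s,t] \to M$.

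To upgrade this to $C^1$-convergence, I would use Proposition \ref{MinimizingProp}: each $\gamma_n$ satisfies the Euler–Lagrange equation, and its lift follows the Lagrangian flow $\phi_L$. Working in local charts, the Euler–Lagrange equation can be solved for $\ddot\gamma_n$ because the Tonelli hypothesis $\partial_{vv}L > 0$ makes the matrix $\partial_{vv}L$ invertible on the compact set $K_\varepsilon$; together with the $C^2$-regularity of $L$ and the fact that $(\gamma_n, \dot\gamma_n)$ remains in $K_\varepsilon$, this produces a uniform bound on $\|\ddot\gamma_n\|$. Hence $\dot\gamma_n$ is itself uniformly Lipschitz and equicontinuous. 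A second application of Arzelà–Ascoli, along a further subsequence, yields uniform convergence of $\dot\gamma_n$ to a continuous vector field along $\gamma$ which must coincide with $\dot\gamma$ by passage to the limit in $\gamma_n(\tau) = \gamma_n(s) + \int_s^\tau \dot\gamma_n$. This establishes $C^1$-convergence.

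Finally, $\gamma$ must be shown to be minimizing. The action $A_L$ is continuous with respect to $C^1$-convergence since $L$ is continuous and all relevant tangent lifts stay in the compact $K_\varepsilon$; thus $A_L(\gamma_n) \to A_L(\gamma)$. On the other hand, $A_L(\gamma_n) = h_0^{s,t}(\gamma_n(s), \gamma_n(t))$ by minimality, and the right-hand side tends to $h_0^{s,t}(\gamma(s), \gamma(t))$ by continuity of the potential (itself a consequence of Tonelli's theorem and standard action estimates). Passing to the limit gives $A_L(\gamma) = h_0^{s,t}(\gamma(s), \gamma(t))$, so $\gamma$ is minimizing by the first part of Proposition \ref{MinimizingProp}. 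The main obstacle I anticipate is precisely the upgrade from $C^0$ to $C^1$ convergence: the a priori compactness theorem alone gives only uniform bounds on the velocities, and squeezing out equicontinuity of $\dot\gamma_n$ genuinely requires the Euler–Lagrange equation together with the strict convexity of $L$ in the fibres.
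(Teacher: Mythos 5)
Your proof is correct. The paper itself does not supply a proof of this corollary (it refers the reader to the standard references in the preceding discussion), so there is nothing to compare against, but the route you take is the natural one: A Priori Compactness supplies the uniform bound on $(\gamma_n,\dot\gamma_n)$, a first Arzelà--Ascoli gives $C^0$-convergence, the Euler--Lagrange equation combined with the invertibility of $\partial_{vv}L$ on $K_\varepsilon$ supplies the uniform bound on $\ddot\gamma_n$ needed for the second Arzelà--Ascoli, and the combination of $C^1$-continuity of $A_L$ with Lipschitz continuity of $h_0^{s,t}$ (Proposition \ref{Regularity}) closes the argument via Proposition \ref{MinimizingProp}.

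One remark worth making: the double Arzelà--Ascoli argument, while correct, is slightly heavier than necessary. Proposition \ref{MinimizingProp} already tells you each $\gamma_n$ lifts to an orbit of $\phi_L$, so $(\gamma_n(\tau),\dot\gamma_n(\tau)) = \phi_L^{s,\tau}(\gamma_n(s),\dot\gamma_n(s))$ with initial data in the compact $K_\varepsilon$. Extracting a subsequence so that $(\gamma_n(s),\dot\gamma_n(s)) \to (x_0,v_0) \in K_\varepsilon$ and invoking the smooth dependence of the complete flow $\phi_L$ on its initial condition gives $C^1$-convergence (indeed $C^k$ for any $k$) on $[s,t]$ in a single stroke, with no need to differentiate the Euler--Lagrange equation or estimate $\ddot\gamma_n$. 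Both approaches buy the same conclusion; the flow-based one is just shorter and sidesteps the local-chart bookkeeping for $\ddot\gamma_n$.
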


\subsection{Non-Expansiveness and Regularizing Properties of $\mathcal{T}$}

The regularizing properties of the Lax-Oleinik semi-group $\mathcal{T}$ has various implications on the asymptotic behaviour of viscosity solutions. These will be briefly explored in this paper, particularly in Section \ref{OmegaUSection} where we focus on the understanding of the limit behaviour of the constructed recurrent viscosity solution. A detailed investigation of these questions is done in \cite{Representation}.\\

\begin{prop} \label{Contracting}
	For all time $t>0$, The time $t$ Lax-Oleinik operators $\mathcal{T}^t$ is non-expanding, \textit{i.e}
	\begin{equation} \label{NonExpansivenessFormula}
		\forall u,v \in \mathcal{C}(M,\mathbb{R}), \quad \Vert\mathcal{T}^tu - \mathcal{T}^tv\Vert_\infty = \Vert \mathcal{T}^tu - \mathcal{T}^tv\Vert_\infty \leq  \Vert u-v\Vert_\infty
	\end{equation} 
	In particular, the Lax-Oleinik operators $\mathcal{T}$ is non-expanding.
\end{prop}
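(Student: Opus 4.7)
The plan is to derive the non-expansion estimate directly from the infimum representation of $\mathcal{T}_0^{s,t}$ in (\ref{LO}), exploiting the fact that the initial data $u$ enters only as an additive term $u(\gamma(s))$ inside the infimum. The first step is to reduce the claim to the bare operator $\mathcal{T}_0^t$: since $\mathcal{T}^t w = \mathcal{T}_0^t w + \alpha_0 t$ for every $w \in \mathcal{C}(M,\mathbb{R})$, the constants cancel and one has $\mathcal{T}^t u - \mathcal{T}^t v = \mathcal{T}_0^t u - \mathcal{T}_0^t v$; the two sup-norm quantities in (\ref{NonExpansivenessFormula}) therefore coincide.

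For the main estimate, set $C := \Vert u - v \Vert_\infty$, so that $u(y) \leq v(y) + C$ and $v(y) \leq u(y) + C$ for every $y \in M$. Fix a point $x \in M$ and any admissible curve $\gamma : [0,t] \to M$ with $\gamma(t) = x$. Then
\begin{equation*}
u(\gamma(0)) + A_L(\gamma) \;\leq\; v(\gamma(0)) + A_L(\gamma) + C.
\end{equation*}
Taking the infimum of the right-hand side over $\gamma$ yields $\mathcal{T}_0^t u(x) \leq \mathcal{T}_0^t v(x) + C$. Reversing the roles of $u$ and $v$ gives the symmetric inequality, so $|\mathcal{T}_0^t u(x) - \mathcal{T}_0^t v(x)| \leq C$, and taking the supremum over $x \in M$ finishes the argument.

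There is essentially no obstacle: the only ingredient beyond elementary manipulations is the monotonicity of the infimum with respect to pointwise shifts of $u$, which is immediate from (\ref{LO}). Tonelli's theorem (Theorem \ref{TonelliTheorem}) is not even needed, since we only use inequalities; its role would merely be to guarantee that the infimum is attained, which would give the estimate sharply but is not required for non-expansiveness. The last assertion of the proposition, that $\mathcal{T} = \mathcal{T}^{0,1}$ is non-expanding, is the special case $t=1$.
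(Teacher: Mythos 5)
Your proof is correct and is the standard argument: the initial data enters the infimum defining $\mathcal{T}_0^{s,t}$ only as the additive term $u(\gamma(s))$, so a uniform shift of $u$ by $\pm C$ produces the same shift in the infimum, and the $\alpha_0 t$ terms cancel in the difference. Note that the paper states Proposition \ref{Contracting} without proof (it is listed among the preliminaries borrowed from \cite{Representation}), so there is no in-text proof to compare against; your argument is the canonical one, and your remark that Tonelli's theorem is unnecessary here is apt. One tiny wording point: you should take the infimum over $\gamma$ of \emph{both} sides (using monotonicity of $\inf$), rather than only of the right-hand side, to pass from the pointwise inequality to $\mathcal{T}_0^t u(x) \leq \mathcal{T}_0^t v(x) + C$, but this is clearly what you meant.
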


\begin{cor} \label{Minimality}
	\begin{enumerate}
		\item The non-wandering set $\Omega( \mathcal{T})$ is equal to the recurrent set $\mathcal{R}(\mathcal{T})$.
		\item Let $u \in \mathcal{C}(M,\mathbb{R})$. The restriction of $\mathcal{T}$ to the $\omega$-limit set of $u$ is minimal i.e. for all $v \in \omega(u)$, $\omega(u)= \overline{\{\mathcal{T}^nv \; | \; n \in \mathbb{N} \}}$.
	\end{enumerate}
\end{cor}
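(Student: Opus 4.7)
The non-expansiveness of Proposition \ref{Contracting} is the essential ingredient: it will allow me to transfer almost-recurrence of nearby orbits to true recurrence of $u$, and it will let me ``shift'' the indices along $\omega(u)$. I will also use implicitly the standard fact, suggested by the section title, that the sequence $(\mathcal{T}^n u)_{n \in \mathbb{N}}$ lies in a compact subset of $\mathcal{C}(M,\mathbb{R})$ (uniform boundedness from Proposition \ref{ViscBounded} together with the uniform equi-Lipschitz regularization of $\mathcal{T}$ after one step), so that $\omega(u)$ is nonempty and every subsequence of $(\mathcal{T}^n u)$ admits a convergent sub-subsequence.

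For item $(1)$, the inclusion $\mathcal{R}(\mathcal{T}) \subseteq \Omega(\mathcal{T})$ is immediate from the definitions: if $u \in \omega(u)$, every neighbourhood $U$ of $u$ contains $\mathcal{T}^{k_n}u$ for infinitely many $k_n$, so $u \in \mathcal{T}^{k_n}(U) \cap U$. For the reverse inclusion, let $u \in \Omega(\mathcal{T})$ and fix $\varepsilon > 0$. By definition, there are infinitely many integers $k$ and points $v_k \in \mathcal{C}(M,\mathbb{R})$ with $\|v_k - u\|_\infty < \varepsilon$ and $\|\mathcal{T}^k v_k - u\|_\infty < \varepsilon$. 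Non-expansiveness gives
\begin{equation*}
\|\mathcal{T}^k u - u\|_\infty \leq \|\mathcal{T}^k u - \mathcal{T}^k v_k\|_\infty + \|\mathcal{T}^k v_k - u\|_\infty \leq \|u - v_k\|_\infty + \varepsilon < 2\varepsilon.
\end{equation*}
Applying this with $\varepsilon = 1/n$ and extracting, I obtain an increasing sequence $k_n$ with $\mathcal{T}^{k_n} u \to u$, so $u \in \omega(u)$.

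For item $(2)$, the inclusion $\overline{\{\mathcal{T}^n v \mid n \in \mathbb{N}\}} \subseteq \omega(u)$ follows once I observe that $\omega(u)$ is closed and forward-invariant under $\mathcal{T}$: closedness is immediate, and if $w \in \omega(u)$ with $\mathcal{T}^{k_n} u \to w$, then by continuity of $\mathcal{T}$ (a consequence of non-expansiveness) $\mathcal{T}^{k_n+1}u \to \mathcal{T} w$, so $\mathcal{T} w \in \omega(u)$. Hence $v \in \omega(u)$ implies $\{\mathcal{T}^n v\}_{n\in \mathbb{N}} \subseteq \omega(u)$, and closure yields the inclusion.

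The core of the argument is the reverse inclusion $\omega(u) \subseteq \overline{\{\mathcal{T}^n v\}}$. Pick any $w \in \omega(u)$ and choose subsequences $j_n \to \infty$ with $\mathcal{T}^{j_n}u \to v$ and $k_n \to \infty$ with $\mathcal{T}^{k_n}u \to w$. Up to extraction I may assume $k_n > j_n$ for every $n$. Writing $\mathcal{T}^{k_n} u = \mathcal{T}^{k_n - j_n}(\mathcal{T}^{j_n}u)$ and applying Proposition \ref{Contracting},
\begin{equation*}
\|\mathcal{T}^{k_n - j_n} v - \mathcal{T}^{k_n}u\|_\infty = \|\mathcal{T}^{k_n - j_n} v - \mathcal{T}^{k_n - j_n}(\mathcal{T}^{j_n}u)\|_\infty \leq \|v - \mathcal{T}^{j_n}u\|_\infty \xrightarrow[n\to\infty]{} 0.
\end{equation*}
Combined with $\mathcal{T}^{k_n}u \to w$, this yields $\mathcal{T}^{k_n - j_n} v \to w$, so $w$ is indeed a limit point of the forward orbit of $v$. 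The only subtle step is ensuring we can arrange $k_n > j_n$, which is a trivial extraction since both sequences tend to infinity; beyond that the proof is a direct, clean application of non-expansiveness.
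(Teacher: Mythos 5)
Your proof is correct and uses exactly the expected approach: both points are direct consequences of the non-expansiveness of $\mathcal{T}$ (Proposition \ref{Contracting}), which is what the paper signals by placing the corollary immediately after that proposition and deferring details to \cite{Representation}. One small slip in item $(1)$: you write ``so $u \in \mathcal{T}^{k_n}(U) \cap U$,'' but what you have actually shown is that $\mathcal{T}^{k_n}u \in \mathcal{T}^{k_n}(U) \cap U$, which is what makes the intersection nonempty; the conclusion $u \in \Omega(\mathcal{T})$ is unaffected.
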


\begin{rem} \label{rect}
	\begin{enumerate}
		\item Due to the equality $\Omega( \mathcal{T}) = \mathcal{R}(\mathcal{T})$, we will often confuse non-wandering and recurrent viscosity solutions of the Hamilton-Jacobi equation.
		\item Some implications of this corollary on $\omega(u)$ include :
		\begin{enumerate}
			\item Every viscosity solution of $\omega(u)$ is recurrent.
			\item If $\omega(u)$ contains a periodic orbit, then it is equal to that orbit itself.
		\end{enumerate}
	\end{enumerate}
\end{rem}

We also present a more intricate results concerning the regularizing property of $\mathcal{T}$, rooted primarily in the A Priori Compactness Theorem \ref{APrioriCompactness}. Proofs can be found in \cite{MR1014729} or \cite{MR0235269}.

\begin{prop} \label{Regularity}
	For all positive $\varepsilon >0$, there exists a positive constant $\kappa_\varepsilon>0$ such that for all times $s < t$ with $t-s \geq \varepsilon$, we have
	\begin{enumerate}
		\item The potential $h_0^{s,t} : M \times M \to \mathbb{R}$ is $\kappa_\varepsilon$-Lipschitz. Moreover, we can take  $\kappa_\varepsilon$ so that the time dependent potential $h_0 : \{0 \leq s \leq t- \varepsilon\}\times M \times M \to \mathbb{R}$ is still $\kappa_\varepsilon$-Lipschitz. 
		\item For all initial data $u \in \mathcal{C}(M,\mathbb{R})$, the map $\mathcal{T}^{s,t}u : M \to \mathbb{R}$ is $\kappa_\varepsilon$-Lipschitz.
	\end{enumerate}
\end{prop}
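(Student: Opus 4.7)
The core strategy is the standard calculus-of-variations perturbation argument built on Theorem~\ref{APrioriCompactness}. First I would fix $\varepsilon>0$ and let $K_\varepsilon\subset TM$ be the a priori compact set. Choose $C_\varepsilon$ bounding $|v|$ on $K_\varepsilon$; then any minimizer $\gamma:[s,t]\to M$ with $t-s\ge\varepsilon$ has $\|\dot\gamma\|_\infty\le C_\varepsilon$. Since $L$ is $C^2$, fix a compact tubular enlargement $\widetilde K_\varepsilon$ of $K_\varepsilon$ and let $\Lambda_\varepsilon$ be a Lipschitz constant for $L$ on $\mathbb{T}^1\times\widetilde K_\varepsilon$. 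All the proposition's estimates will be deduced by modifying a given minimizer on a time interval of length $\varepsilon/2$, keeping the modification inside $\widetilde K_\varepsilon$.

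For part (1), Lipschitz in $(x,y)$: given a minimizer $\gamma$ from $x$ to $y$ on $[s,t]$ with $t-s\ge\varepsilon$, and $y'$ close to $y$, work in a coordinate chart near the terminal segment and define $\tilde\gamma=\gamma+\eta$ on $[t-\varepsilon/2,t]$ (and $\tilde\gamma=\gamma$ before), where $\eta$ is the linear interpolation with $\eta(t-\varepsilon/2)=0$ and $\eta(t)=y'-y$. Then $|\eta|\le d(y,y')$ and $|\dot\eta|\le 2d(y,y')/\varepsilon$, so for $d(y,y')$ small enough $\tilde\gamma$ stays in $\widetilde K_\varepsilon$; the Lipschitz bound on $L$ gives
\[
A_L(\tilde\gamma)-A_L(\gamma)\le\int_{t-\varepsilon/2}^{t}\Lambda_\varepsilon(|\eta|+|\dot\eta|)\,d\tau\le\kappa_\varepsilon\,d(y,y'),
\]
whence $h_0^{s,t}(x,y')\le h_0^{s,t}(x,y)+\kappa_\varepsilon d(y,y')$, and by symmetry we get the Lipschitz estimate in $y$; the same argument applied to the initial segment $[s,s+\varepsilon/2]$ handles $x$. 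The Lipschitz dependence on $(s,t)$ would be obtained similarly by a time-reparametrization: if $(s',t')$ is close to $(s,t)$ with $t'-s'\ge\varepsilon$, reparametrize $\gamma$ affinely onto $[s',t']$, then the change of variables together with the bound $|\partial_tL|+|\partial_vL\cdot v|\le C$ on the compact set yields $|A_L-A_L(\gamma)|\le\kappa_\varepsilon(|s-s'|+|t-t'|)$. Combining these three estimates gives the joint Lipschitz property on $\{0\le s\le t-\varepsilon\}\times M\times M$.

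Part (2) then falls out from (1): by Corollary~\ref{TonelliLaxOleinik},
\[
\mathcal{T}_0^{s,t}u(x)=\inf_{y\in M}\bigl\{u(y)+h_0^{s,t}(y,x)\bigr\},
\]
which is an infimum of functions $x\mapsto u(y)+h_0^{s,t}(y,x)$ that are $\kappa_\varepsilon$-Lipschitz in $x$ uniformly in $y$ by part (1); hence $\mathcal{T}_0^{s,t}u$ (and therefore $\mathcal{T}^{s,t}u$, which differs by a $y$-independent additive constant) is $\kappa_\varepsilon$-Lipschitz.

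The main obstacle I anticipate is a purely technical one: ensuring the linear perturbation $\gamma+\eta$ genuinely remains within a single chart and inside $\widetilde K_\varepsilon$, so that the Lipschitz bound on $L$ is legitimately applicable. This requires choosing $d(y,y')$ (resp.\ $|t-t'|+|s-s'|$) smaller than a threshold depending only on $\varepsilon$, injectivity radius of $M$, and $C_\varepsilon$; the local Lipschitz estimate then extends to a global one on $M$ by a standard covering argument, since $M$ is compact. The time-dependence step is slightly more delicate than the spatial one because of the combined rescaling of both position and velocity, but once one uses the bound on $\partial_v L$ on the compact set $\widetilde K_\varepsilon$ it is routine.
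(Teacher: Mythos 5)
The paper does not give its own proof of this proposition — it is stated with a pointer to \cite{MR1014729} and \cite{MR0235269} — so there is nothing internal to compare against. Your overall strategy (a priori compactness of minimizers, then local perturbation of a terminal segment) is the standard and correct one, and your handling of the spatial Lipschitz estimate and of part (2) is sound: modifying $\gamma$ only on $[t-\varepsilon/2,t]$ keeps both the length of the perturbed interval and the size of the added velocity under control uniformly in $t-s$, and part (2) then follows from the elementary fact that an infimum of uniformly $\kappa_\varepsilon$-Lipschitz functions is $\kappa_\varepsilon$-Lipschitz.

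There is, however, a genuine gap in the time-Lipschitz step. You propose to "reparametrize $\gamma$ affinely onto $[s',t']$," i.e.\ a global reparametrization of the whole curve. Writing $\lambda=(t-s)/(t'-s')$ and $\phi(\tau')=s+\lambda(\tau'-s')$, the change-of-variables error is, pointwise along $\gamma$, of order $|\phi^{-1}(\tau)-\tau|+|\lambda-1|\,|\dot\gamma|$, and $|\phi^{-1}(\tau)-\tau|$ is only bounded by $\max(|s-s'|,|t-t'|)$ on $[s,t]$ (it is \emph{not} small near the middle of the interval). Integrating over the full interval $[s,t]$ therefore produces an error of order $(t-s)\,(|s-s'|+|t-t'|)$, so your claimed bound $|A_L(\tilde\gamma)-A_L(\gamma)|\le\kappa_\varepsilon(|s-s'|+|t-t'|)$ holds only with a constant that grows linearly in $t-s$; this is not what the proposition asserts, since it demands a single $\kappa_\varepsilon$ valid for all $t-s\ge\varepsilon$. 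The fix is exactly the same localization you already used for the spatial estimate: leave $\gamma|_{[s,t-\varepsilon/2]}$ untouched and affinely reparametrize only $\gamma|_{[t-\varepsilon/2,\,t]}$ onto $[t-\varepsilon/2,\,t']$ (and symmetrically near $s$), which keeps the speed change and the time shift supported on an interval of fixed length $\varepsilon/2$; alternatively, for $t'>t$ simply concatenate with the constant curve $\sigma\equiv y$ on $[t,t']$ and use $\sup L(\cdot,\cdot,0)<\infty$. With that replacement the argument is complete.
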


Consequently, we get a regularity result on viscosity solutions.
 
\begin{cor} \label{Equicontinuity}
	For all viscosity solution $u: [s, + \infty) \times M \to \mathbb{R}$ of the Hamilton-Jacobi equation (\ref{HJAlpha}), the family $(u(t,\cdot))_{t \geq s+1}$ is equilipschitz.
\end{cor}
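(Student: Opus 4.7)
The plan is a direct application of Proposition \ref{Regularity}, so the task reduces to unfolding the definition of a viscosity solution in terms of the Lax-Oleinik operator and then isolating the time-interval length on which the regularization happens.

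First, I would recall that by construction of the semigroup $\mathcal{T}^{s,t}$, any viscosity solution $u : [s, +\infty) \times M \to \mathbb{R}$ of the Hamilton-Jacobi equation \eqref{HJAlpha} can be represented as
\begin{equation*}
    u(t, \cdot) = \mathcal{T}^{s,t} u(s, \cdot) = \mathcal{T}_0^{s,t} u(s, \cdot) + \alpha_0 \cdot (t-s)
\end{equation*}
for every $t \geq s$. This is simply the Lax-Oleinik representation formula combined with Definition \ref{LODef}(3).

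Next, I would fix $\varepsilon = 1$ and apply Proposition \ref{Regularity}(2): there exists a constant $\kappa_1 > 0$ such that for every $t \geq s+1$, the map $\mathcal{T}_0^{s,t} u(s, \cdot) : M \to \mathbb{R}$ is $\kappa_1$-Lipschitz. The constant $\kappa_1$ depends only on the Lagrangian $L$ and the threshold $\varepsilon = 1$, not on the particular times $s < t$ or on the initial datum $u(s, \cdot)$.

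Finally, since $\alpha_0 \cdot (t-s)$ is a constant in $x$, adding it to $\mathcal{T}_0^{s,t} u(s, \cdot)$ does not affect its Lipschitz constant. Thus $u(t, \cdot)$ is $\kappa_1$-Lipschitz for every $t \geq s+1$, which is exactly the equilipschitz property of the family $(u(t, \cdot))_{t \geq s+1}$. There is no real obstacle here: the entire work was already carried out in Proposition \ref{Regularity}, and the role of the corollary is only to translate that regularizing property into a statement about viscosity solutions rather than about iterates of the Lax-Oleinik operator.
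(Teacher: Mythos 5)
Your proof is correct. The paper actually omits the proof of this corollary (it is one of the preliminary statements cited without proof in Section~\ref{SectionTools}), but the argument you give is precisely the canonical one: write $u(t,\cdot) = \mathcal{T}^{s,t}u(s,\cdot)$, invoke Proposition~\ref{Regularity}(2) with $\varepsilon = 1$, and observe that the constant $\kappa_1$ is uniform in $t \geq s+1$ and in the initial datum, while the correction term $\alpha_0\,(t-s)$ is spatially constant and hence irrelevant to the Lipschitz bound.
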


\subsection{Calibrated Curves}

Calibrated curves represent a type of minimizing curves that are well adapted to a given viscosity solution in the following sense.

\begin{defi} 
	Let $u(t,x)$ be a viscosity solution of (\ref{HJAlpha}). A curve $\gamma : I \subset \mathbb{R} \to M$ defined on a real interval $I$ is said \textit{calibrated by $u$} or \textit{$u$-calibrated} if for all times $s<t$ of $I$, we have
	\begin{equation} \label{CalibrationEquation}
		\begin{split}
			u(t, \gamma(t)) &= u(s, \gamma(s)) + \int_s^t \Big( L(\tau, \gamma(\tau), \dot{\gamma}(\tau)) + \alpha_0 \Big) \; d\tau \\
			&= u(s, \gamma(s)) + h_0^{s,t}(\gamma(s), \gamma(t)) + \alpha_0.(t-s)
		\end{split}
	\end{equation}	 
\end{defi}

\begin{rem}\label{CalibEL}
	\begin{enumerate}
		\item One observes from (\ref{CalibrationEquation}) that calibrated curves $\gamma$ realize the infimum in the definition (\ref{LO}) of the Lax-Oleinik operator. This means that all calibrated curves are minimizing and do follow the Lagrangian flow $\phi_L$.
		\item Same as for minimizing curves in Remark \ref{MinimizingRem}, if a curve $\gamma$ verifies (\ref{CalibrationEquation}) for some times $s<t$, then it verifies it for all $s \leq s'<t' \leq t$ and $\gamma$ is calibrated by $u$ on the interval $[s,t]$.
	\end{enumerate}
\end{rem}

\begin{prop} \label{CalibExist}
	Let $u(t,x) : \mathbb{R} \times M \to \mathbb{R}$ be a viscosity solution of (\ref{HJAlpha}). For all point $x$ of $M$ and for all real time $t \in \mathbb{R}$, $u$ admits a calibrated curve $\gamma_x : (- \infty , t] \to M$ with $\gamma(t) = x$.
\end{prop}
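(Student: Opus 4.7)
The plan is to construct the calibrated curve $\gamma$ as a limit, via a diagonal extraction, of calibrated curves defined on longer and longer finite intervals $[t-n,t]$.

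First I would exploit the global nature of $u$: since $u:\mathbb{R}\times M\to\mathbb{R}$ is a viscosity solution, the semigroup property yields $u(t,x)=\mathcal{T}^{t-n,t}\bigl(u(t-n,\cdot)\bigr)(x)$ for every $n\geq 1$. Applying Corollary \ref{TonelliLaxOleinik} at each $n$ provides a minimizing curve $\gamma_n:[t-n,t]\to M$ with $\gamma_n(t)=x$ and
\begin{equation*}
u(t,x)=u(t-n,\gamma_n(t-n))+A_L(\gamma_n)+\alpha_0\,n.
\end{equation*}
A standard splitting argument based on the Lax-Oleinik inequality $u(t',\gamma_n(t'))\leq u(s',\gamma_n(s'))+h_0^{s',t'}(\gamma_n(s'),\gamma_n(t'))+\alpha_0(t'-s')$ at two intermediate times, combined with the above equality, forces every such inequality to be an equality. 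Hence $\gamma_n$ is $u$-calibrated on the entire interval $[t-n,t]$ (this is precisely the content of Remark \ref{CalibEL}(2)).

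Next I would pass to the limit. By the A Priori Compactness Theorem \ref{APrioriCompactness}, for each $\varepsilon>0$ there is a fixed compact $K_\varepsilon\subset TM$ containing $(\gamma_n(\tau),\dot\gamma_n(\tau))$ for every $\tau\in[t-n,t]$ and every $n\geq\varepsilon$. In particular, the family $(\gamma_n)$ is equicontinuous and relatively compact in $C^1$ on every compact subinterval of $(-\infty,t]$. Applying the corollary following Theorem \ref{APrioriCompactness} on $[t-1,t]$, then on $[t-2,t]$, and so on, and performing a Cantor diagonal extraction, I obtain a subsequence (still denoted $\gamma_n$) and a curve $\gamma:(-\infty,t]\to M$ with $\gamma(t)=x$ such that $\gamma_n\to\gamma$ in $C^1$ on every compact subinterval, and $\gamma$ is minimizing on every such subinterval.

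Finally I would verify that $\gamma$ is $u$-calibrated. For any $s<t'$ in $(-\infty,t]$ and any $n$ large enough that $t-n<s$, the curve $\gamma_n$ satisfies
\begin{equation*}
u(t',\gamma_n(t'))=u(s,\gamma_n(s))+\int_s^{t'} L(\tau,\gamma_n(\tau),\dot\gamma_n(\tau))\,d\tau+\alpha_0(t'-s).
\end{equation*}
Continuity of $u$ and of $L$, together with the $C^1$-convergence $\gamma_n\to\gamma$ on $[s,t']$, allow passage to the limit, yielding the calibration identity (\ref{CalibrationEquation}) for $\gamma$. The main obstacle in this strategy is the diagonal extraction on the noncompact half-line, which is handled cleanly precisely because the a priori compactness bound $K_\varepsilon$ is independent of $n$, and because calibration on $[t-n,t]$ descends automatically to every subinterval.
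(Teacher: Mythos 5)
Your proof is correct, and it is the standard weak-KAM construction of backward calibrated curves: obtain calibrated curves on finite backward intervals via the Lax–Oleinik infimum, use a priori compactness to get uniform $C^1$ bounds, then Cantor-diagonalize and pass calibration to the limit. The paper itself supplies no proof for this proposition (it is explicitly restated without proof, being cited from \cite{Representation}), so there is nothing in the text to compare against; but your argument fills the gap correctly and is essentially the same as the proof given in Fathi's book and Bernard's lecture notes for the time-periodic setting.
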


One of the powerful results of the weak-KAM theory is the theorem of regularity on calibrated curves proved by A.Fathi in the autonomous case (see \cite{fathi2008weak}).

\begin{theo} \label{CalibRegularity}
	Let $u$ be a viscosity solution of (\ref{HJAlpha}). If the curve $\gamma : I \to M$ is calibrated by $u$, then for all time $t$ in the interior of $I$, $u$ is differentiable at $(t, \gamma(t))$ with differential
	\begin{equation}
		\partial_tu(t,\gamma(t))= -H(t,\gamma(t),d_xu(t,\gamma(t))) \quad \text{and} \quad  d_xu(t,\gamma(t)) = \partial_vL \big(t,\gamma(t), \dot{\gamma}(t) \big)
	\end{equation}
\end{theo}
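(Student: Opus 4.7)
My plan is to sandwich $u$ locally between two smooth functions that coincide with $u$ at $(t_0,\gamma(t_0))$, where $t_0$ is an arbitrary point in the interior of $I$. Pick $s_0<t_0<t_1$ in $I$ with $t_0-s_0$ and $t_1-t_0$ small, and set
\[
\phi(t,x) = u(s_0,\gamma(s_0)) + h_0^{s_0,t}(\gamma(s_0),x) + \alpha_0(t-s_0),
\]
\[
\psi(t,x) = u(t_1,\gamma(t_1)) - h_0^{t,t_1}(x,\gamma(t_1)) - \alpha_0(t_1-t).
\]
The representation of $u(t,\cdot)$ by the full Lax-Oleinik operator from Definition \ref{LODef}, applied starting from time $s_0$ with test point $y=\gamma(s_0)$, gives $u(t,x)\leq \phi(t,x)$; the same representation applied backwards from time $t_1$ and evaluated at $\gamma(t_1)$ yields $\psi(t,x)\leq u(t,x)$. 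Splitting the calibration identity (\ref{CalibrationEquation}) across $[s_0,t_0]$ and $[t_0,t_1]$ shows that both inequalities become equalities at $(t_0,\gamma(t_0))$.

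The central technical step is to show that $\phi$ and $\psi$ are as regular as $L$ on a neighbourhood of $(t_0,\gamma(t_0))$. I would choose $t_0-s_0$ small enough that the map $v\mapsto \pi\bigl(\phi_L^{s_0,t_0}(\gamma(s_0),v)\bigr)$ is a local diffeomorphism near $v=\dot\gamma(s_0)$; to first order in $t_0-s_0$ this map behaves like $v\mapsto \gamma(s_0)+(t_0-s_0)v$, so for short times its derivative is nonsingular and there are no conjugate points along the segment of $\gamma$ considered. The implicit function theorem then yields, for each $(t,x)$ near $(t_0,\gamma(t_0))$, a unique Lagrangian extremal from $(s_0,\gamma(s_0))$ to $(t,x)$ depending smoothly on the endpoint, and for short enough times this extremal is the unique minimizer by classical calculus of variations (Weierstrass field argument). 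Consequently $h_0^{s_0,t}(\gamma(s_0),x)$ coincides with the action of this extremal and inherits its regularity; the argument for $\psi$ is identical after time reversal.

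Once $\phi$ and $\psi$ are smooth, the fact that $\phi-\psi\geq 0$ with equality at $(t_0,\gamma(t_0))$ forces $d\phi(t_0,\gamma(t_0))=d\psi(t_0,\gamma(t_0))$, and the sandwich $\psi\leq u\leq \phi$ then guarantees differentiability of $u$ at $(t_0,\gamma(t_0))$ with this common differential. To identify it, I apply the classical first variation formula for the Lagrangian action to $\phi$: differentiating the action of the optimal extremal from $(s_0,\gamma(s_0))$ to $(t,x)$ with respect to the terminal point $x$ produces, after integration by parts against the Euler-Lagrange equation, the boundary term $\partial_vL(t_0,\gamma(t_0),\dot\gamma(t_0))$, whence $d_xu(t_0,\gamma(t_0))=\partial_vL(t_0,\gamma(t_0),\dot\gamma(t_0))$. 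Since $\phi$ is the value function of a free-endpoint variational problem, it solves the Hamilton-Jacobi equation (\ref{HJAlpha}) classically wherever smooth, so $\partial_tu(t_0,\gamma(t_0))=\alpha_0-H(t_0,\gamma(t_0),d_xu(t_0,\gamma(t_0)))$.

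The chief obstacle is the regularity of $\phi$ and $\psi$: rigorously excluding conjugate points on short extremals and confirming that each such short extremal is a genuine minimizer (rather than merely a critical point of the action). Once these smooth envelopes are in hand, the differentiability of $u$ and the explicit formulae for its partial derivatives fall out essentially by inspection using the Legendre duality between $L$ and $H$.
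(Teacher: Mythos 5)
The paper does not prove Theorem~\ref{CalibRegularity}; it cites Fathi's weak KAM book for the autonomous case and uses the result as a black box, so there is no in-paper argument to compare against. Your sandwich argument is the standard Fathi proof correctly transposed to the time-periodic setting, and it is sound: the two envelopes $\phi$ and $\psi$ bound $u$ from above and below by the forward and backward Lax-Oleinik representations, calibration on $[s_0,t_0]$ and $[t_0,t_1]$ forces equality at $(t_0,\gamma(t_0))$, smoothness of $\phi,\psi$ for short times follows from absence of conjugate points plus the Weierstrass sufficiency criterion, and the common differential is read off from the first-variation formula together with the classical Hamilton--Jacobi equation satisfied by the value function (giving $\partial_t u=\alpha_0-H$, which agrees with the stated formula under the paper's standing normalization $\alpha_0=0$). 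One step worth making explicit is that $\gamma|_{[s_0,t_0]}$ is itself a minimizer (Remark~\ref{CalibEL}), hence by the short-time uniqueness of minimizers it coincides with the extremal reaching $(t_0,\gamma(t_0))$; this is what justifies $\dot\gamma(t_0)$ appearing in the boundary term $\partial_v L(t_0,\gamma(t_0),\dot\gamma(t_0))$, and the analogous remark applies to $\psi$.
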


\subsection{Peierls Barrier}  \label{PeierlsSection}

The Peierls barrier introduced by Mather in \cite{MR1275203} enables the creation of particular (periodic) viscosity solutions that are central in the main constructions of this paper. In this section, we introduce a time-dependent Peierls Barrier following \cite{CIS} and compile its properties. Proofs can be found in \cite{Representation}.\\

For times $s<t$ we define the map $h^{s,t}: M \times M \to \mathbb{R}$ by
\begin{equation} \label{Potential}
	\begin{split}
		h^{s,t}(x,y)& = (t-s).\alpha_0 + h_0^{s,t}(x,y)\\  
		&= (t-s).\alpha_0 + \inf \left\{ A_L(\gamma) \; \left| \;
			\begin{matrix}
				\gamma : & [s,t] \to M \\
				& s \mapsto x \\
				& t \mapsto y
			\end{matrix} \right.	\right\}
	\end{split}
\end{equation}
And we adopt the notation $h^t$ for $h^{0,t}$.

\begin{prop} \label{hprop}
	\begin{enumerate}
		\item (Triangular Inequality) For all real time $s<\tau<t$, and for all points $x$, $y$ and $z$ in $M$, we have the triangular inequality
			\begin{equation}\label{TriangIneg}
				h^{s,t}(x,z) \leq h^{s,\tau}(x,y)+h^{\tau,t}(y,z)
			\end{equation}
	\end{enumerate}
\end{prop}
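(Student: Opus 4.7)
The plan is to prove the triangular inequality by concatenating admissible curves. Given any two absolutely continuous curves $\gamma_1 : [s,\tau] \to M$ with $\gamma_1(s) = x$, $\gamma_1(\tau) = y$, and $\gamma_2 : [\tau, t] \to M$ with $\gamma_2(\tau) = y$, $\gamma_2(t) = z$, I will form the concatenation $\gamma : [s,t] \to M$ defined to equal $\gamma_1$ on $[s,\tau]$ and $\gamma_2$ on $[\tau, t]$. Since both pieces are absolutely continuous and agree at $\tau$, the concatenation is absolutely continuous and is an admissible competitor in the infimum defining $h_0^{s,t}(x,z)$.

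Next, because the action $A_L$ is defined by an integral, it splits additively along the partition point $\tau$:
\begin{equation*}
    A_L(\gamma) = \int_s^\tau L(\sigma, \gamma_1(\sigma), \dot{\gamma}_1(\sigma))\, d\sigma + \int_\tau^t L(\sigma, \gamma_2(\sigma), \dot{\gamma}_2(\sigma))\, d\sigma = A_L(\gamma_1) + A_L(\gamma_2).
\end{equation*}
Using the definition of $h_0^{s,t}(x,z)$ as an infimum over admissible curves from $(s,x)$ to $(t,z)$, this immediately gives
\begin{equation*}
    h_0^{s,t}(x,z) \leq A_L(\gamma_1) + A_L(\gamma_2).
\end{equation*}
Now taking the infimum over all admissible $\gamma_1$ with endpoints $(x,y)$ on $[s,\tau]$, and independently over all admissible $\gamma_2$ with endpoints $(y,z)$ on $[\tau,t]$, I obtain
\begin{equation*}
    h_0^{s,t}(x,z) \leq h_0^{s,\tau}(x,y) + h_0^{\tau,t}(y,z).
\end{equation*}

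Finally, I add the Mañé-critical-value term: since $(t-s)\alpha_0 = (\tau - s)\alpha_0 + (t - \tau)\alpha_0$, the definition \eqref{Potential} of $h^{s,t}$ yields
\begin{equation*}
    h^{s,t}(x,z) = (t-s)\alpha_0 + h_0^{s,t}(x,z) \leq h^{s,\tau}(x,y) + h^{\tau,t}(y,z),
\end{equation*}
which is the claimed inequality \eqref{TriangIneg}. There is no serious obstacle here; the only minor point requiring attention is verifying that the concatenation of two absolutely continuous curves meeting at a common endpoint remains absolutely continuous, which is a standard fact about Lebesgue integration, and that the infima may be taken separately in $\gamma_1$ and $\gamma_2$ because the two integrals depend on disjoint variables.
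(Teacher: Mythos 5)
Your proof is correct and is the standard concatenation argument. The paper itself states Proposition \ref{hprop} without proof, deferring to the reference \cite{Representation}; your argument — splicing two absolutely continuous competitors at time $\tau$, using additivity of the action integral, taking infima separately in $\gamma_1$ and $\gamma_2$, and then restoring the $\alpha_0(t-s)$ term via $(t-s)\alpha_0 = (\tau-s)\alpha_0 + (t-\tau)\alpha_0$ — is exactly the expected one and introduces no gaps.
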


\begin{defi} \label{PeierlsDefi}
	The \textit{Peierls barrier} $h^\infty : M^2 \to \mathbb{R}$ is defined as 
	\begin{equation} \label{Peierls} 
		h^\infty(x,y) = \liminf_{n \to \infty} h^n(x,y)
	\end{equation}

	Its time dependence is defined as follows 
	\begin{equation}
		h^\infty(t,x,y) = h^{\infty+t}(x,y)= \liminf_{n \to \infty} h^{n+t}(x,y)
	\end{equation}

	More generally, for every two times $s$ and $t$, we define
	\begin{equation} \label{PeierlsGeneral}
		h^{s,\infty+t}(x,y) = \liminf_{n \to \infty} h^{s,n+t}(x,y)
	\end{equation}
	
	We define the \textit{Peierls set} $\mathcal{A}_0$ in $M$ as follows
	\begin{equation}
		\mathcal{A}_0 = \{ x \in M \; | \; h^\infty(x,x) = 0 \}
	\end{equation}
\end{defi}

\begin{prop} \label{PeierlsProp}
	\begin{enumerate}
		\item (Finiteness) For all $(t,x,y) \in \mathbb{R} \times M \times M$, the Peierls barrier $h^\infty(t,x,y)$ is finite.
		\item (Regularity) \label{PeierlsRegularity} The Peierls barrier $h^\infty$ is $\kappa_\varepsilon$-Lipschitz for all $\varepsilon >0$ with $\kappa_\varepsilon$ being the same Lipschitz value introduced in Proposition \ref{Regularity}.
		\item (Liminf Property) For all points $x$ and $y$ in $M$ and all sequences of points $(x_n)_n$ and $(y_n)_n$ respectively converging to $x$ and $y$, we have
		\begin{equation} \label{PeierlsLiminf}
			h^\infty(x,y) = \liminf_n h^{n}(x_n,y_n)
		\end{equation}
		\item (Triangular Inequality) For all point $x$, $y$ and $z$ in $M$, we have the triangular inequality
		\begin{equation}\label{TriangInegPeierls}
			h^\infty(x,z) \leq h^\infty(x,y) + h^\infty(y,z)
		\end{equation}
		\item And for all times $s< \tau < t$, we have the triangular inequalities
		\begin{equation}\label{TriangInegPeierls2}
			\begin{split}
				h^{s,\infty+t}(x,z) &\leq h^{s,\infty+\tau}(x,y) + h^{\tau,t}(y,z)\\
				h^{s,\infty+t}(x,z) &\leq h^{s,\tau}(x,y) + h^{\tau,\infty + t}(y,z)
			\end{split}
		\end{equation}
		\item (Weak-KAM Solution) \label{pvisc} For all $x \in M$, $h^\infty(t,x,y) - \alpha_0t$ is a weak-KAM solution of the Hamilton-Jacobi equation (\ref{HJAlpha}).
		\item (Non-negativity) \label{PeierlsPositivity} For every point $x$ in $M$, we have $h^\infty(x,x) \geq 0$.
	\end{enumerate}
\end{prop}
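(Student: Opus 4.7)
The plan is to treat the seven items in an order that exploits their dependencies: the softer items 2--5 first (via the finite-time analogues plus a $\liminf$), then item 7 as an immediate consequence of item 4, and finally the two substantive items 1 and 6.

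For regularity (item 2), Proposition \ref{Regularity} already gives that $h^n$ is $\kappa_\varepsilon$-Lipschitz on $M \times M$ whenever $n \geq \varepsilon$, and the pointwise $\liminf$ of a family sharing a common Lipschitz constant inherits that constant, so $h^\infty$ is $\kappa_\varepsilon$-Lipschitz once finiteness is known. Item 3 is then immediate: $|h^n(x_n, y_n) - h^n(x, y)| \leq \kappa_1 \bigl( d(x_n, x) + d(y_n, y) \bigr) \to 0$, so the two liminfs agree. Items 4 and 5 follow by applying the finite-time triangular inequality of Proposition \ref{hprop} to $h^{m+n}(x, z) \leq h^m(x, y) + h^n(y, z)$ (and its time-dependent analogues) and then taking liminfs on each side, using that $L$ is $1$-periodic so $h^{n+s, n+t} = h^{s, t}$ for $n \in \mathbb{Z}$. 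Item 7 is then a one-line corollary: set $x = y = z$ in (\ref{TriangInegPeierls}) to get $h^\infty(x, x) \leq 2 h^\infty(x, x)$, hence $h^\infty(x, x) \geq 0$ (once finiteness is known).

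For item 1, the upper bound on $h^n(x, y)$ uses an invariant probability measure $\mu_0$ achieving $\int L \, d\mu_0 = -\alpha_0$ (whose existence is a standard tightness argument on the Polish space of compactly supported invariant measures): concatenate a fixed bounded-action bridge from $x$ to a point near $\supp(\mu_0)$, follow the Lagrangian flow for $n - O(1)$ time units (with action close to $-(n - O(1))\alpha_0$ by the Birkhoff ergodic theorem), then bridge out to $y$; the $n \alpha_0$ drift in $h^n$ cancels and yields a bound uniform in $n$. For the lower bound, suppose $h^{n_k}(x, y) \to -\infty$ along a subsequence, let $\gamma_k : [0, n_k] \to M$ realize $h^{n_k}(x, y)$, close each $\gamma_k$ into a loop at $x$ via a fixed bounded-action bridge from $y$ to $x$, and form the normalized pushforward measure $\mu_k$ on $\mathbb{T}^1 \times TM$. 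By A priori compactness (Theorem \ref{APrioriCompactness}) a weak-$*$ subsequential limit $\mu$ exists and is flow-invariant, and $\int L \, d\mu = \lim_k (A_L(\gamma_k) + O(1)) / (n_k + O(1)) = -\infty$, contradicting (\ref{ManeCritValue}).

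For item 6, it suffices to establish the core case that $f(y) := h^\infty(x, y)$ is a fixed point of $\mathcal{T}$; the time-dependent form then follows from the flow evolution together with the $1$-periodicity of $L$. The inequality $\mathcal{T} f \geq f$ comes from item 4: $h^{n+1}(x, y) \leq h^n(x, z) + h^{n, n+1}(z, y) = h^n(x, z) + h^{0, 1}(z, y)$ by time-periodicity; taking $\liminf$ in $n$ and shifting the index gives $h^\infty(x, y) \leq h^\infty(x, z) + h^{0, 1}(z, y)$ for every $z$, hence $f(y) \leq \inf_z\{f(z) + h^{0, 1}(z, y)\} = \mathcal{T} f(y)$. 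For the reverse inequality, extract $n_k$ with $h^{n_k + 1}(x, y) \to h^\infty(x, y)$, let $\gamma_k$ realize each $h^{n_k + 1}(x, y)$, and set $z_k = \gamma_k(n_k)$. A priori compactness gives a subsequential limit $z_k \to z^*$. The splitting $h^{n_k + 1}(x, y) = h^{n_k}(x, z_k) + h^{0, 1}(z_k, y)$ combined with continuity of $h^{0,1}$ and item 3 (liminf property) then yields $h^\infty(x, y) \geq h^\infty(x, z^*) + h^{0, 1}(z^*, y) \geq \mathcal{T} f(y)$, so the infimum is attained at $z^*$ and $\mathcal{T} f = f$.

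The main obstacle is the lower bound in item 1: constructing a flow-invariant measure from asymptotic closed loops and using it to violate (\ref{ManeCritValue}) is the only step that does not reduce to a standard weak-KAM fact about finite-time potentials. The infimum-attainment argument in item 6 is also delicate, but it is essentially a textbook application of A priori compactness together with the liminf characterization from item 3.
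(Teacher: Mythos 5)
The paper itself does not prove this proposition: the preamble to Subsection \ref{PeierlsSection} states ``Proofs can be found in \cite{Representation}'' and the proposition is stated without argument. So your proof cannot be compared against the paper's; I will assess it on its own terms.

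Items 2--5 and 7 are handled correctly: passing the Lipschitz constant through a pointwise $\liminf$, deducing the liminf property from equi-Lipschitz-ness of the $h^n$, pushing the finite-time triangular inequality of Proposition \ref{hprop} through the $\liminf$ along well-chosen subsequences of $m$ and $n$, and the one-line deduction of non-negativity from item 4 once finiteness is available --- all of this is standard and sound. The fixed-point argument for item 6 is also essentially right: the inequality $\mathcal{T}f \geq f$ follows from the triangular inequality exactly as you say, and the inequality $\mathcal{T}f \leq f$ follows from A Priori Compactness together with the liminf property applied to the cut points $z_k = \gamma_k(n_k)$. (Your statement that ``the time-dependent form then follows from the flow evolution'' is hand-wavy --- the identification $\mathcal{T}^t f(\cdot) = h^\infty(t,x,\cdot) - \alpha_0 t$ needs the same splitting-at-integer-times argument as for $\mathcal{T}f$, but there is no conceptual obstacle there.)

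The real gap is in item 1, and it sinks both halves of your argument for finiteness. For the upper bound: the Birkhoff ergodic theorem for an ergodic component of a Mather measure gives only that the time average $\frac{1}{T}\int_0^T L\,d\tau$ converges to $-\alpha_0$, i.e. that $\int_0^T L\,d\tau + T\alpha_0 = o(T)$. It does \emph{not} give an $O(1)$ bound, so ``action close to $-(n - O(1))\alpha_0$'' cannot be used to conclude that $h^n$ is bounded above along any subsequence. The uniform boundedness of $\int_0^T(L + \alpha_0)\,d\tau$ along a Mather orbit is genuinely equivalent to the orbit being calibrated by a bounded weak-KAM solution (Proposition \ref{CalibMather}), which presupposes item 6 (and hence item 1). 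For the lower bound the situation is worse: you extract a weak-$*$ limit $\mu$ of the normalized loop measures $\mu_k$ and claim $\int L\,d\mu = -\infty$, but $\mu$ is a compactly supported Borel \emph{probability} measure and $L$ is continuous, so $\int L\,d\mu$ is necessarily finite. What the argument actually yields is $\liminf_k \frac{1}{n_k}A_L(\gamma_k) \geq -\alpha_0$, i.e. $h^{n_k}(x,y) \geq o(n_k)$ from below; this is consistent with $h^{n_k}(x,y) \to -\infty$ slowly, say like $-\sqrt{n_k}$, and gives no contradiction. In short, finiteness of $h^\infty$ requires a genuine boundedness result --- usually obtained by first constructing a weak-KAM solution $u$ (which gives the lower bound $h^n(x,y) \geq u(y) - u(x)$ directly from $\mathcal{T}^n u = u$) and then using calibration of a Poincaré-recurrent Mather orbit by $u$ (which gives $h^\infty(z_0,z_0) \leq 2\|u\|_\infty$, hence the upper bound via two triangular inequalities). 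Your averaging/Krylov--Bogolyubov route does not supply this.
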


Following \cite{MR1792479}, we define another family of periodic barriers. 

\begin{defi} \label{nPeierlsDefi}
	For all integer $n \geq 1$ and for $k \in \{0,..,n-1\}$, we define 
	\begin{itemize}
		\item the \textit{$n$-Peierls Barrier} by
			\begin{equation}
				h^{n \infty}(x,y) = \liminf_{p \to \infty} h^{np}(x,y)
			\end{equation}
		\item the \textit{$(n,k)$-Peierls Barrier} by
			\begin{equation}
				h^{n \infty + k}(x,y) = \liminf_{p \to \infty} h^{np+k}(x,y)
			\end{equation}
	\end{itemize}
\end{defi}

The properties of Proposition \ref{PeierlsProp} do adapt to these more general periodic barriers. In addition, we get the following.
\begin{prop} \label{kvisc}
	For all integers $n \geq 1$ and $k \in \{0,..,n-1\}$,
	\begin{enumerate}
		\item $\mathcal{T} h^{n \infty + k} = h^{n \infty + k+1}$
		\item For all $x \in M$, $h^{n \infty + k}(x,\cdot)$ is a $n$-periodic viscosity solution of the Hamilton-Jacobi equation \eqref{HJAlpha}.
	\end{enumerate}
\end{prop}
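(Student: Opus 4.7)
The plan is to establish (1) directly from a semigroup/$\liminf$ identity, and then deduce (2) by iterating (1) and using uniqueness of the Cauchy problem together with the $1$-periodicity of $L$.

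For (1), fix $x,y \in M$. The starting point is the semigroup relation for the potential: since $np+k$ is an integer and $L$ is $1$-periodic,
\begin{equation*}
    h^{np+k+1}(x,y) = \inf_{z \in M}\bigl\{h^{np+k}(x,z) + h^{np+k,\,np+k+1}(z,y)\bigr\} = \mathcal{T}\bigl(h^{np+k}(x,\cdot)\bigr)(y).
\end{equation*}
Taking $\liminf$ on the left gives $h^{n\infty+k+1}(x,y)$, so the task is to exchange the infimum with the $\liminf$. For the upper bound, fix any $z \in M$ and extract a subsequence $p_j$ realizing $h^{n\infty+k}(x,z) = \liminf_p h^{np+k}(x,z)$; then $h^{np_j+k+1}(x,y) \leq h^{np_j+k}(x,z) + h^1(z,y)$, and passing to the $\liminf$ in $j$ and then taking the infimum over $z$ yields $h^{n\infty+k+1}(x,y) \leq \mathcal{T}(h^{n\infty+k}(x,\cdot))(y)$. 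For the lower bound, extract a subsequence $p_j$ realizing $h^{n\infty+k+1}(x,y) = \liminf_p h^{np+k+1}(x,y)$, and for each $j$ use Corollary \ref{TonelliLaxOleinik} to pick a minimizer $z_j \in M$ in the semigroup identity. Compactness of $M$ yields $z_{j_l} \to z^*$ along a subsequence; continuity of $h^1$ and the $(n,k)$-analogue of the Liminf property of Proposition \ref{PeierlsProp}(3) then give
\begin{equation*}
    h^{n\infty+k+1}(x,y) \geq h^{n\infty+k}(x,z^*) + h^1(z^*,y) \geq \mathcal{T}\bigl(h^{n\infty+k}(x,\cdot)\bigr)(y).
\end{equation*}

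For (2), applying (1) a total of $n$ times with $k$ replaced successively by $k, k+1, \dots, k+n-1$, and noting the trivial reindexing $h^{n\infty+n}(x,y) = \liminf_p h^{n(p+1)}(x,y) = h^{n\infty}(x,y)$, gives $\mathcal{T}^n h^{n\infty+k}(x,\cdot) = h^{n\infty+k}(x,\cdot)$. Set $u(t,y) := \mathcal{T}^t h^{n\infty+k}(x,\cdot)(y)$; this is a viscosity solution of \eqref{HJAlpha} satisfying $u(n,\cdot) = u(0,\cdot)$. Using the $1$-periodicity of $L$, the operator $\mathcal{T}^{s,s+t}$ depends only on $s \bmod 1$, hence for integer shifts of $n$ we have $\mathcal{T}^{n,n+t} = \mathcal{T}^{0,t}$; uniqueness of the viscosity Cauchy problem then forces $u(t+n,\cdot) = u(t,\cdot)$ for all $t \geq 0$, which is exactly the $n$-periodicity claim.

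The main obstacle is the exchange of $\inf$ and $\liminf$ in (1). This is not formal: it depends crucially on the Tonelli structure, both through the existence of minimizers (to pick the competitors $z_j$) and through the compactness of $M$ together with equi-Lipschitz regularity of the potentials $h^{np+k}(x,\cdot)$ (which underlies the Liminf property of Peierls-type barriers). Once this exchange is justified, (2) follows by a straightforward iteration of (1) and a standard uniqueness argument.
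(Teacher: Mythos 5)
The paper states this proposition without proof, deferring to the cited reference, so there is no internal argument to compare against; I will therefore assess your proposal on its own terms. Your proof is correct. For (1), the finite-time semigroup identity $h^{np+k+1}(x,y)=\inf_z\{h^{np+k}(x,z)+h^1(z,y)\}$ (valid because $np+k$ is an integer and $L$ is $1$-periodic) is the right starting point, and both sides of the liminf/inf exchange are handled properly: the upper bound by fixing a competitor $z$ and passing to a realizing subsequence, and the lower bound by producing intermediate minimizers $z_j$ via Tonelli, extracting a convergent subsequence on the compact $M$, and passing limits using the uniform $\kappa_1$-Lipschitz bound on the finite-time potentials (which is indeed the substance behind the Liminf property you invoke) together with the continuity of $h^1$. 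For (2), once $\mathcal{T}^n h^{n\infty+k}(x,\cdot)=h^{n\infty+k}(x,\cdot)$ is obtained by iterating (1) and reindexing $h^{n\infty+n}=h^{n\infty}$, the full $n$-periodicity already follows from the semigroup identity $\mathcal{T}^{t+n}=\mathcal{T}^{n,n+t}\circ\mathcal{T}^{0,n}=\mathcal{T}^{0,t}\circ\mathcal{T}^n$, so the appeal to uniqueness of the viscosity Cauchy problem, while correct, is not strictly needed.
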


\begin{rem}
	\begin{enumerate}
		\item These viscosity solutions can be seen as weak-KAM solutions for the modified Hamiltonian $nH(nt, x , p)$.
		\item If $\alpha_0 =0$, then these are $n$-time-periodic viscosity solutions of the Hamilton-Jacobi equation. 
	\end{enumerate}
\end{rem}

\subsection{The Mather Set} \label{MatherSection}

The Mather set, introduced by John N. Mather, is associated with the study of action-minimizing measures in Lagrangian and Hamiltonian systems.

\begin{defi}
	\begin{enumerate}
		\item A measure $\mu$ on $\mathbb{T}^1 \times TM$ is \textit{a minimizing measure} if it is a Borel probability measure, invariant by the Euler-Lagrange flow $\phi_L$ and it satisfies
		\begin{equation}
			\int_{\mathbb{T}^1 \times TM} L \; d\mu = - \alpha_0
		\end{equation} 
		where $\alpha_0$ is the Mañé critical value defined in (\ref{ManeCritValue}) (and assumed to be null).
		\item The \textit{Mather set} $\tilde{\mathcal{M}}$ is defined by
		\begin{equation}
			\tilde{\mathcal{M}} = \bigcup_\mu \supp(\mu) \subset \mathbb{T}^1 \times TM
		\end{equation}
		where the union is on minimizing measures $\mu$.
		\item The \textit{projected Mather set} $\mathcal{M}$ is the projection of $\tilde{\mathcal{M}}$ to $\mathbb{T}^1 \times M$.
		\item The \textit{time-zero Mather set} $\tilde{\mathcal{M}}_0$ and its projected counterpart $\mathcal{M}_0$ are the intersections
		\begin{equation}
			\tilde{\mathcal{M}}_0 := \tilde{\mathcal{M}} \cap \big( \{0\} \times TM \big) \quad \text{and} \quad \mathcal{M}_0 := \mathcal{M} \cap \big( \{0\} \times M \big)
		\end{equation}
		seen respectively as subsets of $TM$ and $M$.
	\end{enumerate}
\end{defi}

\begin{rem} \label{MatherInv}
	\begin{enumerate}
		\item More explicitly, the invariant measures $\mu$ featured in the definition are invariant by the maps $\Phi_L^\tau$, for all time $\tau>0$, given by
		\begin{equation} \label{InvarianceFlow}
			\begin{split}
				\Phi_L^\tau : \mathbb{T}^1 \times TM &\longrightarrow \mathbb{T}^1 \times TM \\
				(t,x,v) & \longmapsto (t + \tau, \phi_L^{t,t+\tau}(x,v))
			\end{split}
		\end{equation}   
		\item We easily see from these definitions that the different Mather sets are invariant by either the Euler-Lagrange flow or its time-one map. 
	\end{enumerate}
\end{rem}

The non-emptiness of this set has been proved by John N.Mather in the non-autonomous case. See proposition 4 of \cite{MR1109661}
\begin{prop} \label{MatherNonempty}
	The Mather set $\tilde{\mathcal{M}}$ is compact and non-empty.
\end{prop}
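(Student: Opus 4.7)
The plan is to produce at least one minimizing measure by a compactness argument in the space of invariant probability measures, and then to realise the whole Mather set as the support of a single ``universal'' minimizing measure obtained by a convex-combination trick, which will automatically force compactness of $\tilde{\mathcal{M}}$.

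First I would work with the extended flow $\Phi_L^\tau$ on $\mathbb{T}^1 \times TM$ introduced in Remark \ref{MatherInv} and pick a minimizing sequence $(\mu_n)_n$ of compactly supported $\Phi_L$-invariant Borel probability measures with $\int L \, d\mu_n \to -\alpha_0$. Existence of at least one such $\mu_n$ follows from a Krylov--Bogolyubov averaging along an orbit of $\Phi_L^\tau$, using completeness of the Lagrangian flow. The A Priori Compactness Theorem \ref{APrioriCompactness} together with superlinearity of $L$ then forces the supports of the $\mu_n$, for $n$ large, to lie in a fixed compact set $\mathbb{T}^1 \times K \subset \mathbb{T}^1 \times TM$: an invariant measure with action close to $-\alpha_0$ must concentrate on near-minimizing orbits of duration $\geq 1$, which by Theorem \ref{APrioriCompactness} all lie in $K$. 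Once the supports are uniformly trapped, Banach--Alaoglu yields a weak-$*$ subsequential limit $\mu_\infty$; continuity of $\Phi_L^\tau$ and of $L$ on the compact set $\mathbb{T}^1 \times K$ transfer invariance and the equality $\int L \, d\mu_\infty = -\alpha_0$, so $\mu_\infty$ is minimizing and $\tilde{\mathcal{M}} \neq \emptyset$.

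For compactness, I would exploit convexity of the set of minimizing measures, which I denote $\mathcal{M}_{\min}$. By the previous step, $\mathcal{M}_{\min}$ is a closed convex subset of probability measures supported in $\mathbb{T}^1 \times K$, hence weak-$*$ compact and metrizable. I pick a countable weak-$*$ dense sequence $(\nu_n)_{n \geq 1} \subset \mathcal{M}_{\min}$ and set
\begin{equation*}
	\mu_* \; = \; \sum_{n \geq 1} 2^{-n} \nu_n .
\end{equation*}
Since $\int L \, d\mu_* = \sum_n 2^{-n} \int L \, d\nu_n = -\alpha_0$ and convex combinations of invariant measures are invariant, $\mu_* \in \mathcal{M}_{\min}$, so $\supp(\mu_*) \subset \tilde{\mathcal{M}}$. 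Conversely, for any $\nu \in \mathcal{M}_{\min}$ and any subsequence $\nu_{n_k} \to \nu$ weak-$*$, the Portmanteau inequality $\nu(U) \leq \liminf_k \nu_{n_k}(U)$ on open sets $U$ forces $\supp(\nu) \subset \overline{\bigcup_k \supp(\nu_{n_k})} \subset \overline{\bigcup_n \supp(\nu_n)} = \supp(\mu_*)$. Taking the union over $\nu \in \mathcal{M}_{\min}$ gives $\tilde{\mathcal{M}} \subset \supp(\mu_*)$, hence $\tilde{\mathcal{M}} = \supp(\mu_*)$, which is closed in $\mathbb{T}^1 \times K$ and therefore compact.

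The step I expect to be the main obstacle is confining the minimizing sequence $(\mu_n)_n$ to a common compact set at the very start. Superlinearity alone only tells us that measures spreading out in the fibre direction have large action; upgrading this to a genuine uniform compactness statement for \emph{invariant} measures is exactly what Theorem \ref{APrioriCompactness} provides, so once it is invoked the remaining ingredients (Krylov--Bogolyubov, Banach--Alaoglu, and the convex-combination trick) are standard measure-theoretic manipulations.
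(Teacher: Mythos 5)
The paper itself gives no proof of this proposition; it simply cites Mather's Proposition 4 in \cite{MR1109661}, so there is nothing to compare against and I evaluate your argument on its own terms. The broad strategy (Krylov--Bogolyubov to produce invariant measures, extract a minimizer by a compactness argument, and then close the union of supports via a convex-combination ``universal'' minimizing measure) is sound, and the $\mu_* = \sum 2^{-n}\nu_n$ trick is a clean way to get closedness of $\tilde{\mathcal{M}}$, which the paper's definition (a union without closure) genuinely requires. But two of your intermediate claims are false as written.

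The main gap is the confinement step, which you yourself flag as the crux. You assert that Theorem \ref{APrioriCompactness} together with superlinearity forces $\supp(\mu_n)\subset\mathbb{T}^1\times K$ for a fixed compact $K$ once $\int L\,d\mu_n$ is close to $-\alpha_0$. That is not what the theorem gives. Theorem \ref{APrioriCompactness} is a statement about \emph{minimizing curves}; an invariant measure with nearly minimal action need not be carried by minimizing orbits at all, and can put a small but nonzero mass arbitrarily far out in the fibre, so its support is in general unbounded. The statement ``$\supp(\mu_n)\subset\mathbb{T}^1\times K$ for $n$ large'' is simply wrong. What is true, and what suffices, is \emph{tightness}: superlinearity gives $L(t,x,v)\geq a\Vert v\Vert - b$, hence $\int \Vert v\Vert\,d\mu_n \leq (\sup_n\int L\,d\mu_n + b)/a$ is uniformly bounded, and Markov's inequality yields tightness of the family $(\mu_n)$ in $\mathbb{T}^1\times TM$. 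Tightness, weak-$*$ compactness, continuity of $\Phi_L^\tau$, and lower semicontinuity of $\mu\mapsto\int L\,d\mu$ (which holds because $L$ is continuous and bounded below) then produce an invariant limit $\mu_\infty$ with $\int L\,d\mu_\infty = -\alpha_0$.

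The second gap is in the compactness half. To claim $\mathcal{M}_{\min}$ is a closed (hence weak-$*$ compact, metrizable) subset of measures ``supported in $\mathbb{T}^1\times K$'', and later that $\supp(\mu_*)$ is compact because it sits inside $\mathbb{T}^1\times K$, you need \emph{every} minimizing measure to have support in a common compact set --- not just the one limit $\mu_\infty$ you built. This is true, but the reason is not the one you give: it requires the nontrivial intermediate fact that orbits lying in $\supp(\mu)$ for a minimizing $\mu$ are themselves minimizing curves (a step of Mather's graph theorem, typically proved via the ergodic decomposition or via a critical subsolution), and \emph{only then} does Theorem \ref{APrioriCompactness} confine those orbits, hence the supports, to a fixed $K$. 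Once that lemma is supplied, your Portmanteau argument showing $\tilde{\mathcal{M}}\subset\supp(\mu_*)$ is correct, and the conclusion follows. So the architecture of the proof holds up, but both invocations of A Priori Compactness are misapplied: the first should be replaced by tightness from superlinearity, and the second must be routed through the fact that supports of minimizing measures consist of minimizing orbits.
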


The following proposition has been proved by R.\Mane in \cite{MR1479499} and generalized by J-M.Roquejoffre and P.Bernard in \cite{MR2041603} to non-wandering viscosity solutions. However, for the purpose of this paper, we only require the result for periodic viscosity solutions.

\begin{prop} \label{CalibMather}
	Let $\tilde{x}$ be an element of the Mather set $\tilde{\mathcal{M}}_0$ and $x = \pi(\tilde{x})$ be its projection in $\mathcal{M}_0$. Let $\gamma : \mathbb{R} \to M$ be the projection on $M$ of the Lagrangian flow at $\tilde{x}$ i.e. $\gamma(t) = \pi \circ \phi_L^t(\tilde{x})$. Then,
	\begin{enumerate}
		\item Every periodic viscosity solution $u$ is calibrated by $\gamma$ and differentiable at $(t,\gamma(t))$.
		\item In particular, every weak-KAM solution $u$ is calibrated by $\gamma$.
	\end{enumerate}
\end{prop}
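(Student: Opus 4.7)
The plan is to construct, for any $T$-periodic viscosity solution $u$ of \eqref{HJAlpha}, a continuous non-positive ``defect'' function whose integral against every minimizing measure vanishes, forcing it to be identically zero on $\tilde{\mathcal M}$.

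The first step is to set
\begin{equation*}
\delta(t,x,v) := u\bigl(t+T,\,\pi\phi_L^{t,t+T}(x,v)\bigr) - u(t,x) - \int_t^{t+T} L\bigl(\tau, \phi_L^{t,\tau}(x,v)\bigr)\,d\tau - T\alpha_0.
\end{equation*}
The semigroup property gives $u(t+T,\cdot) = \mathcal{T}^{t,t+T}u(t,\cdot)$, so applying the definition of $\mathcal{T}^{t,t+T}$ in Definition \ref{LODef} to the specific Lagrangian-flow curve $\tau \mapsto \pi\phi_L^{t,\tau}(x,v)$ forces $\delta \leq 0$ pointwise. Continuity of $u$, of $L$, and of the Lagrangian flow makes $\delta$ continuous.

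The main step is to show that $\int\delta\,d\mu = 0$ for every minimizing measure $\mu$, after lifting $\mu$ to the $T$-fold time cover so that $\bar u(t,x,v) := u(t,x)$ is globally defined. The $T$-periodicity of $u$ together with the $\Phi_L^T$-invariance of $\mu$ from Remark \ref{MatherInv} gives
\begin{equation*}
\int \bigl[u(t+T, \pi\phi_L^{t,t+T}(x,v)) - u(t,x)\bigr]\,d\mu = \int (\bar u \circ \Phi_L^T - \bar u)\,d\mu = 0.
\end{equation*}
For the action term, Fubini combined with the $\Phi_L^s$-invariance of $\mu$ for every $s$ yields $\int\int_t^{t+T} L \circ \Phi_L^{\tau-t}\,d\tau\,d\mu = T\int L\,d\mu = -T\alpha_0$, cancelling the constant $-T\alpha_0$ in $\delta$. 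Hence $\int \delta\,d\mu = 0$, and because $\delta$ is continuous and non-positive, $\delta \equiv 0$ on $\supp(\mu)$. Taking the union over all minimizing measures, $\delta \equiv 0$ on $\tilde{\mathcal M}$.

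Finally, the $\Phi_L^\tau$-invariance of $\tilde{\mathcal M}$ (Remark \ref{MatherInv}) places $\Phi_L^\tau(0,\tilde x) = (\tau,\phi_L^\tau(\tilde x))$ in $\tilde{\mathcal M}$ for every $\tau \in \mathbb{R}$; evaluating $\delta = 0$ there is precisely the calibration identity \eqref{CalibrationEquation} for $\gamma$ on the interval $[\tau,\tau+T]$. Summing such identities on adjacent intervals, $\gamma$ is calibrated on every bounded interval, hence on all of $\mathbb{R}$. Theorem \ref{CalibRegularity} then delivers differentiability of $u$ at every $(t,\gamma(t))$; statement (2) is the case $T=1$. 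The main obstacle I anticipate is not conceptual but bookkeeping: aligning the $T$-periodicity of $u$ with the $1$-periodicity of $L$ and the various time-shift invariances of $\mu$, which I would handle systematically via the lift to the $T$-fold time cover at the outset.
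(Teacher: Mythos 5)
The paper states this proposition without proof, citing Mañé~\cite{MR1479499} and Bernard--Roquejoffre~\cite{MR2041603}; there is therefore no in-paper argument to compare against. Your proposal supplies a correct proof by the classical Aubry--Mather technique of integrating the ``calibration defect'' against a minimizing measure: $\delta \leq 0$ follows from the infimum defining the Lax--Oleinik operator, $\int \delta \, d\bar{\mu} = 0$ from flow-invariance, hence $\delta \equiv 0$ on the support by continuity, and calibration on all of $\mathbb{R}$ by additivity of the calibration identity (Remark~\ref{CalibEL}); differentiability then follows from Theorem~\ref{CalibRegularity}.

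One point you correctly isolate and which does require care: for $T \geq 2$ the defect $\delta$ is $T$-periodic but \emph{not} $1$-periodic in $t$, since $u(t+1,\cdot) \neq u(t,\cdot)$, so $\int \delta \, d\mu$ over $\mathbb{T}^1 \times TM$ is ill-posed as written; the lift to $\mathbb{R}/T\mathbb{Z} \times TM$ is genuinely needed, not just cosmetic. The lifted measure $\bar{\mu}$ should be constructed explicitly (disintegrate $\mu = dt \otimes \mu_t$ over its Lebesgue time marginal and set $\bar{\mu} = T^{-1}\, dt' \otimes \mu_{t' \bmod 1}$), and one should verify that $\bar{\mu}$ is invariant under the lifted flow $\bar{\Phi}_L^s(t',x,v) = (t'+s, \phi_L^{t',t'+s}(x,v))$, which follows from $(\phi_L^{t,s})_*\mu_t = \mu_s$ and the $1$-periodicity of $L$. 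You should also state that $T$ is a positive integer, which is the only case arising in the paper (periods of $h^{n\infty}$) and what makes the $T$-fold cover of $\mathbb{T}^1$ and the identity $\mu_{t+T}=\mu_t$ meaningful. With those details written out, the argument is complete.
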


\begin{rem} \label{MatherRegularity}
	The application of Theorem \ref{CalibRegularity} to the curves $\gamma$ of $\mathcal{M}$ reveals that the Mather set (and more generally, the Peierls set $\mathcal{A}_0$ defined below) is a set of differentiability for all weak-KAM or periodic viscosity solutions. This fact can be extended to all non-wandering viscosity solutions of the Hamilton-Jacobi equation (see \cite{MR2041603}), but we will not need such generality in this paper.
\end{rem}

\begin{prop} \label{MatherPeierls}
	The following inclusion holds
	\begin{equation}
		\mathcal{M}_0 \subset \mathcal{A}_0
	\end{equation}
\end{prop}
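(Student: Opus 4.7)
The bound $h^\infty(x,x) \ge 0$ on the diagonal is immediate from the non-negativity item of Proposition \ref{PeierlsProp}, so my only task is to establish the reverse inequality $h^\infty(x,x) \le 0$ for every $x \in \mathcal{M}_0$. The key idea is to sidestep any Poincaré recurrence argument by singling out a very specific weak-KAM solution, namely $u(t,y) := h^\infty(t,x,y)$, which is a weak-KAM solution by the weak-KAM item of Proposition \ref{PeierlsProp} (recall $\alpha_0 = 0$). Applying Proposition \ref{CalibMather} to this $u$ and exploiting that the calibration identity is then naturally expressed in terms of $h^\infty(x,\cdot)$ will make the sought inequality drop out through a cancellation.

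Concretely, fix a lift $\tilde{x} = (x,v) \in \tilde{\mathcal{M}}_0$ of $x$ and set $\gamma(t) = \pi \circ \phi_L^t(\tilde{x})$. By Proposition \ref{CalibMather}, $\gamma$ calibrates every periodic viscosity solution, in particular $u$. Combining the calibration identity \eqref{CalibrationEquation} with the $1$-time-periodicity $u(n,\cdot) = u(0,\cdot)$, I obtain for every integer $n \ge 1$
\[
A_L\bigl(\gamma|_{[0,n]}\bigr) \;=\; u(0, \gamma(n)) - u(0, x) \;=\; h^\infty(x,\gamma(n)) - h^\infty(x,x).
\]
Since $\gamma|_{[0,n]}$ is an admissible curve from $x$ to $\gamma(n)$, comparing with the infimum in \eqref{Potential} gives $h^n(x, \gamma(n)) \le A_L(\gamma|_{[0,n]})$, that is,
\[
h^n(x, \gamma(n)) \;\le\; h^\infty(x, \gamma(n)) - h^\infty(x, x).
\]

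To pass to the limit, I use the compactness of the Mather set to extract a subsequence $n_k \to \infty$ with $\gamma(n_k) \to y_\infty$ for some $y_\infty \in M$. Defining an auxiliary sequence $y_n \to y_\infty$ with $y_{n_k} = \gamma(n_k)$ (for instance, setting $y_n = y_\infty$ off the set $\{n_k\}$), the Liminf Property of Proposition \ref{PeierlsProp}, together with the elementary fact that the $\liminf$ along a full sequence is dominated by the $\liminf$ along any of its subsequences, yields
\[
h^\infty(x, y_\infty) \;=\; \liminf_n h^n(x, y_n) \;\le\; \liminf_k h^{n_k}(x, \gamma(n_k)) \;\le\; h^\infty(x, y_\infty) - h^\infty(x, x),
\]
where the last inequality uses the previous display and the continuity of $h^\infty(x, \cdot)$. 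Since $h^\infty(x, y_\infty)$ is finite by the finiteness item of Proposition \ref{PeierlsProp}, the two copies cancel to give $h^\infty(x,x) \le 0$, so $h^\infty(x,x) = 0$ and hence $x \in \mathcal{A}_0$.

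I anticipate that the only delicate point is precisely this passage from the convergent subsequence $\{n_k\}$ to the full sequence demanded by the Liminf Property; the auxiliary-sequence device handles it cleanly, but an equivalent alternative is to invoke the uniform Lipschitz bound on $h^n$ from Proposition \ref{Regularity} to replace $\gamma(n_k)$ by $y_\infty$ inside $h^{n_k}(x, \cdot)$ at the cost of a vanishing error of order $\kappa_1 \lvert \gamma(n_k) - y_\infty \rvert$.
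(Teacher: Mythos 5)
Your proof is correct and takes a genuinely different route from the paper's. Both arguments rest on the same key input (Proposition~\ref{CalibMather}: orbits of $\tilde{\mathcal{M}}_0$ calibrate every periodic viscosity solution), but they exploit it differently. The paper invokes Poincaré recurrence: for a minimizing measure $\mu$ it extracts a $\Phi_L^1$-recurrent point $(x,v) \in \supp_0(\mu)$, uses $\phi_L^{k_n}(x,v) \to (x,v)$ to close the calibrated orbit into loops at $x$ of vanishing action and conclude $h^\infty(x,x) \leq 0$, and then (implicitly) relies on density of recurrent points in $\supp_0(\mu)$ together with closedness of $\mathcal{A}_0$ to cover all of $\mathcal{M}_0$. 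Your argument sidesteps both the recurrence theorem and the density step. The crucial device is the self-referential choice $u(t,\cdot) = h^\infty(t,x,\cdot)$, which by $1$-periodicity turns the calibration identity into $A_L(\gamma|_{[0,n]}) = h^\infty(x,\gamma(n)) - h^\infty(x,x)$; it then suffices that $\gamma(n_k)$ converge to \emph{some} $y_\infty \in M$ --- automatic by compactness of $M$ --- rather than return to $x$, and the Liminf Property plus the finiteness of $h^\infty(x,y_\infty)$ produce the cancellation $h^\infty(x,y_\infty) \leq h^\infty(x,y_\infty) - h^\infty(x,x)$. This yields the inclusion directly for every $x \in \mathcal{M}_0$ with no measure-theoretic input, which is a genuine streamlining. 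A minor remark: since $\gamma$ is calibrated it is minimizing, so $h^n(x,\gamma(n)) = A_L(\gamma|_{[0,n]})$ holds with equality, though the one-sided inequality you wrote is all that is needed.
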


\begin{proof} 
	Let $\mu$ be a minimizing measure on $\mathbb{T}^1 \times TM$. It is invariant by the map $\Phi_L^1$ defined in \eqref{InvarianceFlow}. Then the Poincaré recurrence theorem shows that $\Phi_L^1$-recurrent points are dense in $\supp(\mu)$. If $(t,x,v)$ is such a point, then so is $(\Phi_L^t)^{-1}(t,x,v) = (0, (\phi^{t}_L)^{-1}(x,v))$. Hence, $\Phi_L^1$-recurrent points are dense in $\supp_0(\mu) := \supp(\mu) \cap \{t = 0 \}$. Let $(x,v)$ be a $\Phi_L^1$-recurrent element of $\supp_0(\mu)$ where we omit the time variable $t=0$. Note that with this omission, the map $\Phi_L^1$ coincides with $\phi_L^1$ on $\supp_0(\mu)$.
	Consider an increasing sequence of positive integers $k_n$ such that $\phi_L^{k_n}(x,v)$ converges to $(x,v)$. Let $u$ be any weak-KAM solution, for instance $h^\infty(x, .)$ (see Property \ref{pvisc} of Proposition \ref{PeierlsProp}). From Proposition \ref{CalibMather}, we know that the curve $x(t) = \pi \circ \phi_L^t(x,v)$ is calibrated by $u$ and  
	\begin{equation*}
		u(x(k_n)) - u(x) = u(k_n,x(k_n)) - u(x) = k_n.\alpha_0 + \int_0^{k_n} L(s,x(s),\dot{x}(s)) \; ds \longrightarrow 0 \quad \text{as } n \to \infty
	\end{equation*}
	where the limit holds due to the continuity of $u$. Thus, for any small $\varepsilon >0$ and any large integer time $n_0>0$, closing the orbit $x(t)$ enables to construct a loop $\gamma : [0,k_n] \to M$ with $k_n > n_0$, $\gamma(0) = \gamma(k_n) = x$ and such that
	\begin{equation*}
		k_n.\alpha_0 + \int_0^{k_n} L(s,\gamma(s), \dot{\gamma}(s)) \; ds \leq \varepsilon
	\end{equation*}
	Therefore, by the definition of the Peierls barrier and taking the liming on $k_n$, we get $h^\infty(x,x) \leq 0$. However, we already have from the non-negativity Property \ref{PeierlsPositivity} of Proposition \ref{PeierlsProp} the inverse inequality  $h^\infty(x,x) \geq 0$. We conclude that $h^\infty(x,x)=0$.
\end{proof}

\subsection{The Mañé Set} \label{ManeSection}

In this subsection, we introduce the final concept derived from the Aubry-Mather theory that holds significance for this article: the Mané set. This set will be used in the proof of the regularity of the recurrent viscosity solution of Section \ref{RegSection}.\\

The definition of the Mañé Set requires the introduction of a potential.

\begin{defi}
	\begin{enumerate}
		\item The \textit{Mañé potential} $m : M \times M \to \mathbb{R}$ is defined by
		\begin{equation}
			m(x,y) = \inf_{n \geq 0} h^n(x,y)
		\end{equation}
		with a time evolution counterpart $m: \{(s,t) \in \mathbb{T}^1 \times \mathbb{T}^1 \; | \; s\leq t \} \times M \times M \to \mathbb{R}$ defined by
		\begin{equation}
			m^{s,t}(x,y) = \inf_{n\geq 0} h^{s,t+n}(x,y)
		\end{equation}
		We use the notation $m^t$ for $m^{0,t}$. 
		
		\item A curve $\gamma : I \to \mathbb{R}$ is said \textit{semi-static} if for all times $s<t$ in $I$, we have
		\begin{equation} \label{SemiStaticFormula}
			m^{s,t}(\gamma(s), \gamma(t)) = \alpha_0.(t-s) + \int_s^t L(\tau, \gamma(\tau), \dot{\gamma}(\tau)) \; d\tau  
		\end{equation}		 
		
		\item The \textit{Mañé set} $\tilde{\mathcal{N}}$ is the subset of $\mathbb{T}^1 \times TM$ defined as
		\begin{equation}
			\tilde{\mathcal{N}} = \{ (t,\gamma(t), \dot{\gamma}(t)) \; | \; \gamma : \mathbb{R} \to M \text{ is a semi-static curve } \}
		\end{equation}	
	\end{enumerate}
\end{defi}

\begin{rem} \label{SemiStaticRemark}
	\begin{enumerate}
		\item The first term of the infimum is the quantity $h^0$ defined as
		\begin{equation}
			h^0(x,y) = 
			\begin{cases}
				0 & \text{if } x=y \\
				+ \infty & \text{if }  x \neq y
			\end{cases}
		\end{equation}
		It follows that for any point $x$ of $M$, $m(x,x) =0$. This also implies that the map $m$ can exhibit discontinuity at the diagonal of $M \times M$. 
		\item Away from the diagonal of $M \times M$, the Mañé potential $m^{s,t}$ is continuous on its time and space variables.
		\item Note that a semi-static curve is minimizing as for any times $s<t$,
		\begin{align*}
			A_{L+\alpha_0}(\gamma)  = m^{s,t}(\gamma(s), \gamma(t)) \leq h^{s,t}(\gamma(s), \gamma(t)) \leq A_{L+\alpha_0}(\gamma)
		\end{align*}
		and we get equality everywhere.
		\item Analogously to Remarks \ref{MinimizingRem} and \ref{CalibEL}, if for fixed times $s<t$ the identity (\ref{SemiStaticFormula}) holds, then the curve $\gamma$ is semi-static on $[s,t]$.
	\end{enumerate}
\end{rem}

The regularity Proposition \ref{Regularity} for the finite-time potential $h^t$ is reflected on the regularity of the Mañé potential $m^{s,t}$ as stated below.

\begin{prop} \label{RegularityManePotential}
	For all positive real number $\varepsilon >0$ and all times $s< t+ \varepsilon$, the Mañé potential $m^{s,t}$ is $\kappa_\varepsilon$-Lipschitz with $\kappa_\varepsilon$ being the same Lipschitz value figuring in Proposition \ref{Regularity}.
\end{prop}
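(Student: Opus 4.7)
The plan is to reduce the statement directly to the Lipschitz regularity of the finite-time potentials furnished by Proposition \ref{Regularity}. Writing
$$ m^{s,t}(x,y) = \inf_{n \geq 0} h^{s,t+n}(x,y), $$
I first observe that for each $n \geq 0$ the map $(x,y) \mapsto h^{s,t+n}(x,y)$ differs from $h_0^{s,t+n}(x,y)$ only by the additive constant $\alpha_0 \cdot (t+n-s)$, which has no effect on Lipschitz estimates in $(x,y)$. Under the intended hypothesis $t - s \geq \varepsilon$, every $n \geq 0$ satisfies $(t+n)-s \geq \varepsilon$, so Proposition \ref{Regularity} applies to every term appearing in the infimum with the \emph{same} constant $\kappa_\varepsilon$, independently of $n$.

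The only remaining ingredient is the general principle that the pointwise infimum of a uniformly $K$-Lipschitz family of functions is itself $K$-Lipschitz, provided it is everywhere finite. Given two pairs $z, z' \in M \times M$ and $\delta > 0$, one picks $n_0$ such that $h^{s,t+n_0}(z') \leq m^{s,t}(z') + \delta$, whereupon
$$ m^{s,t}(z) \leq h^{s,t+n_0}(z) \leq h^{s,t+n_0}(z') + \kappa_\varepsilon \, d(z,z') \leq m^{s,t}(z') + \kappa_\varepsilon \, d(z,z') + \delta; $$
letting $\delta \to 0$ and swapping the roles of $z$ and $z'$ yields the required two-sided bound. The analogous argument using the joint time-space Lipschitz estimate in the second assertion of Proposition \ref{Regularity} handles the regularity in the time variables as well, on any region $\{t-s \geq \varepsilon\}$.

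The only point genuinely requiring care, and the main (albeit mild) obstacle, is the finiteness of $m^{s,t}(x,y)$, without which the chain of inequalities above is vacuous. The upper bound is immediate from the choice $n=0$: $m^{s,t}(x,y) \leq h^{s,t}(x,y) < +\infty$. For the lower bound I would invoke any weak-KAM solution $u$, which exists for instance in the form $u(t,x) = h^\infty(t,x_0,x) - \alpha_0 t$ by Property \ref{pvisc} of Proposition \ref{PeierlsProp}. Since $u$ is $1$-time-periodic and satisfies $u(t+n,y) \leq u(s,x) + h^{s,t+n}(x,y)$ for every $n \geq 0$, one deduces $h^{s,t+n}(x,y) \geq u(t,y) - u(s,x)$ uniformly in $n$; as $u$ is bounded on $\mathbb{T}^1 \times M$ by compactness, this supplies the desired uniform lower bound and closes the argument.
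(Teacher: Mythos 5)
The paper states Proposition \ref{RegularityManePotential} without proof; the surrounding preliminaries section explicitly recalls such facts from \cite{Representation} without re-deriving them, so there is no in-paper argument to compare against. Your proof is the natural one and it is correct: you reduce to the fact that a pointwise infimum of a family of $\kappa_\varepsilon$-Lipschitz functions is again $\kappa_\varepsilon$-Lipschitz as soon as it is finite somewhere, you correctly observe that $h^{s,t+n}$ and $h_0^{s,t+n}$ differ by an additive constant so the Lipschitz constant is unaffected, you correctly observe that all the times $(t+n)-s$ stay $\geq \varepsilon$ so Proposition \ref{Regularity} gives a uniform constant, and your lower bound via domination by a weak-KAM solution $u$ (using $u(t+n,\cdot)=u(t,\cdot)$ by $1$-periodicity and $u(t+n,y)\leq u(s,x)+h^{s,t+n}(x,y)$) is exactly the right way to get finiteness in the general Tonelli setting where $L$ need not be nonnegative. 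You are also right to read the (clearly garbled) hypothesis ``$s<t+\varepsilon$'' as $t-s\geq\varepsilon$; as literally written the condition is vacuous on the domain $s\leq t$, and $\kappa_\varepsilon$ makes sense only under the gap hypothesis.

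Two tiny corrections. First, the joint $(s,t,x,y)$-Lipschitz estimate you invoke for regularity in the time variables sits in the ``Moreover'' clause of the \emph{first} item of Proposition \ref{Regularity}, not the second (the second item is about $\mathcal{T}^{s,t}u$). Second, when you conclude Lipschitz continuity in the time variables you should make explicit that you are using time-periodicity of $L$ (equivalently, $h_0^{s+1,t+1}=h_0^{s,t}$) so that the ``Moreover'' estimate, stated on $\{0\leq s\leq t-\varepsilon\}$, applies uniformly in $n$ to every term $h^{s,t+n}$; otherwise the uniformity of the constant over the whole family is not completely obvious. Both are presentational rather than mathematical gaps.
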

	
Below is another characterization of semi-static curves. Only an implication is stated here, but the inverse statement will be shown in Proposition \ref{CalibMane}.

\begin{prop} \label{SemiStaticCalib}
	Let $u$ be a weak-KAM solution of (\ref{HJAlpha}) and let $\gamma : I \to M$ be a calibrated curve by $u$. Then the curve $\gamma$ is semi-static.
\end{prop}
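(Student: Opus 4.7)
The plan is to reduce the semi-static identity to an equality between $m^{s,t}$ and $h^{s,t}$ at the endpoints $(\gamma(s),\gamma(t))$, and then use the time $1$-periodicity of the weak-KAM solution together with the Lax-Oleinik inequality to produce the missing direction of that equality.

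First, fix $s<t$ in $I$. Since $\gamma$ is calibrated by $u$, Remark \ref{CalibEL} tells us $\gamma$ is minimizing, so
$$\alpha_0\cdot(t-s)+\int_s^t L(\tau,\gamma(\tau),\dot{\gamma}(\tau))\,d\tau \;=\; h^{s,t}(\gamma(s),\gamma(t)).$$
Hence the semi-static identity $m^{s,t}(\gamma(s),\gamma(t)) = \alpha_0(t-s)+\int_s^t L\,d\tau$ reduces to showing $m^{s,t}(\gamma(s),\gamma(t)) = h^{s,t}(\gamma(s),\gamma(t))$. Since $m^{s,t}$ is an infimum over $n\geq 0$ of $h^{s,t+n}$, taking $n=0$ gives $m^{s,t}\leq h^{s,t}$ unconditionally, and only the reverse inequality needs work.

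For the reverse direction I would exploit two features of the weak-KAM solution $u$. Being a fixed point of $\mathcal{T}$, its time-dependent extension $u(t,\cdot)=\mathcal{T}^tu$ is $1$-periodic in $t$; in particular $u(t+n,\gamma(t))=u(t,\gamma(t))$ for every integer $n\geq 0$. On the other hand, the very definition of the Lax-Oleinik operator provides the pointwise inequality
$$u(\tau,x)\;\leq\; u(\sigma,y)+h^{\sigma,\tau}(y,x) \qquad \text{for all } \sigma<\tau,\; y\in M.$$
Applied at $\sigma=s$, $\tau=t+n$, $y=\gamma(s)$, $x=\gamma(t)$ and combined with the calibration identity \eqref{CalibrationEquation} at times $s<t$, this gives
$$u(s,\gamma(s)) + h^{s,t}(\gamma(s),\gamma(t)) \;=\; u(t,\gamma(t)) \;=\; u(t+n,\gamma(t)) \;\leq\; u(s,\gamma(s)) + h^{s,t+n}(\gamma(s),\gamma(t)).$$
Cancelling $u(s,\gamma(s))$ yields $h^{s,t}(\gamma(s),\gamma(t))\leq h^{s,t+n}(\gamma(s),\gamma(t))$ for every integer $n\geq 0$.

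Passing to the infimum over $n\geq 0$ gives $h^{s,t}(\gamma(s),\gamma(t))\leq m^{s,t}(\gamma(s),\gamma(t))$, and combined with the trivial reverse inequality this produces the required equality, hence the semi-static property of $\gamma$ on $[s,t]$. Since $s<t$ in $I$ were arbitrary, $\gamma$ is semi-static on $I$. There is no genuine technical obstacle here: the argument is a direct manipulation of the calibration identity, the subadditivity built into the Lax-Oleinik formula, and the $1$-periodicity that characterizes weak-KAM solutions.
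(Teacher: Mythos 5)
Your proof is correct and follows essentially the same route as the paper's: both exploit the $1$-periodicity of the weak-KAM solution combined with the Lax-Oleinik inequality $u(\tau,x)\leq u(\sigma,y)+h^{\sigma,\tau}(y,x)$, apply it along $\gamma$ at shifted times $t+n$, cancel $u(s,\gamma(s))$, and take the infimum over integers $n$ to obtain $h^{s,t}(\gamma(s),\gamma(t))\leq m^{s,t}(\gamma(s),\gamma(t))$. Your presentation is actually a bit cleaner in its handling of the $\alpha_0$ bookkeeping (the paper's version has a couple of apparent typos in the $\alpha_0$-correction terms and in one occurrence of $u(s,\gamma(t))$), but the idea and structure are the same.
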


\begin{proof}
	Let $s<t$ be two times in $I$. We have by definition of weak-KAM solutions that for all integer $k$ such that $s \leq t+k$
	\begin{align*}
		u(t,\gamma(t)) & = u(t+k, \gamma(t)) + k.\alpha_0 = \mathcal{T}^{s,t+k} u(s, \gamma(t)) + k.\alpha_0\\
		&\leq u(s, \gamma(s)) + h_0^{s,t+k}(\gamma(s),\gamma(t)) + k.\alpha_0 \\
		&= u(s, \gamma(s)) + h^{s,t+k}(\gamma(s),\gamma(t)) - \alpha_0.(t-s)
	\end{align*}
	so that
	\begin{align*}
		A_{L+\alpha_0}(\gamma_{|[s,t]}) =  A_L(\gamma_{|[s,t]})+  \alpha_0.(t-s) = u(t,\gamma(t)) - u(s, \gamma(t)) + \alpha_0.(t-s) \leq h^{s,t+k}(\gamma(s),\gamma(t))
	\end{align*}
	And taking the infimum over the integers $k$, we get that $A_{L+\alpha_0}(\gamma_{|[s,t]}) = m^{s,t}(\gamma(s),\gamma(t))$ meaning that $\gamma$ is a semi-static curve.
\end{proof}

\begin{prop} \label{ManeMatherInclusion}
	The following inclusion holds
	\begin{equation}
		\tilde{\mathcal{M}} \subset \tilde{\mathcal{N}}
	\end{equation}
\end{prop}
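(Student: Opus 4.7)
The plan is to chain together two propositions that have just been established: Proposition \ref{CalibMather}, which tells us that Lagrangian orbits through points of $\tilde{\mathcal{M}}_0$ are calibrated by every weak-KAM solution, and Proposition \ref{SemiStaticCalib}, which tells us that curves calibrated by a weak-KAM solution are automatically semi-static. The only extra maneuver needed is to pass from the time-zero slice $\tilde{\mathcal{M}}_0$, on which Proposition \ref{CalibMather} is stated, to a general point $(t_0, x, v) \in \tilde{\mathcal{M}}$; this will be handled by the $\Phi_L$-invariance of $\tilde{\mathcal{M}}$ recorded in Remark \ref{MatherInv}.

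Concretely, I would take an arbitrary $\tilde{x} = (t_0, x, v) \in \tilde{\mathcal{M}}$ and set $(y, w) := \phi_L^{t_0, 0}(x, v)$, so that $\Phi_L^{-t_0}(\tilde{x}) = (0, y, w)$ lies in $\tilde{\mathcal{M}}_0$ by flow invariance of the Mather set. Next I would fix a weak-KAM solution, the most convenient choice being $u(t, z) = h^\infty(t, y, z) - \alpha_0 t$, whose existence is guaranteed by item (6) of Proposition \ref{PeierlsProp}. Proposition \ref{CalibMather} applied to $(y, w) \in \tilde{\mathcal{M}}_0$ then asserts that the curve $\gamma(t) := \pi \circ \phi_L^t(y, w)$ is $u$-calibrated on all of $\mathbb{R}$. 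Proposition \ref{SemiStaticCalib} immediately upgrades this to the statement that $\gamma$ is semi-static. Finally, since $\phi_L^{t_0}(y, w) = (x, v)$ by construction, evaluation at $t = t_0$ yields $\gamma(t_0) = x$ and $\dot\gamma(t_0) = v$, hence $\tilde{x} = (t_0, \gamma(t_0), \dot\gamma(t_0)) \in \tilde{\mathcal{N}}$.

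There is no serious obstacle; the inclusion is essentially a two-line corollary of material already on the table. The only item requiring a moment of care is the bookkeeping of the time variable: one must invoke flow invariance to move the reference time from $t_0$ to $0$ before Proposition \ref{CalibMather} can be applied. An equivalent and arguably cleaner presentation would be to first establish the inclusion $\tilde{\mathcal{M}}_0 \subset \tilde{\mathcal{N}}$ directly, and then observe that both $\tilde{\mathcal{M}}$ and $\tilde{\mathcal{N}}$ are invariant under $\Phi_L^\tau$ for every $\tau \in \mathbb{R}$, together with the identity $\tilde{\mathcal{M}} = \bigcup_{\tau \in [0,1)} \Phi_L^\tau(\tilde{\mathcal{M}}_0)$, to propagate the inclusion to the full Mather set.
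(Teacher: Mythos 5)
Your proof is correct and follows essentially the same route as the paper: both arguments reduce to Proposition \ref{CalibMather} to obtain calibration of the Mather orbit by a weak-KAM solution and then deduce semi-staticness, after which membership in $\tilde{\mathcal{N}}$ is immediate. The only (cosmetic) difference is that the paper re-derives the implication ``calibrated by a weak-KAM solution $\Rightarrow$ semi-static'' inline instead of citing Proposition \ref{SemiStaticCalib} as you do, and your handling of the time shift from $\tilde{\mathcal{M}}$ to $\tilde{\mathcal{M}}_0$ via $\Phi_L$-invariance is spelled out a bit more carefully than in the paper's text.
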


\begin{proof}
	Let $(s,\tilde{x})$ be an element of $\tilde{\mathcal{M}}$ and $\gamma : \mathbb{R} \to M$ be the curve of $M$ defines as $\gamma(t) =\pi\circ \phi_L^{s,t}(\tilde{x})$. We need to show that $\gamma$ is semi-static.
	
	Fix a weak-KAM solution $u$. We know from Proposition \ref{CalibMather} that $\gamma$ is calibrated by $u$. Thus, for all integers $q<p$ and $k \geq 1$, and for all curves $\sigma : [q, q+k] \to M$ linking $\gamma(q)$ to $\gamma(p)$, we have
	\begin{align*}
		A_{L+\alpha_0}(\gamma_{|[q,p]}) &= A_L(\gamma_{|[q,p]}) + \alpha_0.(p-q) \\
		&= u(p, \gamma(p)) - u(q, \gamma(q)) + \alpha_0.(p-q) \\
		&= u(q+k, \gamma(p)) + \alpha_0.(q+k-p) - u(q, \gamma(q)) + \alpha_0.(p-q) \\
		&= u(q+k, \gamma(p)) - u(q, \gamma(q)) + \alpha_0.k \\
		&\leq A_L(\sigma) + \alpha_0.k = A_{L+\alpha_0}(\sigma)
	\end{align*}
	where we used the definition of weak-KAM solutions in the second line. Taking the infimum over such curves $\sigma$ while varying $k$, this implies that
	\begin{align*}
		A_L(\gamma_{|[q,p]}) + \alpha_0.(p-q)\leq m(\gamma(q), \gamma(p)) \leq A_L(\gamma_{|[q,p]}) + \alpha_0.(p-q)
	\end{align*}
	We deduce equality everywhere. And since the integers $p$ and $q$ are arbitrary, we conclude from the last point of Remark \ref{SemiStaticRemark} that the curve $\gamma$ is semi-static and $(s,\gamma(s), \dot{\gamma}(s)) = (s, \tilde{x})$ belongs to the Mañé set $\tilde{\mathcal{N}}$.
\end{proof}

We devote the rest of this subsection to understanding the dynamical behaviour of the semi-static curves in the Mañé set $\tilde{\mathcal{N}}$. To achieve this, we need to introduce additional concepts following \cite{MR1479499} and \cite{MR1720372}.

\begin{defi} \label{DefinitionStaticClasses}
	\begin{enumerate} 
		\item We define the semi-distance $d_1: \mathcal{M}_0 \times \mathcal{M}_0 \to \mathbb{R}_{\geq 0}$ as
		\begin{equation}
		d_1(x,y) = h^{\infty}(x, y) + h^{\infty}(y, x)
		\end{equation}
		And set $\sim$ to be the equivalence relation on $\mathcal{M}_0$ given by
		\begin{equation}
		x \sim y \Longleftrightarrow d_1(x,y) = 0
		\end{equation}
		The \textit{static classes} are the equivalence classes of the equivalence relation $\sim$. We denote by $\mathbb{M}$ the set of static classes.\\
		
		\item For a curve $\gamma : \mathbb{R} \to M$, we define its integer-time $\alpha$-limit set $\alpha_k(\gamma)$ and its integer-time $\omega$-limit set $\omega_k(\gamma)$ by 
		\begin{equation} \label{AlphaOmega1}
			 	\begin{split}
					\alpha_k(\gamma) &= \{ z \in M \; | \; \exists (q_n)_n \in \mathbb{N}^\mathbb{N}; \; \lim_n q_n = +\infty, \; \lim_n \gamma(-k.q _n) = z \} \\
					\omega_k(\gamma) &= \{ z \in M \; | \; \exists (q_n)_n \in \mathbb{N}^\mathbb{N}; \; \lim_n q_n = +\infty, \; \lim_n \gamma(k.q_n) = z \}
				\end{split}
		\end{equation}
		
		 Then, we set a partial order $\preceq$ on $\mathbb{M}$ given by
		\begin{enumerate}[label=\roman*.]
			\item $\preceq$ is reflexive and transitive.
			\item For all $x$ and $y$ in $\mathbb{M}$, if there exists a curve $\gamma$ with lift $(\gamma, \dot{\gamma})$ in the Mañé set $\tilde{\mathcal{N}}$ such that its integer $\alpha$-limit set $\alpha_1(\gamma) \subset x$ and its integer $\omega$-limit set $\omega_1(\gamma) \subset y$, then $x \preceq y$.
		\end{enumerate}
	\end{enumerate}	
\end{defi}

\begin{rem} \label{PositivityPseudoDistance}
	Note that the semi-distance $d_1$ is non-negative thanks to the triangular inequality property (\ref{TriangInegPeierls}) and the non-negativity Property \ref{PeierlsPositivity} of Proposition \ref{PeierlsProp}. Indeed, for all $x$ and $y$ in $\mathcal{M}_0$,
	\begin{align*}
		d_1(x,y) = h^{\infty}(x, y) + h^{\infty}(y, x) \geq h^{\infty}(x, x) \geq 0
	\end{align*}
	
	It is a distance when restricted to the set $\mathbb{M}$.
\end{rem}

\begin{prop} \label{StaticClassesProperties}
	If $x$ and $y$ are two point of the Peierls set $\mathcal{A}_0$ with the same static class in $\mathbb{M}$, then for all point $z \in M$, 
	\begin{equation}
		h^\infty(x,z) = h^\infty(x,y) + h^\infty(y,z)
	\end{equation}
\end{prop}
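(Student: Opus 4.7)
The plan is to verify the identity by a double application of the triangular inequality for $h^\infty$, leveraging the definition of the static class $\sim$.

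First, I would translate the hypothesis. Since $x$ and $y$ lie in the same static class, Definition \ref{DefinitionStaticClasses} gives
\begin{equation*}
    d_1(x,y) = h^\infty(x,y) + h^\infty(y,x) = 0,
\end{equation*}
so that $h^\infty(x,y) = - h^\infty(y,x)$. This key algebraic identity will let one inequality be flipped into the other.

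Next, I would obtain the upper bound by the triangular inequality (\ref{TriangInegPeierls}) applied through the intermediate point $y$:
\begin{equation*}
    h^\infty(x,z) \leq h^\infty(x,y) + h^\infty(y,z).
\end{equation*}
For the matching lower bound, I would apply (\ref{TriangInegPeierls}) instead to the triple $(y,x,z)$:
\begin{equation*}
    h^\infty(y,z) \leq h^\infty(y,x) + h^\infty(x,z).
\end{equation*}
Rearranging this and substituting $h^\infty(y,x) = -h^\infty(x,y)$ yields
\begin{equation*}
    h^\infty(x,z) \geq h^\infty(y,z) - h^\infty(y,x) = h^\infty(x,y) + h^\infty(y,z),
\end{equation*}
which is the reverse inequality.

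There is no serious obstacle: the result is essentially a tautology once one notices that the vanishing of the pseudo-distance $d_1$ makes $h^\infty(x,y)$ and $h^\infty(y,x)$ exact opposites, so the two triangular inequalities become tight simultaneously. The only mild subtlety is that the definition of $\sim$ was phrased on $\mathcal{M}_0$ while the statement is on $\mathcal{A}_0$, but since $d_1$ is defined by the same formula on any pair of points where $h^\infty$ is finite (which is everywhere, by Property $1$ of Proposition \ref{PeierlsProp}), the above argument goes through unchanged. Combining the two inequalities gives equality and completes the proof.
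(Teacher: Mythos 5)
Your proof is correct and follows essentially the same route as the paper: a double application of the triangular inequality (\ref{TriangInegPeierls}), using the hypothesis $d_1(x,y)=0$. The paper chains both applications into a single string of inequalities ending in $d_1(x,y) + h^\infty(x,z) = h^\infty(x,z)$ and concludes equality everywhere, whereas you make the cancellation $h^\infty(x,y) = -h^\infty(y,x)$ explicit to produce the lower bound directly, but this is cosmetic rather than a genuinely different argument.
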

\begin{proof}
	This is no more than an application of the triangular inequality (\ref{TriangInegPeierls}) as follows
	\begin{align*}
		h^\infty(x,z) &\leq h^\infty(x,y) + h^\infty(y,z) \\
		& \leq h^\infty(x,y) + h^\infty(y,x) + h^\infty(x,z)\\
		& = d_1(x,y) + h^\infty(x,z) = h^\infty(x,z)
	\end{align*}
	So, there is equality everywhere.
\end{proof}

The following proposition motivates the definition of the partial order $\preceq$.

\begin{prop} \label{SemiStaticLimitInclusions}
	Let $\gamma : I \to M$ be a semi-static curve. Then, whenever the $\alpha$ and $\omega$-limit sets are well-defined for the interval $I$, there exist static classes $\bar{x}$ and $\bar{y}$ in $\mathbb{M}$ such that 
	\begin{equation}
		\alpha_1(\gamma) \subset \bar{x} \subset \mathcal{A}_0 \quad \text{and} \quad \omega_1(\gamma) \subset \bar{y} \subset \mathcal{A}_0 
	\end{equation}
\end{prop}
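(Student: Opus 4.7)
The plan is to establish two claims for $\omega_1(\gamma)$: (a) every $y \in \omega_1(\gamma)$ satisfies $h^\infty(y,y) = 0$, so $y \in \mathcal{A}_0$; and (b) any $y, z \in \omega_1(\gamma)$ satisfy $d_1(y,z) = 0$, so they lie in a common static class $\bar{y} \in \mathbb{M}$. The statement for $\alpha_1(\gamma)$ will follow by applying the same argument to $\tilde\gamma(t) = \gamma(-t)$ in the time-reversed system. The non-negativities $h^\infty(y,y) \geq 0$ and $d_1(y,z) \geq 0$ are already given by Property 7 of Proposition \ref{PeierlsProp} and Remark \ref{PositivityPseudoDistance}, so the whole effort goes into proving the matching upper bounds.

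For (a), I exploit a pigeonhole/averaging argument on the orbit of $\gamma$. For each $N$, pick $N+1$ integer times $t_0^N < \cdots < t_N^N$ with $d(\gamma(t_i^N), y) \leq \delta_N \to 0$ and with gaps $t_{i+1}^N - t_i^N \to \infty$; this is possible since $y$ is an integer-time accumulation point that admits arbitrarily large return-time gaps. Since $\gamma$ is semi-static hence minimizing, the total action equals $h^{t_N^N - t_0^N}(\gamma(t_0^N), \gamma(t_N^N))$. The elementary inequality $m^{s,t} \leq h^{s,\infty+t}$—which follows from $m^{s,t} = \inf_{k \geq 0} h^{s,t+k} \leq \liminf_k h^{s,t+k}$—combined with the Lipschitz regularity of $h^\infty$ (Property 2) bounds this total above by $h^\infty(y,y) + 2\kappa\delta_N$. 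Averaging the $N$ subpieces $A_i^N := A_{L+\alpha_0}(\gamma_{|[t_i^N, t_{i+1}^N]})$, together with the weak-KAM lower bound $A_i^N \geq h^\infty(y, \gamma(t_{i+1}^N)) - h^\infty(y, \gamma(t_i^N)) \geq -2\kappa\delta_N$ applied with the weak-KAM solution $h^\infty(y,\cdot)$ (Property 6), delivers an index $i_N$ such that $A_{i_N}^N \to 0$ while the corresponding time-gap $T_N := t_{i_N+1}^N - t_{i_N}^N$ diverges. Plugging this into the Liminf Property (Proposition \ref{PeierlsProp}(3)) with sequences $x_n = \gamma(t_{i_N}^N)$ and $y_n = \gamma(t_{i_N+1}^N)$ at the indices $n = T_N$, and using $h^{T_N}(x_{T_N}, y_{T_N}) = A_{i_N}^N$ by minimality of $\gamma$, forces $h^\infty(y,y) \leq 0$.

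For (b), I rerun the construction of (a) with an alternating pattern: pick integer times $t_0^N < s_0^N < t_1^N < \cdots < t_N^N$ with $\gamma(t_i^N)$ within $\delta_N$ of $y$ and $\gamma(s_i^N)$ within $\delta_N$ of $z$. By (a), the total action is now bounded by $h^\infty(y,y) + O(\delta_N) = O(\delta_N)$. Splitting each cycle $\gamma_{|[t_i^N, t_{i+1}^N]}$ into its $y \to z$ and $z \to y$ subpieces and applying the weak-KAM lower bound separately with $h^\infty(y,\cdot)$ and $h^\infty(z,\cdot)$—exploiting $h^\infty(y,y) = h^\infty(z,z) = 0$ from (a)—yields the per-cycle bound $A_i^{y \to z} + A_i^{z \to y} \geq d_1(y,z) - O(\delta_N)$. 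Summing over the $N$ cycles and comparing with the total bound gives $N \cdot d_1(y,z) \leq O(N\delta_N)$, hence $d_1(y,z) \leq O(\delta_N) \to 0$.

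The main obstacle is the simultaneous control required in (a): one must extract a subsequence of orbit pieces with (i) action tending to zero, (ii) time-length diverging to infinity so that the Liminf Property applies at integer times tending to infinity, and (iii) endpoints converging to $y$. Once this tripartite control is secured, (b) reduces to a straightforward averaging argument that leverages the value $h^\infty(y,y) = 0$ just established in (a) to force $d_1$-collapse.
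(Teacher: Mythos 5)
Your proof is correct, but the route is genuinely different from the paper's. Both arguments establish the same two facts for a point $y \in \omega_1(\gamma)$ — namely $h^\infty(y,y)=0$ and $d_1(y,z)=0$ for a second point $z$ — but the mechanisms differ. The paper's proof uses the A Priori Compactness Theorem to extract a $C^1$-limit of translated orbit pieces $\gamma(-q_n + \cdot)$, producing a short calibrated curve $\sigma_z:[0,s]\to M$ emanating from $z$; it then shows $h^{\infty+t}(z,\sigma_z(t)) = m^t(z,\sigma_z(t))$ by invoking the continuity of the Mañé potential $m^t$ \emph{away from the diagonal}, and finally sends $t \to 0$. Your proof never constructs limit curves: it is a pigeonhole/averaging argument at integer times. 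You bound the total semi-static action over a long stretch above by $h^\infty(y,y) + O(\delta)$ via $m \leq h^\infty$ and Lipschitz continuity, bound each sub-interval below by $-O(\delta)$ via the weak-KAM domination of $h^\infty(y,\cdot)$ (which is a clean consequence of the triangular inequality \eqref{TriangInegPeierls2} at integer times), and extract by averaging a sub-piece whose action tends to $0$ while its time-length diverges; feeding this into the Liminf Property \eqref{PeierlsLiminf} then forces $h^\infty(y,y) \leq 0$. The $d_1$-collapse is a second averaging pass exploiting $h^\infty(y,y)=h^\infty(z,z)=0$. Your route buys independence from A Priori Compactness and sidesteps the delicate discontinuity of $m$ at the diagonal, working entirely with the (globally Lipschitz) potentials $h^n$ and $h^\infty$; the cost is managing the combinatorics of return times. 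The time-reversal reduction for $\alpha_1(\gamma)$ is sound: the reversed Lagrangian $\tilde L(t,x,v)=L(-t,x,-v)$ is Tonelli and $1$-periodic, preserves semi-staticity, satisfies $\tilde h^\infty(x,y)=h^\infty(y,x)$, and sends $\alpha_1(\gamma)$ to $\omega_1(\tilde\gamma)$, so it inherits both conclusions.
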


\begin{proof}
	we only prove the first inclusion. Assume that $I$ is unbounded in the negative direction. Let $z$ be an element of $\alpha_1(\gamma)$ and let $q_n$ be an increasing sequence of positive integers such that $\lim\limits_n q_{n+1}-q_n = +\infty$ and $\gamma(-q_n)$ converges to $z$. We first prove that $z$ belongs to $\mathcal{A}_0$. Fix a small time $s>0$. By the A priori compactness Theorem \ref{APrioriCompactness}, we know that the curves $\gamma_n(t) = \gamma(-q_n +t)$ are bounded in the $C^1$-topology. Thus, we can assume, up to extraction, that they $C^1$-converge on the interval $[0,s]$ to a curve $\sigma_z : [0,s] \to M$ with $\sigma_z(0) = z$. Let us evaluate $h^{\infty+t}(z, \sigma_z(t))$. The regularity Proposition \ref{Regularity} shows that for all integers $n \geq 0$
	\begin{equation*}
		|h^{q_{n+1}-q_n+t}(z,\sigma_z(t)) - h^{q_{n+1}-q_n+t}(\gamma(t-q_{n+1}), \gamma(-q_{n}))|  \leq \kappa_1. [d(z, \gamma(t-q_{n+1})) + d(\sigma_z(t), \gamma(-q_{n}))]
	\end{equation*}
	which implies that
	\begin{align*}
		\liminf_n h^{q_{n+1}-q_n+t}(z,\sigma_z(t)) = \liminf_n h^{q_{n+1}-q_n+t}(\gamma(-q_{n+1}), \gamma(t-q_{n}))
	\end{align*}
	Using the semi-static behaviour of the curve $\gamma$, we get the following
	\begin{align*}
		h^{\infty+t} (z,\sigma_z(t)) & \leq \liminf_n h^{q_{n+1}-q_n+t}(z,\sigma_z(t)) = \liminf_n h^{q_{n+1}-q_n+t}(\gamma(-q_{n+1}), \gamma(t-q_{n})) \\
		&= \liminf_n A_L( \gamma_{|[-q_{n+1},t-q_n]}) = \liminf_n m^t(\gamma(-q_{n+1}), \gamma(t-q_{n})) = m^t(z,\sigma_z(t)) 
	\end{align*}
	where the last equality is due to the continuity of the Mañé potential $m^t$ stated in Proposition \ref{RegularityManePotential}. And since $m^t(z,\sigma_z(t)) \leq h^{\infty+t} (z,\sigma_z(t))$, we deduce the equality $m^t(z,\sigma_z(t)) = h^{\infty+t} (z,\sigma_z(t))$ for all times $t \in (0,s]$. Taking the time $t$ to zero, we get
	\begin{equation} 
		h^{\infty} (z,z) = \lim_{t \to 0} h^{\infty + t} (z, \sigma_z(t) ) = \lim_{t \to 0} m^t(z,\sigma_z(t)) \leq  \lim_{t \to 0} A_L(\sigma_{z|[0,t]}) =0
	\end{equation}
	where the first limit is due to the uniform continuity of $h^\infty$ on the graph of $(t,\sigma_z(t))$ for $t \in [0,s]$. Hence, we deduce from the non-negativity Property \ref{PeierlsPositivity} of Proposition \ref{PeierlsProp} that $h^{\infty} (z,z)=0$ meaning that $z$ belongs to the Peierls set $\mathcal{A}_0$.\\
	
	Now we show that the $\alpha$-limit set $\alpha_1(\gamma)$ belongs to a unique static class. Let $x$ and $z$ be two points of $\alpha_1(\gamma)$. Let $q_n$ and $p_n$ be two intertwined increasing sequences of integers such that $\gamma(-q_n)$ converges to $x$, $\gamma(-p_n)$ converges to $z$, for all $n \geq 0$, $q_n < p_n < q_{n+1}$ and $\lim\limits_n q_{n+1}-p_n = \lim\limits_n p_n- q_n = +\infty$. As done above, we construct two curves $\sigma_z$ and $\sigma_x : [0,s] \to M$ and we prove that
	\begin{align*}
		h^{\infty + t}(x, \sigma_z(t)) = m^t(x, \sigma_z(t)) \quad \text{and} \quad  h^{t,\infty+2t}(\sigma_z(t), \sigma_x(2t)) = m^{t,2t}(\sigma_z(t), \sigma_x(2t))
	\end{align*}		
	Thus, we get
	\begin{align*}
		h^{\infty + t}(x, \sigma_z(t)) + h^{t,\infty+2t}(\sigma_z(t), \sigma_x(2t)) &= m^t(x, \sigma_z(t)) + m^{t,2t}(\sigma_z(t), \sigma_x(2t)) \\
		&= \lim_n m^t\big(\gamma(-q_{n+1}), \gamma(t-p_n)\big) + m^{t,2t}\big(\gamma(t-p_n), \gamma(2t-q_n)\big) \\
		&= \lim_n A_{L+\alpha_0}(\gamma_{|[-q_{n+1}, t-p_n]}) + \lim_n A_{L+\alpha_0}(\gamma_{|[t-p_n, 2t -q_n]}) \\
		&= \lim_n A_{L+\alpha_0}(\gamma_{|[-q_{n+1}, 2t-q_n]}) =\lim_n m^{2t}\big(\gamma(-q_{n+1}), \gamma(2t-q_n)\big) \\
		&= m^{2t}(x, \sigma_x(2t))
	\end{align*}
	And taking $t$ to $0$, we finally get
	\begin{align*}
		d_1(x,z) = h^\infty(x,z) + h^\infty(z,x) & = \lim_{t \to 0} h^{\infty + t}(x, \sigma_z(t)) + h^{t,\infty+2t}(\sigma_z(t), \sigma_x(2t)) \\
		&=\lim_{t \to 0} m^{2t}(x, \sigma_x(2t)) \leq \lim_{t \to 0} A_{L+\alpha_0}(\sigma_{x|[0,2t]}) =0
	\end{align*}
	However, we already know from Remark \ref{PositivityPseudoDistance} that $d_1(x,z) \geq 0$. We obtain $d_1(x,z) =0$ and the two points $x$ and $z$ belong to the same static class $\bar{x}$.
\end{proof}

\begin{prop} \label{CalibMane}
	Let $\gamma : \mathbb{R} \to M$ be a semi-static curve on $M$ and let $\bar{x}$ be the static class of $\alpha_1(\gamma)$. Then for all point $x$ of $\bar{x}$, the curve $\gamma$ is calibrated by the weak-KAM solution $h^\infty(x, \cdot)$.
\end{prop}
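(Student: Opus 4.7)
The plan is to reduce calibration at any $x \in \bar{x}$ to calibration at a specific $x' \in \alpha_1(\gamma)$, using that for two points in the same static class, the two weak-KAM solutions $h^{\infty+t}(x, \cdot)$ and $h^{\infty+t}(x', \cdot)$ differ only by an additive constant. The calibration at $x'$ is the substantive part: I would exploit an integer sequence $q_n \nearrow +\infty$ with $\gamma(-q_n) \to x'$ and use the Mañé potential $m^{-q_n, t}$ as a continuous, monotonically convergent proxy for the Peierls barrier $h^{\infty+t}$.

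For the calibration at $x'$, I would fix $s < t$ and $n$ large enough that $-q_n < s$. Semi-staticity of $\gamma$ combined with Remark \ref{SemiStaticRemark} (item 3) and additivity of the action gives
\begin{equation*}
m^{-q_n, t}(\gamma(-q_n), \gamma(t)) = m^{-q_n, s}(\gamma(-q_n), \gamma(s)) + h^{s, t}(\gamma(s), \gamma(t)).
\end{equation*}
By the $1$-time-periodicity of $L$ and the definition of the Mañé potential, $m^{-q_n, \tau}(x', \gamma(\tau)) = \inf_{j \geq q_n,\, j \in \mathbb{N}} h^{\tau + j}(x', \gamma(\tau))$ for $\tau \in \{s,t\}$, which monotonically increases to $h^{\infty+\tau}(x', \gamma(\tau))$ as $n \to \infty$; the Lipschitz estimate of Proposition \ref{RegularityManePotential} absorbs the discrepancy caused by $|\gamma(-q_n) - x'| \to 0$. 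Passing to the limit yields the calibration equation $h^{\infty+t}(x', \gamma(t)) = h^{\infty+s}(x', \gamma(s)) + h^{s,t}(\gamma(s), \gamma(t))$, that is, the calibration of $\gamma$ by $u(t,y) = h^{\infty+t}(x', y)$.

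To transfer to arbitrary $x \in \bar{x}$, I would combine the two Peierls triangular inequalities of Proposition \ref{PeierlsProp} (item 5), choosing an integer intermediate time so that $h^{k, \infty + t}(\cdot, \cdot) = h^{\infty + t}(\cdot, \cdot)$ by $1$-periodicity, to derive $h^{\infty+t}(x, y) \leq h^\infty(x, x') + h^{\infty+t}(x', y)$ and its symmetric counterpart. The relation $h^\infty(x, x') + h^\infty(x', x) = d_1(x, x') = 0$ then forces $h^{\infty+t}(x, y) = h^{\infty+t}(x', y) + h^\infty(x, x')$, and the constant $h^\infty(x, x')$ cancels when subtracting the identity at $(s, \gamma(s))$ from that at $(t, \gamma(t))$. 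Hence the calibration at $x'$ transfers directly to $x$.

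The main difficulty lies in the passage to the limit in the first step: since $h^{\infty+t}$ is only defined as a $\liminf$, one cannot simply take a $\liminf$ along the prescribed subsequence $\{q_n\}$ of the identity $h^{-q_n,t}(\gamma(-q_n), \gamma(t)) = h^{-q_n,s}(\gamma(-q_n), \gamma(s)) + h^{s,t}(\gamma(s), \gamma(t))$ and hope to recover $h^{\infty+t}(x', \gamma(t))$ on both sides. Replacing the barriers $h^{-q_n,\tau}$ by the Mañé potentials $m^{-q_n,\tau}$, which are continuous and genuinely monotone-convergent to $h^{\infty+\tau}$, is the key device that makes the limit go through without losing information.
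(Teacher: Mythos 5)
Your proof is correct and runs parallel to the paper's, sharing the same skeleton: apply semi-staticity at $s = -q_n$, take the limit in $n$, identify the Peierls barrier, and transfer to the static class. The genuine difference lies in the limit step. The paper writes the semi-static identity with the fixed time-zero potential $m^t := m^{0,t}$, takes the limit in $n$ by continuity of $m^t$ (which requires a separate patch when $s$ or $t \equiv 0$ in $\mathbb{T}^1$), and only afterwards proves $m^t(x,\cdot) = h^{\infty+t}(x,\cdot)$ via the triangular inequality and $h^\infty(x,x) = 0$. You instead keep the variable potential $m^{-q_n, \tau}$, which, reading the defining formula $m^{s,t} = \inf_{j \geq 0} h^{s,t+j}$ literally at $s = -q_n$ and using $1$-periodicity of $L$, is precisely the tail infimum $\inf_{j \geq q_n} h^{\tau + j}(\cdot, \cdot)$; this is monotone in $n$ and converges, by definition of $\liminf$, to $h^{\infty + \tau}$. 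This is a clean device: it hands you the Peierls barrier directly and eliminates the paper's separate $m = h^\infty$ identification, and the minimum time span $\tau + q_n$, being bounded away from zero as $n$ grows, places $m^{-q_n,\tau}$ uniformly inside the Lipschitz regime of Proposition \ref{RegularityManePotential}, so the paper's edge-case argument is never triggered. Your explicit derivation of the time-dependent static-class identity $h^{\infty+t}(x,y) = h^\infty(x,x') + h^{\infty+t}(x',y)$ from the two triangular inequalities of Proposition \ref{PeierlsProp} is also the precise form required for the transfer; it makes explicit what the paper leaves implicit by citing the time-independent Proposition \ref{StaticClassesProperties}.
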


\begin{proof} 
	Fix two real times $s<t$. Let us first assume that $x$ belongs to $\alpha_1(\gamma)$. Then there exists an increasing sequence of integers $q_n$ such that $\gamma(-q_n)$ converges to $x$. We have from the definition of semi-static curves that
	\begin{align*}
		m^t(\gamma(-q_n), \gamma(t)) &= A_{L+\alpha_0}(\gamma_{|[-q_n,t]})= A_{L+\alpha_0}(\gamma_{|[-q_n,s]}) + A_{L+\alpha_0}(\gamma_{|[s,t]})  \\
		&= m^s(\gamma(-q_n), \gamma(s)) + A_{L+\alpha_0}(\gamma_{|[s,t]})
	\end{align*}
	In order to take the limit, we need to make sure that we work in the domain of continuity of the Mañé potential $m^{s,t}$. It suffices to assume that $s$ and $t \neq 0$ in $\mathbb{T}^1 = \mathbb{R} / \mathbb{Z}$. Then we can take the limit on $n$ to get
	\begin{align*}
		m^t(x,\gamma(t)) = m^s(x, \gamma(s)) + A_{L+\alpha_0}(\gamma_{|[s,t]})
	\end{align*}

	We need to replace $m^t$ by $h^{\infty+t}$. To do so, remark that for all integer $n \geq 0$, we have from the triangular inequality (\ref{TriangInegPeierls2}) of the Peierls Barrier that
	\begin{align*}
		h^{\infty+t}(x,\gamma(t)) = h^{\infty+t+n}(x,\gamma(t)) \leq h^\infty(x,x) + h^{t+n}(x,\gamma(t))=h^{t+n}(x,\gamma(t))
	\end{align*}
	where we used the inclusion $x \in \mathcal{A}_0$ derived from Proposition \ref{SemiStaticLimitInclusions}. And taking the infimum over $n$, we get the inequality $h^{\infty+t}(x,\gamma(t)) \leq m^t(x, \gamma(t))$. And since the inverse inequality is immediate, we deduce the equality $h^{\infty+t}(x,\gamma(t)) = m^t(x, \gamma(t))$. Doing the same for $s$, we finally get
	\begin{align*}
		h^{\infty+t}(x,\gamma(t)) = h^{\infty+s}(x, \gamma(s)) + A_{L+\alpha_0}(\gamma_{|[s,t]})
	\end{align*}
	Now if for example $s=0$ in $\mathbb{T}^1 = \mathbb{R} / \mathbb{Z}$ and $\gamma(s) = x$, we take $s'>s$ and we get $h^{\infty+s'}(x,\gamma(t)) = m^{s'}(x, \gamma(s'))$. The regularity of the Peierls barrier $h^\infty$ allows to take the limit $s' \to s$ and conclude.
	
	The extension of the result to the static class $\bar{x}$ is an immediate application of Proposition \ref{StaticClassesProperties}.
\end{proof}

\begin{rem}
	Gathering Propositions \ref{SemiStaticCalib} and \ref{CalibMane}, we infer the following characterization of semi-static curves : a curve is semi-static if and only if it is calibrated by a weak-KAM solution.
\end{rem}

The following theorem is an adaptation in the non-autonomous case of a result proved in Section 3.11 of \cite{MR1720372}. It can also be derived from \cite{MR2393423} which covers the time-dependant framework.

\begin{theo} \label{ManeChainTransitivity}
	If the set of static classes $\mathbb{M}$ is finite, then for all $x$ and $y$ in $\mathbb{M}$ we have $x \preceq y$.
\end{theo}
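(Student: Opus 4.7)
The plan is to construct, given classes $\bar{x}, \bar{y}\in\mathbb{M}$, a finite chain of biinfinite semi-static curves $\gamma_{0},\dots,\gamma_{k-1}:\mathbb{R}\to M$ whose endpoint classes overlap: $\alpha_{1}(\gamma_{0})\subset\bar{x}$, $\omega_{1}(\gamma_{k-1})\subset\bar{y}$, and $\omega_{1}(\gamma_{i})$ and $\alpha_{1}(\gamma_{i+1})$ lie in a common static class $\bar{c}_{i+1}$. Each $\gamma_{i}$ then witnesses $\bar{c}_{i}\preceq\bar{c}_{i+1}$ along the chain $\bar{x}=\bar{c}_{0},\bar{c}_{1},\dots,\bar{c}_{k}=\bar{y}$, and the transitivity of $\preceq$ yields the desired $\bar{x}\preceq\bar{y}$. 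The hypothesis that $\mathbb{M}$ is finite will enter through a pigeonhole extraction on the sequence of static classes ``shadowed'' by minimizing curves of increasing duration.

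First I would fix $x_{0}\in\bar{x}$ and $y_{0}\in\bar{y}$ and use Tonelli's Theorem \ref{TonelliTheorem} together with the definition of $h^{\infty}$ to produce minimizing curves $\sigma_{n}:[0,T_{n}]\to M$ joining $x_0$ to $y_0$ with $T_{n}\to\infty$ and $A_{L+\alpha_{0}}(\sigma_{n})\to h^{\infty}(x_{0},y_{0})$. By A Priori Compactness (Theorem \ref{APrioriCompactness}) these curves are uniformly bounded in $C^{1}$. Because the total action $A_{L+\alpha_0}(\sigma_n)$ stays bounded while $T_n\to\infty$, the density $L+\alpha_0$ must approach its infimum along $\sigma_n$, forcing each $\sigma_n$ to spend most of its time in a small neighbourhood of $\mathcal{A}_0$, hence near the classes of $\mathbb{M}$. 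Using the finiteness of $\mathbb{M}$, I would then extract a subsequence so that every $\sigma_{n}$ shadows the same ordered finite list of classes $\bar{x}=\bar{c}_{0},\bar{c}_{1},\dots,\bar{c}_{k}=\bar{y}$, spending a time $\to\infty$ in a neighbourhood of each $\bar{c}_{i}$ and transiting from one to the next in uniformly bounded time (the latter again by A Priori Compactness applied to minimizing transition segments). For each consecutive pair $(\bar{c}_{i},\bar{c}_{i+1})$, I would reparameterize $\sigma_n$ so that its transition occurs near $t=0$ and extract, using Theorem \ref{APrioriCompactness} once more, a $C^{1}_{\mathrm{loc}}$-limit $\gamma_{i}:\mathbb{R}\to M$. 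Lower semicontinuity of the Mañé potential (Proposition \ref{RegularityManePotential}), together with the convergence $A_{L+\alpha_0}(\sigma_n)\to h^\infty(x_0,y_0)$ and the additive decomposition of action along $\sigma_n$, forces $\gamma_i$ to be semi-static, and Proposition \ref{SemiStaticLimitInclusions} then places $\alpha_{1}(\gamma_{i})\subset\bar{c}_{i}$ and $\omega_{1}(\gamma_{i})\subset\bar{c}_{i+1}$. Chaining along $i=0,\dots,k-1$ and invoking transitivity of $\preceq$ concludes.

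The main obstacle is the pigeonhole/extraction step: one must show that the family $\sigma_{n}$ can actually be organised to shadow a single ordered sequence of static classes, with transitions between consecutive classes taking uniformly bounded time. This is precisely where the finiteness of $\mathbb{M}$ is essential, for with infinitely many classes a diagonal extraction could fail to capture a transition between two prescribed classes. A related subtlety is to ensure that the reparameterizations are centred on genuine \emph{transitions}, rather than on interior times in a single class (which would yield a trivial limit staying in one class); this can be enforced by fixing small open neighbourhoods of each $\bar{c}_i$ in $M$, defining transition times as first-exit from one neighbourhood and first-entry into the next, and showing that these transition times are well-separated and occur in a controlled order along $\sigma_n$.
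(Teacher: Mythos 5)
The paper itself does not prove Theorem \ref{ManeChainTransitivity}; it only cites \cite{MR1720372} (Section 3.11) and \cite{MR2393423}. So there is no in-paper proof to compare against. Your shadowing strategy --- extract semi-static transitions from Peierls-minimizing arcs of increasing duration --- is indeed the strategy used in those references, and the chaining/transitivity conclusion is the right endgame. But as written the sketch has two genuine gaps.

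First, the justification that $\sigma_n$ accumulates on $\mathcal{A}_0$ is not correct as stated. "The density $L+\alpha_0$ must approach its infimum along $\sigma_n$" does not give what you want: for a general Tonelli Lagrangian the pointwise infimum of $L+\alpha_0$ is strictly negative, and even for the Mañé Lagrangians of this paper (where $L\geq 0$, $\alpha_0=0$) the zero-set of $L$ is the entire graph $\{v=X_t(x)\}$, far larger than $\mathcal{A}_0$. What does work is the occupation-measure argument: the normalized occupation measures of $\sigma_n$ on $\mathbb{T}^1\times TM$ subconverge to a $\Phi_L$-invariant probability measure with $\int L\,d\mu = \lim T_n^{-1}A_L(\sigma_n) = -\alpha_0$, hence a minimizing measure supported on $\tilde{\mathcal{M}}\subset\tilde{\mathcal{A}}$; this is what yields the shadowing of $\mathcal{M}_0$. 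You should replace the infimum-of-the-density reasoning by this.

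Second, the pigeonhole step as you describe it does not directly go through even with $\mathbb{M}$ finite, because the ordered list of shadowed classes along $\sigma_n$ need not have length bounded uniformly in $n$: the curve may make many cheap excursions between neighbourhoods of classes, so there is no common "ordered finite list" to extract. The fix is combinatorial rather than a raw pigeonhole on lists. Form the directed graph on $\mathbb{M}$ whose edges are the pairs $(\bar{c},\bar{c}')$ of distinct classes for which the transition from $\bar{U}_{\bar{c}}$ to $\bar{U}_{\bar{c}'}$ is realized by $\sigma_n$ infinitely often; a reachability/frontier argument (if $\bar{y}$ were not reachable from $\bar{x}$ in this graph, every $\sigma_n$ would have to cross the frontier between reachable and non-reachable classes, and by finiteness one such crossing pair would recur infinitely often, contradicting non-reachability) shows $\bar{y}$ is reachable from $\bar{x}$, and for each edge along a path one then centers the reparametrization on that transition and extracts a semi-static $C^1_{\mathrm{loc}}$-limit exactly as you describe. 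With that repair, the shortcut argument for semi-staticity, the uniform boundedness of the transit time (itself proved by contradiction using Proposition \ref{SemiStaticLimitInclusions} applied to a limiting semi-static arc that would otherwise avoid all class neighbourhoods), and the transitivity of $\preceq$ close the proof.
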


\begin{rem}
	The general result true even in the case where $\mathbb{M}$ is infinite is the chain-transitivity of the Mañé set $\tilde{\mathcal{N}}$. However, we will need in Section \ref{RegSection} the specific behaviour of the semi-static curves between the static classes.	
\end{rem}

\section{Construction of the \Mane Hamiltonian} \label{SectionManeHamiltonian}

In this section, we construct a Tonelli Hamiltonian $H : \mathbb{T}^1 \times T^*M \to \mathbb{R}$ whose Hamilton-Jacobi equation admits a smooth, recurrent, non-periodic viscosity solution. This construction is inspired by Mañé's approach in the appendix of \cite{MR1166538}.\\

The idea of the construction involves creating regions in $M$ where a selected viscosity solution is periodic with a prescribed minimal period. Taking these periods to infinity will make the solution non-periodic, and controlling the size of the regions facilitates the proof of the recurrence.\\

We will also highlight certain symmetries imposed on the Hamiltonian vector field, which are crucial for ensuring the existence of regular, recurrent, non-periodic viscosity solutions, as will be shown in Section \ref{RegSection}.\\

In the first subsection, we present the construction of the Hamiltonian, and in the second, we analyze the behavior of the $n$-Peierls barriers $h^{n\infty}$ (see Definition \ref{nPeierlsDefi}), which will be essential for building periodic viscosity solutions with prescribed minimal periods.\\

\subsection{Construction of the Lagrangian $L$} \label{SectionConstruction}

We construct the Lagrangian $L$ following the examples constructed by Mañé in the appendix of \cite{MR1166538}. We will designate these Lagrangians as Mañé Lagrangians.\\

The idea is to select the desired dynamics of $f_t = \phi^t_L$ on a specific submanifold of $TM$ and subsequently extending it into a Lagrangian flow across the entire tangent bundle. From the Hamiltonian perspective, the initial submanifold corresponds to the zero-section of $T^*M$ which yields the desired Hamiltonian dynamics.\\

\subsubsection{Subdivision of the Manifold $M$} \label{SectionSubdivision}

We first need to fix a region of work in the manifold $M$ where we will define the isotopy $f_t$ as desired. We work on a fixed chart $U \to \mathbb{R}^d$ of $M$. This chart will be identified to an open $0$-centered ball $B$ of $\mathbb{R}^d$. We use the coordinates $(r,\theta,x_3,...,x_d)$ in $\mathbb{R}^d$ where $(r, \theta)\in \mathbb{R}_{\geq 0} \times \mathbb{T}^1$ are the polar coordinates on the plane $\mathbb{R}^2 \times \{0\}$ and $(x_3,...,x_d)$ are the remaining canonical coordinates of $\{0\} \times \mathbb{R}^{d-2}$. The space $\mathbb{R}^d$ is endowed with its canonical $L^2$-metric $\Vert v\Vert = \sqrt{\sum_{k=1}^d v_k^2}$ and its relative distance $d$. The manifold $M$ is endowed with a Riemannian metric so that the chart $U \to \mathbb{R}^d$ is a Riemannian isometry onto its image.\\

We aim for the map $f=f_1$ to possess a family of attractive orbits $\{x^i_n\}_{0 \leq i \leq \rho_n-1}$ with increasing periodicities $\rho_n$, each having distinct attraction basins in the sense that no $\varepsilon$-orbit can transition from one basin to another.\\

Let $(\rho_n)_{n\geq 0}$ be an increasing sequence of positive integers with $\rho_0 \geq 2$. Let $(r_n)_{n\geq 0}$ be a decreasing sequence of positive radii, converging to $0$. We take $r_0$ smaller than the radius of the ball $B$ defining the chart of $M$. Consider the circles $O_n = \{ (r_n, \theta,0,..,0) \; | \; \theta \in \mathbb{T}^1\} \subset \mathbb{R}^2 \times \{0\}$ and set the $n$-th orbit to be $x^i_n = (r_n, \frac{i}{\rho_n}, 0, .. ,0) \in O_n$ for $i=0,..,\rho_n-1$.\\

Let $(\delta_n)_{n\geq 0}$ be a decreasing sequence of sufficiently small positive real numbers. And consider the following different sets
\begin{equation} \label{Sets}
	\begin{split}
		O_n &= \{ (r_n, \theta,0,..,0) \; | \; \theta \in \mathbb{T}^1\}\\
		B^i_n & = \{ x \in \mathbb{R}^d \; | \; d(x,x^i_n) < \delta_n \} \\
		B_n &= \{ x \in \mathbb{R}^d \; | \; d(x,O_n) < \delta_n \} \\
		A_n &= \{ x \in \mathbb{R}^d \; | \; \delta_n < d(x,O_n) < 2\delta_n \} \\
		C_n &= A_n \cup \overline{B_n}= \{ x \in \mathbb{R}^d \; | \; d(x,O_n) < 2\delta_n \}\\
		D_n &= \{ x \in \mathbb{R}^d \; | \; 2\delta_n < d(x,O_n) < 3\delta_n \} \\
		D &= M \setminus \big(\bigcup_{n=0}^\infty \overline{C_n}\big) \\
		D' &= M \setminus \big(\bigcup_{n=0}^\infty \overline{C_n \cup D_n}\big)
	\end{split}
\end{equation}
The radii $\delta_n$ are taken small enough so that all of these defined sets are in the chart $B$, all the closed balls $(\overline{B^i_n})_i$ are disjoint and all the closed sets $(\overline{C_n \cup D_n})_{n \geq 0}$ are disjoint.\\

\begin{rem} \label{RemarkDimension}
	Note that in dimension $2$, the sets $A_n$ have two connected components contrasting with the higher dimensional case. We denote these connected components by $A^\pm_n$ where $A^+_n$ corresponds to the larger $r$-coordinate, and $A^-_n$ to the lower. We also define $D^\pm_n$ in the same way.
	
	This distinction will take more importance in the study of the regularity in Section \ref{RegSection} where we will need to separate the two cases.
\end{rem}

We consider the following points
\begin{equation} \label{Points}
	\begin{split}
		x^i_n & = (r_n, \frac{i}{\rho_n}, 0, .. ,0) \in O_n \\
		y_n &= (r_n+ \delta_n, 0, 0, .. ,0) \in \partial B_n \\
		z_\infty &= (0,0,..,0) \in \overline{D} \\
		z^\pm_n &= (r_n \pm 2\delta_n, 0, 0, .. ,0) \in \partial A^\pm_n \cap \partial C_n \text{ in the 2D case}
	\end{split}
\end{equation}

The different sets of (\ref{Sets}) and points of (\ref{Points}) are represented in the Figure \ref{FigureConstruction} down below.\\

\begin{figure}[!htbp]
	\centering
	\begin{subfigure}[b]{0.55\linewidth}
		\includegraphics[width=\linewidth]{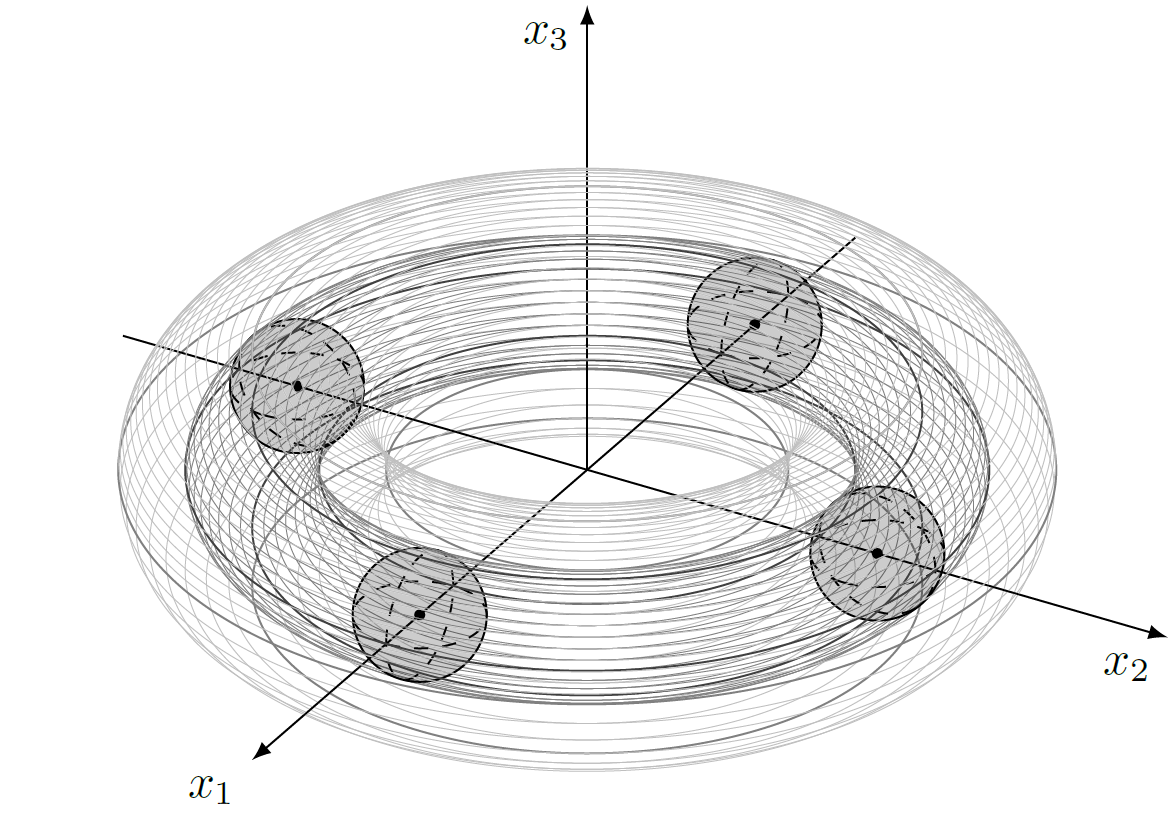}
		\caption{3D View of $C_n$.}
		\label{3DModel}
	\end{subfigure}
	\begin{subfigure}[b]{0.6\linewidth}
		\includegraphics[width=\linewidth]{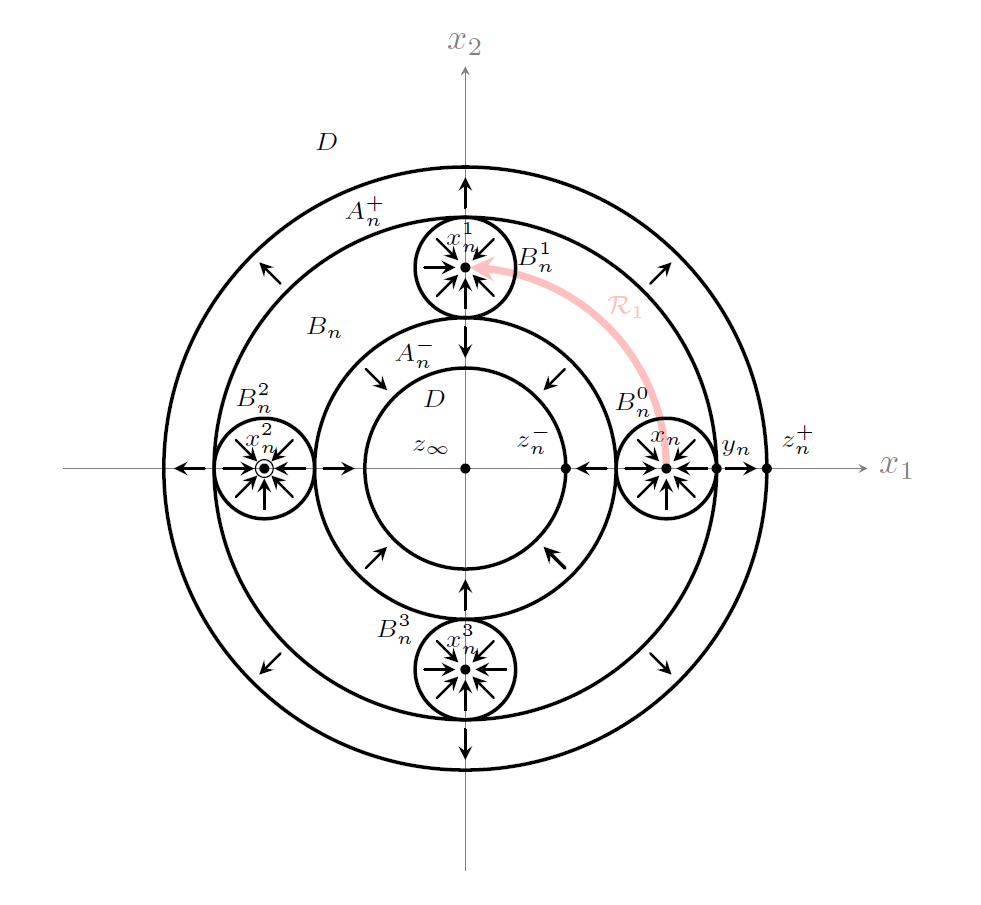}
		\caption{Top View of $C_n$ / The 2D Case.}
		\label{TopView}
	\end{subfigure}
	\begin{subfigure}[b]{0.57\linewidth}
		\includegraphics[width=\linewidth]{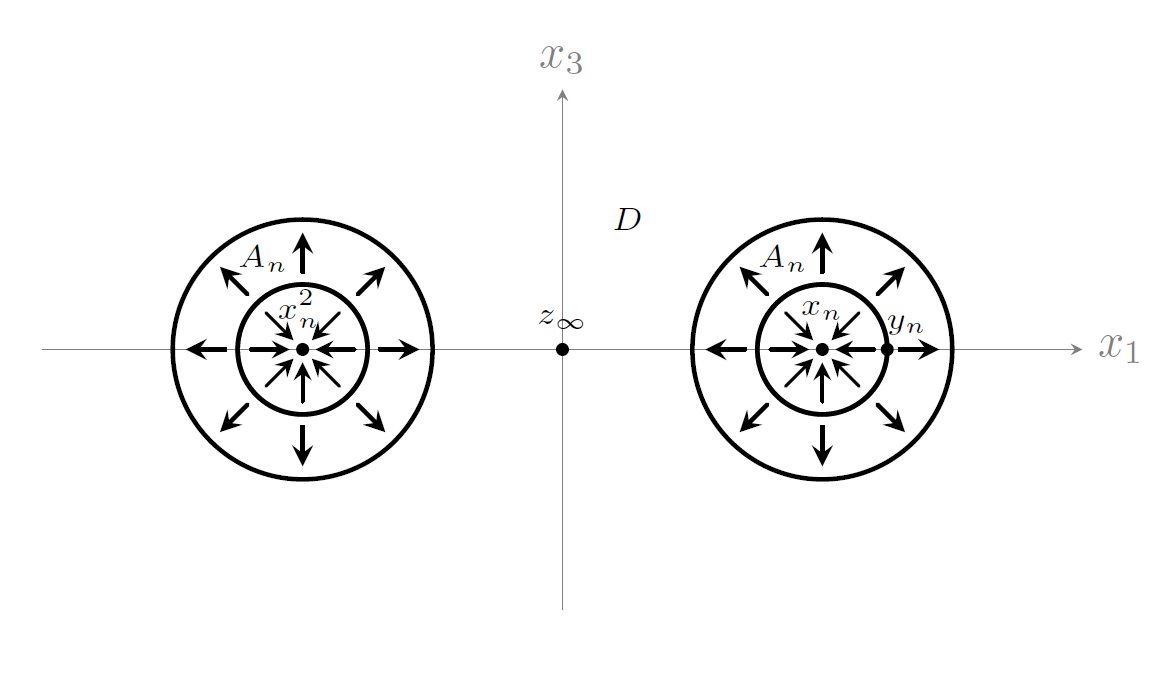}
		\caption{Side View of $C_n$.}
		\label{SideView}
	\end{subfigure}
	\caption{The dynamics of the flow $g_t$ on the set $C_n$}
	\label{FigureConstruction}
\end{figure}

\subsubsection{The Isotopy $f_t$} \label{tomf}

Now we define the isotopy $f_t$ as the composition of a "radial" autonomous Lagrangian flow $g_t$ with a rotational isotopy $\mathcal{R}_t$.

\paragraph{The Isotopy $g_t$.} Let ($\varsigma_n)_n$ be a sequence of small real numbers, let $\chi : \mathbb{R} \to \mathbb{R}_{\geq 0}$ be a smooth bump function with support in $[0,1]$ and positive on $(0,1)$. And let $p_{C_n}: A_n \to  \partial C_n$ be the projection on $\partial C_n$ defined as follows
\begin{equation}
	p_{C_n}(x) = p_{O_n}(x) + 2\delta_n.\frac{x- p_{O_n}(x) }{\Vert x- p_{O_n}(x)\Vert} \quad \text{where } p_{O_n}(x) = (\rho_n, \theta_x, 0,..,0)
\end{equation}
We define the autonomous vector field $Z : M \to TM$ by

\begin{equation} \label{Mapg}
	Z(x) = 
	\begin{cases}
		-\varsigma_n. \chi ( \Vert x- x^i_n\Vert^2 / \delta_n^2 ).(x- x^i_n) & \text{if } x \in B^i_n \\
		-\varsigma_n. \chi ( \Vert x- p_{C_n}(x)\Vert^2 / \delta_n^2 ).(x- p_{C_n}(x)) & \text{if } x \in A_n \\
		0  & \text{elsewhere} \\
	\end{cases}
\end{equation}
and let $(g_t)_{t \in [0,1]}$ be the associated flow.\\

The restriction of $g:=g_1$ to $B^i_n$ is radially symmetric with respect to the center $x^i_n$ which is attractive. Similarly, the restriction of $g$ to $A_n$ has a radial symmetry with respect to the circle $O_n$ meaning that for a fixed angle $\theta$, the restriction of $g$ to the "hollowed" ball $A_n \cap \{\theta_x = \theta\}$ is symmetric with respect to its center $O_n \cap \{\theta_x = \theta\}$. Moreover, the set $\partial C_n$ is attractive for its dynamics. This can be observed in Figure \ref{FigureConstruction}.\\

Computation shows that the vector field $Z$ is $C^k$-regular if  $\varsigma_n \underset{n \to \infty}{=} o( \delta_n^k )$.

\paragraph{The Isotopy $\mathcal{R}_t$.} Let $ \eta :\mathbb{R} \to \mathbb{R}$ be a smooth map, null on $( - \infty,0]$, constant equal to $1$ on $[1, +\infty)$ and increasing on $[0,1]$. And consider for all integer $n \geq 0$ the map $\eta_n : D_n \to [0,1]$ defined by
\begin{equation}
	\eta_n (x) = 1-\eta(\Vert x- p_{C_n}(x)\Vert^2/\delta_n^2 )
\end{equation}
For all angles $ \alpha \in \mathbb{T}^1$, we define the rotation $R_\alpha : \mathbb{R}^d \to \mathbb{R}^d$ by 
\begin{equation}
	R_\alpha (r, \theta, x_3, .. ,x_d) = (r, \theta + \alpha, x_3, .. ,x_d)
\end{equation}

The isotopy $(\mathcal{R}_t)_{t \in [0,1]}$ is taken as 
\begin{equation} \label{MapR_t}
	\mathcal{R}_t(x) = 
	\begin{cases}
		R_{\frac{1}{\rho_n} \eta(t)}(x) & \text{if } x \in \overline{C_n} \\
		\left(r, \theta +  \frac{\eta(t)}{\rho_n}\eta_n(x),x_3,..,x_d \right) & \text{if } x \in D_n \\
		x & \text{elsewhere}
	\end{cases}
\end{equation}
It is a rotation of constant angle $\rho_n$ on the sets $C_n$, and these angles decrease radially and progressively on $D_n$ until they reach zero on $D'$.\\

Computation shows that the map $\mathcal{R}_t$ is $C^k$-regular if $\frac{1}{\rho_n} \underset{n \to \infty}{=} o( \delta_n^k )$.\\

\paragraph{The Isotopy $f_t$.} The desired isotopy $(f_t)_{t \in [0,1]}$ is the composition
\begin{equation}
	f_t = \mathcal{R}_t \circ g_t
\end{equation} 

\begin{rem} \label{Symmetriesf}
\textbf{Symmetries of $\mathcal{R}_t$ and $g_t$.} Notice that 
	\begin{itemize}
		\item In a neighbourhood of the sets $\overline{C_n}$, the isotopies $\mathcal{R}_t$, $g_t$ and hence $f_t$ commutes with the rotation $R_{\frac{i}{\rho_n}}$ for every $0 \leq i < \rho_n$.
		\item In the balls $B^i_n$, the isotopy $g_t$ is symmetric with respect to the center $x^i_n$ of $B^i_n$ meaning that it commutes with all affine isometries of the form $x^i_n +U$ where $U \in O(n)$.
		\item In the sets $A_n$, the isotopies $\mathcal{R}_t$, $g_t$ and hence $f_t$ are symmetric with respect to the circles $O_n$ meaning that for any fixed angle $\theta$, they commute with all the affine isometries defined on the set $A_n \cap \{\theta_x = \theta \}$ and of the form $x_n^\theta + U$ where $x_n^\theta = (\rho_n, \theta , 0,..,0) \in O_n$ and $U \in O(n-1)$. Additionally, in the sets $A_n$, they commute with the translations in the $\theta$-coordinate.
	\end{itemize}
\end{rem}

\begin{rem} \label{RegularityConditions1}
	\begin{enumerate}
		\item Computation shows that the smaller the quantities $\frac{1}{\rho_n}$ and $\varsigma_n$ are, the closer is the map $f=f_1$ to the identity $Id_M$ in the $C^\infty$-topology. 
		\item As mentioned for the maps $g_t$ and $\mathcal{R}_t$, the map $f_t$ is $C^{k+1}$-regular provided that
		\begin{equation}
			\varsigma_n \underset{n \to \infty}{=} o( \delta_n^k ) \quad \text{and} \quad \frac{1}{\rho_n} \underset{n \to \infty}{=} o( \delta_n^k )
		\end{equation}
		Assuming that these identities hold for all integers $k\geq 0$, we obtain a smooth isotopy $f_t$.
	\end{enumerate}
\end{rem}

\subsubsection{The Mañé Lagrangian $L$} \label{fcme}

The Mañé Lagrangian $L$ is defined using the vector field $X_t$ of the constructed isotopy $f_t$. Additionally, to ensure periodicity in time for $L$, it is essential to confirm that the vector field $X_t$ is itself time-periodic.

\paragraph{The Vector Field $X_t$.} The first point of Remark \ref{RegularityConditions1} tells us that if $\frac{1}{\rho_n}$ and $\delta_n$ are small enough, we get for all times $t \in [0,1]$
\begin{equation}
	\Vert f_t - Id_M\Vert_{C^1} < 1
\end{equation}
and in particular, the map $f_t$ is invertible for all times $t$.\\ 

This allows us to define the corresponding vector field $X(t,x)= X_t(x) : \mathbb{R} \times M \to TM$ by
\begin{equation}
	X_t = \frac{df_t}{dt} \circ f_t^{-1}
\end{equation}

We know that for all positive integer $k \geq 1$, $\eta^{(k)}(0) = \eta^{(k)}(1)=0$. Thus, $X_0 = X_1 =0$ and so are all of its time derivatives. This implies that the vector field $X_t$ reduces to be a smooth time periodic vector field $X_t : \mathbb{T}^1 \times M \to TM$.

Consider the time periodic vector fields $Y_t$ relative to the isotopy $\mathcal{R}_t$ and recall the relation between $Z$ and $g_t$
	\begin{equation} \label{VectorFields}
		Y_t = \frac{d\mathcal{R}_t}{dt} \circ \mathcal{R}_t^{-1} \quad \text{and} \quad Z = \frac{dg_t}{dt} \circ g_t^{-1}
	\end{equation}

\begin{prop} \label{XP}
	We have
	\begin{equation} \label{XFormula}
		X_t = Y_t + d\mathcal{R}_t. Z \circ \mathcal{R}^{-1}_t
	\end{equation}
\end{prop}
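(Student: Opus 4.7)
The plan is to apply the chain rule directly to $f_t = \mathcal{R}_t \circ g_t$ and then post-compose by $f_t^{-1} = g_t^{-1} \circ \mathcal{R}_t^{-1}$, using the autonomy of the flow $g_t$ to identify the $Z$-term.

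First I would differentiate $f_t(x) = \mathcal{R}_t(g_t(x))$ with respect to $t$ at a fixed point $x \in M$. Writing $\partial_t \mathcal{R}_t$ for the derivative taken with the spatial argument held fixed, the chain rule yields
\begin{equation*}
	\frac{df_t}{dt}(x) = (\partial_t \mathcal{R}_t)\bigl(g_t(x)\bigr) + d\mathcal{R}_t\bigl(g_t(x)\bigr) \cdot \frac{dg_t}{dt}(x).
\end{equation*}
Next, evaluating at $f_t^{-1}(y) = g_t^{-1}(\mathcal{R}_t^{-1}(y))$ and using $g_t \circ f_t^{-1} = \mathcal{R}_t^{-1}$, the two base points simplify to $g_t(f_t^{-1}(y)) = \mathcal{R}_t^{-1}(y)$, giving
\begin{equation*}
	X_t(y) = (\partial_t \mathcal{R}_t)\bigl(\mathcal{R}_t^{-1}(y)\bigr) + d\mathcal{R}_t\bigl(\mathcal{R}_t^{-1}(y)\bigr) \cdot \frac{dg_t}{dt}\bigl(g_t^{-1}(\mathcal{R}_t^{-1}(y))\bigr).
\end{equation*}

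Finally I would identify the two pieces with the vector fields defined in \eqref{VectorFields}. The first summand is, by definition, $Y_t(y) = \frac{d\mathcal{R}_t}{dt}\circ \mathcal{R}_t^{-1}(y)$. For the second summand, because $g_t$ is the autonomous flow of the time-independent vector field $Z$, one has $\frac{dg_t}{dt}(x) = Z(g_t(x))$ for every $x$, so that $\frac{dg_t}{dt} \circ g_t^{-1} = Z$ as a function on $M$. Applying this identity at $x = \mathcal{R}_t^{-1}(y)$ turns the second summand into $d\mathcal{R}_t(\mathcal{R}_t^{-1}(y)) \cdot Z(\mathcal{R}_t^{-1}(y))$, which is precisely $\bigl(d\mathcal{R}_t \cdot Z \circ \mathcal{R}_t^{-1}\bigr)(y)$, proving \eqref{XFormula}.

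There is no real obstacle here: the computation is a one-line application of the chain rule combined with the fact that $Z$ is the autonomous generator of $g_t$. The only point requiring care is bookkeeping of the base points at which each differential is evaluated, which is the reason I would work pointwise in $y$ rather than in operator notation throughout the derivation.
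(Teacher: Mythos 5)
Your proof is correct and follows essentially the same chain-rule computation as the paper's; the only difference is that you track the base points explicitly and work pointwise in $y$, whereas the paper carries out the same manipulation in operator notation.
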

\begin{proof}
	We have 
	\begin{align*}
		\frac{df_t}{dt} = \frac{d(\mathcal{R}_t \circ g_t)}{dt} =  \frac{d\mathcal{R}_t}{dt} \circ g_t + d\mathcal{R}_t . \frac{dg_t}{dt}
	\end{align*}
	Then
	\begin{align*}
		X_t & = \frac{df_t}{dt} \circ f_t^{-1} = \frac{d\mathcal{R}_t}{dt} \circ g_t \circ g^{-1}_t \circ \mathcal{R}^{-1}_t + d\mathcal{R}_t . \frac{dg_t}{dt} \circ g^{-1}_t \circ \mathcal{R}^{-1}_t  \\
		&=  Y_t + d\mathcal{R}_t. Z \circ \mathcal{R}^{-1}_t
	\end{align*}
\end{proof}

\begin{rem} \textbf{Symmetries of $X_t$.} \label{SymmetryX}
	We can deduce from the symmetries of $\mathcal{R}_t$ and $g_t$ stated in Remark \ref{Symmetriesf} that
	\begin{itemize}
		\item In a neighbourhood of the sets $\overline{C_n}$, the vector field $X_t$ is $R_{\frac{i}{\rho_n}}$-invariant for every $0 \leq i < \rho_n$.
		\item In the balls $B^i_n$, the vector field $Z$ is symmetric with respect to the center $x^i_n$ of $B^i_n$.
		\item In the sets $A_n$, the vector field $X_t$ is symmetric with respect to the circles $O_n$ and with respect to the $\theta$-coordinate.
		\item In the sets $D_n$, we have equality $X_t = Y_t$.
		\item In the set $D'$, the vector field $X_t$ is null.
	\end{itemize}	 
	These symmetries will show crucial in the study of regularity in Section \ref{RegSection}.
\end{rem}

\paragraph{The Mañé Lagrangian $L$.} Following the appendix of \cite{MR1166538}, we define a smooth time-periodic Mañé Lagrangian $L: \mathbb{T}^1 \times TM \to \mathbb{R}$ by
\begin{equation} \label{LagMane}
	L(t,x,v) = \frac{1}{2} \Vert v-X_t(x)\Vert^2
\end{equation}
where $\Vert.\Vert$ is the norm coming from the metric of $M$. It reduces to the $L^2$-norm when restricted to the chart where the dynamics are non trivial.

\begin{prop} \label{ManeHamiltonian}
	The \Mane Hamiltonian $H: \mathbb{T}^1 \times T^*M \to \mathbb{R}$ associated to the \Mane Lagrangian $L$ is given by
	\begin{equation} \label{ManeHamiltonianFormula}
		H(t,x,p) = \frac{1}{2} \Vert p + X_t(x) \Vert^2 - \frac{1}{2} \Vert X_t(x) \Vert^2
	\end{equation}
	with vector field $X_H= (X_H^x,X_H^p)$ given by
	\begin{equation} \label{ManeHamiltonianVF}
		X_H^x(t,x,p) = p + X_t(x) \quad \text{and} \quad X_H^p(t,x,p) =  -p.dX_t(x)
	\end{equation}
\end{prop}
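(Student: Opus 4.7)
The plan is a direct Legendre-transform computation followed by a standard reading off of the Hamiltonian vector field, so there is no real obstacle here beyond bookkeeping of the metric identification between $TM$ and $T^*M$.

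First, I would apply formula \eqref{Ham} pointwise: for fixed $(t,x,p)$, the function $v \mapsto p(v) - \frac{1}{2}\Vert v - X_t(x) \Vert^2$ is strictly concave in $v \in T_xM$, so it has a unique maximizer. Differentiating in $v$ and using the identification between covectors and tangent vectors provided by the Riemannian metric on $M$, the critical point equation becomes $p = (v - X_t(x))^\flat$, equivalently $v = X_t(x) + p^\sharp$, where $\sharp$ and $\flat$ denote raising and lowering indices.

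Substituting the maximizer back yields
\begin{equation*}
    H(t,x,p) = p\big(X_t(x) + p^\sharp\big) - \tfrac{1}{2}\Vert p^\sharp \Vert^2 = p(X_t(x)) + \tfrac{1}{2}\Vert p \Vert^2,
\end{equation*}
and completing the square gives exactly the stated formula \eqref{ManeHamiltonianFormula}. (On the chart where the dynamics is nontrivial the metric is the Euclidean one, so these identifications become literal equalities on coordinates; away from the chart $X_t\equiv 0$ and the identity is just $H = \frac{1}{2}\Vert p\Vert^2$, which is consistent.)

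For the Hamiltonian vector field, I would read off $X_H = (\partial_p H, -\partial_x H)$ from the expanded form $H = \frac{1}{2}\Vert p\Vert^2 + p(X_t(x))$. The derivative in $p$ at fixed $x$ gives $p^\sharp + X_t(x)$, which under the metric identification is the expression $p + X_t(x)$ displayed in \eqref{ManeHamiltonianVF}. The derivative in $x$ at fixed $p$ hits only the second term, giving $p \cdot dX_t(x)$, whence $X_H^p = -p \cdot dX_t(x)$. This closes the proof; the only subtlety worth spelling out in writing is that the $\frac{1}{2}\Vert p\Vert^2$ term contributes nothing to $\partial_x H$ because the nontrivial dynamics is confined to the Euclidean chart fixed in Subsection \ref{SectionSubdivision}.
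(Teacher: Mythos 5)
Your computation is correct and follows essentially the same route as the paper: a direct Legendre transform of $L$. The paper posits the stated $H$ and verifies the Fenchel equality by showing $H(t,x,p)+L(t,x,v)-p\cdot v=\tfrac12\Vert p-v+X_t(x)\Vert^2\geq 0$ with equality exactly when $p=\partial_vL$, whereas you locate the maximizer $v=X_t(x)+p^\sharp$ explicitly and substitute; these are the same calculation read in opposite directions, and the reading-off of $X_H$ is identical.
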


\begin{proof}
	Using the metric $\langle \cdot , \cdot \rangle$ induced by the norm $\Vert \cdot \Vert$, we identify elements of $v$ of$T_xM$ with elements $p$ of $T^*_xM$ as follows : $p = \langle v, \cdot \rangle $. Then, we have for all $(t,x) \in \mathbb{R} \times M$ and $(v,p) \in T_xM \times T^*_xM$
	\begin{align*}
		2.\big[ H(t,x,p) + L(t,x,v) - p.v \big] &= \Vert p + X_t(x) \Vert^2  - \Vert X_t(x) \Vert^2 + \Vert v-X_t(x)\Vert^2 - 2p.v \\
		&= \Vert p \Vert^2 + 2p.X_t(x) + \Vert v \Vert^2 - 2v.X_t(x) + \Vert X_t(x) \Vert^2 - 2p.v \\
		&=  \Vert p - v + X_t(x) \Vert^2 \geq 0
	\end{align*}
	with equality if and only if $p = v-X_t(x) = \partial_v L(t,x,v)$. 
	
	Direct computation yields
	\begin{equation*}
		X_H^x(t,x,p) = \partial_p H(t,x,p) = p + X_t(x) \quad \text{and} \quad X_H^p(t,x,p) = - \partial_xH(t,x,p) =  -p.dX_t(x)
	\end{equation*}
\end{proof}

\begin{prop}
	The \Mane Lagrangian $L$ defined in \eqref{LagMane} is Tonelli.
\end{prop}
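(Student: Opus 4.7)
The plan is to verify the four conditions of Definition \ref{TonelliDef} one by one, which are all straightforward consequences of the construction and the compactness of $M$.

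\textbf{Regularity and strict convexity.} The smoothness of $X_t$ established at the end of Subsection \ref{tomf} (under the quantitative assumptions on $\rho_n$ and $\varsigma_n$ collected in Remark \ref{RegularityConditions1}) makes $L(t,x,v) = \tfrac{1}{2}\|v-X_t(x)\|^2$ a smooth, hence $C^2$, function on $\mathbb{T}^1 \times TM$. In a local chart, $\partial_{vv}L(t,x,v)$ equals the matrix of the Riemannian metric at $x$, which is positive definite, so strict convexity in the fibre holds.

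\textbf{Superlinearity.} Since $\mathbb{T}^1\times M$ is compact and $X_t$ is continuous, the quantity $C := \sup_{(t,x)} \|X_t(x)\|$ is finite. Then for any $(t,x,v)$,
\begin{equation*}
L(t,x,v) \;\geq\; \tfrac{1}{2}\big(\|v\|-C\big)^2,
\end{equation*}
which gives $L(t,x,v)/\|v\| \to \infty$ as $\|v\|\to\infty$ uniformly in $(t,x)$.

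\textbf{Completeness.} The manifold $M$ is compact, so the $x$-component of any flow line of $X_H$ stays in a compact set. It remains to show that the $p$-component cannot blow up in finite time. Using the expression \eqref{ManeHamiltonianVF} for the Hamiltonian vector field together with the fact that $dX_t$ is bounded on the compact $\mathbb{T}^1\times M$ by some constant $K$, we obtain along any integral curve $(x(t),p(t))$
\begin{equation*}
\tfrac{d}{dt}\tfrac{1}{2}\|p(t)\|^2 \;=\; \langle p(t),\dot p(t)\rangle \;\leq\; K\,\|p(t)\|^2.
\end{equation*}
Grönwall's inequality then yields $\|p(t)\|\leq \|p(0)\|\,e^{Kt}$ on any finite time interval, preventing escape to infinity. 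Hence $\phi_H^{t,s}$ is defined for all times, and $L$ satisfies all four Tonelli conditions.

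The argument contains no real obstacle: the only step requiring more than a one-line justification is the completeness, which is handled by the Grönwall bound above. One small technical point worth emphasizing is that $\|\cdot\|$ in the definition of $L$ is the Riemannian norm induced on $TM$ (and dually on $T^*M$), so convexity and the computation of $\partial_{vv}L$ are truly geometric statements rather than artifacts of a particular chart; since all four conditions in Definition \ref{TonelliDef} are intrinsic, it suffices to verify them in any convenient local trivialization, which is what the preceding bullet points do.
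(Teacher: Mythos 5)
Your proposal is correct and follows essentially the same route as the paper: explicit computation of $\partial_{vv}L$ for strict convexity, a quadratic lower bound for superlinearity, and a Grönwall estimate via the Hamiltonian vector field \eqref{ManeHamiltonianVF} for completeness. The only slight variation is in the completeness step: you bound $\tfrac{d}{dt}\tfrac{1}{2}\|p\|^2\leq K\|p\|^2$ directly using the linearity of $X_H^p=-p\,.\,dX_t(x)$ in $p$, whereas the paper tracks the full phase-space vector $(x,p)$ with the affine estimate $\|X_H\|\leq C_1\|p\|+C_2$ and then applies Grönwall; both are equivalent, and yours is marginally more economical.
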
   
\begin{proof}
	\textit{Regularity.} is verified by the assumptions of Remark \ref{RegularityConditions1}.\\
	
	\textit{Strict Convexity.} We denote by $\langle \cdot , \cdot \rangle$ the scalar product associated to the norm $\Vert.\Vert$ on $TM$. Then, for all $(t,x,v) \in \mathbb{T}^1 \times TM$, we have
	\begin{equation*}
		\partial_v L(t,x,v) = \langle v-X_t(x), \cdot \rangle \quad \text{and} \quad \partial^2_{vv} L(t,x,v) = \langle \cdot ,  \cdot \rangle
	\end{equation*}
	It follows that $\partial^2_{vv} L(t,x,v) >0$ for all $(t,x,v) \in \mathbb{T}^1 \times TM$.\\
	
	\textit{Superlinearity.} Since the norm is squared in the definition \eqref{LagMane} of $L$, we get $\frac{L(t,x,v)}{\Vert v \Vert} \to \infty$ as $\Vert v \Vert \to \infty$.\\
	
	\textit{Completeness.} We will prove the completeness for the Hamiltonian flow $\phi_H$ which is conjugated to the Lagrangian flow $\phi_L$ by the Legendre map $\mathcal{L}$ introduced in Subsection \ref{SubsectionTonelli}. Note from the formula of the Hamiltonian vector field $X_H$ expressed in \eqref{ManeHamiltonianVF}, its time-periodicity and from the compactness of the manifold $M$ that there exist two positive real numbers $C_1$ and $C_2 >0$ such that for all $(s,x,p) \in \mathbb{T}^1 \times T^*M$, we have
	\begin{equation*}
		\Vert X_H(s,x,p) \Vert \leq C_1\Vert p \Vert + C_2
	\end{equation*}
	We work locally on charts in $\mathbb{R}^d$. Fix a time $T>0$. For all time $t$ such that $|t-s| \leq T$ and $\phi_H^{s,t}(x,p)$ is well defined and belongs to the chart, we have
	\begin{equation*}
		\left\Vert \frac{d}{dt} \phi_H^{s,t}(x,p) \right\Vert = \Vert X_H(t,\phi_H^{s,t}(x,p)) \Vert \leq C_1 \Vert \phi_H^{s,t}(x,p)\Vert + C_2
	\end{equation*}
	Hence, we get
	\begin{align*}
		\Vert \phi_H^{s,t}(x,p) \Vert & \leq \Vert \phi_H^{s,t}(x,p) - (x,p) \Vert + \Vert (x,p) \Vert \\
		& = \left\Vert \int_s^t X_H(\tau, \phi_H^{s,\tau}(x,p)) \; d\tau \right\Vert + \Vert (x,p) \Vert \\
		& \leq \int_s^t \Vert X_H(\tau, \phi_H^{s,\tau}(x,p)) \Vert d\tau + \Vert (x,p) \Vert \\
		& \leq C_1 \int_s^t \Vert \phi_H^{s,\tau}(x,p) \Vert d\tau + C_2.(t-s)+ \Vert (x,p) \Vert \\
		& \leq C_1 \int_s^t \Vert \phi_H^{s,\tau}(x,p) \Vert d\tau + C_2T+ \Vert (x,p) \Vert
	\end{align*}
	Applying Gr\"onwall lemma yields
	\begin{equation*}
		\left\Vert \phi_H^{s,t}(x,p) \right\Vert \leq (C_2T+ \Vert (x,p) \Vert). e^{C_1(t-s)} \leq (C_2T+ \Vert (x,p) \Vert). e^{C_1T}
	\end{equation*}
	This compactness result for a fixed $T>0$ allow to extend $\phi_H^{s,t}(x,p)$ to all times $t$ such that $|t-s| \leq T$. Since this holds for all $T > 0$, we deduce the completeness of the Hamiltonian vector field $X_H$, and by conjugacy, the completeness of the Lagrangian vector field $X_L$.
\end{proof}

\subsection{Evaluation of the Peierls Barriers of Mañé Lagrangians} \label{SectionManeExample}

This subsection examines the behaviour of the Peierls barriers $h^\infty$ and $h^{k\infty}$, that are specific to Mañé Lagrangians. These will provide a straightforward way to estimate the values of the barriers between two points in $M$. \\

\subsubsection{The $0$-set of the Peierls Barriers on Pseudo-Orbits of Mañé Lagrangians}

Prior to selecting a viscosity solution that addresses the main Theorem \ref{MainC1} of this paper, one needs to be able to identify the Mather set and the Peierls set associated to the constructed Mañé Lagrangian $L$. This constitutes the objective of the current subsection, where our focus will be on understanding the $0$-set of the Peierls Barriers.\\

We recall some properties of the Lagrangian $L(t,x,v) = \frac{1}{2} \Vert v-X_t(x)\Vert^2$ mentioned in the previous subsection.
\begin{itemize}
	\item The projection on $M$ of the Euler-Lagrange flow $\phi_L^t$ restricted to the subset $\mathcal{L}_L := \{v = X_t(x)\} \subset TM$ corresponds precisely to the flow $f_t$ of $X_t$.
	\item The subset $\mathcal{L}_L$ is a graph over the zero-section $0_{TM} \simeq M$ and its Hamiltonian counterpart $\mathcal{L}_H \subset T^*M$ is the zero-section $0_{T*M} \simeq M$ of the cotangent bundle. Furthermore, the Hamiltonian flow $\phi_H^t$ restricted to the zero-section is precisely the flow $f_t$.
\end{itemize}

The following proposition justifies the choice of $\alpha_0 =0$ and of considering the Hamilton-Jacobi equation \eqref{HJ}.

\begin{prop} \label{ManeCVnull}
	The Mañé critical value $\alpha_0$ of \Mane Lagrangians of the form $L(t,x,v) = \frac{1}{2} \Vert v - X_t(x) \Vert^2 $ is null.
\end{prop}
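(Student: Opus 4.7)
The plan is a straightforward two-sided bound on $\inf_\mu \int L\,d\mu$, exploiting that the Mañé Lagrangian is a non-negative square whose zero set is precisely the ``graph section'' $\tilde{\mathcal{L}}_L = \{(t,x,v)\in \mathbb{T}^1\times TM : v = X_t(x)\}$. Since this set is compact (as $M$ is compact and $X_t$ continuous) and invariant under the suspended Lagrangian flow $\Phi_L^\tau$ (by the property recalled at the start of Subsection \ref{SectionManeExample}), any invariant probability measure supported in it achieves the infimum; and $L\ge 0$ pointwise gives the opposite bound for free.

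For the easy inequality, I would observe directly from the definition \eqref{LagMane} that
\begin{equation*}
	L(t,x,v) = \tfrac{1}{2}\|v - X_t(x)\|^2 \geq 0,
\end{equation*}
so every Borel probability measure $\mu$, invariant or not, satisfies $\int L\,d\mu \ge 0$. Taking the infimum over compactly supported invariant measures yields $\alpha_0 = -\inf_\mu \int L\,d\mu \le 0$.

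For the matching lower bound I need to exhibit a single admissible measure with $\int L\,d\mu = 0$. By Remark \ref{SymmetryX}, the vector field $X_t$ vanishes identically on the open set $D'$, so for any $x_\ast \in D'$ one has $X_t(x_\ast) = 0$ for all $t$, which means $(x_\ast,0)\in TM$ is a fixed point of $\phi_L^{s,t}$ for every pair of times. I would set
\begin{equation*}
	\mu_\ast = \mathrm{Leb}_{\mathbb{T}^1}\otimes \delta_{(x_\ast,0)},
\end{equation*}
a compactly supported Borel probability on $\mathbb{T}^1\times TM$. The map $\Phi_L^\tau(t,x_\ast,0) = (t+\tau, x_\ast, 0)$ reduces to a translation in the $t$-variable on the support of $\mu_\ast$, so $\mu_\ast$ is $\Phi_L^\tau$-invariant for every $\tau$. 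Finally $L(t,x_\ast,0) = \tfrac{1}{2}\|X_t(x_\ast)\|^2 = 0$, so $\int L\,d\mu_\ast = 0$. This gives $\inf_\mu \int L\,d\mu \le 0$, hence $\alpha_0 \ge 0$, and combining with the previous step yields $\alpha_0 = 0$.

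There is essentially no obstacle here: the proof is a one-line unpacking of the definitions once one spots the trivial invariant measure sitting on the portion of $M$ where the Mañé drift vanishes. The only mild thing to verify is that $\mu_\ast$ really is compactly supported and $\Phi_L^\tau$-invariant in the sense used in \eqref{ManeCritValue}, which is immediate. (Alternatively, if one preferred to avoid invoking the region $D'$, Krylov–Bogolyubov applied to the continuous flow induced by $\Phi_L^\tau$ on the compact invariant set $\tilde{\mathcal{L}}_L$ would equally well furnish such a measure, but the delta-measure argument is the cleanest.)
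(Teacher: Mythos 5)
Your proof is correct. Both you and the paper share the trivial upper bound $\alpha_0\le 0$ from $L\ge 0$, but for the matching bound the paper invokes Krylov--Bogolyubov on the compact invariant graph $\{v = X_t(x)\}$ to produce \emph{some} minimizing measure, whereas you pin down an explicit one: a Dirac at a point $x_\ast$ in the region $D'$ where $X_t\equiv 0$, tensored with Lebesgue on the circle. Your argument is more concrete and avoids the measure-theoretic machinery, but it is tied to the specific construction (it needs $X_t$ to have a common zero for all $t$), so it proves the proposition only for the $X_t$ built in Section \ref{SectionManeHamiltonian}. The Krylov--Bogolyubov route --- which you correctly note as an alternative --- is what gives the statement at its stated level of generality, namely for \emph{any} Mañé Lagrangian $L(t,x,v)=\tfrac12\|v-X_t(x)\|^2$. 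In the context of the paper either suffices, since the proposition is only ever applied to the constructed example.
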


\begin{proof}
	Recall that the Mañé critical value is the real
	\begin{equation*}
		\alpha_0 = - \inf_\mu \left\{\int_{\mathbb{T}^1 \times TM}L \;d\mu \right\} \leq 0
	\end{equation*} 
	where the infimum is taken over compact supported probability measures $\mu$ invariant by the Euler-Lagrangian flow corresponding to $L$.\\
	Following the proof of the Krylov–Bogolyubov theorem (see \cite{MR1109661}), there exists a $f_t$-invariant measure $\mu$ supported on the submanifold $\{(t,x,v) \; | \; v=X_t(x)\} \subset TM$. Integrating $L$ with respect to $\mu$ gives
	\begin{equation*}
		-\alpha_0 \leq \int_{\mathbb{T}^1 \times T^*M}L \;d\mu  =\frac{1}{2} \int_{\supp(\mu)} \Vert v-X_t(x)\Vert^2 \;d\mu =0 
	\end{equation*}
	Therefore $\alpha_0 =0$.
\end{proof}

We get the identifications $h=h_0$ and $\mathcal{T} = \mathcal{T}_0$. And since the \Mane Lagrangian $L$ is non-negative, we get the following.
\begin{cor} \label{hpos}
	For all $x,y \in M$, all times $s<t$ and all integers $n,k \geq 1$, $h^{s,t}(x,y) \geq 0$ and $h^{n\infty}(x,y) \geq 0$.
\end{cor}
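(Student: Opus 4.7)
The plan is to trace pointwise non-negativity through each definition. The Mañé Lagrangian $L(t,x,v) = \frac{1}{2}\|v - X_t(x)\|^2$ is visibly non-negative on $\mathbb{T}^1 \times TM$, so for any absolutely continuous curve $\gamma : [s,t] \to M$, the action
\begin{equation*}
A_L(\gamma) = \int_s^t L(\tau, \gamma(\tau), \dot{\gamma}(\tau))\, d\tau \geq 0.
\end{equation*}
Taking the infimum over curves with $\gamma(s) = x$ and $\gamma(t) = y$ (which is attained by Tonelli's Theorem \ref{TonelliTheorem}) yields $h_0^{s,t}(x,y) \geq 0$. Combining with Proposition \ref{ManeCVnull} which gives $\alpha_0 = 0$, the definition \eqref{Potential} reduces to $h^{s,t}(x,y) = h_0^{s,t}(x,y) \geq 0$.

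For the Peierls side, I would apply the first assertion to the integer times $np$, $p \geq 1$, to get $h^{np}(x,y) \geq 0$ for all $p$, and then pass to the liminf:
\begin{equation*}
h^{n\infty}(x,y) = \liminf_{p \to \infty} h^{np}(x,y) \geq 0.
\end{equation*}
There is no real obstacle — the entire argument is a direct consequence of $L \geq 0$ and $\alpha_0 = 0$, with Proposition \ref{ManeCVnull} doing all the real work in advance.
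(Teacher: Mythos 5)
Your proof is correct and mirrors the paper's own reasoning: the paper's preamble to the corollary invokes exactly the identification $h = h_0$ (via $\alpha_0 = 0$ from Proposition~\ref{ManeCVnull}) together with the pointwise non-negativity of the Mañé Lagrangian, and passing to the liminf for $h^{n\infty}$ is immediate. (Your appeal to Tonelli's Theorem for attainment of the infimum is unnecessary for non-negativity, but harmless.)
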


\begin{rem}
	In a general framework, we only have $h^\infty(x,x) \geq 0$, however for two distinct points $x$ and $y$, the value of $h^\infty(x,y)$ can potentially be negative. The exceptional non-negativity of the barriers highly simplifies the Aubry-Mather theory of Mañé Lagrangians. 
\end{rem}

We will discern the conditions under which the Peierls Barrier vanishes and when it remains positive. But first, we introduce the following definition.

\begin{defi}
	For any $\varepsilon >0$, any fixed time $\tau >0$, and any two points $x$ and $y$ of $M$,  we call a \textit{$(\varepsilon,\tau)$-pseudo-orbit} of the flow of $X_t$ between $x$ and $y$ a finite family of curves $(\gamma_k : [S_k,T_k] \to M)_{0 \leq k \leq m}$ such that 
	\begin{enumerate}[label=\roman*.]
		\item $S_0 = 0$ and $\gamma_0(0) = x$. 
		\item For all $0 \leq k \leq m$, $\dot{\gamma}_k(t)= X_t(\gamma_k(t))$.
		\item The real times $S_k$ and $T_k$ verify $T_k-S_k \geq \tau$ and $S_{k+1}=T_k \; mod \; 1$.
		\item For all $0 \leq k \leq m-1$, $d\big(\gamma_k(T_k),\gamma_{k+1}(S_{k+1})\big) < \varepsilon$ and $d\big(\gamma_{m}(T_{m}),y\big)< \varepsilon$.
	\end{enumerate}
\end{defi}

\begin{prop} \label{chrec0}
	Let $x$ and $y$ be two points of $M$ such that there exists an increasing real sequence of positive times $(t_n)_{n \geq 0}$ with $\lim_n t_n = + \infty$ and $\lim_n h^{t_n}(x,y)=0$. Then for every $\varepsilon >0$ and $\tau>0$, there exists an $(\varepsilon, \tau)$-pseudo-orbit of the flow of $X_t$ between $x$ and $y$.
	
	By contraposition, if for some fixed $\tau >0$ there exists a constant $\varepsilon>0$ such that no $(\varepsilon,\tau)$-pseudo-orbit of the flow of $X_t$ links $x$ to $y$, then $ \liminf\limits_{t\to + \infty} h^{t}(x, y) > 0$.
\end{prop}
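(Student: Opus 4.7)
The overall strategy is to produce the pseudo-orbit by starting from a near-minimizing curve joining $x$ to $y$ and then replacing each subinterval by an exact $X_t$-orbit. Specifically, for each large $n$, Tonelli's Theorem \ref{TonelliTheorem} supplies a minimizer $\gamma_n : [0,t_n] \to M$ with $\gamma_n(0)=x$, $\gamma_n(t_n)=y$, and action equal to $h^{t_n}(x,y) \to 0$. The pieces of the pseudo-orbit will be the $X_t$-orbits starting at the sample points $\gamma_n(kN)$ for a suitable time step $N$; the junction jumps will be controlled by a Gronwall estimate exploiting that the Lagrangian $L(t,x,v)=\tfrac12\|v-X_t(x)\|^2$ measures precisely the squared deviation of a velocity from $X_t$.

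Fix $\varepsilon,\tau > 0$ and choose a positive integer $N \geq \tau$. The integer choice is convenient: it makes $T_k \bmod 1 = 0$ for every piece of length $N$, so one can uniformly set $S_k = 0$ and the matching condition $S_{k+1} = T_k \bmod 1$ is automatic. For $n$ large, let $m = \lfloor t_n/N\rfloor - 1$ and partition $[0, t_n]$ into the intervals $[kN, (k+1)N]$ for $0 \leq k < m$, together with a final piece $[mN, t_n]$ of length in $[N, 2N) \subseteq [\tau, 2N)$. Set $x_k := \gamma_n(kN)$ and define $\gamma_k$ on $[0, T_k]$ (with $T_k = N$ for $k<m$ and $T_m = t_n - mN$) by $\dot\gamma_k(s) = X_s(\gamma_k(s))$, $\gamma_k(0) = x_k$. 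The pseudo-orbit conditions (i)--(iii) are then immediate from the construction.

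Condition (iv) is where the work lies. Let $C$ be a Lipschitz constant of $X$ on the compact $\mathbb{T}^1 \times M$, and on each piece set $\tilde\gamma(s) := \gamma_n(kN+s)$. Then $\tilde\gamma$ and $\gamma_k$ agree at $s=0$, and their difference satisfies
\[
\tfrac{d}{ds}(\tilde\gamma - \gamma_k) = \big[\dot{\tilde\gamma} - X_s(\tilde\gamma)\big] + \big[X_s(\tilde\gamma) - X_s(\gamma_k)\big],
\]
where we use the time-periodicity of $X$ together with $kN \in \mathbb{Z}$ to identify $X_{kN+s}=X_s$, and the identity $\|\dot{\tilde\gamma} - X_s(\tilde\gamma)\|^2 = 2L(s,\tilde\gamma,\dot{\tilde\gamma})$. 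Gronwall's inequality combined with Cauchy--Schwarz gives
\[
d\big(\tilde\gamma(T_k), \gamma_k(T_k)\big) \;\leq\; e^{2CN}\sqrt{2N}\cdot\sqrt{2\int_{kN}^{kN+T_k} L\, d\tau} \;\leq\; e^{2CN}\sqrt{4N\, h^{t_n}(x,y)},
\]
where the last inequality uses the non-negativity $L \geq 0$ (Corollary \ref{hpos}) to bound the local action by the global action $h^{t_n}(x,y)$. The right-hand side is independent of $k$ and tends to $0$ as $n \to \infty$, so for $n$ large it is $<\varepsilon$. Together with $\tilde\gamma(T_k) = x_{k+1} = \gamma_{k+1}(0)$ for $k<m$ and $\tilde\gamma(T_m) = y$, this is precisely condition (iv). The only nontrivial ingredient is the Gronwall comparison above, and it works out cleanly because the non-negativity of the Mañé Lagrangian makes the vanishing of the global action propagate automatically to every subinterval.
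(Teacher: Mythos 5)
Your proof is correct, and it takes a genuinely different and in fact more economical route than the paper's. The paper first establishes an auxiliary lemma (stated between the proposition and its proof) asserting the \emph{uniform} convergence $\sup_{t}\Vert\dot\sigma_n(t)-X_t(\sigma_n(t))\Vert\to 0$; this lemma is proved by contradiction, using the facts that minimizers follow the Euler--Lagrange flow and that a minimizer spending time in an open set bounded away from $\{L=0\}$ must pay a positive action cost. The paper then runs Gr\"onwall with that pointwise $\delta$-bound. You instead observe that the Gr\"onwall step only needs an $L^1$ bound on each subinterval for the quantity $\Vert\dot{\tilde\gamma}-X_s(\tilde\gamma)\Vert$, and that such a bound follows immediately from Cauchy--Schwarz applied to the $L^2$ bound given by the total action, since $L=\tfrac12\Vert\dot\gamma-X_t(\gamma)\Vert^2\geq 0$. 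This bypasses the contradiction/compactness lemma entirely and makes the whole argument a two-step estimate (Cauchy--Schwarz, then Gr\"onwall). Your bookkeeping of the pseudo-orbit conditions is also sound: choosing the time step $N\geq\tau$ to be an integer makes $T_k\bmod 1=0$ automatic, and taking the last piece of length in $[N,2N)$ guarantees $T_m-S_m\geq\tau$; the endpoint matchings $\tilde\gamma(T_k)=x_{k+1}$ and $\tilde\gamma(T_m)=y$ hold exactly, so the Gr\"onwall bound directly gives condition (iv). The only trade-off is that the paper's intermediate lemma is of independent interest (it is the ingredient used in the remark that follows to locate the Mather set in $\{v=X_t(x)\}$), so the paper still needs it for other purposes, whereas your argument shows it is not needed for Proposition~\ref{chrec0} itself.
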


\begin{lem}
	Let $x$ and $y$ be two points of $M$ such that $\lim_n  h^{t_n}(x,y)=0$ with $t_n \to + \infty$ i.e. such that there exists a sequence of minimizing curves $(\sigma_n : [0,t_n] \to M)_{n\in \mathbb{N}}$ between $x$ and $y$ with $\lim\limits_n \int_0^{t_n} L(t,\sigma_n(t), \dot{\sigma}_n(t)) \; dt=0$. Then
	\begin{equation*}
		\sup_{t\in [0,t_n]} \Vert \dot{\sigma}_n(t) - X_t(\sigma_n(t)) \Vert \underset{n\to\infty}{\to} 0
	\end{equation*}
\end{lem}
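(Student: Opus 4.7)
The task is to upgrade $L^2$-control to uniform control on the ``defect'' $e_n(t) := \dot{\sigma}_n(t) - X_t(\sigma_n(t))$. Since $L(t,x,v) = \frac{1}{2}\Vert v - X_t(x)\Vert^2$, the hypothesis
\begin{equation*}
\int_0^{t_n} L(t,\sigma_n(t), \dot{\sigma}_n(t)) \, dt \longrightarrow 0
\end{equation*}
is exactly $\Vert e_n \Vert_{L^2([0,t_n])} \to 0$, while the conclusion to prove is $\Vert e_n \Vert_{L^\infty([0,t_n])} \to 0$. The issue is therefore purely to pass from $L^2$-smallness to $L^\infty$-smallness, which requires some uniform regularity of the maps $e_n$.

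The plan is to establish a uniform Lipschitz bound on the family $(e_n)_n$. Since $t_n \to +\infty$, for $n$ large we have $t_n \geq 1$, and the A Priori Compactness Theorem \ref{APrioriCompactness} with $\varepsilon = 1$ provides a compact $K_1 \subset TM$ containing $(\sigma_n(t), \dot{\sigma}_n(t))$ for every $t \in [0, t_n]$. Since each $\sigma_n$ is minimizing, it follows the smooth Euler-Lagrange flow (Proposition \ref{MinimizingProp}); confining $(t,\sigma_n(t), \dot{\sigma}_n(t))$ to the compact $\mathbb{T}^1 \times K_1$ yields a uniform second-derivative bound $\Vert \ddot{\sigma}_n \Vert_\infty \leq C$. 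The smoothness of $X_t$ on $\mathbb{T}^1 \times M$ then gives a uniform bound on
\begin{equation*}
\frac{d}{dt} X_t(\sigma_n(t)) = (\partial_t X_t)(\sigma_n(t)) + dX_t(\sigma_n(t))\dot{\sigma}_n(t).
\end{equation*}
Subtracting produces $\Vert \dot{e}_n \Vert_\infty \leq K$ for a constant $K$ independent of $n$.

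To conclude, suppose for contradiction that there exist $\delta > 0$ and, after extraction, times $t_n^\star \in [0,t_n]$ with $\Vert e_n(t_n^\star) \Vert \geq \delta$. The Lipschitz bound forces $\Vert e_n(t) \Vert \geq \delta/2$ on $I_n := [t_n^\star - \delta/(2K), t_n^\star + \delta/(2K)] \cap [0,t_n]$, whose length is at least $\delta/(4K)$ once $n$ is large enough that $t_n \geq \delta/(2K)$ (in the boundary case we keep the half of $I_n$ lying inside $[0, t_n]$). Therefore
\begin{equation*}
\int_0^{t_n} \Vert e_n(t)\Vert^2 \, dt \;\geq\; \frac{\delta^2}{4} \cdot \frac{\delta}{4K} \;=\; \frac{\delta^3}{16 K},
\end{equation*}
which contradicts $\Vert e_n\Vert_{L^2([0,t_n])} \to 0$.

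The only delicate step is the uniform Lipschitz estimate, which hinges on the smoothness of $X_t$ together with the a priori compactness specific to the Tonelli framework; everything else, including the boundary case in the final step, is straightforward bookkeeping.
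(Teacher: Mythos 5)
Your proof is correct and reaches the same conclusion, but by a genuinely different route than the paper. The paper argues by contradiction through a topological separation: setting $U = \{L > \delta^2/2\}$ and $F = \{L = 0\}$, it uses that $F$ is compact, that $F$ and its complement are flow-invariant, and that $\Phi_L^{[0,1]}(\bar U)$ is disjoint from $F$, to produce a uniform $\nu > 0$ with $L > \nu$ on $\Phi_L^{[0,1]}(\bar U)$; then $A_L(\sigma_{k_n}) \geq \int_{s_n}^{s_n+1} L \geq \nu$, contradicting $A_L(\sigma_n) \to 0$. You instead upgrade the $L^2$-smallness to $L^\infty$-smallness by deriving, via A Priori Compactness and the smoothness of the Euler–Lagrange flow, a uniform Lipschitz bound on the defect $e_n = \dot\sigma_n - X_t(\sigma_n)$, and then running the standard ``pointwise spike forces an interval-length contribution to the $L^2$ norm'' argument. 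The two approaches are comparable in effort, but yours is more quantitative and arguably more self-contained: the paper's separation argument implicitly relies on restricting to the compactum from Theorem \ref{APrioriCompactness} so that $\Phi_L^{[0,1]}(\bar U)$ is effectively compact (otherwise the uniform $\nu$ is not immediate), whereas your Lipschitz estimate makes that dependence explicit. The paper's argument, on the other hand, needs only continuity of $L$ on the relevant compactum rather than a bound on $\dot e_n$, which would make it slightly more robust under lower regularity assumptions; but for the smooth Mañé Lagrangian at hand both are equally valid.
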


The lemma shows that these curves $\sigma_n$ get closer to the set $\{(t,x,v) \; | \; v=X_t(x)\}$ as $n$ grows. This enables to deduce that the Mather set (and the Aubry set) are contained in the zero-level of $L$.

\begin{proof}
	Arguing by contradiction, suppose that there exists $\delta >0$ such that we can find an increasing sequence $k_n$ of integers verifying for all $n\in \mathbb{N}$, $\sup_{t\in [0,t_{k_n}]} \Vert \dot{\sigma}_{k_n}(t) - X_t(\sigma_{k_n}(t)) \Vert > \delta$.\\
	Let $n \in \mathbb{N}$. There exists $s_n \in [0,t_{k_n}]$ such that $\Vert\dot{\sigma}_{k_n}(s_n) - X_{s_n}(\sigma_{k_n}(s_n)) \Vert > \delta$. Thus $(s_n,\sigma_{k_n}(s_n), \dot{\sigma}_{k_n}(s_n))$ belongs to the open set $U$ of $\mathbb{T}^1 \times TM$ defined as
	\begin{equation}
		U:= \left\{(t,x,v) \in \mathbb{T}^1 \times TM \; \left| \; L(t,x,v) > \frac{\delta^2}{2} \right.\right\}
	\end{equation}
	We also define the set $F$ as
	\begin{equation}
		F:= \left\{(t,x,v) \in \mathbb{T}^1 \times TM \; | \; L(t,x,v) = 0 \right\}
	\end{equation}
	It is easy to see that $F$ and its complement are invariant under the map $\Phi_L^\tau$ defined in \eqref{InvarianceFlow}. And knowing that $\bar{U} \subset F^c$, we get the inclusion
	\begin{equation*}
		\Phi_L^{[0,1]}(\bar{U}) := \{ (t + \tau, \phi_L^{t, t+ \tau}(x,v) \; | \; (t,x,v) \in \bar{U}, \; \tau\in [0,1] \} \subset F^c
	\end{equation*}
	Since $L$ is continuous and since the sets $\Phi_L^{[0,1]}(\bar{U})$ and $F$ are disjoint with one of them being closed and the other compact in $\mathbb{T}^1 \times TM$, there exists $\nu>0$ such that
	\begin{equation}\label{inclusion}
		\Phi_L^{[0,1]}(\bar{U}) \subset \left\{(t,x,v) \in \mathbb{T}^1 \times TM \; | \; L(t,x,v) > \nu \right\}
	\end{equation}
	
	Now, we study the action along any of the curves $\sigma_n$ 
	\begin{align*}
		A_L(\sigma_n) & = \int_0^{t_n} L(t,\sigma_n(t), \dot{\sigma}_n(t)) \; dt = \int_0^{t_n} \frac{1}{2} \Vert\dot{\sigma}_n(t) - X_t(\sigma_n(t))\Vert^2 \; dt \underset{n\rightarrow \infty}{\longrightarrow} 0
	\end{align*}
	Thus there exists $N>0$ such that for all $n \geq N$, $A_L(\sigma_n) < \nu$.\\
	Let $n\in \mathbb{N}$ be such that $k_n > N$, and suppose that $s_n < t_{k_n}-1$, otherwise use $\Phi_L^{[-1,0]}(\bar{U})$ instead of $\Phi_L^{[0,1]}(\bar{U})$.
	\begin{align*}
		A_L(\sigma_{k_n}) & = \int_0^{t_{k_n}} L(t,\sigma_{k_n}(t), \dot{\sigma}_{k_n}(t)) \; dt  \geq \int_{s_n}^{s_n+1} L(t,\sigma_{k_n}(t), \dot{\sigma}_{k_n}(t)) \; dt
	\end{align*}
	However, since $\sigma_{k_n}$ is a minimizing curve, the variational theory claims that it follows Euler-Lagrange flow as mentioned in Theorem \ref{TonelliTheorem}. This implies that for all $t\in [s_n,s_n+1]$, 
	\begin{align*}	
		(t,\sigma_{k_n}(t),\dot{\sigma}_{k_n}(t)) = \Phi_L^{t-s_n}(s_n,\sigma_{k_n}(s_n),\dot{\sigma}_{k_n}(s_n)) \in \Phi_L^{[0,1]}(\bar{U}) 
	\end{align*}
	Therefore we get from the inclusion (\ref{inclusion}) that
	\begin{equation*}
		A_L(\sigma_{k_n}) \geq \int_{s_n}^{s_n+1} L(t,\sigma_{k_n}(t),\dot{\sigma}_{k_n}(t)) \; dt \geq \nu
	\end{equation*}
	which contradicts the definition of the definition of $N$.
\end{proof}

\begin{proof}[Proof of Proposition \ref{chrec0}]
	Fix $\varepsilon >0$. Let $\delta >0$ be a real number. Using the lemma, we get a minimizing cruve $\sigma : [0,T] \to M$ from $x$ to $y$ with $T> \tau$ such that for all $t \in [0,T]$, $\Vert \dot{\sigma}(t) - X_t(\sigma(t)) \Vert \leq \delta$.\\
	Let $\alpha \in \mathbb{N}$ and $0 \leq \beta < \tau$ be such that $T = \alpha \tau + \beta$. For $k \in \{ 1,..,\alpha -1 \}$, We define the curves $\gamma_0 : [0, \tau + \beta] \to M$ and $\gamma_k : [k\tau + \beta,(k+1)\tau + \beta] \to M$ as follows
	\begin{equation}
		\begin{cases}
			\dot{\gamma}_k(t) = X_t( \gamma_k(t) ) \\
			\gamma_k(S_k) = \sigma(S_k)
		\end{cases}
	\end{equation}
	where we denoted by $[S_k,T_k]$ the domains of $\gamma_k$. Thus we have for all $t \in [S_k,T_k]$,
	\begin{align*}
		\Vert \sigma(t) - \gamma_k(t) \Vert & = \left\Vert \int_{S_k}^t \dot{\sigma}(s) - X_s( \gamma(s) ) \; ds \right\Vert \\
		& \leq  \int_{S_k}^t \Vert \dot{\sigma}(s) - X_s( \sigma(s) ) \Vert \; ds + \int_{S_k}^t \Vert X_s( \sigma(s) ) - X_s( \gamma(s) ) \Vert \; ds \\
		& \leq (T_k - S_k)\delta + \int_{S_k}^t M. \Vert \sigma(s) - \gamma_k(s) \Vert \; ds\\
		& \leq 2\tau\delta + \int_{S_k}^t M. \Vert \sigma(s) - \gamma_k(s) \Vert \; ds
	\end{align*}
	where $M$ is a Lipschitz constant for the vector field $X_t$.\\
	And using the Gr\"onwall lemma, we get the upper bound
	\begin{align*}
		\forall t \in [S_k,T_k], \quad \Vert \sigma(t) - \gamma_k(t) \Vert \leq 2\tau\delta.e^{M(t-S_k)} \leq 2\tau\delta.e^{M2 \tau}
	\end{align*}
	Taking $\delta < \varepsilon.\frac{e^{-M2\tau}}{2\tau}$ gives the desired estimations. 
\end{proof}

A specialization of the previous proposition to the $(n,k)$-Peierls Barriers $h^{n\infty+k}$  (see Definition \ref{nPeierlsDefi}) leads to the variation below

\begin{cor} \label{gchrec} 
	Let $n \in \mathbb{N}^*$ and $0 \leq k \leq n-1$. Let $x$ and $y$ be two points of $M$ such that $h^{n\infty+k}(x,y)=0$. Then for every $\varepsilon >0$, there exists a finite family of curves $\big(\gamma_i : [0,T_i] \to M\big)_{0 \leq i \leq m}$ with integer times $T_i$ such that
	\begin{enumerate}[label=\roman*.]
		\item $\gamma_0(0) = x$.
		\item $T_0=k$ and for all $i>0$ $T_i$ is a multiple of $n$.
		\item For all $0 \leq i \leq m$, $\gamma_i(t)= f_t(\gamma_i(0))$.
		\item For all $0 \leq i \leq m-1$, $d\big(\gamma_i(T_i),\gamma_{i+1}(0)\big) < \varepsilon$, and $d\big(\gamma_{m}(T_{m}),y\big)< \varepsilon$.
	\end{enumerate}
\end{cor}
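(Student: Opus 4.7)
The plan is to adapt the argument used in the proof of Proposition \ref{chrec0}, replacing the arbitrary subdivision (of length $\geq \tau$) with a subdivision at integer times matching the pattern $k, k+n, k+2n, \dots$ forced by the periodic Peierls barrier $h^{n\infty+k}$. The hypothesis $h^{n\infty+k}(x,y)=0$, combined with non-negativity of $h^{np+k}(x,y)$ from Corollary \ref{hpos}, provides a sequence of arbitrarily large integers $p$ along which $h^{np+k}(x,y) \to 0$. By Tonelli's theorem \ref{TonelliTheorem}, there is a corresponding sequence of minimizing curves $\sigma : [0, np+k] \to M$ from $x$ to $y$ whose total action tends to zero.

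Applying the lemma embedded in the proof of Proposition \ref{chrec0} to this family, I obtain that, after choosing $p$ sufficiently large,
\begin{equation*}
	\sup_{t \in [0, np+k]} \Vert \dot{\sigma}(t) - X_t(\sigma(t)) \Vert \leq \delta
\end{equation*}
for any preassigned $\delta>0$ (to be fixed below in terms of $\varepsilon$). Now introduce the subdivision at integer times $0 < k < k+n < k+2n < \cdots < k+pn$, and set $S_0=0$, $T_0=k$, and $S_i = k+(i-1)n$, $T_i = S_i + n$ for $1 \leq i \leq p$. Define $\gamma_0 : [0,k] \to M$ by $\gamma_0(t) = f_t(x)$, and for $i \geq 1$ define $\gamma_i : [0,n] \to M$ by $\gamma_i(t) = f_t(\sigma(S_i))$. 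Because $X_t$ is $1$-periodic and each $S_i$ is a non-negative integer, starting the flow at time $S_i$ coincides with starting it at time $0$, so conditions (ii) and (iii) of the statement hold; conditions (i) and the integer-length requirements are built in by construction, with $m=p$.

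It remains to bound the jumps. Fix a Lipschitz constant $\Lambda$ for $X_t$ in the space variable. On the $i$-th piece ($i \geq 1$), the error $e_i(t) = \gamma_i(t) - \sigma(S_i+t)$ vanishes at $t=0$ and satisfies
\begin{equation*}
	\Vert \dot{e}_i(t) \Vert \leq \Vert X_{S_i+t}(\gamma_i(t)) - X_{S_i+t}(\sigma(S_i+t)) \Vert + \delta \leq \Lambda \Vert e_i(t) \Vert + \delta,
\end{equation*}
so Grönwall's lemma yields $\Vert e_i(t) \Vert \leq n\delta\, e^{\Lambda n}$ for all $t \in [0,n]$; an analogous estimate on the zeroth piece gives $\Vert \gamma_0(t) - \sigma(t) \Vert \leq k\delta\, e^{\Lambda k}$. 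Choosing $\delta < \varepsilon \cdot e^{-\Lambda \max(k,n)}/\max(k,n)$ at the outset therefore makes every consecutive gap $d(\gamma_i(T_i), \gamma_{i+1}(0)) = \Vert \gamma_i(T_i) - \sigma(S_{i+1}) \Vert$ and the final gap $d(\gamma_m(T_m), y) = \Vert \gamma_p(n) - \sigma(np+k) \Vert$ strictly smaller than $\varepsilon$, yielding condition (iv). The argument is essentially identical to the tail of the proof of Proposition \ref{chrec0}; the only substantive modification is the enforcement of integer-length pieces compatible with the residue $k$ modulo $n$, which is what the specialization to $h^{n\infty+k}$ provides for free.
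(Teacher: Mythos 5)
Your proof is correct and takes essentially the same approach as the paper, which presents this corollary as a direct specialization of Proposition \ref{chrec0} without writing out the details. Your subdivision at integer times $0<k<k+n<\cdots<k+pn$, the reindexing to start each piece at time $0$ via the $1$-periodicity of $X_t$, and the Gr\"onwall estimate are precisely the intended adaptation.
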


\subsubsection{Quantitative Properties of the Peierls Barriers $h^{k\infty}$}

We now present a lemma that compiles essential properties needed for our analysis, particularly in the study of the non-wandering set $\Omega(\mathcal{T})$ and in facilitating the selection of a smooth element within it.\\

We define the integer-time $\alpha$-limit set $\alpha_k(\gamma)$ and the integer-time $\omega$-limit set $\omega_k(\gamma)$ of a curve $\gamma : \mathbb{R} \to M$ as
		\begin{equation} \label{AlphaOmega1}
			 	\begin{split}
					\alpha_k(\gamma) &= \{ z \in M \; | \; \exists (q_n)_n \in \mathbb{N}^\mathbb{N}; \; \lim_n q_n = +\infty, \; \lim_n \gamma(-k.q_n) = z \} \\
					\omega_k(\gamma) &= \{ z \in M \; | \; \exists (q_n)_n \in \mathbb{N}^\mathbb{N}; \; \lim_n q_n = +\infty, \; \lim_n \gamma(k.q_n) = z \}
				\end{split}
		\end{equation}
		
Recall from \eqref{VectorFields} the vector fields $Y_t$ and $Z$ used to define $X_t$.

\begin{lem} \label{SameClass}
 	\begin{enumerate}
 		\item \label{SameClass0} Fix an integer $k \geq 1$. Let $x$ be a point of $M$ and let $y$ and $z$ be two respective points of $\alpha_k(f_t(x))$ and $\omega_k(f_t(x))$. Then 
 		\begin{equation}
 			h^{k\infty}(y,x) = h^{k\infty} (x,z) = h^{k\infty} (y,z) = 0
 		\end{equation}
 		\item \label{SameClass1} Let  $x$ be a $1$-periodic point under the flow $f_t$. Then for all integer $k \geq 1$ and $i \geq 0$, $h^{k \infty+i}(x,\cdot) = h^{ \infty}(x,\cdot)$ and $h^{k \infty+i}(\cdot,x) = h^{ \infty}(\cdot,x)$.
 		\item \label{SameClass2} Let $F$ be an arc-wise connected subset of $M$ such that the vector fields $X_t$ and $Y_t$ coincide on $f_t(F)$ and that there exist an integer $m$ such that $\mathcal{R}_m(F)=F$. Then, for all pair of points $x$ and $y$ in $F$, we have
		\begin{equation}
			\overline{h}(x,y) := \limsup_{k \to \infty} h^k(x,y) = 0
		\end{equation}
		In particular, for all integers $k \geq 1$ and $i \geq 0$,
		\begin{enumerate}[label=\roman*.]
			\item $h^{k \infty +i}(x,y) = 0$
			\item For any point $z \in M$, $h^{k \infty+i}(x,z) = h^{k \infty+i}(y,z)$ and $h^{k \infty+i}(z,x) = h^{k \infty+i}(z,y)$. 
			
			We will sometimes denote these quantities respectively by $h^{k \infty+i}(F,z)$ and $h^{k \infty+i}(z,F)$.
		\end{enumerate}
		\item \label{SeparateClass} Let $F$ be a subset defined as above and assume moreover that it is a closed $f_t$-invariant subset of $M$. Assume that the rotation $f_t = \mathcal{R}_t$ is $k$-periodic on $F$. Then, for all pair of points $x$ and $y$ in $M \setminus F$ which are separated by $F$, we have
		\begin{equation}
			h^{k\infty}(x, y) = h^{k\infty}(x, F) + h^{k\infty}(F, y)
		\end{equation}		 
 	\end{enumerate}
\end{lem}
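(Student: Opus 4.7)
All four parts hinge on a feature specific to the Mañé Lagrangian $L(t,x,v) = \tfrac{1}{2}\|v - X_t(x)\|^2$: along any flow curve $\tau \mapsto f_\tau(x_0)$ of $X_t$, one has $\dot{\gamma} = X_\tau(\gamma)$, so $L \equiv 0$ along the curve and its action vanishes. Combined with the $\kappa_1$-Lipschitz regularity of $h^n$ (Proposition~\ref{Regularity}), the non-negativity of the Peierls barriers (Corollary~\ref{hpos}), and the triangular inequality, this will reduce each statement to a short manipulation.

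For Part \ref{SameClass0}, I would take a sequence $q_n \to \infty$ with $f_{-kq_n}(x) \to y$ and consider the flow curve $\tau \mapsto f_\tau(f_{-kq_n}(x))$ on $[0, kq_n]$; it has zero action and reaches $x$ at time $kq_n$, so $h^{kq_n}(f_{-kq_n}(x), x) = 0$, and the Lipschitz bound transfers this to $h^{kq_n}(y, x) \to 0$. Therefore $h^{k\infty}(y, x) \le \liminf_n h^{kq_n}(y, x) = 0$, and non-negativity forces equality; the claim for $(x, z)$ is symmetric, and $(y, z)$ follows from the triangular inequality for $h^{k\infty}$. For Part \ref{SameClass1}, the hypothesis $f_1(x) = x$ makes the loop $\tau \mapsto f_\tau(x)$ give $h^m(x, x) = 0$ for every integer $m \ge 1$; the triangular inequality then yields $h^{kp+i}(x, y) \le h^{kp+i-n}(x, x) + h^n(x, y) = h^n(x, y)$ for every $n \le kp+i$, and taking $\liminf_p$ and then $\liminf_n$ forces $h^{k\infty+i}(x, y) \le h^\infty(x, y)$. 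The reverse inequality is immediate since $\{kp+i\}_p \subset \mathbb{N}$, and the second variable is handled identically.

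For Part \ref{SameClass2}, the core step is a construction furnishing, for each large $k$, a time-$k$ curve from $x$ to $y$ of action $O(1/k)$. Choose a smooth path $\beta : [0, 1] \to F$ with $\beta(0) = x$ and $\beta(1) = \mathcal{R}_{-k}(y)$; arc-connectedness of $F$ provides such a $\beta$ as long as $\mathcal{R}_{-k}(y) \in F$, which follows from $\mathcal{R}_m(F) = F$ when $k \in m\mathbb{N}$ (and holds for all $k$ in the intended applications, where $F$ is in fact $\mathcal{R}$-invariant). Set $\gamma_k(t) := \mathcal{R}_t(\beta(t/k))$. Since $\beta(t/k) \in F$, the point $\gamma_k(t)$ lies in $\mathcal{R}_t(F) = f_t(F)$, so the hypothesis $X_t = Y_t$ on $f_t(F)$ gives the clean identity
\[
\dot{\gamma}_k(t) - X_t(\gamma_k(t)) \;=\; \tfrac{1}{k}\, d\mathcal{R}_t(\beta(t/k))\, \dot{\beta}(t/k),
\]
whose squared norm integrated over $[0, k]$ is bounded by $C/k$ for some $C$ depending only on $\|d\mathcal{R}\|_\infty$ and the length of $\beta$. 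Combined with non-negativity, this yields $\overline{h}(x, y) = 0$. The consequence (i) is immediate because $kp+i \to \infty$ forces $h^{kp+i}(x, y) \to 0$; (ii) follows from $h^{k\infty+i}(x, z) \le h^{k\infty}(x, y) + h^{k\infty+i}(y, z) = h^{k\infty+i}(y, z)$ combined with symmetry.

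The main obstacle lies in Part \ref{SeparateClass}. The inequality $h^{k\infty}(x, y) \le h^{k\infty}(x, F) + h^{k\infty}(F, y)$ is the triangular inequality combined with the constancy of $h^{k\infty}(x, \cdot)|_F$ from Part 3(ii). The reverse inequality rests on a monotonicity: since $\mathcal{R}_k = \mathrm{Id}$ on $F$, any minimizing curve of time $kq$ from $x$ to $z \in F$ can be extended by a zero-action flow on $F$ for time $k(p-q)$ that returns to $z$, so $p \mapsto h^{kp}(x, z)$ is non-increasing for $z \in F$ and in particular $h^{k\infty}(x, z) = \inf_p h^{kp}(x, z) \le h^{kp}(x, z)$ for every $p$. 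Given then a sequence of minimizers $\gamma_n$ of time $kp_n$ realizing $h^{k\infty}(x, y)$, the separation hypothesis forces each $\gamma_n$ to hit $F$ at some time $s_n$ at a point $z_n \in F$; extending the first piece by flowing on $F$ from time $s_n$ to the next multiple $kp_n'$ of $k$ costs nothing and produces a time-$kp_n'$ curve from $x$ to some $z_n' \in F$ with action $\le A(\gamma_n^{(1)})$. The monotonicity and constancy of $h^{k\infty}(x, \cdot)|_F$ convert this into $h^{k\infty}(x, F) \le A(\gamma_n^{(1)})$, and a mirror argument (prepending the zero-action flow from $\mathcal{R}_{-s_n}(z_n) \in F$ to $z_n$) gives $h^{k\infty}(F, y) \le A(\gamma_n^{(2)})$. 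Summing and passing to $n \to \infty$ closes the reverse inequality and hence the equality.
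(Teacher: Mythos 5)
Your proposal follows the paper's proof in structure on all four parts: Part 1 is the liminf property applied to the zero-action flow curve; Part 2 uses the null action of the periodic loop through $x$; Part 3 is the key construction $\gamma_k(t) = \mathcal{R}_t(\beta(t/k))$ combined with the identity $\dot\gamma_k - X_t(\gamma_k) = \tfrac{1}{k}d\mathcal{R}_t.\dot\beta(t/k)$; Part 4 splits near-minimizers at $F$ and closes the gap with the zero-action flow. Parts 1, 2, and 4 are correct and arguably cleaner than the paper's versions (your Part 2 replaces the paper's explicit curve concatenation with a direct triangular-inequality argument, and your Part 4 replaces compactness/liminf extraction with the monotonicity $p \mapsto h^{kp}(x,z)$ for $z \in F$, both valid).

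The one place where you are materially less careful than the paper is in Part 3. You take a curve $\beta = \beta_k$ from $x$ to $\mathcal{R}_{-k}(y)$ that implicitly depends on $k$, and the constant $C$ in your bound $C/k$ is of the form $\Vert d\mathcal{R}\Vert_\infty^2 \cdot (\text{length of } \beta_k)^2$. Since $\mathcal{R}_{-k}(y)$ ranges over the $\mathcal{R}$-orbit of $y$ (which may be dense in a circle), one must still argue that the lengths of the $\beta_k$ are uniformly bounded for the product to go to zero. This is exactly what the paper's case distinction is for: in the $\rho$-periodic case it fixes a \emph{finite} family of curves $\gamma^0, \dots, \gamma^{\rho-1}$ with a uniform $C^1$-bound, and in the irrational case it uses a finite $\varepsilon$-dense family together with short bridges of length $\leq \varepsilon$. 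Your argument can be repaired by a standard compactness/nice-boundary observation that path lengths inside a reasonable open set $F$ between points of a compact $K \subset F$ are uniformly bounded, but that step is missing. A smaller imprecision: you write that in the applications "$F$ is in fact $\mathcal{R}$-invariant," but the relevant sets (e.g.\ $B_n \setminus \bigcup_i B_n^i$) are only invariant under $\mathcal{R}_k$ for integer $k$, not under $\mathcal{R}_t$ for all $t$; this is enough for what you need, but the phrasing should be corrected.
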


\begin{proof}
	\ref{SameClass0}. This is an immediate implication of the Liminf property (\ref{PeierlsLiminf}) of the Peierls Barrier. More precisely,
	\begin{align*}
		0 \leq h^{k\infty}(y,x) \leq \liminf_i A_L( f_t(x)_{|[-ki,0]}) = 0
	\end{align*}		
	The other cases are analogous.\\

	\ref{SameClass1}. Let $x$ be a $1$-periodic point under the flow $f_t$. We fix two integers $k \geq 1$ and $i \geq 0$. We already know by definition of the Peierls barriers that
	\begin{align*}
		h^\infty(x,y) & = \liminf_n h^n(x,y) \leq \liminf_n h^{kn+i}(x,y) \leq h^{k \infty+i}(x,y) 
	\end{align*}
	Now let $\gamma_n : [0, k_n] \to M$ be a sequence of curves linking $x$ to $y$ with increasing integer times $k_n$ and such that $h^\infty(x,y) = \lim_n A_L(\gamma_n)$. We left-concatenate the curves $\gamma_n$ with the loops $f_t(x) : [0, (k-1).k_n+i] \to M$ based on the $1$-time periodic point $x$. We obtain new curves $\tilde{\gamma}_n : [0,k.k_n+i] \to M$ still linking $x$ to $y$. Since $f_t$ is of null action by the Mañé Lagrangian $L$, we get
	\begin{align*}
		h^\infty(x,y) &= \lim_n A_L(\gamma_n) = \lim_n A_L(\tilde{\gamma}_n) \geq \liminf_n h^{kn+i}(x,y) = h^{k \infty+i}(x,y) 
	\end{align*}
	The equality follows. The other equality is analogous. \\
	
	\ref{SameClass2}. Fix two points $x$ and $y$ in $F$. We distinguish two cases. The case where $y$ is periodic under the rotation $\mathcal{R}_t$ and the case where $\mathcal{R}_t$ is an irrational rotation at $y$.
	
	First assume that $y$ is periodic under $\mathcal{R}_t$ with integer period $\rho \geq 1$. We can consider for all integer $0 \leq i < \rho$, the points $y_i =\mathcal{R}_{i}^{-1}(y)$ and the curves $\gamma^i : [0,1] \to F$ connecting $x$ to $y_i$. The latter exist thanks to the arc-wise connectedness of $F$. Now for all integer time $k \geq 1$, we define the curve $ \gamma_k : [0,k] \to M$ by 
	\begin{equation*}
		\gamma_k (t)= \mathcal{R}_t \circ \gamma^i\left( \frac{t}{k} \right)
	\end{equation*}
	where $0 \leq i <\rho$ is such that $k  \equiv i \pmod {\rho}$. Then, its velocity is given by
	\begin{align*}
		\dot{\gamma}_k(t) &= \frac{d\mathcal{R}_t}{dt} \circ \gamma^i \left( \frac{t}{k} \right) + d\mathcal{R}_t. \frac{\dot{\gamma}^i}{k} \left( \frac{t}{k} \right) 
		= Y_t (\gamma_k(t)) + \frac{1}{k} d\mathcal{R}_t. \dot{\gamma}^i\left( \frac{t}{k} \right) 
	\end{align*}		
	Using the fact that $X_t = Y_t$ on $f_t(F)= \mathcal{R}_t(F)$, we get
	\begin{align*}
		A_L(\gamma_k) &= \int_0^k L(\tau, \gamma_k(\tau), \dot{\gamma}_k(\tau)) \; d\tau = \int_0^k \frac{1}{2} \Vert \dot{\gamma}_k(\tau) - Y_\tau(\gamma_k(\tau))\Vert^2 d\tau \\
		&= \int_0^k \frac{1}{2k^2} \left|\left| d\mathcal{R}_\tau. \dot{\gamma}^i\left( \frac{\tau}{k} \right) \right|\right|^2 d\tau \leq \Vert d\mathcal{R}_t\Vert^2_\infty. \max_{0 \leq j < \rho}\Vert\dot{\gamma}^j\Vert_\infty^2.\frac{1}{2k} \to 0 \quad \text{as } k \to \infty
	\end{align*}
	Therefore, we deduce that 
	\begin{equation}
		0 \leq \overline{h}(x,y) = \limsup_{k \to \infty} h^k(x,y) \leq \limsup_{k \to \infty} A_L(\gamma_k) =0
	\end{equation} 
	
	Now assume that $\mathcal{R}_t$ is an irrational rotation at $y$. Fix $\varepsilon >0$ and consider a large $n$ such that the points $y_i = \mathcal{R}_{i}^{-1}(y)$, $0\leq i <n$ are $\varepsilon$-dense in the orbit of $y$. For all integer $k \geq 1$ and $0\leq i <n$, we consider curves $\gamma^i: [0,1- \varepsilon /k ] \to F$ linking $x$ to $y_i$ with a uniform $C^1$-bound over $k$. Now fix $k \geq 1$ and let $0 \leq i < n$ be such that $d(y_k,y_i) < \varepsilon$ where $y_k = \mathcal{R}_{k}^{-1}(y) $. Consider a curve $\tilde{\gamma}^i : [0,\varepsilon] \to M$ linking $y_i$ to $y_k$ and such that $\Vert \dot{\tilde{\gamma}}^i \Vert_\infty \leq \varepsilon$. Now define the curve $\gamma_k : [0,k] \to M$ by
	\begin{equation*}
		\gamma_k(t) =
		\begin{cases}
			\mathcal{R}_t \circ \gamma^i\left( \frac{t}{k} \right) & \text{if } t \in [0, k-\varepsilon] \\
			\tilde{\gamma}^i (t - k + \varepsilon) & \text{if } t \in [k-\varepsilon, k]
		\end{cases}
	\end{equation*}
	Then, doing the same computations as in the periodic case, we obtain
	\begin{align*}
		A_L(\gamma_k) &= \int_0^{k-\varepsilon} L(\tau, \gamma_k(\tau), \dot{\gamma}_k(\tau)) \; d\tau + \int_{k-\varepsilon}^k L(\tau, \gamma_k(\tau), \dot{\gamma}_k(\tau)) \; d\tau \\
		& \leq \int_0^{k-\varepsilon} \frac{1}{2k^2} \left|\left| d\mathcal{R}_\tau. \dot{\gamma}^i\left( \frac{\tau}{k} \right) \right|\right|^2 d\tau + \int_{k-\varepsilon}^k L(\tau, \gamma_k(\tau), \dot{\gamma}_k(\tau)) \; d\tau \\
		& \leq \Vert d\mathcal{R}_t\Vert^2_\infty. \max_{0 \leq j < n}\Vert\dot{\gamma}^j\Vert_\infty^2.\frac{1}{2k} + \left(\sup_{\substack{ (\tau, z) \in \mathbb{T}^1 \times M  \\ \Vert v\Vert \leq \varepsilon}} L(\tau,z,v) \right). \varepsilon
	\end{align*}
	Taking $k \geq \frac{1}{\varepsilon}$, we conclude that $\lim_k A_L(\gamma_k) =0$ and that $\overline{h}(x,y) =0$. \\
	
	The remaining claimed identities follow from 
	\begin{align*}
		0 \leq h^{k \infty+i}(x,y) \leq \overline{h}(x,y) =0
	\end{align*}
	and from the triangular inequality (\ref{TriangInegPeierls}) of Proposition \ref{PeierlsProp} applied twice as below
	\begin{align*}
		h^{k \infty+i}(z,x) &\leq h^{k \infty+i}(z,y) + h^{k \infty}(y,x) = h^{k \infty+i}(z,y) \\
		&\leq h^{k \infty+i}(z,x) + h^{k \infty}(x,y) = h^{k \infty+i}(z,x)
	\end{align*}
	
	\ref{SeparateClass}. Let $x$ and $y$ be two points of $M \setminus F$ separated by $F$. We know from the triangular inequality (\ref{TriangInegPeierls}) that
	\begin{equation}
		h^{k\infty}(x, y) \leq h^{k\infty}(x, F) + h^{k\infty}(F, y)
	\end{equation}
	We need to prove the inverse inequality. Let $\gamma_i : [0, k.n_i] \to M$ be curves linking $x$ to $y$ such that $h^{k\infty}(x, y) = \lim_i A_L(\gamma_i)$. Since $F$ separates $x$ and $y$, there exist times $t_i \in (0, k.n_i)$ such that the points $z_i := \gamma_i(t_i)$ belong to $F$. 
	
	We concatenate the curve $\gamma_{i|[0,t_i]}$ with the curve $f_{t_i,t}(z_i): \big[t_i, k \lceil \frac{t_i}{k} \rceil \big] \to M$ to get a first curve $\gamma^1_i:\big[0, k \lceil \frac{t_i}{k} \rceil \big] \to M$ linking $x$ to $z_i^1:= f_{t_i,k \lceil \frac{t_i}{k} \rceil}(z_i)$. And since the flow $f_t$ of $X_t$ is of null action, we have $A_L(\gamma_{i|[0,t_i]}) = A_L(\gamma_i^1)$.
	
	Similarly, we concatenate the curve $f_{t_i,t}(z_i): \big[ k \lfloor \frac{t_i}{k} \rfloor, t_i \big] \to M$ with the curve $\gamma_{i|[t_i, kn_i]}$ to get a curve $\gamma_i^2 : \big[ k \lfloor \frac{t_i}{k} \rfloor,kn_i \big] \to M$ linking $z_i^2 := f_{t_i, k \lfloor \frac{t_i}{k} \rfloor}$ to $y$. And we still have $A_L(\gamma_{i|[t_i, kn_i]}) = A_L(\gamma_i^2)$.
	
	For $j = 1,2$, the points $z_i^j$ do belong to the $f_t$-invariant set $F$. And by compactness, we can assume that they converge up to extraction to $z^j \in F$. We get
	\begin{align*}
		h^{k\infty}(x, y) &= \lim_i A_L(\gamma_i) = \lim_i A_L(\gamma_i^1) + A_L(\gamma_i^2) \\
		& \geq h^{k \infty}(x,z^1) + h^{k \infty}(z^2,y) \\
		& =h^{k \infty}(x,F) + h^{k \infty}(F,y)
	\end{align*}
	where we used the Liminf property (\ref{PeierlsLiminf}) in the second line. This gives the wanted inequality.
\end{proof}

\begin{rem}
	These lemmas will be applied in various ways in the different sets (\ref{Sets}) that served the construction of $f_t$.
	
	The last two properties of the lemma can be applied to the connected components of $D$ or to the set $B_n \setminus \bigcup_i B_n^i$ which satisfy all the assumptions on $F$. 
	
	As an example, we present an evaluation of $h^{\rho_n\infty}(x^i_n,D)$ in dimension $d \geq 3$.  This quantity is well defined due to the Point \ref{SameClass2} of the lemma and the fact that $D$ is connected in high dimensions. Since $F = \partial B_n$ separates $x^i_n$ and $D$, we have
	\begin{align*}
		h^{\rho_n\infty}(x^i_n,D) = h^{\rho_n\infty}(x^i_n,\partial B_n) + h^{\rho_n\infty}(\partial B_n, D)
	\end{align*}
	The dynamics of $f_t$ in $A_n$ is such that for all $x \in A_n$, $\alpha_{\rho_n}(f_t(x)) \subset \partial B_n$ and $\omega_{\rho_n}(f_t(x)) \subset \partial C_n \subset D$ so that Property \ref{SameClass0} implies $h^{\rho_n\infty}(\partial B_n, D) = 0$. Thus, we get the equality
	\begin{align*}
		h^{\rho_n\infty}(x^i_n,D) = h^{\rho_n\infty}(x^i_n,\partial B_n)
	\end{align*}
\end{rem}

\section{Construction of a Non-Periodic Recurrent Viscosity Solution} \label{SectionRecurrence}

We now proceed with constructing a non-wandering viscosity solution that is not periodic, thus partially proving Theorem \ref{MainC1}. We will select a solution $u(t,x)$ such that $u(t,x_n)$ is periodic with a minimal period $\rho_n$. And since $\rho_n$ diverges to infinity, $u(t,x)$ cannot be periodic. However, the regularity conditions will not yet be met, and a suitable choice of a smooth solution will be deferred to Section \ref{RegSection}.\\

Additionally, we will examine the dynamics of $\mathcal{T}$ when restricted to the $\omega$-limit set $\omega(u)$, and more broadly to $\omega(v)$ for any non-wandering viscosity solution $v\in \Omega(\mathcal{T})$. We will observe that $\Omega(u)$ forms a Cantor set within $\mathcal{C}(M, \mathbb{R})$, where $\mathcal{T}$ behaves as an odometer. And in the general case, we will see through a proof of Theorem \ref{MainOmega} and Proposition \ref{MainPropOmega} that $\mathcal{T}_{|\omega(v)}$ is a factor of an odometer.\\

\subsection{Choice of the Non-Periodic Recurrent Initial Data} \label{SectionNCCofT}

Now that the framework has been established, our objective is to identify a suitable scalar map $u$ with a non-periodic $\omega$-limit set $\omega(u)$ under the action of the Lax-Oleinik semi-group $\mathcal{T}$. To achieve this, we start by constructing a $\rho_n$-periodic viscosity solution at every $\rho_n$-orbit $\{x^i_n\}$ of the Lagrangian flow $\phi_L$.\\

Recall from Proposition \ref{kvisc} that the barriers $h^{\rho_n\infty}(x^0_n, \cdot)$ are $\rho_n$-periodic viscosity solutions. Consequently, a fitting candidate for an initial data $u: M \to \mathbb{R}$ would be
\begin{equation} \label{ic}
	u(x) = \inf_{n \geq 0} \{ h^{\rho_n\infty}(x_n, x) \}
\end{equation}
where $x_n = x^0_n$ were defined in (\ref{Points}).\\

\begin{theo} \label{RecurrenceTheo}
	The viscosity solution with initial data $u$ defined in (\ref{ic}) is a recurrent, non-periodic viscosity solution of the Hamilton-Jacobi equation (\ref{HJ}).
\end{theo}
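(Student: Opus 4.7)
I would verify three things in sequence: $u$ is a viscosity solution, it is recurrent, and it is not periodic. The first is immediate: each $v_n := h^{\rho_n\infty}(x_n, \cdot)$ is a $\rho_n$-periodic viscosity solution (Proposition \ref{kvisc}), the family is uniformly $\kappa_1$-Lipschitz (Proposition \ref{PeierlsProp}), so $u$ is continuous and, by Lemma \ref{ViscosityInf}, a viscosity solution of \eqref{HJ}.

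The key structural identity driving both remaining steps is an orbit-shift formula for the modified Peierls barriers. Writing $v_n^{(j)} := h^{\rho_n\infty}(x_n^j, \cdot)$, I claim $\mathcal{T}^j v_n = v_n^{(j)}$. One inequality uses the zero-action orbit segment $x_n^0 \to x_n^j$ of length $j$ and the triangular inequality to estimate $h^{\rho_n p + j}(x_n^0, z) \le h^{\rho_n p}(x_n^j, z)$, then takes $\liminf_p$; the reverse inequality is analogous, pre-composing with the length-$(\rho_n - j)$ segment $x_n^j \to x_n^0$. Since $\mathcal{T}$ is defined through an infimum it commutes with pointwise infima, hence
\[ \mathcal{T}^k u \;=\; \inf_n v_n^{(k \bmod \rho_n)}. \]

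For recurrence, I choose $k_m := \lcm(\rho_0, \ldots, \rho_{N_m})$ with $N_m \to \infty$. For $n \le N_m$ one has $v_n^{(k_m \bmod \rho_n)} = v_n$; for $n > N_m$, the uniform Lipschitz regularity of the $\rho_n$-Peierls barrier gives $|v_n^{(j)}(x) - v_n(x)| \le \kappa_1 d(x_n^j, x_n) \le 2 \kappa_1 r_n$. Combined with the elementary estimate $|\inf_n a_n - \inf_n b_n| \le \sup_n |a_n - b_n|$ applied only to the large-$n$ tail, this yields $\Vert\mathcal{T}^{k_m}u - u\Vert_\infty \le 2 \kappa_1 r_{N_m + 1} \to 0$, so $u \in \omega(u)$.

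For non-periodicity, suppose $\mathcal{T}^T u = u$ for some integer $T \ge 1$. Since $u(x_n) = v_n(x_n) = 0$, evaluating at $x_n$ gives $\inf_m v_m^{(T \bmod \rho_m)}(x_n) = 0$, and the non-negativity of every term (Corollary \ref{hpos}) forces $h^{\rho_m\infty}(x_m^{T \bmod \rho_m}, x_n) = 0$ for some $m$. The main obstacle is to rule out every pair $(m, T \bmod \rho_m) \ne (n, 0)$: by the construction of Subsection \ref{SectionSubdivision} the closures $\overline{C_m}$ are pairwise disjoint and $f_t$-invariant, while for $m = n$ the balls $B_n^i$ with $i \ne 0$ sit in basins of distinct fixed points of $f^{\rho_n}$. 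In either situation, no $(\varepsilon,\tau)$-pseudo-orbit of $X_t$ connects the source to $x_n$ for small enough $\varepsilon$, so Corollary \ref{gchrec} forces the barrier to be strictly positive. Therefore $\rho_n \mid T$ for every $n$, which is impossible since $\rho_n \to \infty$.
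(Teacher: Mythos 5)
Your division into three steps (viscosity solution, recurrence, non-periodicity), the orbit-shift identity $\mathcal{T}^j v_n = v_n^{(j)}$ (the paper's Proposition~\ref{kvisc} together with Lemma~\ref{Tk+iu}), the commutation of $\mathcal{T}$ with pointwise infima (Proposition~\ref{TkuProp}), and the recurrence estimate using the uniform Lipschitz constant of the barriers are all the same mechanism as in the paper, up to the cosmetic replacement of $p_k=\prod_{i\le k}\rho_i$ by $k_m=\lcm(\rho_0,\dots,\rho_{N_m})$.

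The non-periodicity step, however, has a genuine gap. After writing $\mathcal{T}^T u(x_n)=\inf_{m}h^{\rho_m\infty}(x_m^{T\bmod\rho_m},x_n)=0$, you assert that the non-negativity of the terms ``forces'' one of them to vanish. That inference is false for an infimum over infinitely many indices: an infimum of strictly positive reals can be zero without being attained. Even if you then succeed in proving $h^{\rho_m\infty}(x_m^{j},x_n)>0$ for every $(m,j)\ne(n,0)$ via Corollary~\ref{gchrec} (which is correct term by term), this alone does not rule out that the infimum over $m\ne n$ tends to $0$. What is needed, and what the paper proves, is a lower bound \emph{uniform in $m$}: there is $\varepsilon_n>0$, depending only on $n$, such that every minimizing curve from $x_m$ to $x_n$ crosses both $\partial C_n$ and $\partial B_n$ and therefore pays at least $\varepsilon_n$; the paper establishes this via a compactness lemma applied to the potential $h^{s,t}$ restricted to $\partial C_n\times\partial B_n$ with $t-s\ge\tau$. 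You could alternatively obtain the uniform bound by noting that $h^{\rho_m\infty}(z_\infty,\cdot)=h^{\infty}(z_\infty,\cdot)$ (Property~\ref{SameClass1} of Lemma~\ref{SameClass}, since $z_\infty$ is a fixed point of $f_t$) and invoking the $\kappa_1$-Lipschitz bound, so that $h^{\rho_m\infty}(x_m^j,x_n)\ge h^\infty(z_\infty,x_n)-\kappa_1 r_m$ is bounded away from $0$ for $m$ large; but some such uniformity argument must appear explicitly for the non-periodicity step to close.
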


\begin{proof}
	The proof is segmented into two main parts. One for non-periodicity and another for recurrence. However, before delving into these steps, we need to determine the action of the Lax-Oleinik semigroup $\mathcal{T}$ on the initial data $u$ of (\ref{ic}).

\subsubsection{Evaluation of $\mathcal{T}^ku$}

We evaluate the action of the Lax-Oleinik operator $\mathcal{T}$ on the chosen scalar map $u$.

\begin{prop} \label{TkuProp}
	For all integer $k \geq 0$, we have 
	\begin{equation} \label{TkuFormula}
		\mathcal{T}^k u(x) = \inf_{n \geq 0} \{ h^{\rho_n\infty+k}(x_n, x) \}
	\end{equation}
\end{prop}

\begin{proof} 
	We use the following lemma
	\begin{lem} \label{ViscosityInf}
		Let $v_n \in \mathcal{C}(\mathbb{R} \times M)$ be a sequence of viscosity solutions and let $u(t,x) = \inf_n \{v_n(t,x) \}$. Then $u$ is a viscosity solution.
	\end{lem} 
	\begin{proof}
		Fix two real times $s<t$. Then, for all $x \in M$ the following computation holds
		\begin{align*}
			\mathcal{T}^{s,t}u(s,x) & = \inf_{y \in M} \big\{ u(s, y ) + h^{s,t}(y,x) \big\} \\
			&= \inf_{y \in M} \big\{ \inf_{n \geq 0} \{ v_n(s, y ) \} + h^{s,t}(y,x) \big\} \\
			&= \inf_{n \geq 0} \; \inf_{y \in M} \Big\{ v_n(s, y ) + h^{s,t}(y,x) \Big\} \\
			&= \inf_{n \geq 0}  \big\{ \mathcal{T}^{s,t}v_n(s,x) \big\} = \inf_{n \geq 0}  \{ v_n(t,x) \} =  u(t,x)
		\end{align*}
	\end{proof}
	We know from Proposition \ref{kvisc} that the maps $h^{\rho_n\infty}(\cdot,x_n,\cdot)$ are viscosity solutions. Hence, the lemma immediately implies that
	\begin{align*}
		\mathcal{T}^k u(x) = \inf_ {n\geq 0} \{ \mathcal{T}^k h^{\rho_n\infty}(x_n, x) \} = \inf_{n\geq 0} \{  h^{\rho_n\infty+k}(x_n, x) \}
	\end{align*}
\end{proof}

In order to determine the exact periodicity of $u$ around every orbit $\{x^i_n\}$, we need to take a closer look on the behaviour of the $\rho_n$-barrier $h^{\rho_n\infty}$ for the studied \Mane Lagrangian $L$.

\begin{prop} \label{hper}
	For all integers $n \geq 0$, the map $h^{\rho_n\infty}(x_n, \cdot)$ is a periodic viscosity solution with minimal period $\rho_n$.
\end{prop}

\begin{proof}
	The Proposition \ref{kvisc} already tells that the maps $h^{\rho_n\infty}(x_n, \cdot)$ are $\rho_n$-periodic viscosity solutions of the Hamilton-Jacobi equation (\ref{HJ}). However, the minimality of the periods $\rho_n$ requires extra effort. We claim that
	\begin{enumerate}[label=(\roman*)]
		\item $h^{\rho_n\infty}(x_n,x_n) =0$. \label{=0}
		\item For all $k \in \{1,..,\rho_n-1\}$, $\mathcal{T}^k h^{\rho_n\infty}(x_n,x_n) = h^{\rho_n\infty +k}(x_n,x_n) \not=0$. \label{n=0}
	\end{enumerate}	
	
	\paragraph{\ref{=0}} For this case, One needs to follow the flow $f_t$ starting at $x_n$. Let $\gamma : \mathbb{R} \to M$ be the curve $\gamma(t) = f_t(x_n)$ which, by construction of the point $x_n$, is $\rho_n$-periodic. We have by definition of the $\rho_n$-barrier $h^{\rho_n\infty}$ that
	\begin{align*}
		h^{\rho_n\infty}(x_n,x_n) = \liminf_k h^{\rho_nk}(x_n,x_n) &\leq \liminf_k A_L( \gamma_{|[0,\rho_nk]}) = \liminf_{k} \int_0^{\rho_nk} L(\tau,\gamma(\tau), \dot{\gamma}(\tau)) \; d\tau \\ 
		&= \liminf_{k} \int_0^{\rho_nk} \frac{1}{2}  \Vert \dot{\gamma}(\tau) -X_\tau(\gamma(\tau))\Vert^2 d\tau \\
		&=  \liminf_{k} \int_0^{\rho_nk} \frac{1}{2}  \Vert \partial_\tau f_\tau(x_n) -X_\tau(f_\tau(x_n))\Vert^2 d\tau = 0
	\end{align*}
	where the last nullity is due to the fact that $f_t$ is the flow of $X_t$. Combining this inequality with the non-negativity of the Mañé Lagrangian barriers noted in Corollary \ref{hpos}, we deduce that $h^{\rho_n\infty}(x_n,x_n) = 0$.
	
	\paragraph{\ref{n=0}} Let $k \in \{1,..,\rho_n-1\}$. It suffices to see that the kind of chain transitivity claimed by Corollary \ref{gchrec} does not hold in this case. We act by contradiction and suppose that $h^{\rho_n\infty +k}(x_n,x_n)=0$. 
	
	 Recall that the ball $B^k_n$ centered at $x^k_n$ with radius $\delta_n$ is the basin of attraction of $x^k_n$ under the map $f_{\rho_n}$. Fix a radius $0<\delta< \frac{\delta_n}{2}$. We know that $f_{\rho_n}$ sends the open ball $B_\delta = B(x^k_n, \delta)$ compactly into itself so that $f_{\rho_n} (\overline{B_\delta}) \subset B_\delta$. Let $\varepsilon$ be defined as $\varepsilon = d\big(f_{\rho_n} (\overline{B_\delta}), \partial B_\delta\big)$. Then we have $\varepsilon < \delta$ and $f_{\rho_n} (B_\delta) \subset B_{\delta- \varepsilon}$.
	
	Since we assumed $h^{\rho_n\infty +k}(x_n,x_n)=0$, we can now apply the Corollary \ref{gchrec} to this $\varepsilon$. We use the same notations for the obtained curves. Given that $\gamma_0(t) = f_t(x_n)$, it follows that $\gamma_0(k) = x^k_n$. This enables to localize $\gamma_1(0)$ as below 
	\begin{equation*}
		\gamma_1(0) \in B(\gamma_0(k), \varepsilon) = B(x^k_n, \varepsilon) \subset B(x^k_n, \delta)= B_\delta
	\end{equation*}
	Using the definition of $\varepsilon$ and the fact $T_1$ is a multiple of $\rho_n$, we deduce that $\gamma_1(T_1)$ belongs to $f_{\rho_n} (B_\delta) \subset B_{\delta- \varepsilon}$. And as a result, the localization of the next initial point $\gamma_2(0)$ gives $\gamma_2(0) \in B_\delta$.
	
	A simple induction leads to the inclusion $x_n \in B_\delta \subset B^k_n = B_{\delta_n}$ which contradicts the fact the $x^k_n$-centred ball $B^k_n$ and the $x_n$-centred ball $B^0_n$ are disjoint.
\end{proof}

\begin{rem} \label{hstrictpos}
	Note that the proof of point \ref{n=0} yields the following result : For every integers $n>0$ and $0\leq k < \rho_n$, and every point $x \neq x^k_n$, we have $h^{\rho_n \infty +k}(x_n, x) > 0$.
	
	This proof provides the necessary details to verify the hypothesis of the contrapositive version of Proposition \ref{chrec0}, which establishes the strict positivity of $\liminf_{t \to 0} h^t(x,y) >0$ or $\liminf_{n \to 0} h^{t_n}(x,y) >0$.
	
	From now on, we will omit such detailed explanations and only assert the non-existence of pseudo-orbits linking two studied point based on dynamics of the flow $f_t$ between these.
\end{rem}

\subsubsection{Non-Periodicity of $u$.} \label{NonPeriodicitySection}

	We proceed with the proof of the non-periodicity of $u$. Arguing by contradiction, suppose that there exists a positive integer $q>0$ such that $\mathcal{T}^qu = u$. Given that the sequence of periods $\rho_n$ diverges to infinity, we can fix an integer $n \geq 0$ such that $\rho_n > q$. We claim that 
	\begin{enumerate}[label=(\roman*)]
		\item For $k=0,\rho_n$, $\mathcal{T}^k u(x_n) = 0$. \label{==0}
		\item For $k=1,..,\rho_n-1$, $\mathcal{T}^k u(x_n) \not= 0$. \label{not=0}
	\end{enumerate}
	which contradicts the $q$-periodicity of $(\mathcal{T}^ku(x_n))_{k \geq 0}$ with $q < \rho_n$.\\
	
	The point \ref{==0} is a consequence of Proposition \ref{TkuProp} combined with the proof of Proposition \ref{hper}. When gathered, we get for $k=0,\rho_n$
	\begin{equation*}
		0 \leq \mathcal{T}^ku(x_n) \leq h^{\rho_n \infty+k}(x_n,x_n) = h^{\rho_n \infty}(x_n,x_n) = 0
	\end{equation*}		
		
	The second point \ref{not=0} is more subtle.  Fix an integer $1 \leq k < \rho_n$. We know from the proof of Proposition \ref{hper} that $h^{\rho_n \infty+k}(x_n,x_n) > 0$. However, we need a uniform positive lower bound on $h^{\rho_m \infty +k}(x_m,x_n)$ for $m \neq n$ in order to deduce that $\mathcal{T}^k u(x_n) = \inf_{m \geq 0} \{ h^{\rho_m\infty+k}(x_m, x_n) \} > 0$.
	
	Fix an integer $m \neq n$. We use the dynamics of the flow $f_t$. We know from the construction that the flow is directed from $\partial B_n$ towards $\partial C_n$ (see Figure \ref{FigureConstruction}). We will use this fact to apply the contrapositive part of Proposition \ref{chrec0}. 
	
	But before that, let us take a deeper look on the minimizing curves that realize $h^{\rho_m\infty+k}(x_m, x_n)$. There exists a sequence of minimizing curves $\gamma_i : [0, \rho_m q_i+k] \to M$ from $x_m$ to $x_n$ such that $h^{\rho_m\infty+k}(x_m, x_n) = \lim_i A_L(\gamma_i)$. Since $\partial C_n$ and $\partial B_n$ separate successively $x_m$ from $x_n$, there exist two real times $0 \leq s_i < t_i \leq \rho_nq_i+k$ such that $\gamma_i(s_i) \in \partial C_n$ and $\gamma_i(t_i) \in \partial B_n$. However, we know from the à priori compactness Theorem \ref{APrioriCompactness} that there is a uniform Lipschitz value $M>0$ for all minimizing curves. Hence, we have
	\begin{align*}
		d(\partial C_n, \partial B_n) \leq d(\gamma_i(s_i), \gamma_i(t_i)) \leq  M.(t_i-s_i)
	\end{align*}
	
	Set $\tau = \frac{d(\partial C_n, \partial B_n)}{M}$ which is independent of $m$. The following holds.
	\begin{lem}
		There exists a real number $\varepsilon_n >0$ depending only on $\tau$ such that 
		\begin{equation}
			\inf \{h^{s,t}(x,y)\; | \; 0\leq s < t, \; t-s \geq \tau, \; x \in \partial C_n, \; y \in \partial B_n \} \geq \varepsilon_n
		\end{equation}
	\end{lem}	
	\begin{proof}
		We argue by contradiction. Assume that there exists a sequence of curves $\sigma_i : [s_i,t_i] \to M$ such that  $t_i - s_i \geq \tau$, $x_i:= \sigma_i(s_i) \in \partial C_n$, $y_i:= \sigma_i(t_i) \in \partial B_n$ and $h^{s_i,t_i}(x_i,y_i) = A_L(\sigma_i) \to 0$ as $i \to +\infty$. The time periodicity of the Lagrangian allows, up to time translation, to assume that $s_i \in \mathbb{T}^1$ for all $i \geq 0$. And by compactness of $\mathbb{T}^1 \times \partial C_n \times \partial B_n$,we can further assume that, up to extraction, $(s_i,x_i,y_i)$ converges to $(s,x,y) \in \mathbb{T}^1 \times \partial C_n \times \partial B_n$. 
		
		If the times $t_i$ are bounded, we can also assume that $t_i$ converges to $t \geq s+ \tau$. The continuity of the potential $h$ results in the equality
		\begin{equation*}
			h^{s,t}(x,y) = \lim\limits_i h^{s_i,t_i}(x_i,y_i) = 0
		\end{equation*}
		This gives rise to a minimizing curve $\gamma : [s,t] \to M$ linking $x$ to $y$ such that
		\begin{equation}
			A_L(\gamma) = \int_s^t \frac{1}{2} \Vert\dot{\gamma}(\zeta) - X_{\zeta}(\gamma(\zeta) \Vert^2 d\zeta =0
		\end{equation}
		Thus, $\dot{\gamma}(\zeta) = X_\zeta(\gamma(\zeta))$ and the curve follows the flow $f_t$. However, we know from the construction that the flow $f_t$ is directed from $\partial B_n$ toward $\partial C_n$, which contradicts the existence of such a curve $\gamma$.

		We showed that the times $t_i$ are unbounded, diverging to $+ \infty$. Using the lipschitz-regularity of the potentials $h$ stated in Proposition \ref{Regularity}, We have for all integer $i \geq 0$
		\begin{equation*}
			| h^{s,t_i}(x,y) - h^{s_i,t_i}(x_i,y_i) | \leq \kappa_\tau. d\big((s,x,y),(s_i,x_i,y_i) \big) \to 0 \quad \text{as } i \to \infty
		\end{equation*}
		Hence, we get $\liminf_i h^{s,t_i}(x,y) = 0$, which contradicts the contrapositive claim of Proposition \ref{chrec0}.
	\end{proof}
	The lemma above established that $A_L(\gamma_{i|[s_i,t_i]}) \geq \varepsilon_n$, providing the uniform lower bound
	\begin{align*}
		h^{\rho_m\infty+k}(x_m, x_n) = \lim_i A_L(\gamma_i) \geq \liminf_i A_L(\gamma_{i|[s_i,t_i]}) \geq \varepsilon_n
	\end{align*}
	Therefore
	\begin{equation}
		\mathcal{T}^k(u)(x_n) \geq \inf_{m \geq 0} \{ h^{\rho_m\infty+k}(x_m, x_n) \} \geq \min\{ \varepsilon_n, h^{\rho_n\infty+k}(x_n,x_n) \} >0
	\end{equation}
	This concludes the proof of point \ref{not=0} and the proof of the non-periodicity of $u$.
	
	\begin{rem}
		The proof for point \ref{not=0} could have been simplified by exploiting the autonomous nature of the radial flow $g_t$ from $\partial B_n$ to $\partial C_n$. We made a deliberate choice not to take advantage of this feature.
	\end{rem}

\subsubsection{Recurrence of $u$} \label{RecurrenceSection}
	Set for all $n \geq 0$, the integer $p_n = \prod\limits_{k=0}^n\rho_k$. The $\rho_n$-periodicity of the $\rho_n$-barriers $h^{\rho_n \infty}$ tells that
	\begin{equation} \label{Tpku}
		\begin{split}
			\mathcal{T}^{p_k}u(x) & = \inf_{n \geq 0} \{ h^{\rho_n\infty+p_k}(x_n, x) \}\\
			&= \min \big\{ \inf_{0\leq n \leq k} \{h^{\rho_n\infty}(x_n, x)\} , \inf_{n > k} \{h^{\rho_n\infty + p_k}(x_n, x)\} \big\}
		\end{split}
	\end{equation}
	We see that the $k$ first elements of the infimum defining $\mathcal{T}^{p_k}u$ coincide with those in the infimum defining $u$. Our expectation is that the difference between these two maps diminishes as $k$ approaches infinity.\\
	
	We need to compare the two maps $h^{\rho_n\infty}(x_n, x)$ and $h^{\rho_n\infty+i}(x_n, x)$ for some $0 \leq i < \rho_n$. A lemma in this direction is the following

	\begin{lem} \label{Tk+iu}
		For all integers $n \geq 0$ and $0 \leq i < \rho_n$, 	
		\begin{equation}
			h^{\rho_n\infty+i}(x_n, x) = h^{\rho_n\infty}(x^i_n, x)
		\end{equation}
	\end{lem}
	\begin{proof}
		We know that $h^{i}(x_n, x^i_n)=0$ and $h^{\rho_n-i}(x^i_n, x_n)=0$ as they are respectively realized by the curves $f_t(x_n)$ and $f_t(x^i_n)$. Hence, applying the triangular inequality (\ref{TriangIneg}), we obtain
		\begin{multline*}
			h^{\rho_n\infty+i}(x_n, x) = \liminf_j h^{\rho_nj+i}(x_n, x) \\ 
			\leq \liminf_j \big[ h^{i}(x_n, x^i_n)+ h^{\rho_nj}(x^i_n, x) \big] = \liminf_j h^{\rho_nj}(x^i_n, x) =  h^{\rho_n\infty}(x^i_n, x)
		\end{multline*}
		and
		\begin{multline*}
			h^{\rho_n\infty}(x^i_n, x) = h^{\rho_n\infty+\rho_n}(x^i_n, x) = \liminf_j h^{\rho_nj+\rho_n}(x^i_n, x) \\ 
			\leq \liminf_j \big[ h^{\rho_n-i}(x^i_n, x_n)+ h^{\rho_nj+i}(x_n, x)\big]  = \liminf_j h^{\rho_nj+i}(x_n, x) =  h^{\rho_n\infty+i}(x_n, x)
		\end{multline*}
	\end{proof}

	We proceed to the comparison. Recall from the definition of the $n$-Peierls barriers and from Proposition \ref{PeierlsProp} that $h^{\rho_n\infty}$ is $\kappa_1$-Lipschitz. Thus, we get for all integer $n>k$ and all point $x$ in $M$,
	\begin{align*}
		|h^{\rho_n\infty+p_k}(x_n, x) - h^{\rho_n\infty}(x_n, x)| & = |h^{\rho_n\infty}(x^{p_k}_n, x) - h^{\rho_n\infty}(x_n, x)| \\
		& \leq 2\kappa_1.d(x^{p_k}_n,x_n) \leq 2\kappa_1. 2r_n \leq 4\kappa_1. r_k =: \varepsilon_k
	\end{align*}
	Since we have equality $h^{\rho_n\infty + p_k} = h^{\rho_n\infty}$ for all $n\leq k$, it follows that for all integer $n \geq 0$ and all $x \in M$
	\begin{equation}
		|h^{\rho_n\infty+p_k}(x_n, x) - h^{\rho_n\infty}(x_n, x)| \leq \varepsilon_k
	\end{equation}		
	In particular, 
	\begin{equation*}
		h^{\rho_n\infty}(x_n, x) - \varepsilon_k  \leq h^{\rho_n\infty+p_k}(x_n, x) \leq h^{\rho_n\infty}(x_n, x) + \varepsilon_k 
	\end{equation*}
	Taking the infimum over $n$ and using (\ref{Tpku}), we get
	\begin{equation*}
		u(x) -\varepsilon_k \leq \mathcal{T}^{p_k}u(x) \leq u(x) +\varepsilon_k
	\end{equation*}
	or more precisely
	\begin{equation*}
		\Vert\mathcal{T}^{p_k}u(x) - u(x) \Vert_\infty \leq \varepsilon_k \to 0 \quad \text{as } k \to +\infty
	\end{equation*}
	This means that $\mathcal{T}^{p_k}u$ converges to $u$ in $\mathcal{C}(M, \mathbb{R})$, indicating that $u$ belongs to its own $\omega$-limit set $\omega(u)$ and is a recurrent viscosity solution. 
\end{proof}

\begin{rem} Some remarks are to be made on this result.
	\begin{enumerate}
		\item As mentioned in the introduction, this Theorem \ref{RecurrenceTheo} implies that, at the difference of non-autonomous \cite{MR1650261} or the one-dimensional case \cite{MR2041603}, there is no convergence of the Lax-Oleinik operator to a periodic orbit.
		\item Recall from Corollary \ref{Minimality} and Remark \ref{rect} the Minimality of the Lax-Oleinik operator $\mathcal{T}$ on $\omega(u)$. This means that all the elements $v$ of $\omega(u)$ are non-periodic, recurrent viscosity solutions. A deeper study of the $\omega$-limit set of the constructed $u$ will be done in the next Subsection \ref{OmegaUSection}.
		\item The arguments of the proof do work for a large variety of examples. It seems uncomplicated to construct various Hamiltonians that admit non-periodic recurrent viscosity solutions. An example where the Mather set contains a Cantor set would be as follows. $f = \tau \circ g$ where $g$ is represented in black arrows in the Figure \ref{FigureLemniscate} below and $\tau$ is the map that exchanges the interior components of the lemniscates and acts as an adding machine (See Subsection \ref{OmegaUSection} for a definition) on the Cantor set formed by the intersection of these different components. The latter map is represented in pink in Figure \ref{FigureLemniscate}.
		\begin{figure}[h!]
			\centering
			\includegraphics[width=0.8\linewidth]{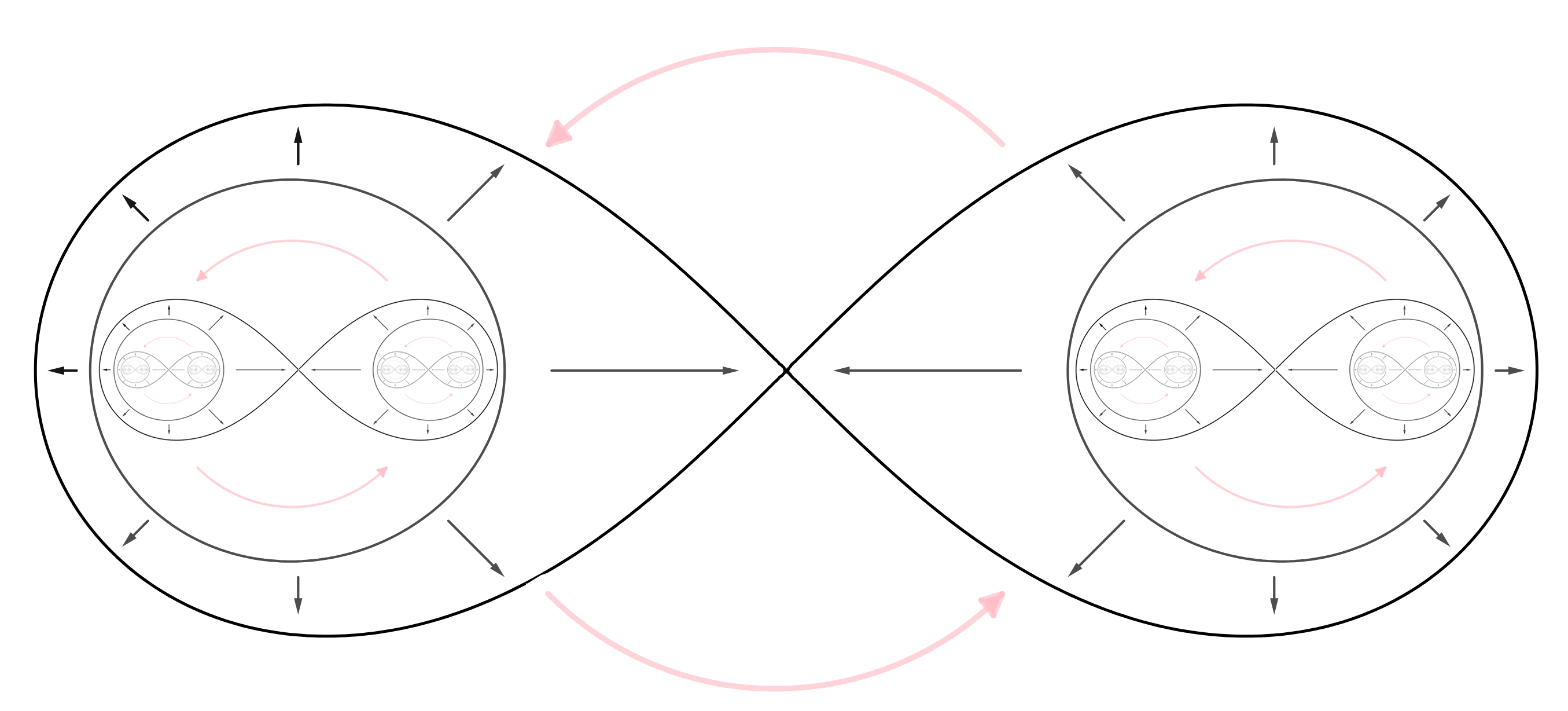}
			\caption{Dynamics of $f$.}
			\label{FigureLemniscate}
		\end{figure}
	\end{enumerate}
\end{rem}

\subsection{Action of the Lax-Oleinik Operator $\mathcal{T}$ on the Non-Wandering Set $\Omega(\mathcal{T})$} \label{OmegaUSection}

In this subsection, we outline the construction of all recurrent viscosity solutions associated with the Mané Lagrangian $L$ defined in (\ref{LagMane}). Furthermore, we provide insights into the dynamics of $\mathcal{T}$ within their $\omega$-limit sets.\\

To begin, we focus on examining the $\omega$-limit set $\omega(u)$ of the previously established recurrent viscosity solution $u$ defined in (\ref{ic}). There are slight topological differences between the cases where the manifold $M$ is of dimension $d=2$ or $d \geq 3$. And since the proofs are analogous, we will address only the high-dimensional case, noting the differences with the 2D case in the remarks.\\

\subsubsection{The $\omega$-Limit Set $\omega(u)$ and Adding Machines} \label{SectionAddingMachine}

First and foremost, it is essential to establish a means of identifying each element within the $\omega$-limit set $\omega(u)$. To do so, we start by giving a convenient expression of $\mathcal{T}^ku$.

\begin{prop}\label{TkuOmegaProp}
	If $d \geq 0$, For all integers $k \in \mathbb{N}$, we have
	\begin{equation*}
		\mathcal{T}^ku(x) = 
		\begin{cases}
			\min \big( h^\infty(z_\infty,x), h^{\rho_n\infty}(x^k_n, x) \big) & \text{if } x \in C_n \\
			0 & \text{if } x \in \overline{D}
		\end{cases}
	\end{equation*}
\end{prop}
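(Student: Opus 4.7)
The plan is to start from Proposition \ref{TkuProp} combined with Lemma \ref{Tk+iu}, which together give $\mathcal{T}^k u(x) = \inf_{n \geq 0} h^{\rho_n \infty}(x_n^k, x)$, and to analyze this infimum using two structural features of the construction: travelling inside $\overline{D}$ is costless, and the price of leaving the shrinking region $C_m$ vanishes as $m \to \infty$. The first feature comes from applying Lemma \ref{SameClass}.\ref{SameClass2} to $F = \overline{D}$, which for $d \geq 3$ is arc-wise connected and satisfies the required hypotheses (since $X_t = Y_t$ on $\overline{D}$ and every $\mathcal{R}_m$ preserves $\overline{D}$). This yields $h^{k\infty+i}(z_1, z_2) = 0$ for all $z_1, z_2 \in \overline{D}$, and, more importantly, that $h^{k\infty+i}(z, y)$ is independent of $z \in \overline{D}$; in particular $h^\infty(z_\infty, y) = h^\infty(\partial C_n, y)$ for every $n$, and $h^\infty(z_\infty, y) = 0$ whenever $y \in \overline{D}$.

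I would next quantify the second feature by introducing the escape cost $c_m := h^{\rho_m \infty}(x_m^k, \partial C_m)$. Using an explicit curve that leaves $x_m^k$ radially through $B_m^k \cup A_m$, and exploiting the fact that $\|X_t\|$ is uniformly small on $C_m$ (controlled by the parameters $\varsigma_m$, which can be taken arbitrarily small as $m \to \infty$), one shows $c_m \to 0$. For the upper bound on the infimum, fix $\varepsilon > 0$ and $x \in M$, and let $\sigma_j$ be a sequence of curves realizing $h^\infty(z_\infty, x)$ via the Liminf property of Proposition \ref{PeierlsProp}. For $m$ large, I would concatenate: an escape from $x_m^k$ to $\partial C_m$ with action close to $c_m$; a free travel inside $\overline{D}$ from $\partial C_m$ to the initial point of $\sigma_j$ (of action at most $\varepsilon$); the curve $\sigma_j$ itself; and finally enough zero-action loops of length $\rho_m$ at the $\rho_m$-periodic point $x_m^k$ to force the total duration to be an exact multiple of $\rho_m$. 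Letting $m \to \infty$ and $\varepsilon \to 0$ yields $\liminf_m h^{\rho_m \infty}(x_m^k, x) \leq h^\infty(z_\infty, x)$. For the matching lower bound, when $x \in C_n$ and $m \neq n$, every curve from $x_m^k$ to $x$ crosses $\partial C_n$; imitating the proof of Lemma \ref{SameClass}.\ref{SeparateClass} (which in fact uses only the $f_t$-invariance of $\partial C_n$ and the zero-action of flow segments, not the precise $k$-periodicity assumption) one obtains $h^{\rho_m \infty}(x_m^k, x) \geq h^{\rho_m \infty}(\partial C_n, x) \geq h^\infty(\partial C_n, x) = h^\infty(z_\infty, x)$.

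Assembling these estimates, for $x \in C_n$ the infimum separates as $\mathcal{T}^k u(x) = \min\bigl(h^{\rho_n \infty}(x_n^k, x),\, \inf_{m \neq n} h^{\rho_m \infty}(x_m^k, x)\bigr) = \min\bigl(h^{\rho_n \infty}(x_n^k, x),\, h^\infty(z_\infty, x)\bigr)$, while for $x \in \overline{D}$ the upper bound together with the non-negativity from Corollary \ref{hpos} forces $\mathcal{T}^k u(x) = h^\infty(z_\infty, x) = 0$. The main obstacle will be the upper-bound construction: proving $c_m \to 0$ alongside matching the duration to a multiple of $\rho_m$ requires quantitative bookkeeping inside $C_m$ using the specific Mañé form of $L$, and the zero-action loops at $x_m^k$ (available precisely because $x_m^k$ is $\rho_m$-periodic under $f_t$) are what promote the estimate from the usual Peierls barrier $h^\infty$ to the periodic barrier $h^{\rho_m \infty}$ appearing in the infimum.
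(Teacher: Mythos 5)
Your outline is correct and follows the same broad strategy as the paper (reduce via Proposition \ref{TkuProp} and Lemma \ref{Tk+iu} to an infimum of periodic barriers, then use the machinery of Lemma \ref{SameClass} together with separation by a distinguished subset of $\overline{D}$), but it takes a more laborious route in two places, and the paper's proof is worth comparing against.

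For the upper bound, the paper does not construct any explicit escape curve or pad durations with loops at $x^k_m$. Instead it applies Property \ref{SeparateClass} of Lemma \ref{SameClass} with $F = \overline{D'}$ (a set of $f_t$-fixed points, hence $1$-periodic, so the hypothesis is trivially met for every $\rho_m$), which yields the decomposition $h^{\rho_m\infty}(x^k_m, x) = h^{\rho_m\infty}(x^k_m, z_\infty) + h^{\infty}(z_\infty, x)$ for $x \in C_n$, $m \neq n$. The first term is then killed by a one-line Lipschitz estimate: $h^{\rho_m\infty}(x^k_m, \cdot)$ is $\kappa_1$-Lipschitz (uniformly in $m$) and vanishes at $x^k_m$, so $0 \leq h^{\rho_m\infty}(x^k_m, z_\infty) \leq \kappa_1\, d(x^k_m, z_\infty) \leq \kappa_1 r_m \to 0$. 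Your quantity $c_m = h^{\rho_m\infty}(x^k_m, \partial C_m)$ is of the same nature and is in fact controlled by exactly the same Lipschitz bound, $c_m \leq \kappa_1\,d(x^k_m, \partial C_m) \leq 2\kappa_1\delta_m$; the explicit radial curve you propose, and the dependence on $\varsigma_m$, are not needed and introduce precisely the bookkeeping headache you flag (matching the total duration to a multiple of $\rho_m$). Once you have the equality from Property \ref{SeparateClass}, no concatenation is needed at all.

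Your lower-bound argument via the separation by $\partial C_n$ is fine, and your remark that the proof of Property \ref{SeparateClass} only needs $f_t$-invariance and the null-action of flow segments (not literal $k$-periodicity, which enters only to make $h^{k\infty}(F,\cdot)$ and $h^{k\infty}(\cdot,F)$ well-defined through Property \ref{SameClass2}) is correct. The paper simply avoids the need to say this by choosing $\overline{D'}$, on which $f_t$ is the identity. Overall your proposal is sound, but I would recommend using the uniform Lipschitz constant $\kappa_1$ to dispose of the small term directly and dropping the explicit curve construction.
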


\begin{proof}
	Fix an integer $n\in \mathbb{N}$. Let $u^k_n$ be the map defined by
	\begin{equation}
		u^k_n(x) = \inf_{m \neq n} \{ h^{\rho_m\infty}(x^k_m,x) \}
	\end{equation}
	We know from Proposition \ref{TkuProp} and Lemma \ref{Tk+iu} that
	\begin{equation} \label{TknuOmega}
		\begin{split}
			\mathcal{T}^ku(x) &= \inf_{m \geq 0}\{h^{\rho_m\infty+k}(x_m,x)\} = \inf_{m \geq 0}\{h^{\rho_m\infty}(x^k_m,x)\} \\
			&= \min \big( u_n^k(x), h^{\rho_n\infty}(x^k_n, x) \big)
		\end{split}
	\end{equation}
	
	We show that for all $x \in C_n$, $u^k_n(x) =  h^\infty(z_\infty,x)$. Let $x$ be a fixed point of $C_n$ and fix an integer $m \neq n$. Let us evaluate $h^{\rho_m\infty}(x^k_m,x)$. The set $\overline{D'}$ separates $x$ and $x^k_m$. Then, applying Property \ref{SeparateClass} of Lemma \ref{SameClass} applied to $F = \overline{D'}$ followed by an application Property \ref{SameClass1} to the $1$-periodic point $z_\infty$, yields
	\begin{equation} \label{TknuOmegaDem1}
		\begin{split}
			h^{\rho_m\infty}(x^k_m, x) &= h^{\rho_m \infty} (x^k_m,D') +  h^{\rho_m \infty} (D',x) \\
			&=h^{\rho_m \infty} (x^k_m,z_\infty) +  h^{\rho_m \infty} (z_\infty,x) \\
			&= h^{\rho_m \infty} (x^k_m,z_\infty) +  h^{\infty} (z_\infty,x)
		\end{split}
	\end{equation}
	Moreover, the regularity of the barriers, as stated in Proposition \ref{pPeierlsProp}, added with the fact that $h^{\rho_m\infty}(x^k_m,x^k_m) =0$, provide
	\begin{align*}
		0 \leq h^{\rho_m\infty}(x^k_m,z_\infty) & = h^{\rho_m\infty}(x^k_m,z_\infty) - h^{\rho_m\infty}(x^k_m,x^k_m) \leq \kappa_1. d(z_\infty, x^k_m) \leq \kappa_1. r_m \to 0 \quad \text{as } m \to \infty
	\end{align*}
	Thus, $\inf_{m \neq n} \{ h^{\rho_m\infty}(x^k_m,z_\infty) \} =0$ and using \eqref{TknuOmegaDem1}, we obtain
	\begin{align*}
		u_n^k(x)&= \inf_{m \neq n} \{h^{\rho_m\infty}(x^k_m,x) \} \\
		&=\inf_{m \neq n} \{h^{\rho_m\infty}(x^k_m,z_\infty) +h^{\infty}(z_\infty,x) \} \\
		&= \inf_{m \neq n} \{ h^{\rho_m\infty}(x^k_m,z_\infty) \} +h^{\infty}(z_\infty,x) \\
		&= h^{\infty}(z_\infty,x)
	\end{align*}
	
	Furthermore, the Property \ref{SameClass2} of Lemma \ref{SameClass} shows that for all $x \in \overline{D}$ and all integer $m$, $h^{\rho_m\infty}(x^k_m,x) = h^{\rho_m\infty}(x^k_m,z_\infty)$ which, in the same fashion as above, yields for all $x \in \overline{D}$, $u^k_n(x) = h^\infty(z_\infty,x) = 0$.
	We showed that 
	\begin{align*}
		u^k_n(x) = 
		\begin{cases}
			h^\infty(z_\infty,x) & \text{if } x \in C_n \\
			0 & \text{if } x \in \overline{D}
		\end{cases}
	\end{align*}
	This, added to identity (\ref{TknuOmega}), concludes the proof.
\end{proof}

\begin{rem}
	In the two dimensional case, the proposition doesn't hold. However, by the same arguments, we still have for all $x \in C_n$ that $u_n^k(x) = u_n^0(x)$, and if we consider the sets 
	\begin{equation}
		\begin{split}
			D'_0 &:= \{r \geq r_{1} + 2 \delta_{1}\} \supset C_0 \\
			D'_n &:= \{ r_{n+1} + 2 \delta_{n+1} \leq r \leq r_{n-1} - 2 \delta_{n-1} \} \supset C_n \quad \text{if } n \geq 1
		\end{split}
	\end{equation}
	then 
	\begin{equation} \label{Tku2Dcase}
		\mathcal{T}^ku(x) = \min \big(u^0_n(x), h^{\rho_n\infty}(x^k_n, x) \big) \quad \text{if } x \in D'_n
	\end{equation}
\end{rem}   

The subsequent proposition allows the desired identification.

\begin{prop} \label{UniquenessSequence}
	Every element $v$ of $\omega(u)$ is characterized by a unique infinite sequence $\underline{k}(v)=(k_n(v))_{n \geq 0}$ in $Z_{\underline{\rho}}:= \prod_{n=0}^\infty \mathbb{Z}/\rho_n\mathbb{Z}$ such that
	\begin{equation}
		v(x) = \inf_{n \geq 0} \{ h^{\rho_n\infty+k_n(v)}(x_n,x) \}
	\end{equation}
\end{prop}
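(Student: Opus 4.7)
By Proposition \ref{TkuProp} combined with Lemma \ref{Tk+iu}, one has
\[ \mathcal{T}^m u(x) = \inf_{n \geq 0} h^{\rho_n \infty}\bigl(x_n^{m \bmod \rho_n}, x\bigr), \]
so the integer $m$ enters only through the residues $(m \bmod \rho_n)_{n \geq 0}$, which form an element of the compact profinite space $Z_{\underline{\rho}}$. The plan is to exploit this compactness to extract a subsequential limit of residues along any sequence $(m_j)$ with $\mathcal{T}^{m_j} u \to v$, and then to identify $v$ with the associated infimum. Uniqueness will follow from the strict positivity statements built into the proof of Proposition \ref{hper} and into Subsection \ref{NonPeriodicitySection}.

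\textbf{Existence.} Fix $v \in \omega(u)$ and positive integers $m_j$ with $\mathcal{T}^{m_j} u \to v$ uniformly. By compactness of $Z_{\underline{\rho}}$, we may assume after extraction that $(m_j \bmod \rho_n)_{n \geq 0}$ converges to some $(k_n)_{n \geq 0} \in Z_{\underline{\rho}}$, i.e.\ for every $N$ one has $m_j \equiv k_n \pmod{\rho_n}$ for all $n \leq N$ and all large $j$. Set $w(x) := \inf_n h^{\rho_n \infty + k_n}(x_n, x) = \inf_n h^{\rho_n \infty}(x_n^{k_n}, x)$. To prove $v = w$, fix $\varepsilon > 0$ and, using $r_n \to 0$ together with the $\kappa_1$-Lipschitz regularity of the Peierls barriers (Proposition \ref{PeierlsProp}), pick $N$ with $2 \kappa_1 r_N < \varepsilon$. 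For $j$ large and any $x \in M$, the terms of index $n \leq N$ in the two infima coincide exactly, while for $n > N$
\[ \bigl|h^{\rho_n \infty}(x_n^{m_j \bmod \rho_n}, x) - h^{\rho_n \infty}(x_n^{k_n}, x)\bigr| \leq 2 \kappa_1 \, d\bigl(x_n^{m_j \bmod \rho_n}, x_n^{k_n}\bigr) \leq 2 \kappa_1 r_n \leq 2 \kappa_1 r_N < \varepsilon. \]
Splitting each infimum as a minimum of its $n \leq N$ and $n > N$ parts then gives $\|\mathcal{T}^{m_j} u - w\|_\infty \leq \varepsilon$ for all large $j$, whence $v = w$.

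\textbf{Uniqueness.} Suppose $v$ is represented by both $(k_n)$ and $(k'_n)$ in $Z_{\underline{\rho}}$. Evaluating the first representation at $x_n^{k_n}$, Property \ref{=0} in the proof of Proposition \ref{hper} gives $v(x_n^{k_n}) = 0$, attained by the $n$-th term. In the second representation, the $n$-th term equals $h^{\rho_n \infty}(x_n^{k'_n}, x_n^{k_n})$, which by Remark \ref{hstrictpos} is strictly positive unless $k'_n = k_n$. For $m \neq n$, every minimizing curve contributing to $h^{\rho_m \infty + k'_m}(x_m, x_n^{k_n})$ must cross successively $\partial C_n$ and $\partial B_n$, so the pseudo-orbit argument of Subsection \ref{NonPeriodicitySection} (applied verbatim with $x_n$ replaced by $x_n^{k_n} \in O_n$) yields a uniform lower bound $h^{\rho_m \infty + k'_m}(x_m, x_n^{k_n}) \geq \varepsilon_n > 0$ depending only on $n$. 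The vanishing $v(x_n^{k_n}) = 0$ thus forces $k'_n = k_n$, as required.

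\textbf{Main obstacle.} The main technical difficulty is that the infima under comparison are taken over infinitely many terms, so a direct pointwise estimate is unavailable. This is circumvented by combining the uniform Lipschitz regularity of the Peierls barriers with the decay $r_n \to 0$ of the basin radii, which permits truncation at a finite level and reduces the problem to the convergence of finitely many residues --- which is exactly what the compactness of $Z_{\underline{\rho}}$ delivers.
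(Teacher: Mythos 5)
Your proof is correct, but it follows a genuinely different route from the paper's. The paper derives Proposition \ref{UniquenessSequence} from the explicit localized representation of Proposition \ref{TkuOmegaProp} (itself a consequence of Lemma \ref{SameClass}): it shows that on each $C_n$ the iterate $\mathcal{T}^k u$ equals $\min\big(h^\infty(z_\infty,\cdot), h^{\rho_n\infty}(x^k_n,\cdot)\big)$ and vanishes on $D$, then uses pointwise finiteness of $\{x^k_n\}_k$ on each $C_n$ to extract the $k_n(v)$, and re-runs the same computation to check that $\inf_n h^{\rho_n\infty+k_n(v)}(x_n,\cdot)$ equals $v$; uniqueness is read off directly from the values $h^\infty(z_\infty,x^i_n)$ and $h^{\rho_n\infty}(x_n^{k_n(v)},x^i_n)$ via Lemma \ref{SameClass}. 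You bypass Proposition \ref{TkuOmegaProp} entirely: you work with the global infimum formula of Proposition \ref{TkuProp} plus Lemma \ref{Tk+iu}, extract a subsequential limit of residues using compactness of the profinite space $Z_{\underline{\rho}}$, and close the gap with the same $\kappa_1$-Lipschitz / $r_n\to 0$ truncation estimate that the paper uses in the recurrence proof of Subsection \ref{RecurrenceSection}; for uniqueness you invoke the pseudo-orbit lower bound of Subsection \ref{NonPeriodicitySection} (valid verbatim at $x_n^{k_n}\in O_n$) in place of Lemma \ref{SameClass}. Your approach is more self-contained and topological; the paper's is more explicit, pinning down the form of $v$ on each $C_n$, which it then reuses immediately in the proof of Theorem \ref{OmegaOdometer}. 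One small slip: two points of the circle $O_n$ of radius $r_n$ are at distance at most $2r_n$, not $r_n$; this changes only a harmless constant.
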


\begin{proof}
	Proposition \ref{TkuOmegaProp} show that 
	\begin{align*}
		\mathcal{T}^ku(x) &= \inf_{m \geq 0}\{h^{\rho_m\infty+k}(x_m,x)\} \\
		&= \begin{cases}
			\min \big( h^\infty(z_\infty,x), h^{\rho_n\infty}(x^k_n, x) \big) & \text{if } x \in C_n \\
			0 & \text{if } x \in D
		\end{cases}
	\end{align*}
	Given that the finiteness of the sets $\{x_n^k\}_{k \geq 0}$, it follows that for any $v \in \omega(u)$ and for all $n \geq 0$, there must exist an integer $k_n(v) \in \{0, 1, \ldots, \rho_n-1\}$ such that
	\begin{equation}
		v(x)= 
		\begin{cases}
			\min \big( h^\infty(z_\infty,x), h^{\rho_n\infty}(x^{k_n(v)}_n, x) \big) & \text{if } x \in C_n \\
			0 & \text{if } x \in D
		\end{cases} 
	\end{equation}
	Let $\tilde{v} : M \to \mathbb{R}$ be the map defined by
	\begin{equation}
		\tilde{v}(x) = \inf_{n \geq 0} \{ h^{\rho_n\infty+k_n(v)}(x_n, x) \}
	\end{equation}
	The exact same proof as for Proposition \ref{TkuOmegaProp} applied to $\tilde{v}$ shows that
	\begin{align*}
		\tilde{v}(x) &= \inf_{n \geq 0} \{ h^{\rho_n\infty+k_n(v)}(x_n, x) \} \\
		&= \begin{cases}
			\min \big( h^\infty(z_\infty,x), h^{\rho_n\infty}(x^{k_n(v)}_n, x) \big) & \text{if } x \in C_n \\
			0 & \text{if } x \in D
		\end{cases} \\
		&= v(x)
	\end{align*}
	
	\textit{Unicity of $k_n(v)$.} An application of the quantitative Property \ref{SameClass0} of Lemma \ref{SameClass} gives for all $i \in \{0,..,\rho_n-1\}$, 
	\begin{align*}
		h^\infty (z_ \infty , x^i_n) = h^\infty (z_\infty , y_n) > 0 \quad \text{and} \quad h^{\rho_n\infty}(x^{k_n(v)}_n, x^i_n)=
		\begin{cases}
			h^\infty(x^{k_n(v)}_n,y_n)>0 & \text{if } i \neq k_n(v) \\
			0 & \text{if } i = k_n(v)
		\end{cases}
	\end{align*}
	This implies that $v(x^i_n) =0$ if and only if $i =k_n(v)$. In other words, for all integer $n \geq 0$, $k_n(v)$ is the index $i$ of the unique point $x^i_n$ such that $v(x^i_n) =0$. This results in the uniqueness of $k_n(x)$.
\end{proof} 

\begin{rem}
	\begin{enumerate}
		\item In the proof, it is evident that each element $v$ in $\omega(u)$ is uniquely determined by the images of the points $\{x^i_n \; ; \; 0 \leq i < \rho_n , n \geq 0 \} \subset \mathcal{M}_0$. This follows from a broader uniqueness theorem proved in proposition 3.2 of \cite{MR2041603}, which asserts that all recurrent viscosity solutions are uniquely characterized by their images on the projected Mather set $\mathcal{M}_0$.
		\item The identity \eqref{Tku2Dcase} suffices to prove the proposition in the 2D case.
	\end{enumerate}
\end{rem}

We have shifted the analysis of $\omega(u)$ from the space $\mathcal{C}(M,\mathbb{R})$ to $Z_{\underline{\rho}}=\prod_{n=0}^\infty \mathbb{Z}/\rho_n\mathbb{Z}$. This enables to compare the dynamics of $\mathcal{T}_{|\omega(u)}$ to other dynamical systems on the new space $Z_{\underline{\rho}}$.\\

Let us define a topology in the space $Z_{\underline{\rho}}= \prod_{n=0}^\infty \mathbb{Z}/\rho_n\mathbb{Z}$ by endowing it with a metric $d$ defined as follows
\begin{equation} \label{TopologyCoeff}
	d\left(\underline{q}, \underline{p}\right) = \frac{1}{2^{\nu \left(\underline{q}, \underline{p}\right)}} \quad \text{with} \quad \nu \left(\underline{q}, \underline{p}\right) = \min \{n \geq 0 \; | \; q_n \neq p_n \}
\end{equation}

\begin{defi}
	The \textit{odometer map} also called the \textit{adding machine} on $Z_{\underline{\rho}}= \prod_{n=0}^\infty \mathbb{Z}/\rho_n\mathbb{Z}$ is the homeomorphism $\tau$ defined by
	\begin{equation}
		\begin{matrix}
			\tau : & \prod\limits_{n=0}^\infty \mathbb{Z}/\rho_n\mathbb{Z} &\longrightarrow &\prod\limits_{n=0}^\infty \mathbb{Z}/\rho_n\mathbb{Z}\\ 
			&\underline{q}=(q_2,q_3,...) &\longmapsto &\underline{q} + \underline{1} = (q_2 + 1 ,q_3 + 1 ,...)
		\end{matrix}
	\end{equation}
\end{defi}

\begin{rem} \label{OdometersRem1} 
	For more on adding machines, a standard reference is \cite{MR0551496}. A survey from a dynamical perspective is available in \cite{MR2180227}. And for a recent and concise introduction, we refer to \cite{MR4492832}.
	\begin{enumerate}
		\item In the standard definition, adding machines are minimal maps on their domains which is not necessarily the case of the map $\tau$. However, if $\underline{q}$ is a non-periodic point of $\tau$, then the restriction of $\tau$ to the $\omega$-limit set $\omega\left(\underline{q}\right)$ behaves as an adding machine in the usual sense.
	
	In this paper, we chose to include periodic maps in what we refer as odometers or adding machines.  
		\item Note that the odometer $\tau$ is an isometry of the space $Z_{\underline{\rho}}$. Indeed, if the first $n$-terms of the sequences $\underline{q}$ and $\underline{p}$ coincide, the same holds for $\underline{q}+1$ and $\underline{p}+1$ so that $d(\tau(\underline{q}), \tau(\underline{p}))= d(\underline{q}, \underline{p})$.
	\end{enumerate}
\end{rem}

We now state the theorem that completes the understanding of the asymptotic behaviour of the constructed recurrent viscosity solution $u$ introduced in (\ref{ic}).

\begin{theo} \label{OmegaOdometer}
	The restriction of the Lax-Oleinik semigroup $\mathcal{T}$ to the $\omega$-limit set $\omega(u)$ is a factor of the odometer map $\tau$. More precisely, there exists a continuous injective map $\varphi : \omega(u) \hookrightarrow Z_{\underline{\rho}}$ such that the following commutative diagram holds
	\begin{equation}
		\begin{tikzcd}
			\omega(u) \arrow[hookrightarrow]{d}{\varphi} \arrow{r}{\mathcal{T}}  & \omega(u) \arrow[hookrightarrow]{d}{\varphi} \\
			Z_{\underline{\rho}} \arrow{r}{\tau}  & Z_{\underline{\rho}}
		\end{tikzcd}	
	\end{equation}
\end{theo}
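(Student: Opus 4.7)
The whole construction of $\varphi$ is handed to us by Proposition \ref{UniquenessSequence}: to every $v \in \omega(u)$ it assigns a unique sequence $\underline{k}(v) = (k_n(v))_{n \geq 0} \in Z_{\underline{\rho}}$ with
\begin{equation*}
v(x) = \inf_{n \geq 0} \{ h^{\rho_n\infty+k_n(v)}(x_n,x) \}.
\end{equation*}
My plan is to set $\varphi(v) := \underline{k}(v)$. Injectivity is automatic since $\underline{k}(v)$ determines $v$ by the above formula. Three things remain: commutativity of the diagram, the well-definedness of the image in $Z_{\underline{\rho}}$, and continuity, which I expect to be the main difficulty.

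For commutativity, I would combine Proposition \ref{kvisc}, which gives $\mathcal{T}h^{\rho_n\infty+k} = h^{\rho_n\infty+k+1}$ (reading indices modulo $\rho_n$), with Lemma \ref{ViscosityInf}, whose proof shows that $\mathcal{T}$ commutes with infima of viscosity solutions:
\begin{equation*}
\mathcal{T}v(x) = \inf_{n \geq 0} \{ \mathcal{T} h^{\rho_n\infty+k_n(v)}(x_n,x) \} = \inf_{n \geq 0} \{ h^{\rho_n\infty+k_n(v)+1}(x_n,x) \}.
\end{equation*}
Uniqueness in Proposition \ref{UniquenessSequence} then forces $k_n(\mathcal{T}v) = k_n(v)+1 \pmod{\rho_n}$ for every $n$, which is exactly $\varphi(\mathcal{T}v) = \tau(\varphi(v))$.

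The delicate point is continuity of $\varphi$. The key input is the quantitative gap extracted in the proof of Proposition \ref{UniquenessSequence}: for every $v \in \omega(u)$ and every $n \geq 0$,
\begin{equation*}
v(x^{k_n(v)}_n) = 0 \quad \text{and} \quad v(x^i_n) \geq \min\bigl( h^\infty(z_\infty, y_n),\ h^\infty(x^{k_n(v)}_n, y_n) \bigr) > 0 \text{ for } i \neq k_n(v).
\end{equation*}
Fix an integer $N \geq 0$ and set $\varepsilon_N := \tfrac{1}{2} \min_{0 \leq n \leq N} \min_i \{v(x^i_n) : i \neq k_n(v)\} > 0$. If $v_j \in \omega(u)$ converges uniformly to $v$, then for $j$ large enough we have $\|v_j - v\|_\infty < \varepsilon_N$, hence $v_j(x^{k_n(v)}_n) < \varepsilon_N$ while $v_j(x^i_n) > \varepsilon_N$ for every $i \neq k_n(v)$ and every $n \leq N$. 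Since $v_j$ vanishes at exactly one of the points $\{x^0_n, \dots, x^{\rho_n-1}_n\}$, namely $x^{k_n(v_j)}_n$, this forces $k_n(v_j) = k_n(v)$ for all $n \leq N$, giving $d(\varphi(v_j), \varphi(v)) \leq 2^{-(N+1)}$. Letting $N \to \infty$ yields continuity of $\varphi$ and completes the proof.
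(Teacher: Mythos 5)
Your proof is correct and follows essentially the same route as the paper: you define $\varphi(v) = \underline{k}(v)$ via Proposition \ref{UniquenessSequence}, obtain injectivity for free, and prove continuity by exploiting the positive gap at the non-vanishing points $x_n^i$ that the uniqueness proposition supplies. The only small deviations are cosmetic: you obtain sequential continuity at each $v$ from a $v$-dependent constant $\varepsilon_N$, whereas the paper uses a $v$-independent constant $\lambda_n$ (giving uniform continuity directly, which is equivalent anyway on the compact set $\omega(u)$); and you spell out the commutativity of the diagram via Proposition \ref{kvisc} and Lemma \ref{ViscosityInf}, a verification the paper leaves implicit.
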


\begin{proof}
	The choice of the map $\varphi$ comes from the unicity Proposition \ref{UniquenessSequence}. For all $v$ in $\omega(u)$, we set $\varphi(v) = \underline{k}(v)$. Then the injectivity of the map $\varphi$ follows immediately. We only need to prove that it is continuous.
	
	For all $n \geq 0$, we set $v_n^i : M \to \mathbb{R}$ to be the map
	\begin{equation*}
		v^i_n(x) = \min \big( h^\infty(z_\infty,x), h^{\rho_n\infty}(x^i_n, x) \big)
	\end{equation*}
	and the constants $\lambda_n$ to be defined as
	\begin{equation*}
		\lambda_n = \min_{0 \leq i \neq j < \rho_n} \left\{ \left\Vert v_{n|C_n}^i - v_{n|C_n}^j \right\Vert_\infty \right\}
	\end{equation*}
	We have $\lambda_n >0$. Indeed, if we take an integer $0 \leq i < \rho_n$, then we know from Proposition \ref{chrec0} that
	\begin{align*}
		v_n^i(x_n^i)  = h^{\rho_n\infty}(x^i_n, x^i_n) = 0 \; , \quad \quad h^\infty(z_\infty,x^i_n) > 0 \; , \quad \text{and} \quad  \forall j \neq i, \; h^{\rho_n\infty}(x^j_n, x^i_n)>0
	\end{align*}
	This yields $\big\Vert v_{n|C_n}^i - v_{n|C_n}^j \big\Vert_\infty >0$, and taking the infimum on $i\neq j$, we obtain the positivity $\lambda_n >0$.

	We can show that the sequence $\lambda_n$ is decreasing. But instead of proving it, we consider the positive non-increasing sequence of $\tilde{\lambda}_n$ defined by
	\begin{equation*}
		\tilde{\lambda}_n = \min_{2 \leq i \leq n} \lambda_i
	\end{equation*}
	Let $v$ and $w$ be two elements of $\omega(u)$ such that $\Vert v - w \Vert_\infty < \tilde{\lambda}_n$. We know from Proposition \ref{UniquenessSequence} that for all $n \geq 0$, 
	
	\begin{equation*}
		v_{|C_n} = v_n^{k_n(v)} \quad \text{and} \quad w_{|C_n} = v_n^{k_n(w)}
	\end{equation*}		
	Hence, from the definition of $\tilde{\lambda}_n$ and the hypothesis $\Vert v - w \Vert_\infty < \tilde{\lambda}_n$, we deduce that for all $2 \leq i \leq n$, $k_i(v) = k_i(w)$ which means that
	\begin{equation*}
		d(\varphi(v), \varphi(w)) \leq \frac{1}{2^{n+1}}
	\end{equation*}
	This proves the (uniform) continuity of $\varphi$ and concludes the proof of the theorem.
\end{proof}

\begin{cor} \label{Cantor}
	The $\omega$-limit set $\omega(u)$ is a Cantor space.
\end{cor}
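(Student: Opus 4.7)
To prove the corollary, the plan is to verify the defining properties of a Cantor space: $\omega(u)$ is non-empty, compact, metrizable, totally disconnected, and perfect. Non-emptiness follows from the recurrence of $u$ (Theorem \ref{RecurrenceTheo}), and metrizability from the ambient $C^0$-topology on $\mathcal{C}(M,\mathbb{R})$. Compactness is a consequence of Proposition \ref{ViscBounded} and Corollary \ref{Equicontinuity}: the orbit $(\mathcal{T}^n u)_n$ is uniformly bounded and equi-Lipschitz, hence relatively compact by Arzel\`a-Ascoli, and $\omega(u)$ is closed in this relatively compact set.

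Total disconnectedness I would obtain via the embedding $\varphi : \omega(u) \hookrightarrow Z_{\underline{\rho}}$ of Theorem \ref{OmegaOdometer}. A continuous injection from a compact space into a Hausdorff space is a homeomorphism onto its image, so $\omega(u)$ is homeomorphic to a subspace of the countable product $Z_{\underline{\rho}} = \prod_{n \geq 0} \mathbb{Z}/\rho_n\mathbb{Z}$ of finite discrete sets. Any such product is totally disconnected, and subspaces inherit this property.

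The remaining and only genuinely delicate point is perfectness. I would combine the minimality of $\mathcal{T}$ on $\omega(u)$ (Corollary \ref{Minimality}) with an aperiodicity argument in $Z_{\underline{\rho}}$. Fix $v \in \omega(u)$; minimality gives an increasing sequence $(n_k)$ with $\mathcal{T}^{n_k}v \to v$. If $v$ were isolated, one would have $\mathcal{T}^{n_k}v = v$ for large $k$, making $v$ periodic with some period $q \geq 1$. Since $\varphi$ conjugates $\mathcal{T}$ with the odometer $\tau$, the image $\varphi(v)$ would satisfy $\tau^q\bigl(\varphi(v)\bigr) = \varphi(v)$. From the explicit formula $\tau(\underline{q}) = \underline{q} + \underline{1}$, this forces $\rho_n \mid q$ for every $n$, which is impossible as $\rho_n \to \infty$. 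Hence no element of $\omega(u)$ is isolated, and $\omega(u)$ is a Cantor space.
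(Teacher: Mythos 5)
Your proposal is correct and follows essentially the same route as the paper: totally disconnected via the embedding $\varphi$ of Theorem \ref{OmegaOdometer}, compact metric by standard facts, and perfect because an isolated point in a minimal system would be periodic. The only variation is in how you rule out periodic points: the paper invokes Remark \ref{rect} (a periodic orbit inside $\omega(u)$ would force $\omega(u)$ to equal that orbit, contradicting the non-periodicity of $u$ from Theorem \ref{RecurrenceTheo}), while you push the contradiction through $\varphi$ to the odometer side, using that $\tau^q(\underline{k}) = \underline{k}$ would force $\rho_n \mid q$ for all $n$, impossible since $\rho_n \to \infty$. Both are sound; yours has the small advantage of not re-invoking the non-periodicity of $u$ separately, reading it off instead from the explicit structure of $Z_{\underline{\rho}}$.
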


\begin{proof}
	The topology induced by the metric $d$ on the space $\prod_{n=0}^\infty \mathbb{Z}/\rho_n\mathbb{Z}$ is the cylinder topology which is totally disconnected. The Theorem \ref{OmegaOdometer} asserts that $\omega(u)$ is embedded in $\prod_{n=0}^\infty \mathbb{Z}/\rho_n\mathbb{Z}$. Thus, it is totally disconnected. Moreover, we know that $\omega(u)$ is a compact metric space and Corollary \ref{Minimality} and Remark \ref{rect} indicate that it has no periodic points and hence no isolated points since every element is neared by its own orbit. These properties collectively imply that $\omega(u)$ is homeomorphic to the Cantor set.
\end{proof}

It is possible to provide a precise description of the minimal odometer $\tau : \varphi(\omega(u)) \to \varphi(\omega(u))$. The classification of these (minimal) odometers is a well-known subject, and we will make use of a classification theorem to be stated below. But first, let us introduce a more classical definition of odometers.

\begin{defi}
	For all sequence of integers $\underline{k} = (k_n)_{n \geq 0}$, we define the \textit{$\underline{k}$-odometer} $\tau_{\underline{k}} : Z_{\underline{k}} \to Z_{\underline{k}}$, defined on the space $Z_{\underline{k}}= \prod_{n=0}^\infty \mathbb{Z}/k_n\mathbb{Z}$ endowed with the metric (\ref{TopologyCoeff}), as follows 
	\begin{equation}
		\tau_{\underline{k}} (\underline{q}) = 
		\begin{cases}
			(\smash[b]{\underbrace{0,0,\cdots,0\,}_\text{$n$ times}}, q_n+1, q_{n+1}, q_{n+2}, ...) & \text{if } n= \min \{ m \geq 0 \; ; \; q_{m} \neq k_m-1 \} < +\infty\\
			\\
			\underline{0} = (0,0,0,...) & \text{if } n= + \infty
		\end{cases}
	\end{equation}
\end{defi}

\begin{rem}
	These are the classical maps that we refer to as odometers. Examining their behaviour, it becomes clearer to understand why they are named adding machines. As mentioned in Remark \ref{OdometersRem1}, they are transitive and even minimal. And there is an odometer for every positive sequence $\underline{k}$.
\end{rem}

\begin{defi}
	For all sequence of integers $\underline{k} = (k_n)_{n \geq 0}$, we define its \textit{multiplicity function} $m_{\underline{k}}$ by
	\begin{equation}
		\begin{matrix}
			m_{\underline{k}} : & \mathbb{P} & \longrightarrow & \mathbb{N} \cup \{\infty\} \\
			& p & \longmapsto & \sum_{n \geq 0} \nu_p (k_n)
		\end{matrix}
	\end{equation}
	where $\mathbb{P}$ is the set of prime numbers and $\nu_p$ is the $p$-adic valuation $\nu_p(k) = \max \{ i \in \mathbb{N} \; | \; p^i  \text{ divides } k\}$.
\end{defi}

\begin{theo}(Classification of adding machines \cite{MR1332404} \cite{MR4492832}) \label{OdometerClassification}
	For any two positive sequences of integers $\underline{k}$ and $\underline{q}$, the classical odometers $\tau_{\underline{k}}$ and $\tau_{\underline{q}}$ are topologically conjugate if and only if the sequences $\underline{k}$ and $\underline{q}$ have the same multiplicity function $m_{\underline{k}}= m_{\underline{q}}$.
\end{theo}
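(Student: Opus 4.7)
\textbf{First}, I would realize each odometer as translation by $1$ on a profinite abelian group. The space $Z_{\underline{k}} = \prod_{n \geq 0} \mathbb{Z}/k_n\mathbb{Z}$ is homeomorphic to $G_{\underline{k}} := \varprojlim_n \mathbb{Z}/K_n\mathbb{Z}$, with $K_n := k_0 k_1 \cdots k_n$, via the map sending $(q_n)_{n\geq 0}$ to the compatible family $\big(q_0 + q_1 k_0 + \cdots + q_n K_{n-1} \bmod K_n\big)_n$. Under this identification, $\tau_{\underline{k}}$ becomes translation by the image of $1 \in \mathbb{Z}$, and $\mathbb{Z}$ embeds densely in the compact abelian topological group $G_{\underline{k}}$. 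The profinite group $G_{\underline{k}}$ is classified up to topological isomorphism by its Steinitz (supernatural) number $\mathfrak{n}(\underline{k}) := \prod_p p^{m_{\underline{k}}(p)}$, and a Chinese Remainder decomposition gives a canonical isomorphism $G_{\underline{k}} \cong \prod_p A_p$, where $A_p$ is the ring $\mathbb{Z}_p$ of $p$-adic integers if $m_{\underline{k}}(p) = \infty$ and $A_p = \mathbb{Z}/p^{m_{\underline{k}}(p)}\mathbb{Z}$ otherwise.

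\textbf{The easy direction} then follows at once: if $m_{\underline{k}} = m_{\underline{q}}$, the prime-component decompositions of $G_{\underline{k}}$ and $G_{\underline{q}}$ match, producing a topological group isomorphism $\Phi : G_{\underline{k}} \to G_{\underline{q}}$ which sends $1 \mapsto 1$. It intertwines the translations, i.e.\ $\Phi \circ \tau_{\underline{k}} = \tau_{\underline{q}} \circ \Phi$, giving the desired topological conjugacy.

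\textbf{The converse} is where the content lies. Given a topological conjugacy $\varphi$ between $\tau_{\underline{k}}$ and $\tau_{\underline{q}}$, I would extract the multiplicity function from a purely dynamical invariant. For each integer $N \geq 1$, let $c_N(\tau)$ denote the number of minimal closed $\tau^N$-invariant subsets; this integer is plainly preserved by topological conjugacy. Working inside $G_{\underline{k}}$, the $\tau_{\underline{k}}^N$-orbit closure of $x$ is the coset $x + \overline{N\mathbb{Z}}$, and density of $\mathbb{Z}$ together with the fact that multiplication by $N$ is a continuous endomorphism of the compact group $G_{\underline{k}}$ yields $\overline{N\mathbb{Z}} = N \cdot G_{\underline{k}}$. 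Hence $c_N(\tau_{\underline{k}}) = [G_{\underline{k}} : N \cdot G_{\underline{k}}] = \prod_p p^{\min(\nu_p(N),\, m_{\underline{k}}(p))}$. Specializing to $N = p^j$ and letting $j \to \infty$ recovers $m_{\underline{k}}(p) = \lim_{j \to \infty} \log_p c_{p^j}(\tau_{\underline{k}})$. Equality $c_N(\tau_{\underline{k}}) = c_N(\tau_{\underline{q}})$ for every $N$ therefore forces $m_{\underline{k}} = m_{\underline{q}}$. \textbf{The main obstacle} is the passage through the profinite group: one has to justify the identification of $Z_{\underline{k}}$ with $G_{\underline{k}}$, verify that $N \cdot G_{\underline{k}}$ is the closure of $N\mathbb{Z}$, and check that $\tau_{\underline{k}}^N$ restricted to each coset is still a minimal translation — all reasonably clean once the profinite machinery is in place, but the right viewpoint is what makes the statement tractable.
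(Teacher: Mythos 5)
The paper does not prove this theorem: it is cited from \cite{MR1332404} and \cite{MR4492832} and used as a black box, so there is no in-paper argument to compare against. Your proof is correct and complete. The reformulation of $Z_{\underline{k}}$ as the profinite group $G_{\underline{k}}=\varprojlim_n \mathbb{Z}/K_n\mathbb{Z}$ via the base-$\underline{k}$ digit expansion is the right move, and the Chinese Remainder decomposition $G_{\underline{k}} \cong \prod_p A_p$ handles both directions cleanly: if $m_{\underline{k}} = m_{\underline{q}}$ the factorwise identifications $A_p \cong A'_p$ are canonical \emph{ring} isomorphisms, hence send $1 \mapsto 1$ and therefore intertwine the translations; for the converse, your invariant $c_N(\tau)$ (number of minimal closed $\tau^N$-invariant sets) does exactly the needed work. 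The two points you flag as the obstacle are both sound: $\overline{N\mathbb{Z}} = N G_{\underline{k}}$ because $N G_{\underline{k}}$ is compact, hence closed, and contains $N\mathbb{Z}$, while conversely $Ng = \lim Nm_j$ for any integer sequence $m_j \to g$; and translation by $N$ on each coset $x + N G_{\underline{k}}$ is minimal precisely because the orbit $x + N\mathbb{Z}$ is dense in that coset. One step worth stating explicitly is that $c_N(\tau_{\underline{k}}) = [G_{\underline{k}} : N G_{\underline{k}}]$ is always \emph{finite}: the exponent $\min(\nu_p(N), m_{\underline{k}}(p))$ vanishes for all primes not dividing $N$ and is bounded by $\nu_p(N)$ otherwise, so the product is a finite integer; without this remark the counting invariant would carry no content. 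With that added, the argument is a clean, self-contained proof, more structural than the direct enumeration of cyclic factors one often finds in the dynamics literature, and entirely adequate to fill the gap the paper leaves to the references.
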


\begin{cor} \label{Odom}
	Let $m : \mathbb{P} \to \mathbb{N} \cup \{ \infty \}$ be the map defined by
	\begin{equation}
		m(p) = \sup \{ \nu_p(\rho_n) \; ; \; n \geq 0 \}
	\end{equation}
	Then the odometer $\mathcal{T}_{|\omega(u)}$ is topologically conjugate to any $\underline{\rho}'$-odometer $\tau_{\underline{\rho}'}$ with multiplicity function $m_{\underline{\rho}'} = m$.
\end{cor}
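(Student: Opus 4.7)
The plan is to explicitly identify the dynamical system $(\omega(u), \mathcal{T})$, via the embedding $\varphi$ from Theorem \ref{OmegaOdometer}, with a classical odometer whose multiplicity function is $m$, and then invoke the classification Theorem \ref{OdometerClassification}. First, I would compute $\varphi(u) = \underline{0}$: for each $n$ we have $u(x_n) = h^{\rho_n\infty}(x_n,x_n) = 0$, and by the uniqueness argument in Proposition \ref{UniquenessSequence}, $k_n(u)$ is characterized as the unique index with $u(x_n^i) = 0$, which is $0$. Using Lemma \ref{Tk+iu}, $\mathcal{T}^k u$ attains zero at $x_n^i$ precisely for $i \equiv k \pmod{\rho_n}$, so $\varphi(\mathcal{T}^k u) = \tau^k(\underline{0})$. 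Since $\omega(u)$ is the closure of $\{\mathcal{T}^k u\}_{k\geq 0}$ in $\mathcal{C}(M,\mathbb{R})$ (Corollary \ref{Minimality}) and $\varphi$ is a continuous injection into the compact space $Z_{\underline{\rho}}$, we obtain $\varphi(\omega(u)) = \overline{\{\tau^k(\underline{0}) : k \geq 0\}}$.

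Next, I would describe this orbit closure explicitly. Setting $L_N = \operatorname{lcm}(\rho_0,\ldots,\rho_N)$, the Chinese Remainder Theorem identifies $\overline{\{\tau^k(\underline{0})\}}$ with the profinite-type inverse limit $\varprojlim_N \mathbb{Z}/L_N\mathbb{Z}$: a sequence $(q_n)_n \in Z_{\underline{\rho}}$ lies in the closure if and only if for every $N$ there exists $k \in \mathbb{Z}$ with $q_n \equiv k \pmod{\rho_n}$ for $n \leq N$, and such compatible systems are exactly elements of $\varprojlim_N \mathbb{Z}/L_N\mathbb{Z}$. The map $\tau$ corresponds to addition by $1$ in this inverse limit. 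Writing $L_{-1} = 1$ and $k_n = L_n/L_{n-1}$, each element of $\varprojlim_N \mathbb{Z}/L_N\mathbb{Z}$ admits a unique expansion $a_N = \sum_{n=0}^{N} c_n L_{n-1}$ with $c_n \in \{0,\ldots,k_n-1\}$, and the map $(a_N)_N \mapsto (c_n)_n$ is a homeomorphism $\varprojlim_N \mathbb{Z}/L_N\mathbb{Z} \to \prod_n \mathbb{Z}/k_n\mathbb{Z}$ that conjugates addition by $1$ with the classical odometer $\tau_{\underline{k}}$ (the carry rule is exactly the carry rule of addition with mixed radix $\underline{k}$).

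Finally, I would compute the multiplicity function of $\underline{k} = (k_n)_n$. For every prime $p$,
\[
m_{\underline{k}}(p) = \sum_{n \geq 0} \nu_p(k_n) = \lim_{N \to \infty} \nu_p(L_N) = \sup_N \max_{i \leq N} \nu_p(\rho_i) = \sup_{n} \nu_p(\rho_n) = m(p),
\]
so Theorem \ref{OdometerClassification} gives that $\tau_{\underline{k}}$ is topologically conjugate to $\tau_{\underline{\rho}'}$ for any sequence $\underline{\rho}'$ with $m_{\underline{\rho}'} = m$. Composing with the conjugacy from the previous paragraph yields the conclusion. The only real subtlety I anticipate is verifying cleanly that the $+\underline{1}$-action on $\varprojlim_N \mathbb{Z}/L_N\mathbb{Z}$ becomes, under the mixed-radix expansion, the carry map $\tau_{\underline{k}}$; this is standard profinite bookkeeping but is the one step that should be written out rather than asserted.
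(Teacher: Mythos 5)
Your proof is correct and arrives at the same auxiliary sequence $\underline{\rho}' = (\rho'_n)$ with $\rho'_n = \lcm(\rho_0,\ldots,\rho_n)/\lcm(\rho_0,\ldots,\rho_{n-1})$ as the paper, but the route you take to the conjugacy is genuinely different and, I would say, cleaner. The paper works entirely at the level of cylinders in $Z_{\underline{\rho}}$: it identifies the $\tau$-orbit of the cylinder $C(\underline{0},n)$ with the $\tau_n$-orbit of $\underline{0}_n$ in $\prod_{i\leq n}\mathbb{Z}/\rho_i\mathbb{Z}$, computes the orbit length as the exponent $\lcm(\rho_0,\ldots,\rho_n)$ of that finite abelian group, constructs level-by-level bijections $\varphi_n$ between finite orbits, and then checks by hand that the resulting map $\varphi:\orb(\tau,\underline{0})\to\orb(\tau_{\underline{\rho}'},\underline{0}')$ is an isometry before extending it to the closures. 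You instead observe, via the Chinese Remainder Theorem, that the orbit closure of $\underline{0}$ in $Z_{\underline{\rho}}$ \emph{is} the inverse limit $\varprojlim_N\mathbb{Z}/L_N\mathbb{Z}$ with $\tau$ acting as $+1$, and then conjugate to $\tau_{\underline{\rho}'}$ through the mixed-radix expansion. This replaces the paper's explicit isometry verification with the standard structure theory of procyclic groups: the content is the same, but your argument compresses the bookkeeping and makes the topological conjugacy conceptually transparent rather than computed cylinder-by-cylinder. What the paper's route buys is self-containment (no appeal to inverse-limit formalism) and an explicit isometric, not merely topological, identification; what yours buys is brevity and a clearer picture of \emph{why} $\underline{\rho}'$ is the right sequence. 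The two preliminary observations you add (that $\varphi(u)=\underline{0}$ and that $\varphi(\mathcal{T}^k u)=\tau^k(\underline{0})$, which together pin down $\varphi(\omega(u))$ as the orbit closure of $\underline{0}$) are implicit in the paper's treatment; making them explicit is a small but useful clarification. As you flag yourself, the one step that deserves to be written out is that $+1$ on $\varprojlim_N\mathbb{Z}/L_N\mathbb{Z}$ becomes, under mixed-radix coordinates with radices $\rho'_n$, exactly the carry map $\tau_{\underline{\rho}'}$ — this is elementary but is the substantive check.
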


\begin{proof}
	The first step is to understand the $\tau$-orbit of $\underline{0}$ in $Z_{\underline{\rho}}$ which has been proven in Theorem \ref{OmegaOdometer} to be homeomorphic to the $\mathcal{T}$-orbit of $u$ in $\mathcal{C}(M,\mathbb{R})$. For all integer $n \geq 0$, consider the following clopen $n$-cylinder 
	\begin{equation} \label{TopologyCylinder}
		\begin{split}
			C(\underline{0},n) &= \left\{ \underline{k} \in Z_{\underline{\rho}} \; \left| \; d(\underline{0},\underline{k}) \leq \frac{1}{2^{n+1}} \right.\right\} \\
			& = \left\{ \underline{k} \in Z_{\underline{\rho}} \; | \; k_i= 0 , \; i= 0 , .. , n \right\}
		\end{split}
	\end{equation}
	As noticed in the second point of Remark \ref{OdometersRem1}, the odometer $\tau$ is an isometry on $Z_{\underline{\rho}}$ so that for all integer $k \in \mathbb{Z}$, 
	\begin{equation*}
		\tau^k(C(\underline{0},n)) = C(\tau^k(\underline{0}),n)
	\end{equation*}
	We would like to associate each of these cylinders to the $(n+1)$-tuples of $Z_{\underline{\rho},n} = \prod_{i=0}^n \mathbb{Z}/\rho_i\mathbb{Z}$ formed by the first $n$ terms of the sequences $\tau^k(\underline{0})$. For that purpose, we define the set $C_{\underline{\rho},n}$ of $(n+1)$-cylinders of $Z_{\underline{\rho}}$ and the conjugacy map
	\begin{equation} \label{ConjugacyCylinders}
		\begin{matrix}
			\psi_{\underline{\rho},n} : & C_{\underline{\rho},n} & \longrightarrow & Z_{\underline{\rho},n} \\
			&C( \underline{q},n) & \longmapsto  & \underline{q}_n = (q_0, ..., q_n)
		\end{matrix}
	\end{equation}
	This conjugacy allows to identify the cylinders to $(n+1)$-tuples of $Z_{\underline{\rho},n}$.  
	
	 If we denote by $\orb(\tau, C(\underline{0},n))$ the $\tau$-orbit of the cylinder $C(\underline{0},n)$, we get the following diagram
	\begin{equation} \label{DiagramOrbit1}
		\begin{tikzcd}
			\orb(\tau,C(\underline{0},n)) \arrow[d, hook, "\psi_{\underline{\rho},n}"] \arrow{r}{\tau}  & \orb(\tau,C(\underline{0},n)) \arrow[d, hook, "\psi_{\underline{\rho},n}"] \\
			Z_{\underline{\rho},n}  \arrow{r}{\tau_n}  & Z_{\underline{\rho},n} 
		\end{tikzcd}	
	\end{equation}
	where $\tau_n$ is the odometer $\tau_n(\underline{q}) = \underline{q} + \underline{1}$ on $Z_{\underline{\rho},n}$. As a result, the $\tau$-orbit of $C(\underline{0},n)$ corresponds to the $\tau_n$-orbit of $\underline{0}_n = (0,..,0)$ in $Z_{\underline{\rho},n}$.\\
	
	\textit{Understanding $\orb(\tau_n,\underline{0}_n)$.} The set $\orb(\tau_n,\underline{0}_n)$ corresponds to the subgroup of the additive group $(\prod_{i=0}^n \mathbb{Z}/\rho_i\mathbb{Z}, +)$ generated by $\underline{1}_n = (1,..,1)$. Moreover, the theory of finite abelian groups shows that the order of this element $\underline{1}_n$ in $\prod_{i=0}^n \mathbb{Z}/\rho_i\mathbb{Z}$ is equal to the exponent of the entire group, which is $\lcm(\rho_i \; ; \; i=0 \; .. \;n)$ where lcm stands for least common multiple.\\
	
	\textit{Construction of a $\underline{\rho}'$-odometer $\tau_{\underline{\rho}'}$.} We would like to rewrite the diagram (\ref{DiagramOrbit1}) with isometric columns onto a new space $Z_{\underline{\rho}',n} = \prod_{i=0}^n \mathbb{Z}/\rho'_i\mathbb{Z}$ endowed with its usual metric (\ref{TopologyCoeff}). We also aim to replace $\tau_n$ with a map $\tau_{\underline{\rho}',n}$ derived from the action of a minimal $\underline{\rho}'$-odometer $\tau_{\underline{\rho}'}$ on the cylinder $C(\underline{0},n)$ of $Z_{\underline{\rho}'}$ as in the diagram \eqref{DiagramOrbit1}. To achieve this, note that by the discussion above, we have determined the precise cardinality of each orbit $\orb(\tau,C(\underline{0},n))$. Building on this, we set for all integer $n \geq 1$
	\begin{equation} \label{ConjugacySequence}
		\rho'_0 = \rho_0 \quad \text{and} \quad \rho'_n = \frac{\lcm(\rho_i \; ; \; i=0 \; .. \;n)}{\lcm(\rho_i \; ; \; i=0 \; .. \;n-1)}
	\end{equation}
	and consider the space $Z_{\underline{\rho}'} = \prod_{n=0}^\infty \mathbb{Z}/\rho'_n\mathbb{Z}$ and the corresponding $\underline{\rho}'$-odometer $\tau_{\underline{\rho}'}$. If we denote by $C'(\underline{0}', n)$ the $n$-cylinder around $\underline{0}'=(0,0,0,...)$ in the $Z_{\underline{\rho}'}$, then by definition of the minimal $\underline{\rho}'$-odometer $\tau_{\underline{\rho}'}$, we have
	\begin{equation*}
		\orb(\tau_{\underline{\rho}'}, C'(\underline{0}', n)) \simeq Z_{\underline{\rho}',n} := \prod_{i=0}^n \mathbb{Z}/\rho'_i\mathbb{Z}
	\end{equation*}
	and since 
	\begin{align*}
		\# Z_{\underline{\rho}',n}  = \prod_{i=0}^n \rho'_i = \rho_0.\prod_{i=1}^n \frac{\lcm(\rho_j \; ; \; j=0 \; .. \;i)}{\lcm(\rho_j \; ; \; j=0 \; .. \;i-1)} = \lcm(\rho_i \; ; \; i=0 \; .. \;n) = \# \orb(\tau,C(\underline{0},n))
	\end{align*}	
	This allows to define, up to conjugacy by the map $\psi_{\underline{\rho}',n}$ analogous to \eqref{ConjugacyCylinders}, a bijective map $\varphi_n : \orb (\tau,C(\underline{0}, n)) \to \orb(\tau_{\underline{\rho}'},C'(\underline{0}', n))$ as
	\begin{equation}
		\varphi_n( \tau_n^k(\underline{0}_n)) = \tau_{\underline{\rho}',n}^k (\underline{0}'_n)
	\end{equation}
	Thus, we get the following diagram
	\begin{equation} \label{DiagramOrbit2}
		\begin{tikzcd}
			\orb(\tau,C(\underline{0},n)) \arrow[d, "\varphi_n"] \arrow{r}{\tau_n}  & \orb(\tau,C(\underline{0},n)) \arrow[d, "\varphi_n"] \\
			Z_{\underline{\rho}',n} \simeq \orb(\tau_{\underline{\rho}'}, C'(\underline{0}', n)) \arrow{r}{\tau_{\underline{\rho}',n}}  & Z_{\underline{\rho}',n} \simeq \orb(\tau_{\underline{\rho}'}, C'(\underline{0}', n)) 
		\end{tikzcd}	
	\end{equation}
	where the columns are bijections.\\
	
	\textit{Construction of the conjugacy between $\tau_{|\omega(\underline{0})}$ and $\tau_{\underline{\rho}'}$.} Let us define the bijective map $\varphi : \orb(\tau,\underline{0}) \to \orb(\tau_{\underline{\rho}'},\underline{0}')$ by
	\begin{equation} \label{ConjugacyDefinition}
		\varphi( \tau^k(\underline{0})) = \tau_{\underline{\rho}'}^k (\underline{0}')
	\end{equation}	
	which is well defined since the two orbits are infinite. We show that it is an isometry. Let $\underline{q}$ and $\underline{p}$ be two elements of $\orb(\tau,\underline{0})$ and let $\underline{q}'=\varphi(\underline{q})$ and $\underline{p}'=\varphi(\underline{p})$ be their respective images. We have
	\begin{equation*}
		d(\underline{q}',\underline{p}') = \frac{1}{2^{\nu \left(\underline{q}', \underline{p}'\right)}} 
	\end{equation*}
	with
	\begin{align*}
		\nu \left(\underline{q}', \underline{p}'\right) & = \min \{n \geq 0 \; | \; q'_n \neq p'_n \} \\
		&= \min \{n \geq 0 \; | \; \underline{q}'_n \notin C'(\underline{p}'_n,n) \} \\
		&= \min \big\{n \geq 0 \; | \; \underline{q}'_n \notin \varphi_n \big(C(\underline{p}_n,n)\big) \big\}
	\end{align*}
	where in the second line we used the definition (\ref{TopologyCylinder}) of the cylinders $C'(\underline{p}'_n,n)$. If we consider two integers $k_q$ and $k_p$ such that $\underline{q} = \tau^{k_q}(\underline{0})$ and $\underline{p} = \tau^{k_p}(\underline{0})$. Then, we deduce from the diagram (\ref{DiagramOrbit2}) that 
	\begin{equation*}
		\underline{q}'_n = \tau_{\underline{\rho}', n}^{k_q}(\underline{0}_n) \notin \varphi_n \big(C(\tau_{\underline{\rho}', n}^{k_p}(\underline{0}_n),n)\big) \quad \text{if and only if} \quad \underline{q}_n = \tau_{n}^{k_q}(\underline{0}_n) \notin C(\tau_{n}^{k_p}(\underline{0}_n),n)
	\end{equation*}
	Thus, we get
	\begin{align*}
		\nu \left(\underline{q}', \underline{p}'\right) = \min \{n \geq 0 \; | \; \underline{q}_n \notin C(\underline{p}_n,n) \} = \nu \left(\underline{q}, \underline{p} \right)
	\end{align*}
	and 
	\begin{equation*}
		d(\underline{q}',\underline{p}') = \frac{1}{2^{\nu \left(\underline{q}', \underline{p}'\right)}} = \frac{1}{2^{\nu \left(\underline{q}, \underline{p}\right)}} = d(\underline{q},\underline{p})
	\end{equation*}
	Therefore, $\varphi : \orb(\tau,\underline{0}) \to \orb(\tau_{\underline{\rho}'},\underline{0}') \subset Z_{\underline{\rho}'}$ is an isometry. In particular, it is a uniformly continuous isometry from $\orb(\tau,\underline{0})$ to a subset of the complete compact space $Z_{\underline{\rho}'}$. Consequently,, it is possible to extend it to the closure $\omega(\underline{0})$ of $\orb(\tau,\underline{0})$, providing an isometric injection $\varphi : \omega(\underline{0}) \to Z_{\underline{\rho}'}$. Additionally, we know from the minimality of the $\underline{\rho}'$-odometer $\tau_{\underline{\rho}'}$ that $\omega(\underline{0}') = Z_{\underline{\rho}'}$. Thus, the conjugacy relation (\ref{ConjugacyDefinition}) and the continuity of $\varphi$ induce the inclusion
	\begin{equation*}
		Z_{\underline{\rho}'} = \omega(\underline{0}') \subset \varphi( \omega(\underline{0}))
	\end{equation*}
	This implies that the map $\varphi : \omega(\underline{0}) \to Z_{\underline{\rho}'}$ is a bijective isometry. Furthermore, the continuity of the odometers $\tau$ and $\tau_{\underline{\rho}'}$ allow to extend the conjugacy relation to $\omega(\underline{0})$, resulting in the desired commutative diagram
	\begin{equation} 
		\begin{tikzcd}
			\omega(\underline{0}) \arrow[d, "\varphi"] \arrow{r}{\tau} & \omega(\underline{0}) \arrow[d, "\varphi"] \\
			Z_{\underline{\rho}'} \arrow{r}{\tau_{\underline{\rho}'}}  & Z_{\underline{\rho}'} 
		\end{tikzcd}	
	\end{equation}
	where the columns are bijective isometries.\\
	
	\textit{Conclusion.} We showed that the odometer $\tau_{|\omega(u)}$ is topologically conjugate to the $\underline{\rho}'$-odometer $\tau_{\underline{\rho}'}$ for $\underline{\rho}'$ defined in (\ref{ConjugacySequence}). Let us compute its multiplicity map. For any prime number $p$, we have
	\begin{align*}
		m_{\underline{\rho}'}(p)& = \sum_{n = 0}^\infty \nu_p(\rho'_n) = \nu_p(\rho_0) + \sum_{n = 1}^\infty \nu_p(\lcm(\rho_i \; ; \; i=0 \; .. \;n))- \nu_p(\lcm(\rho_i \; ; \; i=0 \; .. \;n-1)) \\
		&= \lim\limits_n \nu_p(\lcm(\rho_i \; ; \; i=0 \; .. \;n)) = \lim\limits_n \sup_{2 \leq i \leq n} \nu_p(\rho_i) = \sup_{n \geq 0} \nu_p(\rho_n) = m(p)
	\end{align*}
	We have shown that the sequence $\underline{\rho}'$ has as a multiplicity function the map $m$ mentioned in the statement. An application of the classification Theorem \ref{OdometerClassification} concludes the proof of the corollary.
\end{proof}

\subsubsection{Description of the Non-Wandering Set $\Omega(\mathcal{T})$} \label{SectionNWLOManeLagrangian}

In this section, we extend our analysis beyond the $\omega$-limit of the specific viscosity solution $u$. We will describe the $\omega$-limits of any recurrent viscosity solution $v$ associated with the constructed Lagrangian $L$ defined in (\ref{LagMane}). Our approach involves applying a generalized representation formula on $\Omega(\mathcal{T})$ following \cite{Representation}. \\

But first, we need to identify the Mather set $\mathcal{M}_0$ expressed in the following proposition.
\begin{prop} \label{MatherConstructedProp}
	For the Mañé Lagrangian $L$ defined in (\ref{LagMane}), the time-zero projected Mather set $\mathcal{M}_0$ is given by 
	\begin{equation} \label{MatherConstructed}
		\begin{split}
		\mathcal{M}_0 = \mathcal{A}_0 &= M \setminus  \bigcup_{n \geq 0} \bigg( A_n \cup \bigcup_{0 \leq i < \rho_n} \big( B^i_n \setminus \{x^i_n\} \big) \bigg) \\
		&= \overline{D} \cup  \bigcup_{n\geq 0} \bigg( \overline{B_n} \setminus \bigcup_{0 \leq i < \rho_n} B^i_n \bigg) \cup  \bigcup\limits_{\substack{ n \geq 0 \\ 0 \leq i < \rho_n}}  \{x^i_n\}  
		\end{split}
	\end{equation}
	and the projected Mather set $\mathcal{M}$ is given by
	\begin{equation}
		\mathcal{M} = \{ (t, \mathcal{R}_t(x)) \; | \; x \in \mathcal{M}_0, t\in \mathbb{R} \}
	\end{equation}
\end{prop}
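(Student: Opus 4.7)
The plan is to show that both $\mathcal{M}_0$ and $\mathcal{A}_0$ coincide with the right-hand side of \eqref{MatherConstructed}, which I denote $S$. By Proposition \ref{ManeCVnull} the critical value $\alpha_0=0$, and since $L(t,x,v)=\tfrac{1}{2}\|v-X_t(x)\|^2\geq 0$, a Borel probability measure on $\mathbb{T}^1\times TM$ is minimizing if and only if $\int L\,d\mu=0$, which forces $\supp(\mu)\subset\{v=X_t(x)\}$. Under the natural bijection between invariant measures on this graph and $f_1$-invariant probability measures on $M$, the time-zero projected Mather set $\mathcal{M}_0$ is exactly the union of supports of $f_1$-invariant probability measures on $M$.

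For $S\subset\mathcal{M}_0$, I would exhibit an invariant measure through every point of $S$. On $D'$ we have $X_t\equiv 0$, so every point is fixed and carries a Dirac mass. On each $D_n$, the flow $f_t=\mathcal{R}_t$ rotates the circles of constant $(r,x_3,\ldots,x_d)$ by $\eta_n(x)/\rho_n$, and the arclength measure on each such circle is invariant. On $\partial C_n$ and on $\overline{B_n}\setminus\bigcup_i B_n^i$, the vector field $Z$ vanishes identically (using $\chi(0)=\chi(1)=0$), so $g_t=\mathrm{Id}$ and $f_1$ reduces to the finite-order rotation $R_{1/\rho_n}$ on a compact invariant set, whose orbits are periodic of period dividing $\rho_n$ and each supports an atomic invariant measure. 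Finally, $\{x_n^i\}_{0\leq i<\rho_n}$ is cyclically permuted by $f_1$, so its uniform atomic measure is invariant.

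The reverse inclusion $\mathcal{M}_0\subset S$ requires ruling out invariant measures on $A_n$ and on $B_n^i\setminus\{x_n^i\}$. On $A_n$, the radial vector field $Z$ points strictly outward, pushing every orbit onto $\partial C_n$ in finite time after which it never re-enters $A_n$; by Poincaré recurrence no invariant probability measure can be supported on $A_n$. On each $B_n^i\setminus\{x_n^i\}$, the argument already employed in the proof of Proposition \ref{hper} shows that $f_{\rho_n}$ is a strict contraction $\overline{B_n^i}\to B(x_n^i,\delta_n-\varepsilon)$, so every forward orbit converges to $x_n^i$, again precluding any invariant measure off $\{x_n^i\}$.

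For the Peierls set, the inclusion $S=\mathcal{M}_0\subset\mathcal{A}_0$ is Proposition \ref{MatherPeierls}. The reverse inclusion $\mathcal{A}_0\subset S$ I would obtain from the contrapositive of Proposition \ref{chrec0}: for $x\in A_n$, the outward radial drift yields $\tau>0$ and $\varepsilon>0$ such that no $(\varepsilon,\tau)$-pseudo-orbit of the flow of $X_t$ returns to $x$, so $\liminf_k h^k(x,x)>0$; for $x\in B_n^i\setminus\{x_n^i\}$, the strict-contraction argument of Proposition \ref{hper} rules out closing pseudo-orbits on $x$. The formula for $\mathcal{M}$ then follows from the invariance of $\tilde{\mathcal{M}}$ under $\Phi_L^\tau$ (Remark \ref{MatherInv}), combined with the observation that $Z\equiv 0$ on $\mathcal{M}_0$, so $f_t|_{\mathcal{M}_0}=\mathcal{R}_t|_{\mathcal{M}_0}$. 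The main technical obstacle is the quantitative pseudo-orbit non-return estimate on $A_n$, where the smallness of $\varsigma_n$ makes the outward drift weak and one must invoke the strict positivity of $\chi$ on the interior of the annulus, along with the compactness provided by Theorem \ref{APrioriCompactness}, to produce a uniform lower bound $\varepsilon$.
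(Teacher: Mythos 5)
Your proof is correct and follows essentially the same outline as the paper's: exhibit explicit invariant measures carried by each piece of $S$ to get $S\subset\mathcal{M}_0$, use Proposition \ref{MatherPeierls} for $\mathcal{M}_0\subset\mathcal{A}_0$, and use the contrapositive of Proposition \ref{chrec0} (non-chain-recurrence of $f_t$ on $A_n$ and $B_n^i\setminus\{x_n^i\}$) for $\mathcal{A}_0\subset S$, closing the cycle of inclusions. The one genuinely extra element is your separate direct dynamical proof of $\mathcal{M}_0\subset S$ via Poincaré recurrence on $A_n$ and monotone contraction on $B_n^i\setminus\{x_n^i\}$; the paper obtains that inclusion for free from $\mathcal{M}_0\subset\mathcal{A}_0\subset S$ and never argues it separately. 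Your version is pleasantly elementary (it avoids the Peierls barrier for the Mather-set inclusion) but is logically redundant once $\mathcal{A}_0\subset S$ is established. A second small deviation: on the annuli $D_n$ you invoke the arclength (Haar) measure on the $f_t$-invariant circles; the paper instead uses the density of rational-rotation circles plus the closedness of $\mathcal{M}_0$. Both work. Two minor imprecisions to flag: orbits of $-Z$ reach $\partial C_n$ only asymptotically, not in finite time, since $\chi$ vanishes on the boundary so $Z$ is tangent (indeed zero) there; and $f_{\rho_n}$ fixes $\partial B_n^i$ pointwise, so it is \emph{not} a strict contraction of the whole closed ball $\overline{B_n^i}$ into a smaller one — the contraction used in Proposition \ref{hper} is on subballs $\overline{B(x_n^i,\delta)}$ with $\delta<\delta_n$. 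Neither affects the validity of your Poincaré-recurrence or non-chain-recurrence conclusions.
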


\begin{proof}
	We know from the inclusion $\mathcal{M}_0 \subset \mathcal{A}_0$ of Proposition \ref{MatherPeierls} that a point $x$ of $\mathcal{M}_0$ verifies $h^\infty(x,x) =0$. Thus, we can drop all the points that have a positive Peierls Barrier $h^\infty(x,x)>0$. According to Proposition \ref{chrec0}, all non-chain-recurrent points by the flow $f_t$ are to be dropped. This eliminated set $E$ of non-chain-recurrent points, equals by construction to 
	\begin{equation*}
		E := \bigcup_{n=0}^\infty \bigg( A_n \cup \bigcup_{0 \leq i < \rho_n} \big( B^i_n \setminus \{x^i_n\} \big) \bigg)
	\end{equation*}
	We deduce the inclusion
	\begin{align*}
		\mathcal{M}_0 \subset M \setminus E &= \overline{D} \cup  \bigcup_{n\geq 0} \bigg( \overline{B_n} \setminus \bigcup_{0 \leq i < \rho_n} B^i_n \bigg) \cup  \bigcup\limits_{\substack{ n \geq 0 \\ 0 \leq i < \rho_n}}  \{x^i_n\}  \\
		&= \overline{D'} \cup \bigcup\limits_{ n \geq 0 } \overline{D_n} \bigcup_{n\geq 0} \bigg( \overline{B_n} \setminus \bigcup_{0 \leq i < \rho_n} B^i_n \bigg) \cup  \bigcup\limits_{\substack{ n \geq 0 \\ 0 \leq i < \rho_n}}  \{x^i_n\}
	\end{align*}
	
	Let us verify the inverse inclusion. For any point $x$ of $\overline{D'}$, and for $v = X_0(x)= X_t(x)=0$, the measure $\mu = dt \otimes \delta_{(x,0)}$, where $\delta_{(x,0)}$ is the Dirac at $(x,v)$, is an invariant probability measure with
	\begin{equation*}
		\int_{\mathbb{T}^1 \times TM} L \; d\mu = \int_0^1 \frac{1}{2}\Vert v- X_t(x)\Vert^2 \; dt = 0 = -\alpha_0
	\end{equation*}
	Hence, $\mu$ is a minimizing measure and $x$ belongs to $\pi(\supp(\mu)) \cap \{t=0\} \subset \mathcal{M}_0$.
	
	Let $x$ be one of the $x^i_n$ or be a point of $\overline{B_n} \setminus \bigcup_{0 \leq i < \rho_n} B^i_n$. The point $x$ is $\rho_n$-periodic by the flow $f_t$ and so is $(x,v) = (x, X_0(x))$ by the Lagrangian flow $\phi_L^t$. Let $\gamma: \mathbb{R} \to M$ be the $\rho_n$-periodic loop, projection on $M$ of the loop $\phi^t_L(x,v) = \big(\gamma(t), X_t(\gamma(t))\big)$. And let $\mu$ be the uniform measure on the graph of $(t,\gamma(t)) \in \mathbb{T}^1 \times TM$. Once again, $\mu$ is a minimizing measure with support $\supp(\mu)$ being equal to the graph of $(t,\gamma(t))$. Hence, $x = \pi(\gamma(0))$ belongs to $\mathcal{M}_0$. 
	
	Now let $x$ be a point of $\overline{D_n}$. If it is periodic by $X_t(x) = Y_t(x)$, then it is in the Mather set $\mathcal{M}_0$ by the preceding case. The rotation number of the orbit $\mathcal{R}_t(x)$ is $\frac{1}{\rho_n}\eta_n(x)$ which only depends on $d(x, O_n)$. And since $\eta_n(x)$ is continuous and increasing with $d(x,O_n)$, we obtain a dense set $Q$ in $[2\delta_n, 3\delta_n]$ such that $\eta_n(Q) \subset \mathbb{Q}$. The set $\{x \in M \; | \; d(x,O_n) \in Q \}$ is dense in $\overline{D_n}$ and all its elements are periodic, and hence do belong to the Mather set $\mathcal{M}_0$. Since $\mathcal{M}_0$ is closed, we deduce that $\overline{D_n} \subset \mathcal{M}_0$. This terminates the proof of the identity (\ref{MatherConstructed}).
	
	For the projected Mather set $\mathcal{M}$, it suffices to observe that on the determined $\mathcal{M}_0$, we have $f_t = \mathcal{R}_t$. Given that the Mather set is $f_t$-invariant, the result becomes clear.
\end{proof}

We note that if we set $p_n = \prod\limits_{k=0}^n \rho_k$, then $\phi_{L}^{p_n}$ converges to the identity on $\mathcal{M}_0 setminus \bigcup_n D_n$. Moreover, a direct application of the properties of the Lemma \ref{SameClass} shows that the sets $D_n$ for all $x \in M$ and $y, z \in D_n$, then $\overline{h}(x,y) = \overline{h}(x,z)$ so that the sets $D_n$ are not detected by the Peierls barrier, and in particular by the generalized Peierls barrier $\underline{k}$ used in \cite{Representation} to describe the non-wandering set $\Omega(\mathcal{T})$. Consequently, our framework will meet the assumptions of the uniformly $\underline{p}$-recurrent case studied in Section 6.3.3 of \cite{Representation}.\\

In order to state the corresponding result, we need to introduce a final set of definitions used in the representation formula of $\Omega(\mathcal{T})$.

\begin{defi}
	\begin{enumerate}
		\item We define the \textit{$\underline{p}$-Peierls Barrier} $h^{\underline{p}}: M \times M \to \mathbb{R}$ by
		\begin{equation}
			h^{\underline{p}}(x,y) = \liminf_n h^{p_n}(x,y)
		\end{equation}
		with the corresponding time-depending $\underline{p}$-barrier
		\begin{equation}
			h^{\underline{p}+t}(x,y)= h^{\underline{p}}(t,x,y) = \liminf_n h^{t+p_n}(x,y)
		\end{equation}
		where $h^t$ is the potential introduced in (\ref{Potential}).
		
		\item We define the map $d_{\underline{p}} : M \times M \to \mathbb{R}_{\geq 0}$ by
		\begin{equation}
			d_{\underline{p}}(x,y) = h^{\underline{p}}(x, y) + h^{\underline{p}}(y, x)
		\end{equation}
	\end{enumerate}
\end{defi}

\begin{rem}
	As seen for the classical Peierls barrier in Proposition \ref{PeierlsProp}, the $\underline{p}$-barrier $h^{\underline{p}}(x, \cdot)$ is a viscosity solution for all point $x$ in $M$. Moreover, for any $\underline{p}$-recurrent point $x$ of the Mather set $\mathcal{M}_0$ with a $\underline{p}$-recurrent lift $\tilde{x} \in \tilde{\mathcal{M}}_0$ under the Lagrangian flow $\phi_L$, then the map $h^{\underline{p}}(x, \cdot)$ is a $\underline{p}$-recurrent viscosity solution. Its recurrence speed is controlled by $dist\big(\tilde{x}, \phi_L^{p_n}(\tilde{x})\big)$ or equivalently by $dist(x, f_{p_n}(x))$. (See \cite{Representation} for a proof of these non-trivial facts).
	
	In our case, if $x$ belongs to $D_n$, then we can take $x'$ be in the boundary $\partial D_n$ of $D_n$. Hence, the point $x'$ is $\underline{p}$-recurrent under the projected Lagrangian flow $f_t$, and by Property \ref{SameClass2} of Lemma \ref{SameClass}, we have $h^{\underline{p}}(x,\cdot) = h^{\underline{p}}(x',\cdot)$ which is a $\underline{p}$-recurrent viscosity solution with recurrence speed controlled by $dist\big( x', f_{p_n}(x')\big)$. 
	
	Consequently, we deduce that in our case, for all $x \in \mathcal{M}_0$, the map $h^{\underline{p}}(x,\cdot)$ is a $\underline{p}$-recurrent viscosity solution with recurrence speed uniformly controlled by $dist \big(f_{p_n|(\mathcal{M}_0 \setminus \bigcup_n D_n)}, Id \big)$. 
\end{rem}

\begin{prop}
	The map $d_{\underline{p}} : \mathcal{M}_0 \times \mathcal{M}_0 \to \mathbb{R}_{\geq 0}$ is a pseudometric on $\mathcal{M}_0$.
\end{prop}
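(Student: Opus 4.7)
I will verify the four axioms of a pseudometric on $\mathcal{M}_0$ in turn: symmetry, non-negativity, vanishing on the diagonal, and the triangular inequality. Symmetry is built into the definition $d_{\underline{p}}(x,y) = h^{\underline{p}}(x,y) + h^{\underline{p}}(y,x)$, and non-negativity is immediate from Corollary \ref{hpos}: every $h^{p_n}(x,y) \geq 0$ passes to the liminf $h^{\underline{p}}(x,y) \geq 0$.

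To obtain $d_{\underline{p}}(x,x) = 0$ for $x \in \mathcal{M}_0$, I would prove $h^{\underline{p}}(x,x) = 0$ through a case analysis along the decomposition of $\mathcal{M}_0$ given by Proposition \ref{MatherConstructedProp}. If $x \in \overline{D'}$, then $X_t(x) = 0$ and the constant curve at $x$ has null action, so $h^{p_n}(x,x) = 0$ for every $n$. If $x = x_n^i$ or $x \in \overline{B_n} \setminus \bigcup_i B_n^i$, then $x$ is $\rho_n$-periodic under $f_t$ (in both cases $g_t$ fixes $x$ so $f_t(x) = \mathcal{R}_t(x)$, and $\mathcal{R}_{\rho_n}$ is the identity on $C_n$); since $\rho_n \mid p_m$ for every $m \geq n$, the loop $t \mapsto f_t(x)$ of length $p_m$ is an admissible closed curve of zero action, so $h^{p_m}(x,x) = 0$. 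Finally if $x \in \overline{D_n}$, Property \ref{SameClass2} of Lemma \ref{SameClass} applied to $F = \overline{D_n}$ (arc-wise connected, $X_t = Y_t$ on $f_t(F) = F$ by Remark \ref{SymmetryX}, and $\mathcal{R}_m$-invariant for every $m$) yields $\limsup_k h^k(x,x) = 0$, which combined with non-negativity forces $\lim_k h^k(x,x) = 0$ and a fortiori $h^{\underline{p}}(x,x) = 0$.

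The main obstacle is the triangular inequality $d_{\underline{p}}(x,z) \leq d_{\underline{p}}(x,y) + d_{\underline{p}}(y,z)$, which by symmetry reduces to $h^{\underline{p}}(x,z) \leq h^{\underline{p}}(x,y) + h^{\underline{p}}(y,z)$. The difficulty is that $p_{n_1} + p_{n_2}$ need not be of the form $p_n$, so a naive concatenation of two minimizing families does not land at an admissible time-step. I would fix this with a bridging argument that exploits the vanishing $h^{\underline{p}}(y,y) = 0$ established above. Given $\varepsilon > 0$, I choose arbitrarily large indices $n_1, n_2$ with
\begin{equation*}
h^{p_{n_1}}(x,y) \leq h^{\underline{p}}(x,y) + \varepsilon \quad \text{and} \quad h^{p_{n_2}}(y,z) \leq h^{\underline{p}}(y,z) + \varepsilon,
\end{equation*}
which is possible since the indices realizing each liminf form infinite sets. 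For $M$ sufficiently large, I set $q_M := p_M - p_{n_1} - p_{n_2} > 0$ and apply the sub-additivity of $h^t$ (Proposition \ref{hprop}) twice:
\begin{equation*}
h^{p_M}(x,z) \leq h^{p_{n_1}}(x,y) + h^{q_M}(y,y) + h^{p_{n_2}}(y,z).
\end{equation*}
The same case analysis as before shows $h^{q_M}(y,y) \to 0$ as $M \to \infty$: when $y$ lies in a $\rho_n$-periodic region I take $n_1, n_2 \geq n$ so that $\rho_n \mid q_M$ for every $M \geq n$, giving $h^{q_M}(y,y) = 0$ identically; when $y \in \overline{D_n}$ the conclusion of Property \ref{SameClass2} gives $h^k(y,y) \to 0$ along the full integer sequence, and $q_M \to +\infty$. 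Taking $\liminf$ on $M$ yields $h^{\underline{p}}(x,z) \leq h^{\underline{p}}(x,y) + h^{\underline{p}}(y,z) + 2\varepsilon$, and letting $\varepsilon \to 0$ completes the proof.
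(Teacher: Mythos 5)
Your proof is correct, but it takes a genuinely different route from the paper in both non-trivial steps, and the difference is worth noting. For reflexiveness ($h^{\underline{p}}(x,x)=0$), the paper splits off the $D_n$ case via Lemma \ref{SameClass} and then, for the remaining points, invokes Proposition \ref{CalibMather} to say the Mather orbit through $x$ is calibrated by a weak-KAM solution $u$, so that $h^{\underline{p}}(x,x) = \lim_n u(x(p_n)) - u(x) = 0$ by continuity. You instead read off the explicit decomposition of $\mathcal{M}_0$ from Proposition \ref{MatherConstructedProp} and exhibit zero-action loops by hand in each region; this is more concrete and avoids calibration entirely, at the cost of being tied to the specific Mañé Lagrangian (the paper's argument would work for any Tonelli Lagrangian). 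For the triangle inequality the divergence is sharper: the paper first uses the $\underline{p}$-recurrence of the barrier $h^{\underline{p}}(x,\cdot)$ (stated as a non-trivial fact in the preceding Remark, imported from \cite{Representation}) to write $h^{\underline{p}}(x,z) = \lim_n h^{\underline{p}+p_n}(x,z)$, and then runs a double-liminf computation. You avoid that machinery entirely with a bridging concatenation: choosing $n_1, n_2$ large realizing the two liminfs to within $\varepsilon$, setting $q_M = p_M - p_{n_1} - p_{n_2}$, applying the finite-time triangle inequality twice, and using $h^{q_M}(y,y) \to 0$ (which follows from your reflexiveness case analysis, provided $n_1,n_2$ are taken $\geq n$ when $y$ lies in a $\rho_n$-periodic region so that $\rho_n \mid q_M$). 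This is more elementary and self-contained — it uses only Proposition \ref{hprop} and the vanishing on the diagonal — which is a genuine simplification over the paper's reliance on the recurrence of the barrier. One small point you glossed over: in dimension $2$ the set $D_n$ is disconnected, so Property \ref{SameClass2} of Lemma \ref{SameClass} should be applied to the single component $\overline{D_n^{\pm}}$ containing $y$; this is harmless but should be said.
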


\begin{proof}
	The symmetry is clear.\\
	\textit{Reflexiveness.} Let $x$ be a point of $\mathcal{M}_0$. If $x$ belongs to $D_n$ for some integer $n$, then Property \ref{SameClass2} of Lemma \ref{SameClass} asserts that $h^{\underline{p}}(x,x) = 0$. If not, then $x$ is $\underline{p}$-recurrent under the projected Lagrangian flow. We consider the $x(t)$ of the Mather set $\mathcal{M}$ starting at $x$. Then, Proposition \ref{CalibMather} asserts that it is calibrated by any weak-KAM solution $u$. We get from the liminf property (\ref{PeierlsLiminf}) of the Barrier that 
	\begin{align*}
		h^{\underline{p}}(x,x) = \liminf_n h^{p_n}(x,x) = \liminf_n h^{p_n}(x,x(p_n)) = \lim_n u(x(p_n)) - u(x) =0
	\end{align*}
	\textit{Triangular Inequality.} Using the first remark above and the triangular inequality (\ref{TriangIneg}) we get
	\begin{align*}
		h^{\underline{p}}(x,z) &=  \lim_n h^{\underline{p}+p_n}(x,z) = \lim_n \liminf_k h^{p_k+p_n}(x,z) \\
		&\leq \liminf_n \liminf_k h^{p_k}(x,y) + h^{p_n}(y,z) \\
		&= \liminf_k h^{p_k}(x,y) + \liminf_n  h^{p_n}(y,z) \\
		&= h^{\underline{p}}(x,y) + h^{\underline{p}}(y,z)
	\end{align*}
\end{proof}
		
This proposition allows to define the notion of $\underline{p}$-static classes as follows
\begin{defi}
	\begin{enumerate}
		\item We set $\sim$ to be the equivalence relation in $\mathcal{M}_0$ given by
		\begin{equation}
			x \sim y \Longleftrightarrow d_{\underline{p}}(x,y) = 0
		\end{equation}
	
		\item The \textit{$\underline{p}$-static classes} are the equivalence classes of the equivalence relation $\sim$. We denote by $\mathbb{M}_{\underline{p}}$ the set of $\underline{p}$-static classes represented by elements of $\mathcal{M}_0$ so that we have the inclusion $\mathbb{M}_{\underline{p}} \subset \mathcal{M}_0 \setminus \bigcup_n D_n$. 
		\item A map $\psi : \mathbb{M}_{\underline{p}} \to \mathbb{R}$ is said \textit{$\underline{p}$-dominated} if for all $x$ and $y$ in $\mathbb{M}_{\underline{p}}$, we have
		\begin{equation}
			\psi(y) - \psi(x) \leq  h^{\underline{p}}(x, y)
		\end{equation}
		We denote by $Dom_{\underline{p}}(\mathbb{M}_{\underline{p}})$ the set of dominated maps $\psi$ on $\mathbb{M}_{\underline{p}}$.
	\end{enumerate}
\end{defi}

The generalized representation formula is stated as follows

\begin{theo} \label{OmegaBijectionTheorem} (\cite{Representation})
	If $\phi_{L|\mathbb{M}_{\underline{p}}}^{p_n}$ converges uniformly to the identity on $\mathbb{M}_{\underline{p}}$, then we have a bijection
	\begin{equation} \label{OmegaBijection}
		\begin{matrix}
			Dom_{\underline{p}}(\mathbb{M}_{\underline{p}}) & \longrightarrow & \Omega(\mathcal{T}) \\
			\psi & \longmapsto & \inf\limits_{y \in \mathbb{M}_{\underline{p}}} \{ \psi(y) + h^{\underline{p}}(y, \cdot ) \}
		\end{matrix}
	\end{equation}
	Moreover, every element $v$ of $\Omega(\mathcal{T})$ is $\underline{p}$-recurrent with uniform recurrence over $\Omega(\mathcal{T})$.
\end{theo}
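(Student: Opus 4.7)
My plan is to establish the theorem by simultaneously handling the well-definedness, injectivity, and surjectivity of the map $\Psi: \psi \mapsto u_\psi := \inf_{y \in \mathbb{M}_{\underline{p}}}\{\psi(y) + h^{\underline{p}}(y,\cdot)\}$, reserving the uniform $\underline{p}$-recurrence to fall out of the same arguments. First, to see $u_\psi$ is a viscosity solution, I would adapt the proof of Proposition \ref{pvisc} to show $h^{\underline{p}}(y,\cdot)$ is a viscosity solution for each $y$, then apply Lemma \ref{ViscosityInf}. The $\underline{p}$-recurrence $\mathcal{T}^{p_n} u_\psi \to u_\psi$ reduces, by uniform continuity in $y$ together with equicontinuity from Corollary \ref{Equicontinuity}, to $\mathcal{T}^{p_n} h^{\underline{p}}(y,\cdot) = h^{\underline{p}+p_n}(y,\cdot) \to h^{\underline{p}}(y,\cdot)$ uniformly in $y \in \mathbb{M}_{\underline{p}}$. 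This is exactly where the hypothesis $\phi_L^{p_n}|_{\mathbb{M}_{\underline{p}}} \to \mathrm{Id}$ is used: a calibrated curve beginning at $y \in \mathbb{M}_{\underline{p}}$ projects an orbit of $\phi_L$, and the hypothesis forces it to nearly close up along the $p_n$ subsequence with uniform speed.

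For injectivity, I would show that $u_\psi|_{\mathbb{M}_{\underline{p}}} = \psi$. The inequality $u_\psi(y_0) \leq \psi(y_0)$ follows by choosing $y = y_0$ in the infimum and using $h^{\underline{p}}(y_0,y_0) = 0$ (the reflexivity of $d_{\underline{p}}$ established earlier). The reverse $u_\psi(y_0) \geq \psi(y_0)$ is precisely the $\underline{p}$-domination applied to every $y$ competing in the infimum. Thus $\psi$ is determined by $u_\psi$ and $\Psi$ is injective.

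For surjectivity, given $v \in \Omega(\mathcal{T})$, I would set $\psi_v := v|_{\mathbb{M}_{\underline{p}}}$ and prove two things: that $\psi_v$ is $\underline{p}$-dominated, and that $v = \Psi(\psi_v)$. Domination follows from the viscosity inequality $v(y) \leq \mathcal{T}^{p_n} v(y) \leq v(x) + h^{p_n}(x,y) - [v(x) - \mathcal{T}^{p_n}v(x)]$ combined with $\mathcal{T}^{p_n}v \to v$ along the recurrent subsequence and passage to the liminf, yielding $v(y) - v(x) \leq h^{\underline{p}}(x,y)$. The inequality $v(x) \leq \psi_v(y) + h^{\underline{p}}(y,x)$ for all $y$ (hence $v \leq u_{\psi_v}$) restates domination. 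The reverse $v(x) \geq u_{\psi_v}(x)$ is the main difficulty: I would take a calibrated curve $\gamma_x: (-\infty, 0] \to M$ for $v$ with $\gamma_x(0)=x$ (Proposition \ref{CalibExist}), extract a subsequential limit $y_\star = \lim_k \gamma_x(-p_{n_k}) \in \mathbb{M}_{\underline{p}}$ using that the integer-time $\alpha$-limit of a semi-static curve lies in a static class inside $\mathcal{A}_0 = \mathcal{M}_0$ (cf. Proposition \ref{SemiStaticLimitInclusions} transplanted to the $\underline{p}$-context), and use the calibration identity along with the continuity of $h^{\underline{p}}$ to get $v(x) = v(y_\star) + h^{\underline{p}}(y_\star, x) = \psi_v(y_\star) + h^{\underline{p}}(y_\star, x) \geq u_{\psi_v}(x)$.

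The main obstacle will be producing the $\alpha$-limit $y_\star$ in $\mathbb{M}_{\underline{p}}$ rather than merely in $\mathcal{M}_0$, and showing that the calibration equality survives passage to the $\underline{p}$-barrier. The essential input is that the hypothesis $\phi_L^{p_n}|_{\mathbb{M}_{\underline{p}}} \to \mathrm{Id}$ guarantees both that $\mathbb{M}_{\underline{p}}$ absorbs $\alpha_{\underline{p}}$-limits of $v$-calibrated curves and that the recurrence speed in the representation is uniform in $v \in \Omega(\mathcal{T})$, which simultaneously delivers the uniform $\underline{p}$-recurrence claimed in the last sentence of the theorem. Once the representation is established, the inverse of $\Psi$ is simply $v \mapsto v|_{\mathbb{M}_{\underline{p}}}$, completing the bijection.
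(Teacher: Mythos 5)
The theorem you are trying to prove is cited in the paper from \cite{Representation} and is \emph{not} proved there; the author states it as a black box and then simply applies it to the constructed Mañé Lagrangian. So there is no "paper's proof" to compare against, and I can only assess your sketch on its own merits.

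Your overall architecture — well-definedness via Lemma \ref{ViscosityInf}, injectivity via the restriction $u_\psi|_{\mathbb{M}_{\underline{p}}} = \psi$, surjectivity via $\psi_v := v|_{\mathbb{M}_{\underline{p}}}$ and a calibrated-curve argument — is plausible and is likely the right shape of argument. The injectivity step is essentially correct as written: $h^{\underline{p}}(y_0,y_0)=0$ gives $u_\psi(y_0)\le\psi(y_0)$ and domination gives the reverse. However, there are two genuine gaps in the surjectivity direction. First, the displayed chain $v(y) \le \mathcal{T}^{p_n}v(y) \le \dots$ begins with an inequality $v \le \mathcal{T}^{p_n}v$ that has no justification: the Lax--Oleinik images of a recurrent solution need not dominate it pointwise for any fixed $n$. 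Second, and more fundamentally, the whole domination-and-representation argument presupposes that $v$ is $\underline{p}$-recurrent, i.e.\ that the convergence $\mathcal{T}^{k_n}v \to v$ granted by $v \in \Omega(\mathcal{T}) = \mathcal{R}(\mathcal{T})$ can be taken along the specific sequence $p_n$ (and along the \emph{full} sequence, so that the limit inferior defining $h^{\underline{p}}$ appears). You invoke $\mathcal{T}^{p_n}v \to v$ as if given, but this is exactly part of the conclusion to be proved. Your closing paragraph acknowledges that the hypothesis $\phi_L^{p_n}|_{\mathbb{M}_{\underline{p}}} \to \mathrm{Id}$ should deliver it, but you assert rather than derive this. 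To break the circularity one needs a prior step — typically a uniqueness theorem à la Proposition 3.2 of \cite{MR2041603}, that recurrent solutions are determined by their values on $\mathcal{M}_0$ — together with an argument that the $\phi_L^{p_n}$-convergence on $\mathbb{M}_{\underline{p}}$ transfers to a uniform estimate on $\|\mathcal{T}^{p_n}v - v\|_\infty$ for all $v \in \Omega(\mathcal{T})$. Without that bridge, both the domination of $\psi_v$ and the "uniform $\underline{p}$-recurrence over $\Omega(\mathcal{T})$" remain unproved, and the surjectivity argument does not close.
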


We will simply apply this theorem to the constructed \Mane Lagrangian $L$ defined in (\ref{LagMane}). To do so, we first need to identify the set $\mathbb{M}_{\underline{p}}$ of $\underline{p}$-static classes and then, to better understand the $\underline{p}$-barrier $h^{\underline{p}}$. This is done by an immediate application of the properties of Proposition \ref{chrec0} and Lemma \ref{SameClass}. Recall the notation of the different points introduced in (\ref{Points}).

\begin{prop} \label{pStaticClasses}
	We have 
	\begin{enumerate}[label= \roman*.]
		\item In dimension $d =2$,
		\begin{equation}
			\mathbb{M}_{\underline{p}} = \left( \bigcup_{\substack{n \geq 0 \\ 0\leq i < \rho_n}} \{x^i_n\} \right) \cup \left( \bigcup_{n \geq 0} \{y_n,z^+_n\} \right) \cup \{z_\infty\}
		\end{equation}
		\item In dimension $d \geq 3$, 
		\begin{equation}
			\mathbb{M}_{\underline{p}} = \left( \bigcup_{\substack{n \geq 0 \\ 0\leq i < \rho_n}} \{x^i_n\} \right) \cup \left( \bigcup_{n \geq 0} \{y_n\} \right) \cup \{z_\infty\}
		\end{equation}
	\end{enumerate}
\end{prop}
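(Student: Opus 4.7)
My plan is to deduce the characterization as a combination of Property \ref{SameClass2} of Lemma \ref{SameClass} (to collapse arcwise connected pieces into a single $\underline{p}$-static class) and the contrapositive of Proposition \ref{chrec0} (to separate distinct classes), mimicking the pseudo-orbit arguments already used in Subsection \ref{NonPeriodicitySection} and Remark \ref{hstrictpos}.

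First, I would identify the maximal arcwise connected, rotation-invariant subsets $F \subset \mathcal{M}_0$ on which $X_t = Y_t$, where Property \ref{SameClass2} of Lemma \ref{SameClass} applies. Two families appear. For each $n$, the set $\overline{B_n} \setminus \bigcup_i B_n^i$ is $\mathcal{R}_1$-invariant (the rotation cyclically permutes the $B_n^i$'s) and supports $X_t = Y_t$ since $Z$ is supported only in $\bigcup_i B_n^i \cup A_n$; it is arcwise connected trivially in dimension $\geq 3$, and in dimension $2$ via the pinch points $\partial B_n^i \cap \partial B_n$ where consecutive sectors meet. The lemma then collapses it to a single class, represented by $y_n$. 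Similarly, $\overline{D}$ is rotation-invariant with $X_t = Y_t$ everywhere; in dimension $\geq 3$ it is connected (the codimension $\geq 3$ solid tori $\overline{C_n}$ do not disconnect), yielding one class represented by $z_\infty$; in dimension $2$ it decomposes into the singleton $\{z_\infty\}$ and the annular regions between consecutive $\overline{C_n}$'s, each a separate class represented by $z_n^+$ (note that $z_n^-$ already lies in the same component as $z_{n+1}^+$, so it need not appear in the list).

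Next, I would prove pairwise distinctness of all the listed representatives, including the singletons $\{x_n^i\}$, by invoking the contrapositive of Proposition \ref{chrec0}: for any two candidates $p, q$ it suffices to exhibit $\varepsilon, \tau > 0$ such that no $(\varepsilon, \tau)$-pseudo-orbit of $X_t$ links $p$ to $q$. Two dynamical obstructions, already exploited in the proof of Proposition \ref{hper} and Remark \ref{hstrictpos}, do the job: for $\varepsilon < \delta_n$ no pseudo-orbit can escape the attracting basin $B_n^i$, and for $\varepsilon$ small compared to the outward drift over time $\tau$ no pseudo-orbit can cross $A_n^{\pm}$ against the attraction toward $\partial C_n$. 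Together with the fact that in dimension $2$ distinct components of $\overline{D}$ are separated by some $\overline{C_n}$, these obstructions force $d_{\underline{p}}(p, q) > 0$ for every pair from the list.

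Finally, every point of $\mathcal{M}_0 \setminus \bigcup_n D_n$ either lies in one of the arcwise connected pieces identified above or equals some $x_n^i$, while points of the open sets $D_n$ produce no new classes since $\overline{D_n}$ is itself a rotation-invariant region with $X_t = Y_t$, connected to the relevant component of $\overline{D}$ via its boundary $\partial C_n$. The main obstacle in carrying out this plan is the topological bookkeeping in dimension $2$---verifying arcwise connectedness of $\overline{B_n} \setminus \bigcup_i B_n^i$ at the pinch points, correctly enumerating the components of $\overline{D}$, and tracking which $z_n^{\pm}$ represents which component---whereas the dimension $\geq 3$ case is essentially immediate.
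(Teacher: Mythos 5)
Your plan is exactly the paper's intended argument: the paper itself states that Proposition \ref{pStaticClasses} follows from ``an immediate application of the properties of Proposition \ref{chrec0} and Lemma \ref{SameClass},'' and you carry out precisely that---using Property \ref{SameClass2} of Lemma \ref{SameClass} to collapse the rotation-invariant regions where $X_t=Y_t$ into single classes, and the contrapositive of Proposition \ref{chrec0} (via the attraction of the basins $B^i_n$ and the outward drift across $A_n$) to separate the representatives, with the correct topological bookkeeping of the extra components of $\overline{D}$ in dimension~2.
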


We define the set $\mathbb{M}'_{\underline{p}} = \mathbb{M}_{\underline{p}} \setminus \{z_\infty\}$. Since $\mathbb{M}'_{\underline{p}}$ is dense in $\mathbb{M}_{\underline{p}}$, and due to the continuity of $\underline{p}$-dominated maps $\psi : \mathbb{M}_{\underline{p}} \to \mathbb{R}$, these maps are determined by their images in $\mathbb{M}'_{\underline{p}}$.\\

Notice that all points $x$ of $\mathcal{M}_0$ and $\mathbb{M}_{\underline{p}}$ are periodic, with integer periods, under the flow $f_t$. Consider the map $\rho : \mathcal{M}_0 \to \mathbb{N}_{\geq 1}$ which associates to each points, its (positive) integer period in $\mathbb{M}_{\underline{p}}$. More precisely,
\begin{equation} \label{PeriodMather}
	\rho(x) = \rho_x =
	\begin{cases}
		\rho_n & \text{if } x = x^i_n, \; i=0,..,\rho_n-1 \\
		1 & \text{otherwise}
	\end{cases}
\end{equation}

\begin{rem} \label{RemarkPeriodMather}
	There is a subtlety about the points $y_n$. Note that all element $x$ of $\mathcal{M}_0 \setminus \{y_n\}_{n \geq 0}$ are $\rho_x$-periodic under the flow $f_t$. However, this does not hold for the points $y_n$. Indeed, we have $\rho_{y_n}=1$ and the point $y_n$ is not $1$-periodic under the flow $f_t$. However, its $\underline{p}$-static class $\bar{y}_n$ is a fixed point of $f_{1\left|\mathbb{M}_{\underline{p}}\right.}$ i.e. the points $y_n$ and $f_1(y_n)$ belong to the same $\underline{p}$-static class $\bar{y}_n$.
	
	Furthermore, we can prove using Lemma \ref{SameClass} that
	\begin{align*}
		h^{\rho_n \infty + k}(y_n, \cdot) = h^{\rho_n \infty }(f_k(y_n), \cdot) = h^{\rho_n \infty }(y_n, \cdot)
	\end{align*}
	which implies
	\begin{align*}
		h^{\rho_n\infty}(y_n, \cdot) = h^{\infty} (y_n, \cdot) = h^{\rho_{y_n}\infty} (y_n, \cdot)
	\end{align*}
	This justifies the reason of taking $\rho(y_n) = 1$.
\end{rem}

\begin{prop} \label{hcircles}
	For every point $x$ in $\mathcal{M}_0$, the $\underline{p}$-barrier $h^{\underline{p}}(\cdot, x,\cdot)$ and the $\rho_x$-barrier $h^{\rho_x\infty}(\cdot, x, \cdot)$ do coincide on $\mathbb{R} \times M$. 
\end{prop}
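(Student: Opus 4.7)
The plan is to prove the two inequalities $h^{\underline{p}+t}(x,y)\le h^{\rho_x\infty+t}(x,y)$ and $\ge$ separately, using the divisibility $\rho_x\mid p_m$ for $m$ large together with the existence of zero-action (or asymptotically zero-action) loops at $x$. From \eqref{PeriodMather}: if $x=x^i_n$ then $\rho_x=\rho_n$ divides $p_m=\prod_{k=0}^m\rho_k$ as soon as $m\ge n$; for every other $x\in\mathcal{M}_0$ we have $\rho_x=1$, so divisibility is automatic. Hence there exists $N=N(x)$ with $\rho_x\mid p_m$ for all $m\ge N$.

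For the lower bound, $(p_m+t)_{m\ge N}$ is a subsequence of the arithmetic progression $(\rho_x k+t)_{k\ge 0}$, and the $\liminf$ over a subsequence is at least the $\liminf$ over the full sequence:
\[
h^{\underline{p}+t}(x,y)=\liminf_m h^{p_m+t}(x,y)\ \ge\ \liminf_k h^{\rho_x k+t}(x,y)=h^{\rho_x\infty+t}(x,y).
\]

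For the upper bound, I fix any $k\in\rho_x\mathbb{N}$. For $m$ large, $p_m-k$ is a non-negative integer divisible by $\rho_x$. The triangle inequality \eqref{TriangIneg}, combined with the $1$-periodicity of $L$ (which gives $h^{p_m-k,\,p_m+t}(x,y)=h^{k+t}(x,y)$ since $p_m-k\in\mathbb{Z}$), yields
\[
h^{p_m+t}(x,y)\ \le\ h^{p_m-k}(x,x)+h^{k+t}(x,y).
\]
It therefore suffices to prove that $h^{p_m-k}(x,x)\to 0$ as $m\to\infty$. If $x=x^i_n$, the curve $\tau\mapsto f_\tau(x)$ is a $\rho_n$-periodic loop of the flow of $X_t$ with null Mañé action, and since $\rho_n\mid p_m-k$, the time-$[0,p_m-k]$ piece of this loop is a closed curve at $x$ of zero action, so $h^{p_m-k}(x,x)=0$. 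If $\rho_x=1$, then $x$ belongs to one of the sets $F$ identified in Section \ref{SectionManeExample}—namely a connected component of $\overline{D}$, an invariant radial circle inside some $D_n$, or $\overline{B_n}\setminus\bigcup_i B_n^i$—each of which satisfies the hypotheses of Point \ref{SameClass2} of Lemma \ref{SameClass} (arcwise connectedness, $X_t=Y_t$ on $\mathcal{R}_t(F)$, and $\mathcal{R}_m$-invariance for $m=\rho_n$). That point gives $\overline{h}(x,x)=\limsup_q h^q(x,x)=0$; combined with non-negativity of barriers (Corollary \ref{hpos}), this forces $h^q(x,x)\to 0$ as $q\to\infty$, and applying this to $q=p_m-k$ closes the case. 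Taking $\liminf_m$ in the triangle inequality and then $\liminf$ over $k\in\rho_x\mathbb{N}$ yields $h^{\underline{p}+t}(x,y)\le h^{\rho_x\infty+t}(x,y)$.

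The main delicate point is Case $\rho_x=1$, where one must verify stratum by stratum that some integer $m$ (typically $m=\rho_n$) makes Point \ref{SameClass2} of Lemma \ref{SameClass} applicable; this relies on the fact that $\mathcal{R}_{\rho_n}$ is rotation by an integer angle on sets interior to $C_n$ and preserves every radial circle of $D_n$. In dimension $\ge 3$ this is essentially uniform because $\overline{D}$ is connected; in dimension $2$ one works on each connected component of $\overline{D}$ separately, using the additional representatives $z^+_n$ from Proposition \ref{pStaticClasses}. Once this verification is in hand, the rest reduces to the triangle inequality and the non-negativity of the Mañé Lagrangian barriers.
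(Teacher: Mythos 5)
Your proof is correct and has the same skeleton as the paper's: the lower bound is the liminf-over-subsequence observation from $\rho_x \mid p_m$ for $m$ large, and the upper bound converts a competitor of duration $\rho_x m + t$ into one of duration $p_n + t$ by prepending a long loop at $x$ of asymptotically vanishing action. Where you differ is in how that loop's action is controlled. The paper prepends $\tau \mapsto f_\tau(x)$ and relies on $x$ being exactly $\rho_x$-periodic under $f_t$, invoking Remark~\ref{RemarkPeriodMather} to handle the one exception it names, $x = y_n$; but the assertion there that all other points of $\mathcal{M}_0$ are $\rho_x$-periodic is not quite right, since $\rho_x = 1$ for every point of $\overline{B_n}\setminus\bigcup_i B^i_n$ or of $\partial C_n$ (which are $\rho_n$-periodic, not $1$-periodic) and for the irrational-rotation circles inside $D_n$ (which are not periodic at all), so $\tau\mapsto f_\tau(x)$ need not close up after time $p_n-\rho_x m$. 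Your triangle-inequality version needs only $h^{p_m-k}(x,x)\to 0$ rather than exact closure, and you obtain this for all $\rho_x=1$ strata uniformly from Point~\ref{SameClass2} of Lemma~\ref{SameClass} together with non-negativity of the barriers; so your route is more robust and in fact closes a small gap in the paper's argument. One minor correction to your last paragraph: $\overline{B_n}\setminus\bigcup_i B^i_n$ is arcwise connected and $\mathcal{R}_1$-invariant even in dimension~$2$ (removing only the open balls keeps the circles $\partial B^i_n$, which glue the angular sectors together), so the dimension-$2$ adjustment you mention is not actually needed for that stratum.
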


\begin{proof}
	Fix an element $(t,x,y)$ in $\mathbb{R} \times \mathcal{M}_0 \times M$. Observe from Remark \ref{RemarkPeriodMather} that if $x = y_n$, then taking $\rho_{y_n} =1$ or $\rho_n$ makes no difference. Thus, we can assume that $x$ is $\rho_x$-periodic under the flow $f_s$.\\
	 For $n \geq 0$ sufficiently large, the definition of $p_n$ allows us to consider an integer $p'_n$ such that $p_n = \rho_x.p'_n$. Then, we have
	\begin{align*}
		h^{\underline{p}+t}(x ,y ) & = \liminf_n h^{p_n + t}(x,y) = \liminf_n h^{\rho_x.p'_n + t}(x,y) \\
		& \geq \liminf_n h^{\rho_x.n + t}(x,y) = h^{\rho_x\infty + t}(x,y)
	\end{align*}
	For the inverse inequality, fix an integer $m \geq 1$ and let $n$ be such that $p_n \geq \rho_x. m$. The Tonelli Theorem \ref{TonelliTheorem} guarantees the existence of a curve $\gamma_1 : [0,\rho_x.m +t] \to M$ from $x$ to $y$ such that $h^{\rho_x.m +t}(x,y) = A_L(\gamma_1)$. Now let $\gamma_2 : [ 0 , p_n -\rho_x.m]$ be the curve defined by $\gamma_2(s) =f_s(x)$ so that it has null action $A_L(\gamma_2)=0$. 
	Since $x$ is $\rho_x$-periodic by the flow $f_t$, and the integer $\rho_x$ divides $p_n -\rho_x.m$, we deduce that $\gamma_2(0) = \gamma_2 (p_n -\rho_x.m) = x$ and that $\gamma_2$ is a loop. Then, we can concatenate the two curves $\gamma_2$ and $\gamma_1$ to obtain a third one $\gamma : [0, p_n +t] \to M$ connecting $x$ to $y$. Hence, we get
	\begin{align*}
		h^{\rho_x.m +t}(x,y) & = A_L(\gamma_1) = A_L(\gamma_1) + A_L(\gamma_2)  = A_L(\gamma) \geq h^{p_n +t}(x,y)
	\end{align*}
	Taking the liminf on $m$ and $k$, we derive the desired inequality
	\begin{equation*}
		h^{\rho_x\infty + t}(x,y) \geq h^{\underline{p}+t}(x ,y )
	\end{equation*}
	which concludes the proof.
\end{proof}

\begin{rem}
	Using this proposition, we can express the constructed recurrent viscosity solutions $u$ defined in (\ref{ic}) as $u(x) = \inf_{n \geq 0} \{ h^{\underline{p}}(x_n, x) \}$.
\end{rem}

The properties drawn from Propositions \ref{pStaticClasses} and \ref{hcircles} above finalize the application of Theorem \ref{OmegaBijectionTheorem} to the \Mane Lagrangian $L$, yielding

\begin{cor}
	We have a bijective map
	\begin{equation} \label{OmegaBijectionApplication}
		\begin{matrix}
			Dom_{\underline{p}}(\mathbb{M}_{\underline{p}}) & \longrightarrow & \Omega(\mathcal{T}) \\
			\psi & \longmapsto & \inf\limits_{y \in \mathbb{M}'_{\underline{p}}} \{ \psi(y) + h^{\rho_y \infty}(y, \cdot ) \}
		\end{matrix}
	\end{equation}
	where the structure of $\mathbb{M}'_{\underline{p}} = \mathbb{M}_{\underline{p}} \setminus \{z_\infty\}$ is detailed in Proposition \ref{pStaticClasses}.
\end{cor}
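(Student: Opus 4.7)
The plan is to derive the corollary as a direct application of Theorem \ref{OmegaBijectionTheorem} to the constructed Mañé Lagrangian $L$, then to rewrite the representation formula using the explicit description of $\mathbb{M}_{\underline{p}}$ provided by Proposition \ref{pStaticClasses} and the identification of barriers proved in Proposition \ref{hcircles}. Three points must be addressed: the hypothesis of Theorem \ref{OmegaBijectionTheorem}, the substitution $h^{\underline{p}} \leftrightarrow h^{\rho_y\infty}$, and the reduction of the infimum from $\mathbb{M}_{\underline{p}}$ to $\mathbb{M}'_{\underline{p}}$.

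First I would verify the uniform convergence $\phi_L^{p_n} \to \mathrm{Id}$ on $\mathbb{M}_{\underline{p}}$ by a case-by-case estimate against the list of points in Proposition \ref{pStaticClasses}. Every element of $\mathbb{M}_{\underline{p}}$ is either a periodic point $x^i_k$ of period $\rho_k$, or one of the points $y_k$, $z^+_k$ (in dimension $2$), or $z_\infty$ lying in a component where $f_t$ reduces to the rotation $\mathcal{R}_t$ (or to the identity at $z_\infty$). Whenever $k \leq n$, the integer $p_n = \prod_{j \leq n} \rho_j$ is a multiple of $\rho_k$, so $f_{p_n}$ fixes the point; whenever $k > n$, both the point and its image lie in an $O_k$-neighbourhood of diameter at most $2(r_k + \delta_k) \leq 2(r_{n+1} + \delta_{n+1})$. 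Taking the supremum yields the uniform convergence since $r_n, \delta_n \to 0$.

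Applying Theorem \ref{OmegaBijectionTheorem} then produces the bijection $\psi \mapsto \inf_{y \in \mathbb{M}_{\underline{p}}} \{\psi(y) + h^{\underline{p}}(y, \cdot)\}$ between $Dom_{\underline{p}}(\mathbb{M}_{\underline{p}})$ and $\Omega(\mathcal{T})$. Since $\mathbb{M}_{\underline{p}} \subset \mathcal{M}_0$, Proposition \ref{hcircles} gives $h^{\underline{p}}(y, \cdot) = h^{\rho_y \infty}(y, \cdot)$ for each $y$ in the index set, producing the formula in the statement up to the domain of the infimum.

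It remains to show that $\inf_{\mathbb{M}_{\underline{p}}} = \inf_{\mathbb{M}'_{\underline{p}}}$, where $\mathbb{M}'_{\underline{p}} = \mathbb{M}_{\underline{p}} \setminus \{z_\infty\}$. One inequality is trivial. For the reverse, the $\underline{p}$-domination of $\psi$ gives $|\psi(x) - \psi(y)| \leq \max(h^{\underline{p}}(x,y), h^{\underline{p}}(y,x)) \leq d_{\underline{p}}(x,y)$, while the Lipschitz regularity of $h^{\underline{p}}$ inherited from Proposition \ref{PeierlsProp} through the liminf definition yields $d_{\underline{p}}(y_n, z_\infty) \to 0$ as $y_n \to z_\infty$ in $M$. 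Combined with the Lipschitz regularity of $h^\infty(\cdot, x) = h^{\rho_{y_n}\infty}(\cdot, x) = h^{\rho_{z_\infty}\infty}(\cdot, x)$, this produces
\begin{equation*}
    \psi(y_n) + h^\infty(y_n, x) \xrightarrow[n \to \infty]{} \psi(z_\infty) + h^\infty(z_\infty, x),
\end{equation*}
so the value at $z_\infty$ is approximated from within $\mathbb{M}'_{\underline{p}}$ and the two infima coincide. The main potential obstacle is this last limit argument, which requires the quantitative continuity of $\underline{p}$-dominated maps with respect to the pseudometric $d_{\underline{p}}$; this is resolved by the domination inequality itself together with the non-negativity of Mañé barriers recorded in Corollary \ref{hpos}.
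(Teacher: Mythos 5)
Your proposal is correct and follows the same route the paper takes: it applies Theorem \ref{OmegaBijectionTheorem} after verifying its hypothesis, replaces $h^{\underline{p}}$ by $h^{\rho_y\infty}$ via Proposition \ref{hcircles}, and discards $z_\infty$ from the index set by density of $\mathbb{M}'_{\underline{p}}$ in $\mathbb{M}_{\underline{p}}$ and $d_{\underline{p}}$-continuity of dominated maps. The paper leaves all three steps implicit (stating the corollary without proof, with the first observation about $\phi_L^{p_n}\to\mathrm{Id}$ on $\mathcal{M}_0\setminus\bigcup_n D_n$ appearing in a preceding paragraph and the density argument in the remark right after Proposition \ref{pStaticClasses}); your version fills in the quantitative details, and in particular makes explicit that the domination inequality is exactly the $d_{\underline{p}}$-Lipschitz estimate needed to pass to the limit $y_n\to z_\infty$.
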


We were able in the previous Subsection \ref{SectionAddingMachine} to describe the dynamics of the Lax-Oleinik operator $\mathcal{T}$ on $\omega(u)$ for the initial date $u$ defined in (\ref{ic}). The following is a partial extension to the whole non-wandering set $\Omega(\mathcal{T})$.

\begin{prop} \label{MainPropOmega}
	For all recurrent viscosity solution $v$ in $\Omega(\mathcal{T})$, the Lax-Oleinik operator $\mathcal{T}$ restricted to the $\omega$-limit set $\omega(v)$ is a factor of an odometer map $\tau$.	
\end{prop}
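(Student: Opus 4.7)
The plan is to extend the strategy of Theorem \ref{OmegaOdometer} by combining it with the representation formula (\ref{OmegaBijectionApplication}). To each $v\in\Omega(\mathcal{T})$ one associates a $\underline{p}$-dominated function $\psi_v\in Dom_{\underline{p}}(\mathbb{M}_{\underline{p}})$ such that $v=\inf_{y\in\mathbb{M}'_{\underline{p}}}\{\psi_v(y)+h^{\rho_y\infty}(y,\cdot)\}$. The goal is to track the action of $\mathcal{T}$ on these representatives and exhibit it as a factor of the odometer action on $Z_{\underline{\rho}}=\prod_{n\ge 0}\mathbb{Z}/\rho_n\mathbb{Z}$, so that $(\omega(v),\mathcal{T})$ becomes a factor of an odometer.

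First I would commute $\mathcal{T}^k$ with the infimum, using Lemma \ref{ViscosityInf} and Proposition \ref{kvisc}, to get
\[
\mathcal{T}^k v \;=\; \inf_{y\in\mathbb{M}'_{\underline{p}}}\{\psi_v(y) + h^{\rho_y\infty+k}(y,\cdot)\}.
\]
Lemma \ref{Tk+iu} applied to the orbit classes $\bar{x}^i_n$, together with Remark \ref{RemarkPeriodMather} and Property \ref{SameClass2} of Lemma \ref{SameClass} applied to the remaining classes $\bar{y}_n$ and $\bar{z}_\infty$, would give $h^{\rho_y\infty+k}(y,\cdot)=h^{\rho_y\infty}(\tilde{\mathcal{R}}_k(y),\cdot)$, where $\tilde{\mathcal{R}}_k$ is the permutation of $\mathbb{M}_{\underline{p}}$ that cyclically shifts each orbit $\{\bar{x}^i_n\}_i$ by $k$ and fixes the remaining classes. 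Reindexing then yields the crucial identity
\[
\mathcal{T}^k v \;=\; \inf_{z\in\mathbb{M}'_{\underline{p}}}\{\psi_v(\tilde{\mathcal{R}}_{-k}(z))+h^{\rho_z\infty}(z,\cdot)\},
\]
so the dependence on $k$ is carried entirely by the combinatorial shift $\tilde{\mathcal{R}}_{-k}$.

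Since $\tilde{\mathcal{R}}_k$ depends on $k$ only through its reductions modulo each $\rho_n$, it extends to a $Z_{\underline{\rho}}$-action $\underline{q}\mapsto\tilde{\mathcal{R}}_{\underline{q}}$, and one defines
\[
\Pi(\underline{q}) \;=\; \inf_{z\in\mathbb{M}'_{\underline{p}}}\{\psi_v(\tilde{\mathcal{R}}_{-\underline{q}}(z))+h^{\rho_z\infty}(z,\cdot)\}\in\mathcal{C}(M,\mathbb{R}).
\]
By construction $\Pi(\iota(k))=\mathcal{T}^k v$ for $\iota(k)=(k\bmod\rho_n)_n$, and $\Pi\circ\tau=\mathcal{T}\circ\Pi$. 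Setting $X_v=\overline{\iota(\mathbb{N})}\subset Z_{\underline{\rho}}$, the dynamical system $(X_v,\tau|_{X_v})$ is (topologically conjugate to) an odometer, along the lines of Corollary \ref{Odom}. A compactness argument based on the continuity of $\Pi$ should identify $\Pi(X_v)$ with $\{v\}\cup\omega(v)=\omega(v)$ (using $v\in\omega(v)$, guaranteed by Corollary \ref{Minimality}), thereby producing the desired factor map $\Pi|_{X_v}:(X_v,\tau|_{X_v})\to(\omega(v),\mathcal{T}|_{\omega(v)})$.

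The main obstacle will be establishing the continuity of $\Pi$. When $\underline{q}$ and $\underline{q}'$ agree on their first $N$ coordinates, $\tilde{\mathcal{R}}_{-\underline{q}}$ and $\tilde{\mathcal{R}}_{-\underline{q}'}$ coincide on all classes except on the orbits $\{\bar{x}^i_n\}$ with $n>N$. Since the circles $O_n$ shrink to $z_\infty$, one has $d_{\underline{p}}(\bar{x}^i_n,\bar{z}_\infty)\to 0$ uniformly in $i$ as $n\to\infty$; combining the $1$-Lipschitz behaviour of $\psi_v$ with respect to $d_{\underline{p}}$ (a direct consequence of $\underline{p}$-domination) with the uniform $\kappa_1$-Lipschitz regularity of the barrier family $z\mapsto h^{\rho_z\infty}(z,\cdot)$ furnished by Proposition \ref{PeierlsProp}, the remaining contributions to the infimum should be controlled by a quantity tending to zero with $N$. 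This uniform control would deliver the needed continuity and close the argument.
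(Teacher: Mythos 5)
Your proposal is correct and follows the same skeleton as the paper's proof: represent $v$ by a $\underline{p}$-dominated map $\psi_v$ via Theorem \ref{OmegaBijectionTheorem}, commute $\mathcal{T}^k$ through the infimum, encode the $k$-dependence in an odometer and pass to the orbit closure. Two variations are worth noting. First, your reindexing $h^{\rho_y\infty+k}(y,\cdot)=h^{\rho_y\infty}(\tilde{\mathcal{R}}_k(y),\cdot)$ transfers the shift from the barrier to $\psi_v$ and lets you work directly on $Z_{\underline{\rho}}=\prod_n\mathbb{Z}/\rho_n\mathbb{Z}$ (one coordinate per level $n$), whereas the paper carries a coordinate $k_{x^i_n}$ for each point $x^i_n$ in the redundant space $\prod_n(\mathbb{Z}/\rho_n\mathbb{Z})^{\rho_n}$ and only recovers the diagonal after taking the orbit closure; your formulation is leaner and makes the factor map one step more transparent. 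Second, the paper merely asserts continuity of its map $\varphi$, while you sketch a genuine estimate: $\psi_v$ is $1$-Lipschitz for the pseudometric $d_{\underline{p}}$ by domination, $d_{\underline{p}}(x^a_n,x^b_n)\le 4\kappa_1 r_n$ by the Lipschitz Property \ref{PeierlsRegularity} together with $h^{\underline{p}}(x,x)=0$ on $\mathcal{M}_0$, and therefore if $\underline{q},\underline{q}'$ agree up to level $N$ then $\Vert\Pi(\underline{q})-\Pi(\underline{q}')\Vert_\infty\le \sup_{n>N,i}|\psi_v(\tilde{\mathcal{R}}_{-\underline{q}}x^i_n)-\psi_v(\tilde{\mathcal{R}}_{-\underline{q}'}x^i_n)|\le 4\kappa_1 r_{N+1}$. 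This fills a gap the paper leaves implicit. One small slip: $z_\infty\notin\mathbb{M}'_{\underline{p}}$, so it should not appear in the list of classes over which the infimum runs, though this is immaterial since $h^\infty(z_\infty,\cdot)$ is already a weak-KAM solution and the reindexing trivially fixes it.
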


\begin{proof}
	Let $v$ be an element of $\Omega(\mathcal{T})$ represented by the $\underline{p}$-dominated map $\psi : \mathbb{M}_{\underline{p}} \to \mathbb{R}$. Following the proof of Proposition \ref{TkuProp}, we get that for all non-negative integer $k \geq 0$, 
	\begin{equation} \label{Tkv2}
		\mathcal{T}^kv(x) =  \inf\limits_{y \in \mathbb{M}_{\underline{p}}} \{ \psi(y) + h^{\rho_y \infty + k}(y, \cdot ) \} \in \Omega(\mathcal{T})
	\end{equation}	 
	As in the proof of Theorem \ref{OmegaOdometer}, we will characterize every element of $\orb(\mathcal{T},v)$ by the coefficients $k$ involved in the $(\rho_y, k)$-barriers $h^{\rho_y \infty + k}$ intervening in (\ref{Tkv2}). For all $y$ in $\mathbb{M}_{\underline{p}}$ such that $\rho_y = 1$, we set $k_y = 0$. The other points $y \in \mathbb{M}_{\underline{p}}$ being in the $\{x^i_n \; ; \;  0 \leq i \leq \rho_n-1 , \; n \geq 0, \}$, we deduce that the remaining coefficients $k_y$ belong to the space $Z := \prod_{n \geq 0} (\mathbb{Z} / \rho_n \mathbb{Z})^{\rho_n}$. In other words, every element of the orbit $\orb(\mathcal{T},v)$ under $\mathcal{T}$ can be seen as an element of $Z$.
	
	We endow the space $Z$ with a metric analogously to (\ref{TopologyCoeff}). And we consider the continuous map $\varphi$ defined by
	\begin{equation}
		\begin{matrix}
			\varphi: & Z = \prod_{n \geq 0} (\mathbb{Z} / \rho_n \mathbb{Z})^{\rho_n} & \longrightarrow & \Omega (\mathcal{T}) \\
			& \underline{k} = (k_{x^i_n})_{i,n} & \longmapsto & \inf\limits_{y \in \mathbb{M}_{\underline{p}}} \{ \psi(y) + h^{\rho_y \infty + k_y}(y, \cdot ) \}
		\end{matrix}
	\end{equation}
	Let $\tau$ be the odometer map on $Z$ given by
	\begin{equation}
		\begin{matrix}
			\tau : & Z & \longrightarrow & Z \\
			& \underline{k}= (k_{x^i_n})_{i,n} &\longmapsto &\underline{k} + \underline{1} = (k_{x^i_n}+1)_{i,n}
		\end{matrix}
	\end{equation}
	Then, the identity (\ref{Tkv2}) gives rise to the commutative diagram
	\begin{equation}
		\begin{tikzcd}
			Z \arrow[d, "\varphi"] \arrow{r}{\tau}  & Z \arrow[d, " \varphi"] \\
			\Omega(\mathcal{T}) \arrow{r}{\mathcal{T}}  & \Omega(\mathcal{T})
		\end{tikzcd}	
	\end{equation}
	Note that $\varphi(\underline{0}) = v$. Hence, by continuity of the maps involved in the diagram, their restrictions to the set $Z_0 = \overline{\{ \tau^n(\underline{0}) \; ; \; n \in \mathbb{N} \}}$ yields the new diagram
	\begin{equation}
		\begin{tikzcd}
			Z_0 \arrow[d, "\varphi"] \arrow{r}{\tau}  & Z_0 \arrow[d, " \varphi"] \\
			\omega(v) \arrow{r}{\mathcal{T}}  & \omega(v)
		\end{tikzcd}	
	\end{equation}
	where the map $\varphi : Z_0 \to \omega(v)$ is onto. Therefore, $\mathcal{T}_{|\omega(v)}$ is a factor of an odometer map $\tau : Z_0 \to Z_0$.
\end{proof}

\begin{rem}
	\begin{enumerate}
		\item The Theorem \ref{MainOmega} is a direct consequence of the proof above. 
		\item This proof shows that the behaviour of $\mathcal{T}_{|\omega(v)}$ cannot be more complicated than that of $\tau_{|Z_0}$. Moreover, it is possible to obtain the entire dynamics of the odometer $\tau_{|Z_0}$ as seen in Theorem \ref{OmegaOdometer} and Corollary \ref{Odom} where we got $Z_0 \simeq \omega(u)$ and $\tau_{|Z_0}$ is conjugate to $\mathcal{T}_{|\omega(u)}$.
		\item When comparing the outcomes of the last two subsections, we can ask a natural question : Is it possible to determine the exact odometer acting on $\omega(v)$ ? This proves more intricate than in the case of $u$ due to the non-injectivity of the the map $\varphi$ constructed in the proof above. Even the fact that $\omega(v)$ is Cantor space isn't that clear.
		\item However, for a generic $\underline{p}$-dominated map $\psi$ where the domination is everywhere strict, and if we impose a non symmetry condition between the values of $\{\psi(x^i_n)\}_i$, then it becomes possible to ensure the injectivity of the constructed map $\varphi$. Consequently, the map $\varphi$ is a conjugacy between $\mathcal{T}_{|\omega(u)}$ and the odometer $\tau_{|Z_0}$ and we deduce that $\omega(v)$ is a Cantor set homeomorphic to $Z_0$.
	\end{enumerate}
\end{rem}

\section{Construction of a Smooth Non-Periodic Recurrent Viscosity Solution} \label{RegSection}

In Section \ref{SectionRecurrence}, we constructed a Lipschitz viscosity solution $u(t,x)$ of the Hamilton-Jacobi equation (\ref{HJ}) that is recurrent and non-periodic. In this section, we undertake the proof of Theorem \ref{MainC1} and refine our choice of $u$ to achieve local $C^\infty$ regularity.

\subsection{An Informal Preliminary Discussion.} \label{SubsectionInformal}

The initial data $u$ chosen in \eqref{ic} to obtain a recurrent, non-periodic viscosity solutions was 
\begin{equation}
	u(x) = \inf_{n \geq 0} \{ h^{\rho_n\infty}(x_n, x) \}
\end{equation}
where $x_n = x^0_n$ were defined in (\ref{Points}). However, there was no control over the sets of realization of the infimums and their boundaries where non-differentiability is very likely. Moreover, it has been established in Section \ref{SectionNWLOManeLagrangian} that all the recurrent viscosity solutions for the studied \Mane Lagrangian $L$ defined in (\ref{LagMane}) are of the form 
\begin{equation*}
	u_c(x) = \inf_{y \in \mathbb{M}} \{ c_y+ h^{\rho_y\infty}(y,x) \}
\end{equation*}
where $\mathbb{M}$ is a well chosen subset of the Mather set $\mathcal{M}_0$ and $\rho_y$ are positive integers associated with the points $y$. Therefore, the potential regularization of $u$ should take this form.\\

For simplicity, let us fix an integer $n \geq 0$ and focus on a viscosity solution, simpler than \eqref{ic}, with initial data $v$ given by
\begin{equation}
	v(x) = \inf \{ h^\infty (x_n, x) , h^\infty (z_n,x) \}
\end{equation}
where $z_n$ is an arbitrary point of $\partial C_n$. Let us look at its possible regularity in $C_n$.
\begin{enumerate}
	\item There is a significant risk of irregularity at the boundary between the two domains of the infimum, where $h^\infty (x_n, x) = h^\infty (z_n,x)$. However, this risk vanishes if the equality occurs on $\partial B_n$, which belongs to the Mather set where differentiability has been guaranteed by Remark \ref{MatherRegularity}. This concern can be dealt with by taking a modification of $v$ of the form 
	\begin{equation}
		v_c(x) = \inf \{ c_n + h^\infty (x_n, x) , h^\infty (z_n,x) \}
	\end{equation}
	where the constants $c_n$ are to be well chosen.
	\item Let us look at the regularity of $h^\infty (x_n, x)$ in $B^0_n$. Suppose that $h^\infty (x_n, x)$ is "realized" or calibrates two distinct curves $\gamma_1, \gamma_2 : (-\infty,0] \to M$ linking $x_n$ at $t \to -\infty$ to $x$ at $t=0$. 
	
	Being minimizing, these curves do follow the Lagrangian flow, implying $\dot{\gamma}_1(0) \neq \dot{\gamma}_1(0)$. However, if $h^\infty (x_n, x)$ is regular at $x$, the regularity Theorem \ref{CalibRegularity} imposes that $d_x h^\infty (x_n, x) = \partial_v L(x,\dot{\gamma}_1(0))$ and  $d_x h^\infty (x_n, x) = \partial_v L(x,\dot{\gamma}_2(0))$ which are distinct due to the convexity of $L$. 
	
	As a result, a condition for regularity is to have one and only one minimizing curve that goes from $x_n$ to $x$, and this is the main reason why we imposed the various symmetries listed in Remarks \ref{Symmetriesf} and \ref{SymmetryX} during the construction. The proofs will heavily rely on these.
\end{enumerate}

Let's now delve into the proof. We begin by addressing the choice of the constants $c_n$.

\subsection{Choice of the initial data $u$}

As discussed in Subsection \ref{SubsectionInformal}, we will proceed to a good choice of constants $c_n$, $n \geq 0$, ensuring that the viscosity solution $u_c$ defined by 
\begin{equation}
	u_c(x) = \inf_{n \geq 0} \{ c_n + h^{\rho_n\infty}(x_n, x) \}
\end{equation}
remains non-periodic, recurrent, and becomes regular. Additionally, we introduce the map $u^n_c$ defined by

\begin{equation} \label{uc}
	u_c^n(x) = \inf_{k \neq n} \{ c_k+ h^{\rho_k\infty}(x_k,x) \}
\end{equation}

\begin{prop} \label{Cn}
	There exist a sequence of real constants $c_n$ such that for the the associated solution $u_c$ we have
	\begin{enumerate}[label=\roman*.]
		\item In the sets $B^0_n$, $u_c(x) = c_n + h^{\rho_n\infty}(x_n, x)$.
		\item In the sets $A_n$, 
		\begin{itemize}
			\item (2D case) Either $u_c(x)$ is locally constant. 
			\item (3D case and above) Or $u_c(x)= u^n_c (x) =  h^{\infty}(z_\infty, x)$, where the point $z_\infty$ has been introduced in (\ref{Points}). 
		\end{itemize}
		\item In the sets $B_n \setminus B^0_n$, $u_c(x)$ is constant.
		\item In the set $D$, $u_c(x)$ is locally constant.
	\end{enumerate}
\end{prop}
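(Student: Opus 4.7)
\textbf{Strategy.} The guiding principle is to choose each $c_n$ so that the transition between the region where $u_c$ coincides with $c_n+h^{\rho_n\infty}(x_n,\cdot)$ and the region where it coincides with $u_c^n$ occurs exactly on $\partial B_n$, which lies in the Mather set $\mathcal{M}_0$ by Proposition \ref{MatherConstructedProp}. Lemma \ref{SameClass} is the main tool: Property \ref{SameClass2} will ensure that the relevant Peierls barriers are constant on the arc-wise connected, $\mathcal{R}_{\rho_n}$-invariant Mather pieces $\partial B_n$, $\overline{B_n}\setminus \bigcup_i B_n^i$, and the components of $D$; Property \ref{SeparateClass} will decompose barriers across a separating such piece.

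\textbf{Choice of the constants.} Both
\begin{equation*}
	a_n := h^{\rho_n\infty}(x_n,\partial B_n) \quad \text{and} \quad b_n := h^\infty(z_\infty,\partial B_n)
\end{equation*}
are well defined real numbers by Property \ref{SameClass2} of Lemma \ref{SameClass} applied to $\partial B_n$ (which is arc-wise connected, $\mathcal{R}_{\rho_n}$-invariant, and satisfies $X_t = Y_t$ on $f_t(\partial B_n)$ since $Z$ vanishes there). Set $c_n := b_n - a_n$, so that on $\partial B_n$ one has $c_n+h^{\rho_n\infty}(x_n,\cdot) = b_n = h^\infty(z_\infty,\cdot)$. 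A verbatim adaptation of the proof of Proposition \ref{TkuOmegaProp} (using the separator $\overline{D'}$ and Property \ref{SeparateClass} of Lemma \ref{SameClass}) then yields the identity $u_c^n(x) = h^\infty(z_\infty,x)$ on $C_n$ in dimension $d\geq 3$, provided one further imposes the inductive constraint $\liminf_{m}\bigl(c_m + h^{\rho_m\infty}(x_m,z_\infty)\bigr)=0$. Since $h^{\rho_m\infty}(x_m,z_\infty) \to 0$ as $m \to \infty$, this constraint is compatible with the inductive definition and keeps the family $(c_n)$ bounded, hence preserves the recurrence mechanism of Section \ref{SectionRecurrence}.

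\textbf{Verification of (i)--(iv).} Applying Property \ref{SeparateClass} of Lemma \ref{SameClass} with $F=\partial B_n$ gives, for every $x$ outside $B_n^0$, the decomposition $c_n+h^{\rho_n\infty}(x_n,x) = b_n + h^{\rho_n\infty}(\partial B_n,x)$, and similarly $h^\infty(z_\infty,x) = b_n + h^\infty(\partial B_n,x)$ in dimension $\geq 3$. For (i), inside $B_n^0$ the barrier $h^{\rho_n\infty}(x_n,x) \leq a_n$ (attracting basin), so $c_n+h^{\rho_n\infty}(x_n,x) \leq b_n \leq h^\infty(z_\infty,x) \leq u_c^n(x)$. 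For (ii) in dimension $\geq 3$, the inequality $h^\infty(\partial B_n,x) \leq h^{\rho_n\infty}(\partial B_n,x)$ (immediate from the definitions, since the $\rho_n$-liminf is over a subsequence of the full liminf) combined with the two decompositions above yields $h^\infty(z_\infty,x) \leq c_n + h^{\rho_n\infty}(x_n,x)$ on $A_n$. For (iii), on $\overline{B_n}\setminus\bigcup_i B_n^i$ the constancy follows directly from Property \ref{SameClass2} of Lemma \ref{SameClass}; on each $B_n^i$ with $i\neq 0$, any minimizer to $x\in B_n^i$ must cross $\partial B_n^i\subset \mathcal{M}_0$ and then is absorbed by the attracting flow $f_t$ towards $x_n^i$ with zero residual action, so all competing barriers are constant inside. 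Property (iv) is immediate: each connected component of $D$ is arc-wise connected, $\mathcal{R}_1$-invariant, and satisfies $X_t=0$, so Property \ref{SameClass2} of Lemma \ref{SameClass} gives the local constancy of every barrier in the infimum.

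\textbf{Main obstacle.} The principal difficulty is the coupling between the inductive choice $c_n = b_n - a_n$ and the simultaneous verification of (i), (ii) for all indices: one must ensure that no competing term $c_k+h^{\rho_k\infty}(x_k,\cdot)$ with $k\neq n$ beats the $n$-th term inside $B_n^0$, and the symmetric statement in $A_n$. Both controls go through the decomposition along $\partial B_n$ together with the asymptotic $h^{\rho_k\infty}(x_k,z_\infty)\to 0$. The two-dimensional case (Remark \ref{RemarkDimension}) requires separate treatment of the components $A_n^\pm$, $D_n^\pm$, since $D$ is no longer connected and $h^\infty(z_\infty,\cdot)$ does not cover both sides of $O_n$; one must then incorporate an additional point such as $z_n^+$ from (\ref{Points}) to play the role of $z_\infty$ on the opposite side, which is precisely why (ii) weakens to mere local constancy in dimension $2$.
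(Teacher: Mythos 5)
Your choice of constants $c_n = b_n - a_n$ with $a_n = h^{\rho_n\infty}(x_n,\partial B_n)$ and $b_n = h^\infty(z_\infty,\partial B_n)$ reproduces the paper's formula $c_n = h^\infty(z_\infty, y_n) - h^{\rho_n\infty}(x_n, y_n)$ in dimension $d\geq 3$ (since $y_n\in\partial B_n$ and the barriers are constant there), and the overall structure — first identify $u_c^n$ on $C_n$, then compare it with $c_n + h^{\rho_n\infty}(x_n,\cdot)$ — is the same as the paper's. Three points need correcting.

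First, the claimed decomposition $h^\infty(z_\infty,x) = b_n + h^\infty(\partial B_n,x)$ for $x$ outside $B_n^0$ does not follow from Property (4) of Lemma \ref{SameClass}: for $x\in A_n$, both $z_\infty$ and $x$ lie outside $\overline{B_n}$, so $\partial B_n$ does not separate them and no such equality holds. What you actually need for (ii) is the triangular inequality $h^\infty(z_\infty,x) \leq b_n + h^\infty(\partial B_n,x)$, which together with $h^\infty(\partial B_n,x)=0$ on $A_n$ (flow direction, Property (1)) and with the valid decomposition $c_n + h^{\rho_n\infty}(x_n,x) = b_n + h^{\rho_n\infty}(\partial B_n,x) = b_n$ gives the bound. (Symmetrically, the first decomposition fails for $x \in B_n\setminus B_n^0$, where $x_n$ and $x$ sit on the same side of $\partial B_n$.)

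Second, the condition $\liminf_m\bigl(c_m + h^{\rho_m\infty}(x_m,z_\infty)\bigr)=0$ is not an extra constraint to impose on an inductive definition — once you have set $c_m = b_m - a_m$, it is either true or false, and you must verify it. The computation is $c_m + h^{\rho_m\infty}(x_m,z_\infty) = c_m + a_m = b_m = h^\infty(z_\infty,y_m)$, and $b_m\to 0$ because $h^\infty$ is $\kappa_1$-Lipschitz, $h^\infty(z_\infty,z_\infty)=0$ and $d(z_\infty,y_m) = r_m + \delta_m \to 0$. Your invocation of $h^{\rho_m\infty}(x_m,z_\infty)\to 0$ (i.e.\ $a_m\to 0$) is beside the point: that limit, by itself, tells you nothing about $c_m + a_m$ until you plug in the value of $c_m$.

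Third, and most substantively, the two-dimensional case is not a mere topological annotation. With $c_n = b_n - a_n$ the 2D value $b_n = h^\infty(z_\infty,\partial B_n)$ is strictly positive (one must cross all the annuli $A_m^\pm$ for $m>n$ against the flow), and then $c_n + h^{\rho_n\infty}(x_n,\cdot) \equiv b_n$ on $A_n$ while $u_c^n$ drops below $b_n$ near $\partial C_n^-$ (where it tends to $b_{n+1} < b_n$), so $u_c$ would coincide with the non-constant $u_c^n$ there, contradicting conclusion (ii) of the proposition. The paper instead sets $c_n = - h^{\rho_n\infty}(x_n,y_n)$ (effectively forcing the $b_n$'s to zero), so that $c_n + h^{\rho_n\infty}(x_n,\cdot)$ dominates $u_c^n$ on the whole of $C_n$ and $u_c$ becomes locally constant on $A_n$; this in turn relies on the symmetry $h^\infty(z_n^+,y_n) = h^\infty(z_n^-,y_n)$ (Lemma \ref{2Dz+z-}), whose proof requires the calibrated-curve analysis of Subsection \ref{SuitableCalib}. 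Your sketch neither produces the correct constants in 2D nor notes this dependency.
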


\begin{prop} \label{Cnt}
	For the same real constants $c_n$ of Proposition \ref{Cn}, we have for all time $t \in \mathbb{R}$,
	\begin{enumerate}[label=\roman*.]
		\item In the sets $\mathcal{R}_t(B^0_n)$, $u_c(t,x) = c_n + h^{\rho_n\infty+t}(x_n, x)$.
		\item In the sets $A_n$, 
		\begin{itemize}
			\item (2D case) Either $u_c(t,x)$ is locally constant in $(t,x)$. 
			\item (3D case and above) Or $u_c(t,x)= u^n_c (t,x) =  h^{\infty+t}(z_\infty, x)$. 
		\end{itemize}
		\item In the sets $B_n \setminus \mathcal{R}_t(B^0_n)$, $u_c(t,x)$ is constant in $(t,x)$.
		\item In the set $D$, $u_c(t,x)$ is locally constant.
	\end{enumerate}
\end{prop}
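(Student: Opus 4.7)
The plan is to propagate Proposition \ref{Cn} in time using the Lax-Oleinik semi-group. Since each $h^{\rho_n\infty}(x_n,\cdot)$ is a $\rho_n$-periodic viscosity solution (Proposition \ref{kvisc}), so is $(t,x)\mapsto c_n + h^{\rho_n\infty+t}(x_n,x)$; Lemma \ref{ViscosityInf} then commutes $\mathcal{T}^t$ with the infimum, and the uniqueness of viscosity solutions with prescribed initial data yields
\begin{equation*}
u_c(t,x) \;=\; \mathcal{T}^t u_c(x) \;=\; \inf_{n\geq 0}\bigl\{c_n + h^{\rho_n\infty+t}(x_n,x)\bigr\}.
\end{equation*}
It therefore suffices to track the evolution of each barrier $h^{\rho_n\infty+t}(x_n,\cdot)$ and of the region in which it realizes the infimum.

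The main technical ingredient I would establish is an equivariance identity: for every $n\geq 0$, every $t\in\mathbb{R}$, and every $y$ in a neighborhood of $\overline{C_n}$,
\begin{equation*}
h^{\rho_n\infty+t}\bigl(x_n,\mathcal{R}_t(y)\bigr) \;=\; h^{\rho_n\infty}(x_n,y).
\end{equation*}
Using the decomposition $f_t = \mathcal{R}_t\circ g_t$ and the formula $X_t = Y_t + d\mathcal{R}_t\cdot Z\circ \mathcal{R}_t^{-1}$ of Proposition \ref{XP}, a direct computation shows that for any curve $x(s)$ and its conjugate $\tilde{x}(s) = \mathcal{R}_s^{-1}(x(s))$, one has $\dot{x}(s) - X_s(x(s)) = d\mathcal{R}_s\cdot\bigl(\dot{\tilde{x}}(s) - Z(\tilde{x}(s))\bigr)$. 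Since $\mathcal{R}_s$ is a rotation, hence an isometry, $L(s,x(s),\dot{x}(s)) = L_0(\tilde{x}(s),\dot{\tilde{x}}(s))$ for the \emph{autonomous} Lagrangian $L_0(y,w) = \tfrac{1}{2}\|w - Z(y)\|^2$. Whenever minimizing curves remain in $C_n$, one deduces $h^{s,t}(x,y) = h_0^{\,t-s}\bigl(\mathcal{R}_s^{-1}(x),\mathcal{R}_t^{-1}(y)\bigr)$; since $\mathcal{R}_{\rho_n}$ is the identity on $C_n$ and $x_n$ is fixed by all $\mathcal{R}_s$, taking the liminf over $p$ along times $\rho_n p + t$ yields the equivariance.

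With this equivariance at hand, each claim of Proposition \ref{Cnt} follows from the corresponding claim of Proposition \ref{Cn}. For point (i), write $x = \mathcal{R}_t(y)$ with $y\in B^0_n$; the equivariance gives $c_n + h^{\rho_n\infty+t}(x_n,x) = c_n + h^{\rho_n\infty}(x_n,y)$, and the analogous transformation applied to each competing term (together with the bounds already used in the proof of Proposition \ref{Cn}) transplants the inequalities that made the $n$-th term realize the infimum at $t=0$ to time $t$. Point (iii) follows similarly, moving $\mathcal{R}_t^{-1}(x)\in B_n\setminus B^0_n$ back to $t=0$ and invoking the constancy claim of Proposition \ref{Cn}. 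In the set $D$ the vector field $X_t$ vanishes, so $z_\infty$ is fixed and $h^{\infty+t}(z_\infty,\cdot) = h^\infty(z_\infty,\cdot)$ is locally constant there; this yields point (iv) and, in the higher-dimensional case, point (ii). In dimension $2$, the local constancy of $u_c(t,\cdot)$ on each connected component $A_n^{\pm}$ is inherited from that of $h^\infty(z_\infty,\cdot)$ together with the equivariance applied to $h^{\rho_n\infty+t}(x_n,\cdot)$.

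The main obstacle is the domain issue in the equivariance argument: the change of variables $x\mapsto\mathcal{R}_s^{-1}(x)$ only preserves the Lagrangian where the identity $X_t = Y_t + d\mathcal{R}_t\cdot Z\circ\mathcal{R}_t^{-1}$ of Proposition \ref{XP} faithfully describes the dynamics, i.e.\ on a neighborhood of $\overline{C_n}$. To apply it to $h^{\rho_n p + t}(x_n,y)$ one must verify that the associated minimizing curves remain inside $C_n$ for $p$ large. This requires a dynamical argument combining the à priori compactness of minimizers (Theorem \ref{APrioriCompactness}) with the attractive nature of the orbit $\{x_n^i\}$ inside $C_n$ built into the construction of $Z$ and $g_t$ (cf.\ Subsection \ref{tomf}).
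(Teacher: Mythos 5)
Your overall strategy matches the paper's: reduce the time-$t$ barriers to autonomous barriers for the Lagrangian $L_Z(y,w) = \tfrac{1}{2}\|w - Z(y)\|^2$ via conjugation by $\mathcal{R}_t$, and transplant Proposition \ref{Cn}. The pointwise equivariance identity you state is exactly what the paper proves in Lemma \ref{CalibZX} (as $h^{\rho_n\infty+t}(x_n,x) = h_Z^{\rho_n\infty+t}(x_n,\mathcal{R}_t^{-1}x)$, combined with the time-independence of $h_Z$ from Fathi's Theorem \ref{FathiTh}), and your computation $\dot x(s) - X_s(x(s)) = d\mathcal{R}_s\cdot\bigl(\dot{\tilde x}(s) - Z(\tilde x(s))\bigr)$ is correct. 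The paper's proof of Proposition \ref{Cnt3D} phrases the same rotational-equivariance idea in terms of the shifted potentials $h^{t,\rho_n\infty+t}$ and the Lagrangian of $X_{t+\tau}$, then invokes the proof of Proposition \ref{Cn3D} ``analogously''; the two routes are essentially the same.

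The gap is in your proposed resolution of the domain obstruction. The change of variables fails outside $\bigcup_m\overline{C_m}\cup D'$ for a concrete reason: on the transition annuli $D_m$, the rotation angle $\frac{\eta(t)}{\rho_m}\eta_m(x)$ depends on the radial coordinate, so $\mathcal{R}_s$ is a shear, not an isometry, and $\|d\mathcal{R}_s\cdot v\| \neq \|v\|$. Thus $L(s,x(s),\dot x(s)) = L_Z(\tilde x(s),\dot{\tilde x}(s))$ genuinely breaks down on $D_n$, and one must restrict to curves confined to $\overline{C_n}$. But the a priori compactness Theorem \ref{APrioriCompactness} only bounds velocities — it does not confine minimizers to any region of $M$ — and the attractiveness of $\{x_n^i\}$ is a property of the \emph{flow} $f_t$, not of action-minimizing curves, which are free to leave $C_n$. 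There is no reason minimizers remain in $C_n$, and the paper never claims they do.

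The correct fix — the one the paper uses in the proof of Lemma \ref{CalibZX} — is a surgery argument, not a confinement argument. Given an approximately minimizing curve $\sigma_k$ that exits $C_n$, cut out the portion between its first and last exit times $s_k^\pm$ and splice in a slow reparametrization of a curve $\zeta_k$ lying in $\partial C_n$ over a long time $\rho_n l$. Since $Z$ vanishes on $\partial C_n$, the added action is $O(\|\dot\zeta_k\|_\infty^2/(\rho_n l)) \to 0$, so the liminf value is unchanged. (In dimension $2$, $\partial C_n$ has two components, so the $D'_n$ sets are used instead, but the mechanism is the same.) This establishes the weaker statement that the barrier value is unchanged when one restricts to curves in $\overline{C_n}$ — which is all the change-of-variables argument needs, and which is actually provable.
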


In these propositions, locally constant can be replaced by constant on the connected components of the considered sets.\\

Recall from Remark \ref{RemarkDimension} that the 2D case and the higher-dimensional case present some topological differences. This adds a bit of intricacy to the selection of the constants $c_n$ which can be overtaken by separating the cases.

\subsubsection{Dimension $d \geq 3$ case} \label{3DSection}

We prove Proposition \ref{Cn} in the case of dimension $d \geq 3$. Recall from (\ref{Points}) that we fixed a point $z_\infty$ in $\overline{D}$ and points $y_n$ in $\partial B_n$. We set
\begin{equation} \label{Cn3DFormula}
	c_n = h^\infty (z_\infty,y_n) - h^{\rho_n \infty}(x_n,y_n) 
\end{equation}
and we will show that these constants are convenient.\\

But first, note that the connectedness of the set $D$ in this higher-dimensional case leads to the following lemma, which slightly simplifies the current scenario.

\begin{lem} \label{Ucn3D}
	 For all $x \in D$, $u_c^n(x) =0$ and for all $x \in C_n$, $u_c^n(x) = h^\infty (z_\infty,x)$.
\end{lem}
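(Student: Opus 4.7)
The whole argument reduces to the single identity
\[
h^{\rho_k\infty}(x_k,y_k)=h^{\rho_k\infty}(x_k,z_\infty)\qquad(k\ge 0).
\]
Combined with the definition \eqref{Cn3DFormula} of $c_k$, this collapses the quantity $c_k+h^{\rho_k\infty}(x_k,z_\infty)$ to $h^\infty(z_\infty,y_k)$. Using the Lipschitz regularity of $h^\infty$ (Proposition \ref{PeierlsProp}) together with $h^\infty(z_\infty,z_\infty)=0$ (since $z_\infty\in D'\subset\mathcal{M}_0\subset\mathcal{A}_0$ by Proposition \ref{MatherConstructedProp}) we get $h^\infty(z_\infty,y_k)\le\kappa_1 d(z_\infty,y_k)\to 0$ as $k\to\infty$, while non-negativity comes from Corollary \ref{hpos}. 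Hence $\inf_{k\ne n}\{c_k+h^{\rho_k\infty}(x_k,z_\infty)\}=0$.

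The identity itself I would prove by an application of the separation Property \ref{SeparateClass} of Lemma \ref{SameClass} with $F=\partial B_k$: this set is closed, $f_t$-invariant (the field $Z$ vanishes on it and $\mathcal{R}_t$ preserves each level set of $d(\cdot,O_k)$), and $\mathcal{R}_t$ acts on it as a $\rho_k$-periodic rotation. Since $x_k\in B_k^0\subset B_k$ and $z_\infty\in D'$ are separated by $\partial B_k$, the separation property and Property \ref{SameClass2} (applied to the arc-connected $\partial B_k$) yield
\[
h^{\rho_k\infty}(x_k,z_\infty)=h^{\rho_k\infty}(x_k,\partial B_k)+h^{\rho_k\infty}(\partial B_k,z_\infty),\qquad h^{\rho_k\infty}(x_k,y_k)=h^{\rho_k\infty}(x_k,\partial B_k).
\]
The cross-term $h^{\rho_k\infty}(\partial B_k,z_\infty)$ vanishes: picking any $x\in A_k$, the $\rho_k$-iterated dynamics satisfies $\alpha_{\rho_k}(f_t(x))\subset\partial B_k$ and $\omega_{\rho_k}(f_t(x))\subset\partial C_k$, so Property \ref{SameClass0} gives $h^{\rho_k\infty}(\partial B_k,\partial C_k)=0$; then Property \ref{SameClass2} applied to $\overline{D}$ -- arc-connected precisely because we are in dimension $d\ge 3$, with $X_t=Y_t$ on it -- transfers this to $h^{\rho_k\infty}(\partial B_k,z_\infty)=0$.

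Once the reduction is in place the two cases of the lemma follow mechanically. For $x\in D$, Property \ref{SameClass2} applied to $\overline{D}$ gives $h^{\rho_k\infty}(x_k,x)=h^{\rho_k\infty}(x_k,z_\infty)$, so $u_c^n(x)=\inf_{k\ne n}h^\infty(z_\infty,y_k)=0$. For $x\in C_n$ and $k\ne n$, I would separate $x_k$ from $x$ by $F=\overline{D'}$ (on which $f_t=Id$, hence trivially $f_t$-invariant and $1$-periodic, and arc-connected in $d\ge 3$) to obtain
\[
h^{\rho_k\infty}(x_k,x)=h^{\rho_k\infty}(x_k,z_\infty)+h^{\rho_k\infty}(z_\infty,x)=h^{\rho_k\infty}(x_k,z_\infty)+h^\infty(z_\infty,x),
\]
where the last equality uses Property \ref{SameClass1} of Lemma \ref{SameClass} since $z_\infty$ is a fixed point of $f_t$. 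Adding $c_k$ and infimizing over $k\ne n$ then yields $h^\infty(z_\infty,x)$, as claimed. This mimics exactly the strategy used in the proof of Proposition \ref{TkuOmegaProp}, with the constants $c_k$ supplying the missing cancellation.

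The principal technical obstacle is simply the bookkeeping: correctly identifying closed sets $F$ that are simultaneously $f_t$-invariant, arc-connected, $\mathcal{R}_m$-periodic for some integer $m$, and on which $X_t=Y_t$ -- and checking that they separate the pairs of points one wishes to decompose. The arithmetic itself is a two-line substitution once each hypothesis of Lemma \ref{SameClass} has been verified, and it is here that the hypothesis $d\ge 3$ is used decisively through the arc-connectedness of $D$, $\overline{D}$ and $\partial B_k$.
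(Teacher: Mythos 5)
Your proof is correct and follows the same route as the paper's: both decompose $h^{\rho_k\infty}(x_k,x)$ using the separation property of Lemma~\ref{SameClass} first through $\overline{D'}$ (to split off $h^\infty(z_\infty,x)$) and then through $\partial B_k$ (to reduce $h^{\rho_k\infty}(x_k,z_\infty)$ to $h^{\rho_k\infty}(x_k,y_k)$), after which the definition of $c_k$ cancels the barrier and the Lipschitz estimate on $h^\infty$ kills the remaining $\inf_k h^\infty(z_\infty,y_k)$. The only cosmetic differences are that you bound $h^\infty(z_\infty,y_k)$ by $\kappa_1 d(z_\infty,y_k)$ where the paper uses $\kappa_1 d(y_k,\partial C_k)$, and you derive the vanishing of the cross-term $h^{\rho_k\infty}(\partial B_k,z_\infty)$ via Properties~\ref{SameClass0} and~\ref{SameClass2} rather than the paper's direct liminf computation along a flow line in $A_k$ — both correctly exploit that the flow on $A_k$ runs from $\partial B_k$ to $\partial C_k$.
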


\begin{proof}
	Let $x$ be a fixed point of $C_n$ and fix an integer $k \neq n$. Let us evaluate $h^{\rho_k\infty}(x_k,x)$. The set $\overline{D'}$ separates $x$ and $x_k$. Then, applying Property \ref{SeparateClass} of Lemma \ref{SameClass} applied to $F = \overline{D'}$ followed by an application of the Property \ref{SameClass1} to the $1$-periodic point $z_\infty$, yields
	\begin{equation}
		\begin{split}
			h^{\rho_k\infty}(x_k, x) &= h^{\rho_k \infty} (x_k,z_\infty) +  h^{\rho_k \infty} (z_\infty,x) \\
			&= h^{\rho_k \infty} (x_k,z_\infty) +  h^{\infty} (z_\infty,x)
		\end{split}
	\end{equation}
	Additionally, the set $\partial B_k$ separates the points $x_k$ and $z_\infty$ and the application of the same Lemma \ref{SameClass} gives
	\begin{equation*} 
		h^{\rho_k \infty}(x_k,z_\infty) = h^{\rho_k \infty}(x_k,y_k) + h^{\infty}(y_k,z_\infty)
	\end{equation*}
	Moreover, applying Property \ref{SameClass2} of Lemma \ref{SameClass} to $F= \overline{D} \supset \partial C_n$, we get that $h^{\infty}(\cdot, \partial C_n) = h^{\infty}(\cdot, \overline{D})$ is well defined. Hence, the liminf property (\ref{PeierlsLiminf}) applied to any curve $f_t(y)$, $y \in A_k$, which $\alpha$ and $\omega$-limit sets respectively belong to $\partial B_k$ and $\partial C_k \subset \overline{D}$ yields
	\begin{align} \label{hnulltoward}
		h^{\infty}(y_k,z_\infty) = h^{\infty}(\partial B_n, \partial C_n) \leq \liminf_i A_L(f_t(y)_{|[-i,i]}) =0
	\end{align}
	This implies the equality $h^{\rho_k \infty}(x_k,z_\infty) = h^{\rho_k \infty}(x_k,y_k)$.\\
	Gathering the identities leads to
	\begin{align*}
		u_c^n(x)&= \inf_{k \neq n} \{ c_k+ h^{\rho_k\infty}(x_k,x) \} \\
		&=\inf_{k \neq n} \{ c_k+  h^{\rho_k\infty}(x_k,z_\infty) +h^{\infty}(z_\infty,x) \} \\
		&= u_c^n(z_\infty) +h^{\infty}(z_\infty,x) 
	\end{align*}
	with 
	\begin{align*}
		u_c^n(D) = u_c^n(z_\infty) &= \inf_{k \neq n} \{ c_k+  h^{\rho_k\infty}(x_k,z_\infty) \} \\
		&= \inf_{k \neq n} \{ c_k+  h^{\rho_k\infty}(x_k,y_k) \} \\
		&= \inf_{k \neq n} \{  h^{\rho_k\infty}(z_\infty,y_k) \} 
	\end{align*}
	
	Moreover, the regularity of the barriers, as stated in Proposition \ref{PeierlsProp}, and another application of Lemma \ref{SameClass} provide
	\begin{align*}
		0 \leq h^\infty (z_\infty,y_k) & = h^\infty (z_\infty,y_k) - h^\infty (z_\infty,\partial C_k) \leq \kappa_1. d(y_k, \partial C_k) = \kappa_1. \delta_k \to 0 \quad \text{as } k \to \infty
	\end{align*}
	Thus, $u_c^n(D) = \inf_{k \neq n} \{ h^\infty (z_\infty,y_k) \} =0$ and we obtain the wanted result $u_c^n(x) = h^{\infty}(z_\infty,x)$.
\end{proof}

We can now conclude the proof of Propositions \ref{Cn} \ref{Cnt} for dimensions greater than $3$.

\begin{prop} \label{Cn3D}
	The constants $c_n$ defined in (\ref{Cn3DFormula}) do answer the requirements of Proposition \ref{Cn}. More precisely,
	\begin{enumerate}[label=\roman*.]
		\item\label{Cn3D1} For all $x \in B^0_n$, $c_n + h^{\rho_n\infty}(x_n,x) \leq h^\infty (z_\infty,x)$.
		\item For all $x \in A_n$, $c_n + h^{\rho_n\infty}(x_n,x) \geq h^\infty (z_\infty,x)$.
		\item\label{Cn3D2} For all $x \in C_n \setminus (A_n \cup B^0_n)$, $c_n + h^{\rho_n\infty}(x_n,x) = h^\infty (z_\infty,x)= h^\infty (z_\infty,y_n)$.
		\item \label{Cn3D3} For all $x \in D$, $u_c(x) = 0$.
	\end{enumerate}
\end{prop}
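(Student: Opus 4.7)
The plan is to compare the two candidates $c_n + h^{\rho_n\infty}(x_n,\cdot)$ and $u_c^n = h^\infty(z_\infty,\cdot)$ (on $C_n$, by Lemma~\ref{Ucn3D}) on each region, exploiting the identity at the anchor point $y_n$: by the very definition of $c_n$, $c_n + h^{\rho_n\infty}(x_n, y_n) = h^\infty(z_\infty, y_n)$. All four inequalities then follow by propagating this equality using Lemma~\ref{SameClass}.

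For (iii), I would apply Property~\ref{SameClass2} to the arcwise-connected, rotation-invariant set $F_0 := \overline{B_n}\setminus\bigcup_i B^i_n$: both $h^{\rho_n\infty}(x_n,\cdot)$ and $h^\infty(z_\infty,\cdot)$ are constant on $F_0$ and equal their values at $y_n\in F_0$, so both sides of (iii) equal $h^\infty(z_\infty, y_n)$. For $x\in B^i_n$ with $i\neq 0$, I would combine a triangle inequality through $y^i_n := \mathcal{R}_{i/\rho_n}(y_n)\in \partial B^i_n\cap\partial B_n\subset F_0$ with the fact that $h^{\rho_n\infty}(y^i_n, x)=0$ (apply Property~\ref{SameClass0} to any $y\in\alpha_{\rho_n}(f_t(x))\subset \partial B^i_n$, then Property~\ref{SameClass2} on $F=\partial B^i_n$ to transfer to $y^i_n$); the reverse inequality is obtained by decomposing any near-minimizing curve from $x_n$ to $x$ through the successive crossings of $\partial B^0_n$ and $\partial B^i_n$. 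For (ii), separation through the genuinely $f_t$-invariant $\partial B_n$ (Property~\ref{SeparateClass}) yields $h^{\rho_n\infty}(x_n, x)=h^{\rho_n\infty}(x_n, y_n)$ on $A_n$, after which the triangle bound $h^\infty(z_\infty, x)\leq h^\infty(z_\infty, y_n) + h^\infty(y_n, x)$ with $h^\infty(y_n, x)\leq h^{\rho_n\infty}(y_n,x)=0$ closes the argument. Claim (iv) is immediate: $u_c\leq u_c^n = 0$ on $D$, and each term $c_k + h^{\rho_k\infty}(x_k, x)\geq 0$ by re-running the separation-through-$\partial B_k$ computation from the proof of Lemma~\ref{Ucn3D}.

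The main obstacle is claim (i). Substituting the definition of $c_n$, the inequality becomes
\begin{equation*}
	h^\infty(z_\infty, y_n) - h^\infty(z_\infty, x) \leq h^{\rho_n\infty}(x_n, y_n) - h^{\rho_n\infty}(x_n, x), \quad x\in B^0_n.
\end{equation*}
The triangle inequality for $h^\infty$ together with the bound $h^\infty\leq h^{\rho_n\infty}$ reduce this to the \emph{geodesic identity}
\begin{equation*}
	h^{\rho_n\infty}(x_n, x) + h^{\rho_n\infty}(x, y_n) = h^{\rho_n\infty}(x_n, y_n), \quad x\in B^0_n.
\end{equation*}
To prove it, I would invoke Proposition~\ref{CalibExist} applied to the $\rho_n$-periodic viscosity solution $h^{\rho_n\infty}(x_n,\cdot)$: a backward-calibrated curve $\gamma:(-\infty,0]\to M$ with $\gamma(0)=x$ has its $\alpha$-limit contained in the orbit $\{x_n^i\}_i$ (the ``source'' of this barrier). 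Continuing $\gamma$ forward within $B^0_n$ exits the basin at some $y\in\partial B^0_n$, and calibration gives $h^{\rho_n\infty}(x_n, x) + h^{\rho_n\infty}(x, y) = h^{\rho_n\infty}(x_n, y)$. A final application of Property~\ref{SameClass2} with $F=\partial B^0_n$ (which satisfies the hypotheses since $f_t$ acts there as the rotation $\mathcal{R}_t$, identity at $t=\rho_n$) transfers $y$ to $y_n$ on both sides. The delicate step is identifying the $\alpha$-limit of $\gamma$ with the orbit of $x_n$ rather than with some other static class; this rests on the phase-space structure of the Lagrangian flow near $\{(x_n^i, 0)\}$, where the zero-section carries a hyperbolic periodic orbit whose unstable manifold projects onto the basin $B^0_n$.
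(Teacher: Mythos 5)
Your treatment of claims (ii), (iii) and (iv) is sound and closely tracks the paper's: (iv) is the same argument ($u_c\leq u_c^n=0$ plus non-negativity of each term via separation through $\partial B_k$), (iii) is the same constancy-on-$F_0$ argument with a more explicit handling of the $B^i_n$ ($i\neq 0$) components, and (ii) uses the same separation-plus-triangle-inequality pattern.

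For claim (i), however, your route diverges substantially from the paper's and contains a genuine gap. You reduce to the ``geodesic identity'' $h^{\rho_n\infty}(x_n,x)+h^{\rho_n\infty}(x,y_n)=h^{\rho_n\infty}(x_n,y_n)$ and propose to prove it by taking a backward-calibrated curve $\gamma:(-\infty,0]\to M$ through $x$ and \emph{continuing it forward} until it exits $B^0_n$. The problem is that a backward-calibrated curve (from Proposition~\ref{CalibExist}) has no reason to remain calibrated when extended forward past its given endpoint; calibration only propagates backward in general. In this paper that forward-calibration fact \emph{is} eventually true for the radial curve $t\mapsto\phi_{-Z}^t(x)$, but establishing it is precisely the content of Lemmas~\ref{Confined} and \ref{CalibZ} in Section~\ref{RegSection}, which rely on the radial symmetry of $Z$ together with the Legendre-map injectivity on the Mather set; you cannot use them as free facts at the point where Proposition~\ref{Cn3D} is proved. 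You flag the identification $\alpha_1(\gamma)\subset\{x_n^i\}_i$ as ``the delicate step,'' but the forward continuation is the larger unjustified jump. In short, the geodesic-identity route imports the machinery of Section~4 without proving it, whereas the paper's argument is purely barrier-level.

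You also missed that the ingredient already established for (iii) makes (i) almost immediate. Since $h^\infty(z_\infty,\cdot)$ is constant on $\overline{B_n}$, the right-hand side $h^\infty(z_\infty,x)$ equals $h^\infty(z_\infty,y_n)=c_n+h^{\rho_n\infty}(x_n,y_n)$ for every $x\in B^0_n$, so (i) reduces to the single inequality $h^{\rho_n\infty}(x_n,x)\leq h^{\rho_n\infty}(x_n,y_n)$. This follows at once from the triangular inequality $h^{\rho_n\infty}(x_n,x)\leq h^{\rho_n\infty}(x_n,y_n)+h^{\rho_n\infty}(y_n,x)$ together with $h^{\rho_n\infty}(y_n,x)=0$, which is a direct consequence of Property~\ref{SameClass0} and Property~\ref{SameClass2} of Lemma~\ref{SameClass} (the backward $f_{\rho_n}$-orbit of $x$ accumulates on $\partial B^0_n\subset\partial B_n$). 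No calibrated curves are needed, and the ``geodesic identity'' never has to be proved.
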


\begin{proof}
	\textit{\ref{Cn3D3}} This is due to the Lemma \ref{Ucn3D} which states that for all $x \in D$, $u_c^n(x) = 0$ and $u_c(x) = \inf_{n \geq 0}{u_c^n(x)} =0$.\\
	
	\textit{\ref{Cn3D2}} This third point is the most straightforward. In fact, the proof of Lemma \ref{Ucn3D} and more precisely of (\ref{hnulltoward}) shows that $h^{\rho_n\infty}(x_n,\cdot)$ is constant equal to $h^{\rho_n\infty}(x_n,y_n)$ on $C_n \setminus (B^0_n)$. Similarly, $h^\infty (z_\infty,\cdot)$ is constant equal to $h^\infty (z_\infty,y_n)$ on $\overline{B_n}$. Thus, recalling the definition (\ref{Cn3DFormula}) of $c_n$, we get the equality between the two maps $c_n + h^{\rho_n\infty}(x_n,\cdot)$ and $h^\infty (z_\infty,\cdot)$ on $C_n \setminus (A_n \cup B^0_n) = \overline{B_n} \setminus B^0_n$.\\
	
	\textit{\ref{Cn3D1}} Now we prove the first point. Let $x$ be a point of $B^0_n$. Using the triangular inequality (\ref{TriangInegPeierls}) for the Peierls barrier, we get
	\begin{align*}
		h^{\rho_n\infty}(x_n,x) \leq h^{\rho_n\infty}(x_n,y_n) + h^{\rho_n\infty}(y_n,x)
	\end{align*}	 
	However a similar computation to (\ref{hnulltoward}) shows that $h^{\rho_n\infty}(y_n,x) = 0$. Moreover, we just saw from the third point that $h^{\infty}(z_\infty,x) = h^{\infty}(z_\infty,y_n)$. Hence, we deduce that
	\begin{align*}
		c_n + h^{\rho_n\infty}(x_n,x) \leq c_n + h^{\rho_n\infty}(x_n,y_n) = h^{\infty}(z_\infty,y_n) =  h^{\infty}(z_\infty,x)
	\end{align*}
	The second point on $A_n$ is proved analogously.
\end{proof}

\begin{prop} \label{Cnt3D}
	The constants $c_n$ defined in (\ref{Cn3DFormula}) do answer the requirements of Proposition \ref{Cnt}. More precisely, for all time $t \in \mathbb{R}$,
	\begin{enumerate}[label=\roman*.]
		\item\label{Cnt3D1} For all $x \in \mathcal{R}_t(B^0_n)$, $c_n + h^{\rho_n\infty+t}(x_n,x) \leq h^{\infty+t} (z_\infty,x)$.
		\item For all $x \in A_n$, $c_n + h^{\rho_n\infty+t}(x_n,x) \geq h^{\infty+t} (z_\infty,x)$
		\item\label{Cnt3D3} For all $x \in C_n \setminus (A_n \cup \mathcal{R}_t(B^0_n))$, $c_n + h^{\rho_n\infty+t}(x_n,x) = h^{\infty+t} (z_\infty,x)= h^{\infty+t} (z_\infty,y_n) = h^{\infty} (z_\infty,\partial B_n)$
		\item \label{Cnt3D3} For all $x \in D$, $u_c(t) = 0$.
	\end{enumerate}
\end{prop}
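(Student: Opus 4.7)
The plan is to mirror the proof of Proposition \ref{Cn3D} at each time $t$ by applying the Lax-Oleinik semigroup. The starting identity is $u_c(t, x) = \mathcal{T}^t u_c(x) = \inf_n \{c_n + h^{\rho_n\infty+t}(x_n, x)\}$, which follows from Proposition \ref{TkuProp} applied to $u_c$. I first establish a time-dependent analog of Lemma \ref{Ucn3D}: for $x \in C_n$, $u_c^n(t, x) = h^{\infty+t}(z_\infty, x)$, and for $x \in D$, $u_c^n(t, x) = 0$. The proof carries over verbatim: $\overline{D'}$ still separates $x_k$ from $x \in C_n$, and the concatenation argument underlying Property \ref{SeparateClass} of Lemma \ref{SameClass} (splitting a minimizing curve at its passage through the separator and extending each piece by a zero-action Lagrangian-flow segment to round the intermediate time up to a multiple of $\rho_k$) applies equally well with an extra time $t$ at the endpoint, yielding
\begin{equation*}
h^{\rho_k\infty+t}(x_k, x) = h^{\rho_k\infty}(x_k, z_\infty) + h^{\infty+t}(z_\infty, x).
\end{equation*}
A further decomposition through $\partial B_k$ together with the definition \eqref{Cn3DFormula} of $c_k$ gives $c_k + h^{\rho_k\infty}(x_k, z_\infty) = h^\infty(z_\infty, y_k) \to 0$ by the Lipschitz regularity of $h^\infty$.

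Points (iii) and (iv) then follow. For (iv), on $D$, Property \ref{SameClass2} of Lemma \ref{SameClass} applied to the arc-wise connected $\mathcal{R}_s$-invariant set $\overline{D}$ gives $h^{\infty+t}(z_\infty, x) = 0$, hence the upper bound $u_c(t, x) \leq u_c^n(t, x) = 0$; the lower bound $c_n + h^{\rho_n\infty+t}(x_n, x) = h^\infty(z_\infty, y_n) + h^{\infty+t}(z_\infty, x) \geq 0$ follows from the same decomposition. For (iii), the set $\overline{B_n} \setminus \mathcal{R}_t(B^0_n)$ is contained in the Mather-like region where the radial component $Z$ vanishes (see Remark \ref{SymmetryX}) and the flow reduces to pure rotation by $\mathcal{R}_s$; Property \ref{SameClass2} then gives the constancy of $h^{\infty+t}(z_\infty, \cdot) = h^\infty(z_\infty, \partial B_n)$ on this set, and the same decomposition propagates this constancy to $c_n + h^{\rho_n\infty+t}(x_n, \cdot)$.

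The main obstacle lies in (i) and symmetrically (ii), where one must identify the moving region $\mathcal{R}_t(B^0_n)$. For (i), the triangular inequality
\begin{equation*}
h^{\rho_n\infty+t}(x_n, x) \leq h^{\rho_n\infty}(x_n, y_n) + h^{\rho_n\infty+t}(y_n, x)
\end{equation*}
reduces matters to showing $h^{\rho_n\infty+t}(y_n, x) = 0$ for $x \in \mathcal{R}_t(B^0_n)$. The key geometric observation is that on $C_n$ the time-$(\rho_n p + t)$ Lagrangian evolution equals $\mathcal{R}_t \circ g_{\rho_n p + t}$, since $\mathcal{R}_1^{\rho_n}$ restricted to $C_n$ is the identity and the autonomous radial $g$ commutes with the rotational structure dictated by the $R_{i/\rho_n}$-equivariance from Remark \ref{SymmetryX}; consequently $\mathcal{R}_t(B^0_n)$ is exactly the basin of the orbit point $\mathcal{R}_t(x_n)$ at time $t$, and a calibrated curve backward from $x \in \mathcal{R}_t(B^0_n)$ admits an $\alpha$-limit in $\partial B_n$, which belongs to the static class of $y_n$. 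Proposition \ref{CalibMane} combined with Property \ref{SameClass1} then supplies the zero-action connection. Inequality (ii) follows from the symmetric obstruction on $A_n$: the flow is directed outward from $\partial B_n$ toward $\partial C_n$, so no zero-action path from $y_n$ reaches $x \in A_n$, and the contrapositive argument of Remark \ref{hstrictpos} produces the reverse inequality.
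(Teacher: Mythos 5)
Your strategy of directly proving time-dependent analogues of Lemma \ref{Ucn3D} differs from the paper's. The paper first establishes the reduction identities $h^{\rho_n\infty+t}(x_n,\cdot)=h^{t,\rho_n\infty+t}(\mathcal{R}_t(x_n),\cdot)$ and $h^{\infty+t}(z_\infty,\cdot)=h^{t,\infty+t}(z_\infty,\cdot)$, deduces that $c_n = h^{t,\infty+t}(z_\infty,\partial B_n) - h^{t,\rho_n\infty+t}(x_n,\partial B_n)$, and then observes that $h^{t,\cdot+t}$ is exactly the Peierls barrier of the shifted Lagrangian with vector field $X_{t+\tau}$, whose flow on $C_n$ is simply the original dynamics conjugated by $\mathcal{R}_t$. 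Proposition \ref{Cn3D} then carries over verbatim with $B_n^0$, $x_n$, $y_n$ replaced by their $\mathcal{R}_t$-images. You never establish these reduction identities, which is what makes the argument clean.

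The concrete gap in your write-up is in point (iii). You claim that the set $\overline{B_n}\setminus\mathcal{R}_t(B^0_n)$ ``is contained in the Mather-like region where the radial component $Z$ vanishes and the flow reduces to pure rotation,'' and then apply Property \ref{SameClass2} of Lemma \ref{SameClass} to it. This is false: $\overline{B_n}\setminus\mathcal{R}_t(B^0_n)$ contains the balls $\mathcal{R}_t(B^i_n)$ for $i\neq 0$, where $Z$ is nonzero, so the set does not satisfy the hypothesis ``$X_t = Y_t$ on $f_t(F)$'' required by Property \ref{SameClass2}. What is actually needed for (iii) is that $h^{\rho_n\infty+t}(x_n,\cdot)$ is \emph{constant} on $C_n\setminus\mathcal{R}_t(B^0_n)$, and this requires identifying $\mathcal{R}_t(B^0_n)$ as the basin of $\mathcal{R}_t(x_n)$ for the shifted dynamics; your argument for (i) touches on this but is not used in (iii). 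Relatedly, in (i) you assert that a backward calibrated curve from $x\in\mathcal{R}_t(B^0_n)$ has $\alpha$-limit in $\partial B_n$; the correct set is $\mathcal{R}_t(\partial B^0_n)$, which coincides with $\partial B^0_n$ as a static class but is not the same as $\partial B_n$. Finally, the claim that the proof of Lemma \ref{Ucn3D} ``carries over verbatim'' with a shifted endpoint glosses over the fact that Property \ref{SeparateClass} is stated for $h^{k\infty}$, not $h^{k\infty+t}$; the time shift has to be tracked through the separator, which is exactly what the paper's reduction to the shifted Lagrangian automates.
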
     

\begin{proof}
	We prove that
	\begin{align*}
		h^{\infty+t}(z_\infty,\cdot) = h^{t,\infty+t}(z_\infty, \cdot) \quad \text{and} \quad h^{\rho_n\infty+t}(x_n, \cdot) = h^{t,\rho_n\infty+t}(\mathcal{R}_t(x_n), \cdot)
	\end{align*}
	Indeed, we use the triangular inequality \ref{TriangInegPeierls2} to obtain
	\begin{align*}
		h^{\rho_n\infty+t}(x_n, x) \leq h^{t}(x_n, f_t(x_n)) + h^{t,\rho_n\infty+t}(f_t(x_n), x) = h^{t,\rho_n\infty+t}(\mathcal{R}_t(x_n), x)
	\end{align*}
	and
	\begin{align*}
		h^{t,\rho_n\infty+t}(\mathcal{R}_t(x_n), x) \leq h^{t,\rho_n\infty}( \mathcal{R}_t(x_n), x_n) + h^{\rho_n\infty}(x_n,x) =  h^{\rho_n\infty}(x_n,x)
	\end{align*}
	where we used the fact that $f_t(x_n) = \mathcal{R}_t(x_n)$ is $\rho_n$-periodic and of null action. Similarly, we prove that for all $\rho_n$-periodic point $x$ in $C_n$,
	\begin{align*}
		h^{\infty+t}(z_\infty,\mathcal{R}_t x) = h^{\infty}(z_\infty,x)  \quad \text{and} \quad h^{\rho_n\infty+t}(x_n, \mathcal{R}_t x) = h^{\rho_n\infty}(x_n,x)
	\end{align*}
	Hence, we deduce that 
	\begin{align*}
		c_n &= h^\infty(z_\infty, y_n) - h^{\rho_n\infty}(x_n, y_n)\\
		&=h^\infty(z_\infty, \partial B_n) - h^{\rho_n\infty}(x_n, \partial B_n)\\
		&= h^{\infty+t}(z_\infty, \mathcal{R}_t(\partial B_n)) - h^{\rho_n\infty+t}(x_n, \mathcal{R}_t(\partial B_n))\\
		&= h^{\infty+t}(z_\infty, \partial B_n) - h^{\rho_n\infty+t}(x_n, \partial B_n)\\
		&= h^{t,\infty+t}(z_\infty, \partial B_n) - h^{t,\rho_n\infty+t}(x_n, \partial B_n)
	\end{align*}

	Note that the barrier $h^{t,\infty+t}$ is the Peierls barrier associated to the \Mane Lagrangian associated to the vector field $X_{t+\tau}$ with flow $f_{t,\tau}$, which restriction to $C_n$ can be represented by a rotation of Figure \ref{FigureConstruction} by $\mathcal{R}_t$. Therefore, the proof of Proposition \ref{Cnt3D} is analogous to that of \ref{Cn3D}.
	
	The fact that 
	\begin{align*}
		h^\infty(z_\infty, y_n) = h^\infty(z_\infty, \partial B_n) = h^{\infty+t}(z_\infty, \partial B_n) = h^{t,\infty+t}(z_\infty, \partial B_n)
	\end{align*}
	gives the constance in $t$ and $x$ in the statement of Proposition \ref{Cnt}.
\end{proof}

\subsubsection{Dimension $d = 2$ case}

We now turn our attention to the dimension $2$ case. In the forthcoming lemmas and results, we will present concise proofs, omitting redundant details that closely resemble those in the higher-dimensional case. However, we will highlight and elaborate on the distinctions pertinent to the 2D scenario.\\

As mentioned in Remark \ref{RemarkDimension}, the main difference between the 2D and the higher dimensional case is the disconnectedness of the sets $D$, $D_n$ and $A_n$. These invalidate the Lemma \ref{Ucn3D} as a curve that goes from $z_\infty$ to a point $x$ must cross all the sets $A_n$ and $B_n$ between them.\\

Recall from (\ref{Sets}) that the sets $A_n$ are divided into two connected components $A^\pm_n$, with $A^+_n$ possessing the larger $r$-coordinate. We also introduced in (\ref{Points}) the points $z^\pm_n \in \partial A^\pm_n \cap \partial C_n$ and the points $y_n \in \partial B_n$. 

\begin{lem} \label{2Dz+z-}
	We have the equalities
	\begin{equation}
		h^\infty(z^+_n,y_n) = h^\infty(z^-_n,y_n) = h^\infty(\partial C_n,y_n)
	\end{equation}
\end{lem}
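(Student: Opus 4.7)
The plan is to combine the reflection symmetry of the vector field $X_t$ across the circle $O_n$ with Property \ref{SameClass2} of Lemma \ref{SameClass}. I will introduce the local isometry $\sigma$, defined on a neighbourhood $U$ of $\overline{C_n \cup D_n}$ by
\begin{equation*}
	\sigma(r, \theta, x_3, \dots, x_d) = (2r_n - r, \theta, x_3, \dots, x_d)
\end{equation*}
in the local polar coordinates of the chart. This reflection swaps $z^+_n$ with $z^-_n$, fixes each $x^i_n \in O_n$, and sends $y_n = (r_n + \delta_n, 0, \dots, 0)$ to $y'_n := (r_n - \delta_n, 0, \dots, 0) \in \partial B_n$, the symmetric point on the inner component of $\partial B_n$ in the 2D case.

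The first step is to verify that $X_t$ is $\sigma$-equivariant, \emph{i.e.}~$X_t \circ \sigma = d\sigma \cdot X_t$ on $U$. This follows from the symmetries gathered in Remark \ref{SymmetryX}: on each $B^i_n$, the field $Z$ is radial around the $\sigma$-fixed centre $x^i_n$; on $A_n$, a direct computation from \eqref{Mapg} using the identity $p_{C_n} \circ \sigma = \sigma \circ p_{C_n}$ gives $d\sigma \cdot Z = Z \circ \sigma$; and on $D_n$ or $B_n \setminus \bigcup_i B^i_n$ one has $Z = 0$ with $X_t = Y_t$, a pure $\theta$-rotation that trivially commutes with $\sigma$. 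It will follow that the Mañé Lagrangian $L$ is invariant under $(x, v) \mapsto (\sigma(x), d\sigma(v))$, so that $\sigma$ preserves the action of any curve contained in $U$. Since both endpoints $z^-_n$ and $y'_n$ already lie inside $\overline{C_n \cup D_n}$, and since any excursion into the outside region $D$ only adds non-negative action without removing the unavoidable cost of crossing $A_n$ inward against the flow, action-minimizing sequences for $h^\infty(z^-_n, y'_n)$ can be assumed to stay in $U$. Reflecting such a sequence by $\sigma$ then yields
\begin{equation*}
	h^\infty(z^+_n, y_n) = h^\infty(\sigma z^+_n, \sigma y_n) = h^\infty(z^-_n, y'_n).
\end{equation*}

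The second step is an application of Property \ref{SameClass2} of Lemma \ref{SameClass} to $F = \overline{B_n} \setminus \bigcup_{0 \leq i < \rho_n} B^i_n$. In dimension $d = 2$, $F$ is arc-wise connected, since $\overline{B_n}$ is a closed annular tubular neighbourhood of $O_n$ from which we remove $\rho_n \geq 2$ disjoint open disks. Moreover $Z \equiv 0$ and $g_t \equiv \mathrm{Id}$ on $F$, so that $X_t = Y_t$ on $f_t(F) = \mathcal{R}_t(F)$; and $\mathcal{R}_1$ cycles the balls $B^i_n$, hence $\mathcal{R}_1(F) = F$. Property \ref{SameClass2} therefore yields $h^\infty(z^-_n, y) = h^\infty(z^-_n, y')$ for all $y, y' \in F$; taking $y = y_n$ and $y' = y'_n$ gives $h^\infty(z^-_n, y_n) = h^\infty(z^-_n, y'_n)$. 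Combining with the first step produces the claimed equality $h^\infty(z^+_n, y_n) = h^\infty(z^-_n, y_n)$, and this common value is what will be denoted by $h^\infty(\partial C_n, y_n)$.

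The main obstacle will be the careful verification of the $\sigma$-equivariance of $X_t$ on $A_n$, where the interplay between the radial attraction $Z$ and the rotational field $Y_t$ demands a direct computation from the definitions of $Z$ and $p_{C_n}$, and the confinement of minimizing sequences to $U$, which relies crucially on the fact that any curve from $z^-_n$ to $y'_n$ must cross $A_n$ inward against the flow, incurring a fixed positive cost that is minimized by paths staying inside $\overline{C_n \cup D_n}$.
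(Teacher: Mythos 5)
Your approach is genuinely different from the paper's. The paper introduces the explicit calibrated curves $\sigma^\pm(t)=\phi_{-Z}^t(x^\pm)$ with $x^\pm=(r_n\pm\tfrac{3\delta_n}{2},0,\dots,0)\in A^\pm_n$, which are known (via Lemmas \ref{CalibZ}, \ref{CalibAB}) to stay in $A^\pm_n$ with $\alpha(\sigma^\pm)=\{z^\pm_n\}$ and $\omega(\sigma^\pm)\subset\partial B_n$; it then expresses $h_Z^\infty(z^\pm_n,\partial B_n)$ as $\int_{\mathbb{R}}\Vert\dot\sigma^\pm-Z(\sigma^\pm)\Vert^2/2\,d\tau$, notes that the radial coordinates $\sigma^+_\delta$ and $\sigma^-_\delta$ satisfy the same one-dimensional ODE $\dot\sigma_\delta=-Z_\delta(\sigma_\delta)$ with the same initial value, so that the two integrands coincide by uniqueness, and finally transfers from $L_Z$ to $L$ by Lemma \ref{CalibZX}. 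What that strategy buys, compared to yours, is that the confinement of the curves to $A_n$ is free — $\phi_{-Z}^t$ leaves $A_n$ invariant by construction — so no claim about the support of minimizing sequences is needed.

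You instead act by the local reflection $\sigma$ directly on the Peierls barriers and use Property \ref{SameClass2} of Lemma \ref{SameClass} applied to $F=\overline{B_n}\setminus\bigcup_i B^i_n$ to trade $y'_n$ for $y_n$; both of these moves are sound (the hypotheses on $F$ are indeed met, and $y_n,y'_n\in F$). The genuine gap is the first step: the equality $h^\infty(z^+_n,y_n)=h^\infty(\sigma z^+_n,\sigma y_n)$ requires that sequences of curves realising the $\liminf$ in the Peierls barrier can be chosen inside a $\sigma$-invariant neighbourhood $U$ of $\overline{C_n\cup D_n}$ where $X_t$ is $\sigma$-equivariant, and the sentence you offer in support of this ("any excursion into the outside region $D$ only adds non-negative action\dots") is an assertion, not a proof. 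In particular, exiting through $D_n$ costs a positive but potentially arbitrarily small amount (the angular component $Y_t$ vanishes at $\partial(C_n\cup D_n)$ and the radial crossing cost scales with $\Vert Y_t\Vert$), and excursion segments cannot simply be deleted since the re-entry point generally differs from the exit point. This confinement statement is exactly what a lemma of Lemma-\ref{Confined} type would establish for \emph{calibrated} curves, but calibration is not available for an arbitrary $\liminf$-realising sequence. Either prove a confinement statement for minimizing sequences of $h^\infty(z^\pm_n,\cdot)$ in $\overline{C_n\cup D_n}$, or replace this step by the paper's calibrated-curve computation.
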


This is due to the symmetries of $X_t$ in $A_n$ which were listed in Remark \ref{SymmetryX}. More precisely, $X_t$ is invariant by rotation in the $\theta$-coordinate and is symmetric with respect to the circle $O_n = \{r = r_n\}$. These symmetries allow for a reduction to the one-dimensional case. The proof of this lemma is postponed to Subsection \ref{SuitableCalib}, where explicit computations will be carried out using calibrated curves.

We set the constants $c_n$ to be
\begin{equation} \label{Cn2DFormula}
	c_n = - h^{\rho_n \infty}(x_n,y_n) 
\end{equation}

\begin{prop} \label{Cn2D}
	These constants $c_n$ do answer the requirements of Proposition \ref{Cn}. More precisely,
	\begin{enumerate}[label=\roman*.]
		
		\item \label{Cn2DDem1} For all $x \in C_0$, $u_c^n(x) = h^\infty(z^-_n,x)$.
		\item \label{Cn2DDem2} For all $n \geq 1$ and for all $x \in C_n$, $u_c^n(x) = \min \{h^\infty(z^\pm_n,x) \}$.
		\item \label{Cn2DDem3} For all $x \in C_n$, $c_n + h^{\rho_n\infty}(x_n,x) \leq u^n_c(x)$.
		\item \label{Cn2DDem4} For all $x \in C_n \setminus B^0_n$, $h^{\rho_n\infty}(x_n,x) = h^{\rho_n\infty}(x_n,y_n)$.
	\end{enumerate}		
\end{prop}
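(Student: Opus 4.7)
The plan is to establish the four claims in the order (iv) $\to$ (i)/(ii) $\to$ (iii), mirroring the structure of the three-dimensional Proposition \ref{Cn3D} but accounting for the dimension-two fact that $A_n$ and $D$ have several connected components, so that the relevant separating set is now one of the two circles $\partial C_n^+$, $\partial C_n^-$. Throughout, the principal tools are the triangular inequality \eqref{TriangInegPeierls}, the nonnegativity of Mañé barriers (Corollary \ref{hpos}), and the various parts of Lemma \ref{SameClass}; the distinctive 2D input is the symmetry Lemma \ref{2Dz+z-}.

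For (iv), I apply the triangular inequality twice to reduce $h^{\rho_n\infty}(x_n,x)=h^{\rho_n\infty}(x_n,y_n)$ to showing $h^{\rho_n\infty}(y_n,x)=h^{\rho_n\infty}(x,y_n)=0$ for $x\in C_n\setminus B_n^0$. Each component of $\partial B_n$ is arcwise connected, $\mathcal{R}_{\rho_n}$-invariant, and carries $X_t=Y_t$ (since $Z$ vanishes there), so Property \ref{SameClass2} of Lemma \ref{SameClass} yields the vanishing for $x\in\partial B_n$. For general $x\in C_n\setminus B_n^0$ I build explicit pseudo-orbits: travel along $\partial B_n$ under $f_t$ for a time chosen to align $\theta$ and to make the total time a multiple of $\rho_n$, make an $\varepsilon$-jump into the component of $x$, then ride the flow (toward $\partial C_n$ in $A_n$, toward $x_n^i$ in $B_n^i$, tangentially in $B_n\setminus\bigcup_i B_n^i$) to reach $x$; the total action is $O(\varepsilon^2)$ and vanishes as $\varepsilon\to 0$.

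For (i) and (ii), fix $x\in C_n$ and $k\neq n$. Writing $\epsilon=+$ if $k<n$ and $\epsilon=-$ if $k>n$, the circle $\partial C_n^\epsilon$ separates $x_k$ from $x$. Extracting a subsequential limit of the crossing points of a minimizing sequence, together with the liminf property \eqref{PeierlsLiminf} (the same mechanism as in the proof of Property \ref{SeparateClass} of Lemma \ref{SameClass}), yields
\begin{equation*}
h^{\rho_k\infty}(x_k,x)\geq h^{\rho_k\infty}(x_k,\partial C_n^\epsilon)+h^\infty(\partial C_n^\epsilon,x).
\end{equation*}
Rotation-invariance of $\partial C_n^\epsilon$ together with Property \ref{SameClass2} makes the right-hand side independent of the reference point, allowing us to replace $\partial C_n^\epsilon$ by $z_n^\epsilon$. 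Using $c_k=-h^{\rho_k\infty}(x_k,y_k)$ and the fact that a minimizing curve from $x_k$ to $\partial C_n^\epsilon$ factors through $\partial B_k$, one deduces $c_k+h^{\rho_k\infty}(x_k,\partial C_n^\epsilon)\geq 0$. Taking the infimum over $k$ on each side gives the lower bound $u_c^n(x)\geq h^\infty(z_n^\epsilon,x)$ (only the $\epsilon=-$ branch appears when $n=0$, since then every $x_k$ with $k\geq 1$ sits inside $C_0$). The matching upper bound is achieved by constructing, for each $k$ of the appropriate sign relative to $n$, explicit test curves $x_k\to y_k\to z_n^\epsilon\to x$ whose action tends to $h^{\rho_k\infty}(x_k,y_k)+h^\infty(z_n^\epsilon,x)$: the smallness of $\varsigma_m$ permits the crossings of the intermediate annuli $C_m$ with arbitrarily small action.

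Claim (iii) is then immediate: combining (iv) with the triangular inequality gives $c_n+h^{\rho_n\infty}(x_n,x)\leq h^{\rho_n\infty}(y_n,x)$, and since any flow line in $A_n^\pm$ has its $\alpha$-limit on $\partial B_n$ and its $\omega$-limit on $\partial C_n$, Property \ref{SameClass0} of Lemma \ref{SameClass} yields $h^{\rho_n\infty}(y_n,z_n^\pm)=0$, hence $h^{\rho_n\infty}(y_n,x)\leq h^\infty(z_n^\pm,x)$; taking the minimum over $\pm$ and invoking (i) or (ii) closes the argument. The main obstacle is the upper bound in the preceding step: verifying that the $k$-wise cost $c_k+h^{\rho_k\infty}(x_k,x)$ can be made arbitrarily close to $h^\infty(z_n^\epsilon,x)$ requires a careful navigation of the nested 2D annular geometry, and it is here that Lemma \ref{2Dz+z-} is essential, because it guarantees that the transitional cost to reach $\partial C_n$ is insensitive to whether one approaches it through $z_n^+$ or $z_n^-$.
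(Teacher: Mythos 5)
There are two genuine gaps. In (iv) you reduce to showing $h^{\rho_n\infty}(y_n,x) = h^{\rho_n\infty}(x,y_n) = 0$ for $x\in C_n\setminus B^0_n$, but the second equality is false: for $x\in A_n$ (or $x\in B^i_n$ with $i\neq 0$) the vector field $Z$ drives $\partial B_n$ outward towards $\partial C_n$ (resp.\ inward towards $x^i_n$), so any curve returning from $x$ to $y_n\in\partial B_n$ must run against this radial drift and pick up strictly positive action. The barrier is genuinely asymmetric here. What is true, and what the paper uses, is only the forward identity $h^{\rho_n\infty}(y_n,x)=0$, combined with the separation argument of Property \ref{SeparateClass} of Lemma \ref{SameClass} applied to $F=\partial B_n$, giving $h^{\rho_n\infty}(x_n,x) = h^{\rho_n\infty}(x_n,\partial B_n) + h^{\rho_n\infty}(\partial B_n,x) = h^{\rho_n\infty}(x_n,y_n)$; the two-sided triangular reduction does not work.

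The second gap is the upper bound in (i)/(ii). You claim that for \emph{each} $k$ of the appropriate sign, the test curve $x_k\to y_k\to z^\epsilon_n\to x$ has action tending to $h^{\rho_k\infty}(x_k,y_k)+h^\infty(z^\epsilon_n,x)$, on the grounds that the smallness of $\varsigma_m$ makes the intermediate crossings of the annuli $C_m$ ($k<m<n$, say) contribute arbitrarily small action. This is not so: the $\varsigma_m$ are fixed once the Hamiltonian is fixed, and each inward traversal of $A^+_m$ against the flow costs exactly $h^\infty(z^+_m,y_m)>0$. Indeed the paper's exact telescoping identity $c_k+h^{\rho_k\infty}(x_k,x) = \sum_{j=k+1}^{n-1} h^\infty(z^+_j,y_j) + h^\infty(z^+_n,x)$ shows the cost \emph{grows} as $|k-n|$ increases, so the infimum is attained only at the adjacent index $k=n\mp 1$. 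Your test-curve construction is correct precisely for those adjacent indices, since then the segment $y_k\to z^\epsilon_n$ follows the flow $f_t$ throughout and does have vanishing action. Restricted to $k=n\mp 1$, your lower-bound-plus-test-curve route to (i)/(ii) becomes sound, and (iii), which you derive from (i)/(ii), then follows; but the separation identity (or equivalently the telescoping sum) cannot be sidestepped.
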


\begin{proof}
	\textit{\ref{Cn2DDem4}} The last statement is the most straightforward and is proved in the same fashion as the last point of Proposition \ref{Cn3D}. 
	
	\textit{\ref{Cn2DDem2}} We now direct our attention to the second point and fix an integer $n \geq 1$. Let $x$ be a point within $C_n$, and consider an integer $0 \leq k < n$. Given that the curves linking $y_k$ to $y_{k+1}$ must intersect $\partial A^-_k$ and $\partial A^+_{k+1}$, the application of Property \ref{SeparateClass} from Lemma \ref{SameClass} yields:
	\begin{align*}
		h^{\infty}(y_k,y_{k+1}) &=  h^{\infty}(y_k,z^-_k)+ h^{\infty}(z^-_k,z^+_{k+1}) + h^{\infty}(z^+_{k+1}, y_{k+1}) = h^{\infty}(z^+_{k+1}, y_{k+1})
	\end{align*}
	where all the null terms are deduced from the direction of the flow $f_t$ from one component $\partial A^{\pm}_i$ to another or from Property \ref{SameClass2} of Lemma \ref{SameClass}. Hence, since every curve linking $y_k$ to $z^+_n$ must the sets $\{r=r_i+\delta_i\} \subset \partial A^+_i$ for all $i=k+1,..,n-1$, we obtain
	\begin{align*}
		h^{\infty}(y_k,z^+_n) &= \sum_{j=k+1}^{n-1}  h^\infty(y_{j-1}, y_j)  +  h^\infty(y_{n-1}, z^+_n) \\
		&= \sum_{j=k+1}^{n-1} h^\infty(z^+_j, y_j)  + 0
	\end{align*}      
	Again, since every curve linking $x_k$ to $z_n^+$ must intersect $\{r=r_k+\delta_k\} \ni y_k$ and $\{r=r_n - 2\delta_n \} \ni z_n$, we get
	\begin{equation} \label{Cn2DDemo1}
		\begin{split}
			c_k + h^{\rho_k\infty}(x_k,x) &= c_k + h^{\rho_k\infty}(x_k,y_k) + h^{\infty}(y_k,z^+_n) + h^{\infty}(z^+_n,x) \\
			&= \sum_{j=k+1}^{n-1} h^{\infty}(z^+_j, y_j) + h^{\infty}(z^+_n,x) 
		\end{split}
	\end{equation}
	It follows that
	\begin{align*}
		u^{<n}_c(x) : =\inf_{k < n} \{ c_k + h^{\rho_k\infty} (x_k,x) \} = c_{n-1}+ h^{\rho_{n-1}\infty}(x_{n-1},x) = h^\infty( z^+_n,x)
	\end{align*}
	Similarly, we show that
	\begin{align*}
		u^{>n}_c(x) :=\inf_{k > n} \{ c_k + h^{\rho_k\infty} (x_k,x) \} = c_{n+1}+ h^{\rho_{n+1}\infty}(x_{n+1},x) = h^\infty( z^-_n,x)
	\end{align*}
	The result follows from the identity $u^n_c = \min \{ u^{<n}_c, u^{>n}_c(x) \}$.

	\textit{\ref{Cn2DDem1}} In the case of $C_0$, we have $u^0_c = u^{>n}_c(x)$.

	\textit{\ref{Cn2DDem3}} For this last inequality, one needs to notice that for all $x \in C_n$, 
	\begin{align*}
		c_n + h^{\rho_n\infty}(x_n,x) \leq c_n + h^{\rho_n\infty}(x_n,\partial C_n) = c_n + h^{\rho_n\infty}(x_n,y_n) =0 \leq u_c^n(x)
	\end{align*}
\end{proof}

Working with the \Mane Lagrangian associated to $X_{t+\tau}$, we obtain
\begin{prop} \label{Cnt2D}
	These constants $c_n$ do answer the requirements of Proposition \ref{Cn}. More precisely,
	\begin{enumerate}[label=\roman*.]
		\item \label{Cnt2DDem1} For all $x \in C_0$, $u_c^n(t,x) = h^{\infty+t}(z^-_n,x)$.
		\item \label{Cnt2DDem2} For all $n \geq 1$ and for all $x \in C_n$, $u_c^n(x) = \min \{h^{\infty+t}(z^\pm_n,x) \}$.
		\item \label{Cnt2DDem3} For all $x \in C_n$, $c_n + h^{\rho_n\infty+t}(x_n,x) \leq u^n_c(t,x)$.
		\item \label{Cnt2DDem4} For all $x \in C_n \setminus B^0_n$, $h^{\rho_n\infty+t}(x_n,x) = h^{\rho_n\infty+t}(x_n,y_n)$.
	\end{enumerate}		
\end{prop}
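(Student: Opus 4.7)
The approach mirrors the passage from Proposition \ref{Cn3D} to Proposition \ref{Cnt3D} in the higher-dimensional case: we reduce the statement to the time-zero Proposition \ref{Cn2D} applied to the Mañé Lagrangian associated with the time-shifted vector field $X_{t+\tau}$, exploiting that the sets $\partial B_n$, $\partial C_n$, $\partial A^\pm_n$ are preserved by the rotation $\mathcal{R}_t$ while $B^0_n$ is mapped to $\mathcal{R}_t(B^0_n)$.

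The first step is to establish the time-translation identities for the barriers. Using the triangular inequalities \eqref{TriangInegPeierls2} together with the fact that the orbits of the periodic points $x_n$ and $z_n^\pm$ under $f_\tau$ have zero $L$-action (since $\dot\gamma = X_\tau(\gamma)$ on such orbits), I would show
\begin{equation*}
h^{\rho_n\infty+t}(x_n,y) = h^{t,\rho_n\infty+t}(\mathcal{R}_t(x_n),y)
\quad\text{and}\quad
h^{\infty+t}(z_n^\pm,y) = h^{t,\infty+t}(\mathcal{R}_t(z_n^\pm),y),
\end{equation*}
exactly as was done at the start of the proof of Proposition \ref{Cnt3D}. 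This reduces all barriers appearing in the four claims to barriers of the form $h^{t,\cdots+t}(\cdot,\cdot)$, which are the Peierls-type barriers associated with the time-shifted Mañé Lagrangian $L_t(\tau,x,v)=\tfrac12\Vert v-X_{t+\tau}(x)\Vert^2$.

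The second step is to re-express the constants $c_n$ in this shifted framework. Since $y_n\in\partial B_n$, $\mathcal{R}_t(\partial B_n)=\partial B_n$, and the invariance properties of Lemma \ref{SameClass} give $h^{\rho_n\infty}(x_n,y_n)=h^{\rho_n\infty}(x_n,\partial B_n)$, the constant
\begin{equation*}
c_n = -h^{\rho_n\infty}(x_n,y_n) = -h^{\rho_n\infty}(x_n,\partial B_n) = -h^{t,\rho_n\infty+t}(\mathcal{R}_t(x_n),\partial B_n)
\end{equation*}
is the natural $t$-shifted analogue of the constant used at time zero. Crucially, Lemma \ref{2Dz+z-}, which relied only on the symmetries of $X_\tau$ in $A_n$ listed in Remark \ref{SymmetryX}, persists for $X_{t+\tau}$ because these symmetries are time-invariant; hence $h^{t,\infty+t}(\mathcal{R}_t(z_n^+),\partial B_n)=h^{t,\infty+t}(\mathcal{R}_t(z_n^-),\partial B_n)$.

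The third step is simply to rerun the proof of Proposition \ref{Cn2D} with $L$ replaced by $L_t$, with $B^0_n$ replaced by $\mathcal{R}_t(B^0_n)$ (the basin of attraction of $\mathcal{R}_t(x_n)$ under the flow of $X_{t+\tau}$), and with $x_n,z_n^\pm$ replaced by $\mathcal{R}_t(x_n),\mathcal{R}_t(z_n^\pm)$. The four claims transport verbatim: the separation arguments through the nested sets $\{r=r_j+\delta_j\}$ are unchanged because these level sets are $\mathcal{R}_t$-invariant, the telescoping identity \eqref{Cn2DDemo1} carries over, and the inequality $c_n+h^{\rho_n\infty+t}(x_n,x)\leq u_c^n(t,x)$ follows from $c_n+h^{\rho_n\infty+t}(x_n,\mathcal{R}_t(\partial B_n))=0$. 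I expect the main bookkeeping obstacle to be checking that every null-action argument used in Proposition \ref{Cn2D} — in particular the vanishing of $h^\infty$ between adjacent components $\partial A_k^-$ and $\partial A_{k+1}^+$ — remains valid for the rotated flow, but this is immediate from the $\mathcal{R}_t$-invariance of these boundary spheres and the time-invariance of the direction of the radial flow $g_\tau$.
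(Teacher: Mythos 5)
Your proposal is correct and follows essentially the same approach the paper takes, which for this proposition is left almost entirely implicit (the paper only writes "Working with the Mañé Lagrangian associated to $X_{t+\tau}$, we obtain" and defers to the analogy with Proposition \ref{Cnt3D}). You supply the omitted bookkeeping, including the only genuinely 2D-specific ingredient — checking that Lemma \ref{2Dz+z-} survives the time shift because the radial symmetries of $X_\tau$ in $A_n$ are time-invariant — which is a sensible point to verify explicitly.
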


\subsubsection{Recurrence and Non-Periodicity of $u_c$}

We verify that the chosen initial data corresponds to a recurrent, non-periodic viscosity solution $u_c(t,x)$. We saw in Section \ref{SectionNWLOManeLagrangian} that the form (\ref{uc}) of the chosen initial data $u_c$ corresponds to a recurrent viscosity solution of the Hamilton-Jacobi equation. Hence, we only need to confirm the non-periodicity.

\paragraph{Non-Periodicity.}
We claim that 
\begin{enumerate}[label=(\roman*)]
	\item For $k=0,\rho_n$, $\mathcal{T}^ku_c(x_n) = c_n$.
	\item For $k=1,...,\rho_n-1$, $\mathcal{T}^ku_c(x_n) > c_n$.
\end{enumerate}

In fact, Proposition \ref{TkuProp} and Lemma \ref{Tk+iu} yield 
\begin{equation}	\label{Tkucormula}
	\mathcal{T}^k u_c(x) = \inf_{n \geq 0} \{c_n + h^{\rho_n\infty+k}(x_n, x) \} = \inf_{n \geq 0} \{c_n + h^{\rho_n\infty}(x^k_n, x) \}
\end{equation}
A symmetric version of Propositions \ref{Cn3D} and \ref{Cn2D} featuring $x^i_n$ in $B^i_n$ instead of $x_n$ in $B^0_n$ results in :
\begin{enumerate}[label=(\roman*)]
	\item For $k =0,\rho_n$,
	\begin{align*}
		\mathcal{T}^k u_c(x_n) = c_n + h^{\rho_n\infty}(x^k_n, x_n) = c_n + h^{\rho_n\infty}(x_n, x_n) = c_n
	\end{align*}
	\item For all $k=1,...,\rho_n-1$, 
	\begin{align*}
		\mathcal{T}^k u_c(x_n) = c_n + h^{\rho_n\infty}(x^k_n, x_n) = c_n + h^{\rho_n\infty}(x^k_n, y_n) > c_n
	\end{align*}
\end{enumerate}
Therefore, $(\mathcal{T}^ku_c)_{k\geq 0}$ cannot be periodic.

\subsection{Proof of the $C^\infty$ Regularity of $u_c$}

We begin the proof of Theorem \ref{MainC1} by establishing the $C^\infty$ regularity of the constructed recurrent, non-periodic viscosity solution $u_c$. This approach heavily relies on the differentiability along calibrated curves, as outlined in Theorem \ref{CalibRegularity}. Since the differential has a simple expression on these curves, higher regularity can be achieved through a well-behaved foliation of $M$ by calibrated curves. Thus, identifying these curves becomes crucial. The preliminary lemmas presented in the next subsection serve this purpose.

\subsubsection{Preliminary Lemmas : Identifying Suitable Calibrated Curves} \label{SuitableCalib}

We will take advantage of the autonomy of the flow $g_t$ used to define $f_t$. Let us introduce the autonomous \Mane Lagrangian $L_Z : TM \to \mathbb{R}$ associated to the vector field $Z$ defined by
\begin{equation} \label{LZ}
	L_Z(x,v) = \frac{1}{2} \Vert v - Z(x) \Vert^2
\end{equation}
And we denote by $h_Z$ and $m_Z$ its relative barriers and \Mane potentials.\\

Analogously to Propisition \ref{MatherConstructedProp}, we get
\begin{prop} \label{MatherZ}
	The Mather set $\tilde{\mathcal{M}}_Z$ and its projection $\mathcal{M}_L$ to $M$ associated to $L_Z$ are $\tilde{\mathcal{M}}_Z = \mathbb{T}^1 \times \tilde{\mathcal{M}}_0$ and $\mathcal{M}_Z = \mathbb{T}^1 \times \mathcal{M}_0$
\end{prop}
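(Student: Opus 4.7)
The approach will mirror the proof of Proposition \ref{MatherConstructedProp} almost verbatim, exploiting the fact that $L_Z$ is both autonomous and non-negative. First I will verify that the Mañé critical value of $L_Z$ vanishes, exactly as in Proposition \ref{ManeCVnull}: any fixed point $x_0$ of $Z$ (for instance $z_\infty \in \overline{D'}$, where $Z \equiv 0$) gives a compactly supported $\phi_{L_Z}$-invariant probability $\mu = dt \otimes \delta_{(x_0, 0)}$ with $\int L_Z\, d\mu = 0$, and since $L_Z \geq 0$ no invariant measure can have negative action.

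For the forward inclusion, let $\mu$ be a minimizing measure for $L_Z$. Then $\int L_Z\, d\mu = -\alpha_Z = 0$, and because $L_Z(x,v) = \tfrac12 \|v - Z(x)\|^2 \geq 0$, its support must be contained in the zero-locus $\{(t,x,v) : v = Z(x)\}$. This locus is a graph over $\mathbb{T}^1 \times M$, so $\mu$ pushes forward to a $g_t$-invariant Borel probability on $M$ (where $g_t$ is the flow of $Z$). Poincaré recurrence then forces the support to lie in the set of $g_t$-recurrent points. I will then argue that this recurrent set coincides with the fixed-point set $\{Z = 0\}$: indeed, the construction of $Z$ in \eqref{Mapg} makes it a gradient-like vector field whose only non-trivial dynamics are strict contraction of $A_n$ onto $\partial C_n$ and of each $B_n^i \setminus \{x_n^i\}$ onto $\{x_n^i\}$, so any non-fixed point $x$ satisfies $d(g_t(x), g_s(x)) \to 0$ as $t, s \to \infty$ in a monotone way, preventing recurrence. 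A direct inspection of \eqref{Sets}--\eqref{Mapg} yields
\begin{equation*}
    \{Z = 0\} = \overline{D} \cup \bigcup_{n \geq 0}\bigg( \overline{B_n} \setminus \bigcup_{0 \leq i < \rho_n} B_n^i \bigg) \cup \bigcup_{n,i} \{x_n^i\} = \mathcal{M}_0,
\end{equation*}
where the final equality is Proposition \ref{MatherConstructedProp}. This gives $\tilde{\mathcal{M}}_Z \subset \mathbb{T}^1 \times \{(x, 0) : x \in \mathcal{M}_0\}$.

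For the reverse inclusion, every $x \in \mathcal{M}_0$ satisfies $Z(x) = 0$, so $(x, 0)$ is a fixed point of the Lagrangian flow $\phi_{L_Z}$, and $\mu_x := dt \otimes \delta_{(x, 0)}$ is a compactly supported invariant probability with zero $L_Z$-integral, hence minimizing. This yields $\mathbb{T}^1 \times \{(x, 0) : x \in \mathcal{M}_0\} \subset \tilde{\mathcal{M}}_Z$, and combined with the previous step gives the claimed identification of $\tilde{\mathcal{M}}_Z$. The $\mathbb{T}^1$-factor structure is automatic because $L_Z$ is time-independent, so $\tilde{\mathcal{M}}_Z$ is invariant under the time translations $(t, x, v) \mapsto (t + s, x, v)$. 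Projecting yields $\mathcal{M}_Z = \mathbb{T}^1 \times \mathcal{M}_0$.

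To conclude with the statement in its stated form, I still need to identify $\{(x, 0) : x \in \mathcal{M}_0\}$ with $\tilde{\mathcal{M}}_0$. This is where the construction of $f_t$ pays off: at $t = 0$ one has $\mathcal{R}_0 = \mathrm{Id}$ and $\eta'(0) = 0$, so Proposition \ref{XP} gives $X_0 = Y_0 + Z = Z$ on all of $M$. In particular, on $\mathcal{M}_0$, where $Z \equiv 0$, we get $X_0 \equiv 0$, and the time-zero slice $\tilde{\mathcal{M}}_0 = \{(x, X_0(x)) : x \in \mathcal{M}_0\}$ equals $\{(x, 0) : x \in \mathcal{M}_0\}$, as required. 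The only step requiring more than a bookkeeping check is the verification that the $g_t$-recurrent set reduces to the fixed-point set; this is the main obstacle, but it follows cleanly from the explicit gradient-like form of $Z$ in $A_n$ and $B_n^i$ (a Lyapunov function such as $x \mapsto d(x, \{Z=0\})^2$ decreases strictly along non-stationary orbits).
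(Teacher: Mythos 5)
Your proof mirrors the structure of the proof of Proposition \ref{MatherConstructedProp}, which the paper invokes ``analogously'' without spelling out, and most of the bookkeeping is correct: the vanishing Mañé critical value, the reduction of minimizing-measure supports to $\{v = Z(x)\}$ via non-negativity of $L_Z$, the reverse inclusion using Dirac measures at fixed points, and the identification $\tilde{\mathcal{M}}_0 = \{(x,0) : x \in \mathcal{M}_0\}$ via $X_0 = Y_0 + Z = Z$ and $Z|_{\mathcal{M}_0}\equiv 0$. You do take a different route from the paper for the forward inclusion: the paper discards points with $h^\infty(x,x)>0$ using $\mathcal{M}_0 \subset \mathcal{A}_0$ (Proposition \ref{MatherPeierls}) together with the pseudo-orbit criterion of Proposition \ref{chrec0}, whereas you invoke Poincaré recurrence plus a Lyapunov-function argument directly. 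Both are valid, and yours is slightly more elementary for the autonomous flow $g_t$ since it bypasses the time-dependent pseudo-orbit machinery entirely.

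However, the specific Lyapunov function you propose, $x \mapsto d(x,\{Z=0\})^2$, is \emph{not} monotone along $g_t$-orbits, and this is a genuine (though easily repaired) gap. On $B^i_n$ the sphere $\partial B^i_n$ is contained in $\{Z=0\}$ (the closed balls $\overline{B^i_n}$ are pairwise disjoint and disjoint from the $A_m$, and $Z$ vanishes outside $\bigcup_{j} B^j_n \cup \bigcup_m A_m$), so for $x$ with $\|x - x^i_n\| > \delta_n/2$ the nearest zero of $Z$ lies on $\partial B^i_n$; but $g_t$ moves $x$ radially inward toward $x^i_n$, so $d(\cdot,\{Z=0\})$ strictly \emph{increases} along the orbit until it crosses the shell $\|x - x^i_n\| = \delta_n/2$. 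The repair is to use the local Lyapunov functions $\|\cdot - x^i_n\|^2$ on each $B^i_n$ and $\|\cdot - p_{C_n}(\cdot)\|^2$ on each $A_n$, both of which are strictly decreasing along nontrivial $g_t$-orbits in their domains and suffice to rule out recurrence off $\{Z=0\}$; alternatively, reuse the paper's chain-recurrence argument verbatim for $L_Z$.
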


\begin{lem} \label{CalibZX}
	A curve $\gamma : \mathbb{R} \to C_n$ is calibrated by $h^{\rho_n\infty}(x_n, \cdot)$ (resp. $h^\infty( z_\infty , \cdot)$) if and only if the curve $\sigma (t)= \mathcal{R}_t^{-1} \circ \gamma(t)$ is calibrated by $h_Z^{\rho_n\infty}(x_n, \cdot)$ (resp. $h_Z^\infty( z_\infty , \cdot)$).
\end{lem}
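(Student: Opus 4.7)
The plan is to use the isometric nature of $\mathcal{R}_t$ on $C_n$ together with the decomposition $X_t = Y_t + d\mathcal{R}_t\cdot Z\circ \mathcal{R}_t^{-1}$ from Proposition \ref{XP} to reduce the non-autonomous problem to the autonomous one. I would proceed in three steps.

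First, I would establish the pointwise action equality. Differentiating $\gamma(t) = \mathcal{R}_t(\sigma(t))$ and using $Y_t = \partial_t \mathcal{R}_t \circ \mathcal{R}_t^{-1}$ yields $\dot\gamma(t) = Y_t(\gamma(t)) + d\mathcal{R}_t(\sigma(t))\cdot\dot\sigma(t)$. Substituting the formula for $X_t$ gives
\[
\dot\gamma(t) - X_t(\gamma(t)) = d\mathcal{R}_t(\sigma(t))\cdot\bigl(\dot\sigma(t) - Z(\sigma(t))\bigr).
\]
Since $\mathcal{R}_t$ is a Euclidean rotation on $C_n$, hence an isometry, taking squared norms yields $L(t,\gamma(t),\dot\gamma(t)) = L_Z(\sigma(t),\dot\sigma(t))$, and consequently $A_L(\gamma|_{[s,t]}) = A_{L_Z}(\sigma|_{[s,t]})$ for every $\gamma \subset C_n$.

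Second, and this is the key technical step, I would prove the barrier identity
\[
h^{\rho_n\infty + t}(x_n, \mathcal{R}_t(y)) = h_Z^{\rho_n\infty}(x_n, y) = h_Z^\infty(x_n, y), \quad y \in C_n,
\]
where the last equality holds because $L_Z$ is autonomous and $h_Z^t$ is non-increasing in $t$ (one can always sit at the fixed point $x_n$ for free). The crucial observation is that $\mathcal{R}_{\rho_n j}|_{C_n} = \mathrm{id}$, since the cumulative rotation angle equals the integer $j$. Hence the correspondence $\gamma \leftrightarrow \sigma = \mathcal{R}_\cdot^{-1}\gamma$ sends curves from $x_n$ to $\mathcal{R}_t(y)$ on $[0, \rho_n j + t]$ bijectively to curves from $x_n$ to $y$ on the same interval, with identical actions by the first step. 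This gives the identity for infima restricted to curves staying in $C_n$. The main obstacle is then to upgrade to unrestricted minimizers: one must show that the infimum defining $h^{\rho_n j + t}(x_n, \mathcal{R}_t(y))$ is realized, up to arbitrarily small error, by curves confined to $C_n$. This uses the dynamical structure of the Mañé construction, namely that $x_n$ is an attractor of $f_{\rho_n}$ inside $B_n^0 \subset C_n$ and that the flow in the annulus $A_n$ is strictly directed from $\partial B_n$ toward $\partial C_n$, so that any excursion crossing $\partial C_n$ incurs an uncompensated positive action. Taking $\liminf_j$ then gives the identity.

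Finally, I would combine the two steps. Substituting the barrier identity into the calibration equation
\[
h^{\rho_n\infty + t}(x_n, \gamma(t)) - h^{\rho_n\infty + s}(x_n, \gamma(s)) = \int_s^t L(\tau, \gamma(\tau), \dot\gamma(\tau))\,d\tau
\]
transforms it, via the action equality, into
\[
h_Z^\infty(x_n, \sigma(t)) - h_Z^\infty(x_n, \sigma(s)) = \int_s^t L_Z(\sigma(\tau), \dot\sigma(\tau))\,d\tau,
\]
and conversely, proving the equivalence. The case of $h^\infty(z_\infty, \cdot)$ is handled in exactly the same way, with $\rho_n$ replaced by the period $1$ and $x_n$ by the fixed point $z_\infty$; the barrier identity $h^{\infty + t}(z_\infty, \mathcal{R}_t(y)) = h_Z^\infty(z_\infty, y)$ for $y \in C_n$ follows from the same rotation-confinement argument, noting that $z_\infty \in \overline{D}$ is fixed by $\mathcal{R}_t$ so the endpoint transformation remains clean.
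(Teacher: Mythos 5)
Your overall strategy matches the paper's: the pointwise action equality from the decomposition $X_t = Y_t + d\mathcal{R}_t\cdot Z\circ\mathcal{R}_t^{-1}$ together with the isometry of $\mathcal{R}_t$ on $C_n$, then the barrier identity $h^{\rho_n\infty+t}(x_n,\mathcal{R}_t(y)) = h_Z^{\rho_n\infty}(x_n,y)$, then transferring calibration. Steps 1 and 3 are correct and essentially what the paper does. The gap is in your justification for confining near-minimizers to $C_n$ in Step 2.

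You assert that ``any excursion crossing $\partial C_n$ incurs an uncompensated positive action,'' appealing to the direction of the flow in $A_n$ and the attractivity of $x_n$. This does not hold: $Z$ vanishes on $\partial C_n$ and on the collar $D_n$ just outside, so there $L_Z(x,v) = \tfrac12\Vert v\Vert^2$, and an excursion into $D_n$ traversed slowly enough has arbitrarily small action --- the flow direction gives no lower bound on the action of arbitrary (non-orbit) curves. Moreover, even granting that excursions only add non-negative action (which is true simply because $L_Z\geq 0$), cutting one out shortens the curve, so the truncated curve no longer has duration of the form $\rho_n j + t$ required by the definition of $h_Z^{\rho_n\infty+t}$. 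The paper's proof resolves both difficulties with a \emph{replacement} rather than a confinement argument: for a near-minimizer $\sigma_k$ on $[0,\rho_n n_k + t]$, take the first exit $s_k^-$ and last return $s_k^+$ with respect to $C_n$, discard the middle piece, and glue in a curve $\zeta_k$ on $\partial C_n$ spread over the longer interval of length $s_k^+ - s_k^- + \rho_n l$. Because $Z=0$ on $\partial C_n$ the inserted piece has action of order $1/l$; because $L_Z\geq 0$ discarding the middle piece does not increase the action; and the new total duration $\rho_n(n_k+l)+t$ is still of the required form. Letting $l\to\infty$ produces curves confined to $\overline{C_n}$ realizing the same $\liminf$, which is exactly the input you need to close Step 2; with that supplied, your Steps 1 and 3 finish the proof as written.
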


\begin{proof}
	We prove the case of $h^{\rho_n \infty}(x_n,\cdot)$. We will show that for all times $t \in \mathbb{R}$ and all points $x \in C_n$,
	\begin{equation} \label{CalibZXDem2}
		h^{\rho_n\infty+t}(x_n,x) = h_Z^{\rho_n\infty+t}(x_n,\mathcal{R}_t^{-1} x)
	\end{equation}
	We establish double inequality. Let $n_k$ be an increasing sequence of integers and $\sigma_k :[0,\rho_nn_k+t] \to M$ be a sequence of curves linking $x_n$ to $\mathcal{R}_t^{-1} x$ and such that $h_Z^{\rho_n\infty}(x_n,\mathcal{R}_t^{-1} x) = \lim_k A_{L_Z}(\sigma_k)$.\\
	
	We aim to replace $\sigma_k$ by curves $\tilde{\sigma}_k$ with image in the closure $\overline{C}_n$ of $C_n$. We treat the case of dimension 3 or above. We consider the real numbers $s_k^\pm$ defined by
	\begin{equation*}
		s^-_k := \inf \{\tau \in [0,\rho_nn_k +t] \; | \; \sigma_k(\tau) \notin C_n \} \quad \text{and} \quad s^+_k := \sup \{\tau \in [0,\rho_nn_k +t ] \; | \; \sigma_k(\tau) \notin C_n \}
	\end{equation*}
	Fix a smooth map $\zeta_k : [0,1] \to \partial C_n$ linking $\sigma_k(s_k^-)$ to $\sigma_k(s_k^+)$. Such map exists since in dimension 3 or above, the set $\partial C_n$ is connected. Then, for all integer $l \geq 1$, we set the curve $\sigma_{k,l} : [0,\rho_n(n_k+l) +t] \to \overline{C}_n$ to be given by
	\begin{equation}
		\sigma_{k,l}(\tau) = 
		\begin{cases}
			\sigma_k(\tau) & \text{if } \tau \in [0,s_k^-] \\
			\zeta_k\left( \frac{\tau-s_k^-}{s_k^+ - s_k^- + \rho_nl} \right) & \text{if } \tau \in [s_k^-, s_k^+ + \rho_nl] \\
			\sigma_k(\tau - \rho_nl) & \text{if } \tau \in [s_k^+ + \rho_nl,\rho_n(n_k+l)+t]
		\end{cases}
	\end{equation}
	Hence, we have
	\begin{align*}
		A_{L_Z}(\sigma_{k,l}) &= \int_0^{\rho_n(n_k+l)+t} \frac{1}{2} \Vert \dot{\sigma}_{k,l} - Z(\sigma_{k,l}) \Vert^2 d\tau \\
		&= \int_{[0,s_k^-] \cup [s_k^+\rho_nn_k+t]} \frac{1}{2} \Vert \dot{\sigma}_k - Z(\sigma_k) \Vert^2 \; d\tau + \int_{s_k^-}^{s_k^+ + \rho_nl} \frac{1}{2} \Vert \dot{\sigma}_{k,l} - Z(\sigma_{k,l}) \Vert^2 d\tau  \\
		&= A_{L_Z}(\sigma_{k|[0,s_k^-] \cup [s_k^+\rho_nn_k]})  + \int_{s_k^-}^{s_k^+ + \rho_nl} \frac{1}{2({s_k^+ - s_k^- + \rho_nl})^2} \left\Vert \dot{\zeta}_k\left( \frac{\tau-s_k^-}{s_k^+ - s_k^- + \rho_nl} \right) \right\Vert^2 d\tau  \\
		&\leq A_{L_Z}(\sigma_k) + \frac{\Vert \dot{\zeta}_k\Vert^2}{2({s_k^+ - s_k^- + \rho_nl})}  \longrightarrow 0 \quad \text{as } l \to +\infty
	\end{align*}
	where we used in the third equality that $Z$ is null in $\partial C_n$. Hence, by extracting simultaneously two increasing subsequences $n_{k_i}$ and $l_i$ of $n_k$ and $l$ such that $A_{L_Z}(\sigma_{k_i,l_i})$ converges, we obtain
	\begin{equation*}
		\lim_i A_{L_Z}(\sigma_{k_i,l_i}) \leq \lim_i A_{L_Z}(\sigma_{k_i})
	\end{equation*}
	Thus, using the liminf property \eqref{PeierlsLiminf} of the Peierls barrier yields
	\begin{align*}
		h_Z^{\rho_n \infty+t}(x_n,\mathcal{R}_t^{-1} x) \leq \lim_i A_{L_Z}(\sigma_{k_i,l_i}) \leq \lim_i A_{L_Z}(\sigma_{k_i}) = h_Z^{\rho_n \infty+t}(x_n,\mathcal{R}_t^{-1} x)
	\end{align*}	 
	and we deduce the equality $h_Z^{\rho_n \infty+t}(x_n,\mathcal{R}_t^{-1} x) = \lim_i A_{L_Z}(\sigma_{k_i,l_i})$ for curves $\sigma_{k_i,l_i}$ with images in $C_n$.
	
	We get the same conclusion for the 2D case by setting $s_{k,j}^\pm$ being successive times of entering and exiting the connected components of $M \setminus C_n$.\\
	
	We can now show the identity \eqref{CalibZXDem2}. For simplicity, we change the notation $\sigma_i$ for $\sigma_{k_i,l_i}$. We set for all $i$ the curve $\gamma_i (\tau)= \mathcal{R}_\tau \sigma_i(\tau)$. The evaluation of its velocity gives
	\begin{align*}
		\dot{\gamma_i}(\tau) = \frac{d}{dt}(\mathcal{R}_\tau \sigma_i)(\tau) = \frac{d\mathcal{R}_\tau}{d\tau} \sigma_i(\tau) + d\mathcal{R}_\tau.\dot{\sigma}_i(\tau) = Y_\tau(\gamma_i(\tau)) + d\mathcal{R}_\tau.\dot{\sigma}_i(\tau)
	\end{align*}		
	and using the form of $X_\tau$ given by (\ref{XFormula}), we get
	\begin{align*}
		\Vert \dot{\gamma}_i(\tau) - X_\tau(\gamma_i(\tau)) \Vert &= \Vert d\mathcal{R}_\tau.\dot{\sigma}_i(\tau) - d\mathcal{R}_\tau Z (\sigma_u(\tau)) \Vert = \Vert \dot{\sigma}_i(\tau) - Z (\sigma_i(\tau)) \Vert
	\end{align*}
	and consequently, $A_L(\gamma_i) = A_{L_Z}(\sigma_i)$. Moreover, we have $\gamma_i(0) = \sigma_i(0) = x_n$ and by $\rho_n$-periodicity of $\mathcal{R}_\tau$ on $C_n$, 
	\begin{align*}
		\gamma_i(\rho_n(n_{k_i}+l_i)+t) = \mathcal{R}_{\rho_n(n_{k_i}+l_i)+t}\sigma_i(\rho_n(n_{k_i}+l_i)+t) = \mathcal{R}_t\mathcal{R}_t^{-1}x = x
	\end{align*}		
	Hence, we the liminf Property \eqref{PeierlsLiminf} leads to
	\begin{align*}
		h^{\rho_n\infty+t}(x_n,x) \leq \liminf_i A_L(\gamma_i) = \lim_i A_{L_Z}(\sigma_i) = h_Z^{\rho_n \infty+t}(x_n,\mathcal{R}_t^{-1} x)
	\end{align*}
	
	The inverse inequality is obtained analogously by showing that we can find $\gamma_i$ with image on $C_n$ such that $h^{\rho_n\infty+t}(x_n,x) = \lim_k A_L(\gamma_i)$. This ends the proof the identity \eqref{CalibZXDem2}. Application to calibrated curves $\gamma$ and $\sigma(t) = \mathcal{R}_t^{-1} \circ \gamma(t)$ yields
	\begin{align*}
		h_Z^{\rho_n\infty+t}(x_n,\sigma(t)) = h^{\rho_n\infty+t}(x_n,\gamma(t)), \quad h^{\infty+t}(z_\infty,\gamma(t)) =  h_Z^{\infty+t}(z_\infty,\sigma(t)) \quad \text{and} \quad   A_{L_Z}(\sigma) = A_L(\gamma)
	\end{align*}
	These imply the result.
\end{proof}

The following Lemmas focus on identifying the calibrated curves of $h^{\rho_n\infty}_Z(x_n,\cdot)$ and $h_Z^\infty(\partial C_n, \cdot)$ respectively on  $B^0_n$ and $A_n$.

\begin{lem} \label{Confined}
	Let $u$ be a periodic viscosity solution of the Hamilton-Jacobi equation (\ref{HJ}) associated to $L_Z$ and consider for all point $(t,x)$ in $\mathbb{R} \times M$ a $u$-calibrated curve $\sigma_{t,x} : (-\infty, t ] \to M$ with $\sigma_{t,x}(t) = x$. Then
	\begin{enumerate} [label= \roman*.]
		\item If $x$ belongs $B^i_n$ then for all time $s \leq t$, $\sigma_{t,x}(s) \in B^i_n$.
		\item If $x$ belongs $A_n$ then for all time $s \leq t$, $\sigma_{t,x}(s) \in A_n$.
	\end{enumerate}
\end{lem}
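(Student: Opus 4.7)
My plan is to argue by contradiction using the autonomous structure of $L_Z$ and the specific attractor geometry of $B^i_n$ (resp.\ $A_n$). I will treat only the case $x \in B^i_n$; the case $x \in A_n$ is entirely parallel since $A_n$ is again an open, $g_t$-forward-invariant region whose boundary $\partial A_n$ consists of $g$-fixed points (the bump $\chi$ in the definition \eqref{Mapg} of $Z$ vanishes at the endpoints of $[0,1]$, giving $Z|_{\partial B^i_n} = Z|_{\partial A_n} \equiv 0$).

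Suppose $\sigma_{t,x}((-\infty,t]) \not\subset B^i_n$. Setting $s^* := \sup\{s \leq t \mid \sigma_{t,x}(s) \notin B^i_n\}$, openness of $B^i_n$ and $\sigma_{t,x}(t) = x \in B^i_n$ give $s^* < t$, and continuity forces $\sigma_{t,x}(s^*) \in \partial B^i_n$ while $\sigma_{t,x}((s^*,t]) \subset B^i_n$. My two key ingredients are: (a) the backward $g$-orbit $\eta(s) := g_{s-t}(x)$ stays in $B^i_n$ for all $s \leq t$, carries zero $L_Z$-action since $\dot\eta = Z(\eta)$, and asymptotes to a point of $\partial B^i_n$ as $s \to -\infty$; (b) every point of $\partial B^i_n$ is a $g$-fixed point at which $L_Z(\cdot, 0) = 0$, so a curve can sit on $\partial B^i_n$ at zero cost.

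Using $\eta$ as a competitor in the Lax-Oleinik formula yields $u(s,\eta(s)) \geq u(t,x)$ for all $s \leq t$, and combined with the calibration identity this gives the uniform bound
\begin{equation*}
	\int_s^t L_Z(\sigma_{t,x}, \dot\sigma_{t,x})\, d\tau \;\leq\; u(s, \eta(s)) - u(s, \sigma_{t,x}(s))
\end{equation*}
which is bounded in $s$ since $u$ is a periodic viscosity solution on a compact manifold, hence bounded. The total action $\int_{-\infty}^t \tfrac{1}{2}\|\dot\sigma_{t,x} - Z(\sigma_{t,x})\|^2 d\tau$ is thus finite. Exploiting the Euler-Lagrange equations for $L_Z$, writing $w := \dot\sigma_{t,x} - Z(\sigma_{t,x})$, one gets $\dot w = -\partial_x Z(\sigma_{t,x})\cdot w$, which in the radial attractive region near $x^i_n$ forces $\|w(s)\|$ to decay as $s \to -\infty$. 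Together with the finite action bound, this will show $\sigma_{t,x}$ is a small perturbation of $\eta$ backward in time and asymptotic to $\partial B^i_n$.

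To rule out the excursion past $s^*$, the key step is to exhibit a strictly better competitor than $\sigma_{t,x}|_{[s^*,t]}$. I will construct $\tilde\sigma: [s^*,t] \to \overline{B^i_n}$ by: sitting at $\sigma_{t,x}(s^*) \in \partial B^i_n$ (zero cost by (b)) for a long initial sub-interval $[s^*, s^*+\alpha]$, then taking a slow constant-speed path inside $\overline{B^i_n}$ joining $\sigma_{t,x}(s^*)$ to a point $\eta(s^*+\alpha+\beta)$ on the $g$-orbit through $x$, and finally concatenating with $\eta|_{[s^*+\alpha+\beta,t]}$ (zero cost). Optimizing the parameters $\alpha, \beta$ produces a competitor whose action tends to $0$ as $t - s^* \to \infty$, while the fact that $\sigma_{t,x}$ must leave and re-enter $B^i_n$ around $s^*$ forces $\int_{s^*}^t L_Z(\sigma_{t,x},\dot\sigma_{t,x})\,d\tau \geq c > 0$ (by A-priori compactness Theorem \ref{APrioriCompactness}, the re-entry requires crossing $\partial B^i_n$ with bounded velocity in bounded time, contributing positive action from the $\tfrac{1}{2}\|\dot\sigma_{t,x}\|^2$ term in the $Z=0$ region just outside $B^i_n$). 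This strict inequality contradicts the minimizing property of the calibrated curve $\sigma_{t,x}$ on $[s^*,t]$.

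The main obstacle I expect is making the detour-competitor argument fully quantitative: one must verify that the extra action cost incurred by $\sigma_{t,x}$ during its excursion outside $B^i_n$ is bounded below by a positive constant depending only on the geometry of $\partial B^i_n$, independently of the time-length $t - s^*$, so that it genuinely exceeds the vanishing action of $\tilde\sigma$. This uses crucially the uniform Lipschitz bound on minimizing curves from the A-priori compactness theorem together with the fact that $Z$ vanishes in a neighborhood of $\partial B^i_n$ on the outside.
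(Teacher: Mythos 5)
Your proposal takes a genuinely different route from the paper, but it has gaps that I do not think can be closed without effectively reproducing the paper's argument.

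The paper's proof is a short rigidity argument through the Mather set. The observation is that $\partial B^i_n$ lies in the projected Mather set $\mathcal{M}_Z$ of $L_Z$ (Proposition \ref{MatherZ}). Once the calibrated curve $\sigma_{t,x}$ touches $\partial B^i_n$ at some time $s<t$, Theorem \ref{CalibRegularity} forces $d_x u(s,\sigma_{t,x}(s)) = \partial_v L_Z(\sigma_{t,x}(s),\dot\sigma_{t,x}(s))$, while Proposition \ref{CalibMather} forces the same differential to equal $\partial_v L_Z(\sigma_{t,x}(s), v)$ for the Mather velocity $v$. Injectivity of the Legendre map then pins $\dot\sigma_{t,x}(s) = v$, so by $\phi_{L_Z}$-invariance of $\tilde{\mathcal M}_Z$ the whole curve lies in the Mather set, which is impossible since $x \in B^i_n$ sits in a different connected component of $\mathcal M_Z$ (indeed, $B^i_n\setminus\{x^i_n\}$ is disjoint from $\mathcal M_Z$). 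The crux is that the velocity of a calibrated curve at a Mather-set point is uniquely determined. Your proposal never uses the fact that $\partial B^i_n \subset \mathcal{M}_Z$, and that is precisely the fact that makes the lemma work.

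Two concrete gaps in your competitor argument. First, you assert that the competitor's action tends to $0$ "as $t-s^*\to\infty$," but $t$ and $x$ are fixed in the lemma, so $t-s^*$ is a fixed finite number: the slow-travel competitor's action is bounded below by a quantity of order $d^2/(t-s^*)$, which does not vanish. Second, and more seriously, the claimed uniform lower bound $c>0$ on the action cost of the excursion does not hold. Near $\partial B^i_n$ the vector field $Z$ vanishes (and vanishes identically in the annulus $B_n\setminus\bigcup_i B^i_n$ just outside), so the Lagrangian there is $\tfrac12\|v\|^2$; a curve traversing a fixed distance with velocity $\sim d/T$ over time $T$ incurs action $\sim d^2/T$, which can be made arbitrarily small by slowing down. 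Nothing in the a priori compactness theorem (which gives an upper bound on velocities, not a lower bound) precludes this. Even the remark that the restriction $\sigma_{t,x}|_{[s^*,t]}$ "must leave and re-enter" $B^i_n$ is off: by your own definition of $s^*$, the curve lies inside $B^i_n$ on $(s^*,t]$, and the excursion occurs for $s<s^*$, an interval you do not analyze. The only way I can see to obtain a positive action penalty from the excursion is to first show that the velocity at $\partial B^i_n$ must be zero (so that re-entering after a backward excursion forces an honest velocity mismatch) — but that velocity constraint is exactly what the paper proves via the Legendre map and Mather-set differentiability, at which point the paper's shorter invariance argument finishes the job directly.
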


Note that periodic viscosity solutions for $L_Z$ are stationary weak-KAM solutions due to Fathi's Convergence Theorem \ref{FathiTh}.

\begin{proof}
	We only prove the case of $B^0_n$. The case of $A_n$ is done similarly. Let $x$ be a point of $B^0_n$. Arguing by contradiction, suppose that $\sigma_{t,x}$ exits the ball $B^0_n$. Then there must exist a time $s < t$ such that $\sigma_{t,x}(s) \in \partial B^0_n$. Since the curve $\sigma_{t,x}$ is calibrated by $u$, we deduce from Theorem \ref{CalibRegularity} that $u$ is differentiable at $(s,\sigma_{t,x}(s))$ and $\partial_x u (s,\sigma_{t,x}(s)) = \partial_v L (s,\sigma_{t,x}(s), \dot{\sigma}_{t,x}(s))$.
	
	Now recall from Proposition \ref{MatherZ} that $\partial B^0_n \subset \mathcal{M}_Z$. Thus, we get the inclusion $(s,\sigma_{t,x}(s)) \in \mathcal{M}_Z$ and there exists an element $v$ of $T_{\sigma_{t,x}(s)}M$ such that $(s,\sigma_{t,x}(s), v) \in \tilde{\mathcal{M}}_Z$. The Proposition \ref{CalibMather}, added to the regularity Theorem \ref{CalibRegularity} on calibrated curves, shows that $u$ must be differentiable at $(s,\sigma_{t,x}(s), v)$ and $\partial_x u (s,\sigma_{t,x}(s)) = \partial_v L_Z (\sigma_{t,x}(s), v)$. We get $\partial_v L_Z (\sigma_{t,x}(s), \dot{\sigma}_{t,x}(s)) = \partial_v L_Z (\sigma_{t,x}(s), v)$. However, the Legendre map $\mathcal{L} : (\tau,y,w) \to (\tau,y,\partial_v L_Z (y,w))$ is a bijective in the Tonelli case (see \cite{fathi2008weak}). Hence, we get $v = \dot{\sigma}_{t,x}(s)$ and by the $\phi_{L_Z}$-invariance of the Mather set, we also get that $(\tau,\sigma_{t,x}(\tau)) =\big(\tau, \pi \circ \phi_{L_Z}^{s,\tau}(\sigma_{t,x}(s), v)\big)$ belongs to $\tilde{\mathcal{M}}_Z$ for all times $\tau \leq t$. The fact that $\sigma_{t,x}(s) \in \partial B^0_n$ implies that $\sigma_{t,x}$ must belong in the connected component of $\mathcal{M}_Z$ that contains $\partial B^0_n$. This contradicts the fact that $x$ belongs to $B^0_n$.
\end{proof}

\begin{lem} \label{CalibAB}
	For all integer $n \geq 0$,
	\begin{enumerate}
		\item On $B^0_n$, consider the spherical coordinates $(\delta_B, \varphi^B_2, .. ,\varphi^B_d) = (\delta_B, \bar{\delta}_B)$.\\
		If $x \in B^0_n$ and $\sigma_x: (-\infty, t] \to M$ with $\sigma_x(t) = x$ is a curve calibrated by $h_Z^{\rho_n\infty}(x_n, \cdot)$, then $\sigma_x(t)$ has constant $\bar{\delta}_B$-coordinate, $\alpha_1(\sigma_x) = \{x_n\}$ and for all $s \leq t$
		\begin{equation}
			h_Z^{\rho_n\infty}(x_n, \sigma_x(s)) = h_Z^{\rho_n\infty+s}(x_n, \sigma_x(s)) = A_L( \sigma_{x|(-\infty,s]}) = \int_{-\infty}^s L(\tau, \sigma_x(\tau), \dot{\sigma}_x(\tau)) \; d\tau
		\end{equation}
		\item On $A_n$, we have $A_n \subset C_n = O_n \times B^{d-1}$ where is a $d-1$-dimensional ball centered on $x_n$ and of radius $2\delta$. Using the spherical coordinates on $B^{d-1}$, we define the coordinates $(\theta,\delta_A, \varphi^A_2, .. ,\varphi^A_d) = (\delta_A, \bar{\delta}_A)$ on $A_n$.\\
		If $x \in A_n$ and $\sigma_x: (-\infty, t] \to M$ with $\sigma_x(t) = x$ is a curve calibrated by $h_Z^{\infty}(z_\infty, \cdot)$, then $\sigma_x(t)$ has constant $\bar{\delta}_A$-coordinate, $\alpha_1(\sigma_x) \subset \partial C_n$ and for all $s \leq t$
		\begin{equation}
			h_Z^{\infty}(z_\infty, \sigma_x(s)) = h_Z^{\infty+s}(z_\infty, \sigma_x(s)) = A_L( \sigma_{x|(-\infty,s]}) = \int_{-\infty}^s L(\tau, \sigma_x(\tau), \dot{\sigma}_x(\tau)) \; d\tau
		\end{equation}
	\end{enumerate}
\end{lem}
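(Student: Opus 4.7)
The plan is to treat both parts in parallel, exploiting the autonomy of $L_Z$ together with the radial structure of $Z$ about the appropriate symmetry axis: the point $x_n$ for $B^0_n$ and the circle $O_n$ for $A_n$. First, Lemma \ref{Confined} applies (periodic solutions of the autonomous $L_Z$ are weak-KAM solutions by Theorem \ref{FathiTh}) and confines $\sigma_x$ to $B^0_n$ (resp.\ $A_n$); moreover, autonomy of $L_Z$ gives $h_Z^{\rho_n\infty+s}(x_n,\cdot) = h_Z^{\rho_n\infty}(x_n,\cdot)$ and $h_Z^{\infty+s}(z_\infty,\cdot) = h_Z^\infty(z_\infty,\cdot)$, so the time-shifted barriers in the statement agree with their base-time counterparts.

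The central step is to exploit radial symmetry through the calibration identity. By the rotational symmetries of $Z$ recorded in Remark \ref{SymmetryX}, both weak-KAM solutions $u_B(x) := h_Z^{\rho_n\infty}(x_n, x)$ and $u_A(x) := h_Z^\infty(z_\infty, x)$ are invariant under the corresponding rotations, so they depend only on the relevant radial coordinate ($\delta_B$ in $B^0_n$ and $\delta_A$ in $A_n$), and in particular their gradients are purely radial. Plugging the ansatz $p = u'(r)\hat{r}$ into the Hamilton-Jacobi equation $H(x,p) = \tfrac{1}{2}|p+Z|^2 - \tfrac{1}{2}|Z|^2 = 0$ forces $u'(r) = -2\,Z_r(r)$ along the radial direction. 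Substituting this into the pointwise calibration identity $\frac{d}{ds} u(\sigma_x(s)) = L_Z(\sigma_x(s), \dot{\sigma}_x(s))$ (valid by Theorem \ref{CalibRegularity}) and decomposing $\dot{\sigma}_x$ into its radial and angular components, a direct algebraic manipulation yields a schematic identity of the form
\begin{equation*}
-(\dot{r} - Z_r)^2 \;=\; r^2 \,\|\dot{\bar{\varphi}}\|_\Omega^2,
\end{equation*}
whose left-hand side is non-positive while the right-hand side is non-negative. Hence both sides vanish identically along $\sigma_x$, simultaneously giving the constancy of the angular coordinates $\bar{\delta}_B$ (resp.\ $\bar{\delta}_A$) and a first-order radial ODE $\dot{r} = Z_r(r)$.

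The $\alpha$-limit is then read off from this ODE. In $B^0_n$, $Z_{\delta_B} < 0$ on $(0, \delta_n)$ and vanishes only at the endpoints, so the ODE monotonically drives $\delta_B(s) \to 0$ as $s \to -\infty$, yielding $\alpha_1(\sigma_x) = \{x_n\}$. In $A_n$, $Z_{\delta_A} > 0$ on $(\delta_n, 2\delta_n)$ with vanishing endpoints, so the ODE drives $\delta_A(s) \to 2\delta_n$, yielding $\alpha_1(\sigma_x) \subset \partial C_n$. The action formula is then standard: for $s_0 < s$, calibration gives $u(\sigma_x(s)) - u(\sigma_x(s_0)) = \int_{s_0}^s L_Z(\sigma_x,\dot{\sigma}_x)\,d\tau$, and sending $s_0 \to -\infty$ uses the continuity of $u$ together with $u_B(x_n) = 0$ (resp.\ $u_A|_{\partial C_n} = 0$, which follows from Lemma \ref{SameClass} Part \ref{SameClass2} applied to the arc-wise connected set $\overline{D} \cup \overline{D_n}$ on which $Z \equiv 0$).

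The main technical obstacle I anticipate is justifying the radial reduction of $u_B$ and $u_A$ rigorously in the viscosity framework: one has to use the rotational invariance of $L_Z$ together with the uniqueness of the weak-KAM solution based at a given point, ideally via a symmetry-transport argument for viscosity solutions of the autonomous Hamilton–Jacobi equation. A secondary subtlety concerns the 2D case, where $A_n$ has two disconnected components $A_n^\pm$ and the relevant radial direction is oriented toward either $z_n^+$ or $z_n^-$; the analysis must be performed on each component separately, which will also provide the promised proof of Lemma \ref{2Dz+z-}.
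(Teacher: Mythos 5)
Your proposal takes a genuinely different route from the paper. The paper never proves (nor needs) rotational invariance of the barrier $h_Z^{\rho_n\infty}(x_n,\cdot)$; instead it first establishes $\alpha_1(\sigma_x)=\{x_n\}$ by the semi-static machinery (Propositions \ref{SemiStaticCalib} and \ref{SemiStaticLimitInclusions} give that $\alpha_1(\sigma_x)$ is a single static class inside $\{x_n\}\cup\partial B^0_n$, and a contradiction argument using the triangular inequality rules out $\partial B^0_n$), then obtains the action formula by taking the limit in the calibration identity, and only afterward proves the angular constancy by a \emph{comparison of curves}: it straightens $\sigma_x$ into $\tilde\sigma_x$ with the same radial component and constant angle, observes $L_Z(\sigma_x,\dot\sigma_x)=L_Z(\tilde\sigma_x,\dot{\tilde\sigma}_x)+\tfrac12\Vert\dot\sigma_{\bar\delta}\Vert^2$, and squeezes $\dot\sigma_{\bar\delta}=0$ against the barrier inequality. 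Your route inverts this: radial reduction of the weak-KAM solution $\Rightarrow$ radial ODE for $\sigma_x$ $\Rightarrow$ angular constancy and $\alpha$-limit. Both are viable designs, but yours purchases a shorter calculation at the cost of a nontrivial invariance claim about the barrier that the paper deliberately sidesteps.

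That invariance claim is a genuine gap, and you are right to flag it, but it is more delicate than a generic ``symmetry-transport'' argument: the Peierls barrier $h_Z^{\rho_n\infty}(x_n,y)$ is an infimum over curves $\gamma:[0,T]\to M$ that are a priori free to leave $B^0_n$, and $L_Z$ is only rotation-invariant about $x_n$ on $B^0_n$, not on all of $M$. To restrict the competition to curves confined to $B^0_n$ one would naturally invoke Lemma \ref{Confined} via calibrated curves realizing the barrier, but then the resulting expression $h_Z^{\rho_n\infty}(x_n,y)=A_{L_Z}(\sigma_y)+h_Z^{\rho_n\infty}(x_n,\alpha_1(\sigma_y))$ has a constant term that depends on which static class $\alpha_1(\sigma_y)$ is, i.e.\ precisely on the fact you are still trying to establish; there is a mild circularity to untangle. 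You would also need to justify the branch selection $u'=-2Z_r$: the eikonal equation $H(x,du)=0$ for the \Mane Hamiltonian only forces $u'(r)\in\{0,-2Z_r(r)\}$ pointwise, and identifying the correct branch for the barrier-based weak-KAM solution requires a separate positivity argument (e.g.\ that $h_Z^{\rho_n\infty}(x_n,\cdot)$ is strictly increasing in $r$ on $(0,\delta_n)$).

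There are also algebraic sign errors that propagate to the $\alpha$-limit conclusion. With $u'(r)=-2Z_r$ and $L_Z=\tfrac12(\dot r-Z_r)^2+\tfrac12\Vert\dot\sigma_{\bar\delta}\Vert^2$, the calibration identity $u'(r)\dot r=L_Z$ simplifies to
\begin{equation*}
0 = \tfrac12(\dot r + Z_r)^2 + \tfrac12\Vert\dot\sigma_{\bar\delta}\Vert^2,
\end{equation*}
not $-(\dot r - Z_r)^2 = \Vert\dot\sigma_{\bar\delta}\Vert^2$, so the resulting ODE is $\dot r = -Z_r$, not $\dot r = Z_r$. As written, $\dot r = Z_r<0$ on $B^0_n$ drives $r$ \emph{upward} as $s\to-\infty$, which would give $\alpha_1(\sigma_x)\subset\partial B^0_n$ and contradict the lemma; only the corrected $\dot r = -Z_r>0$ drives $r\to 0$ and yields $\alpha_1(\sigma_x)=\{x_n\}$. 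The same sign care is needed in $A_n$. Once the invariance of the barrier is rigorously justified and these signs are fixed, the argument closes, but in its current form it neither proves the $\alpha$-limit nor the invariance premise it rests on.
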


\begin{proof}
	We only prove the first point, the second being analogous. Note that due to Fathi's theorem \ref{FathiTh}, we have $h_Z^{\rho_n\infty+s} = h_Z^{\rho_n\infty} = h_Z^\infty$ is independent on time. Hence, we deduce from Property \ref{pvisc} of Proposition \ref{PeierlsProp} that $h_Z^{\rho_n\infty}(x_n,\cdot)$ is a weak-KAM solution associated to $L_Z$. And since $x$ belongs to $B^0_n$, we deduce from Lemma \ref{Confined} that $\sigma_x(\tau) \in B^0_n$ for all time $\tau \leq t$.\\
	
	We first prove that $\alpha_1(\sigma_x) = \{x_n\}$. We saw that $\sigma_x$ is calibrated by a weak-KAM solution. Hence, Proposition \ref{SemiStaticCalib} asserts that $\sigma_x$ is semi-static, and according to Proposition \ref{SemiStaticLimitInclusions}, its $\alpha$-limit $\alpha_1(\sigma_x)$ belongs to a single static class of $\mathbb{M}_Z$. Knowing that $\sigma_x$ has its image in the ball $B^0_n$, we deduce that $\alpha_1(\sigma_x) = \bar{x}_n = \{x_n\}$, or $\alpha_1(\sigma_x) \subset \partial B^0_n$.
	
	Assume that $\alpha_1(\sigma_x) \subset \partial B^0_n$. There exists an increasing sequence of integers $k_i$ and a point $y \in \partial B^0_n$ such that $\sigma_x(-\rho_n k_i)$ converges to $y$. Taking the limit for $t= -\rho_n k_i$ in the calibration equation and using the positivity of the Lagrangian $L_Z$, we get
	\begin{align} \label{DemSymmetryh2}
		h_Z^{\rho_n \infty}(x_n, x) = h_Z^{\rho_n\infty}(x_n,y) + A_{L_Z}(\sigma_{x|(-\infty,0]}) > h_Z^{\rho_n\infty}(x_n,y)
	\end{align}
	the strict inequality is due to the strict positivity of $A_L(\sigma_{x|(-\infty,0]}) > 0$ since the flow $g_t$ of $Z$ is directed from $\partial B^0_n$ towards its center $x_n$, at the opposite of $\sigma_x$.
	However, an application of the triangular inequality (\ref{TriangInegPeierls}) of the Peierls barrier gives
	\begin{align*}
		h_Z^{\rho_n \infty}(x_n, x) \leq h_Z^{\rho_n \infty}(x_n, y) + h_Z^{\rho_n \infty}(y, x)
	\end{align*}
	We claim that $h_Z^{\rho_n \infty}(y, x) = 0$. Indeed, if $y'$ is a limit point of the sequence $g_{\rho_nk}^{-1}(x)$, we have $y' \in \partial B^0_n$ and
	\begin{align*}
		0 \leq h_Z^{\rho_n \infty}(y, x) \leq h_Z^{\rho_n \infty}(y, y') + h_Z^{\rho_n \infty}(y', x) \leq h_Z^{\rho_n \infty}(y, y') + \liminf_k h_Z^{\rho_nk}(f_{\rho_nk}^{-1}(x),x) = 0 + 0
	\end{align*}
	where we used Lemma \ref{SameClass} for the nullity of the first term and the fact that the flow $g_t$ has null action for the nullity of the second term. We finally get $h_Z^{\rho_n \infty}(x_n, x) \leq h_Z^{\rho_n \infty}(x_n, y)$ which contradicts (\ref{DemSymmetryh2}). Therefore, we deduce that $\alpha_1(\sigma_x) = \bar{x}_n = \{x_n\}$.\\
	
	 Let $k_i$ be an increasing integer sequence such that $\sigma_x(-\rho_nk_i)$ converges to $x_n$. Taking the limit in the calibration equation, we get
	\begin{align*}
		h_Z^{\rho_n \infty}(x_n, x) &= h_Z^{\rho_n\infty}(x_n, x_n) + \lim_i A_L(\sigma_{x|(-\rho_nk_i,0]}) = A_L(\sigma_{x|(-\infty,0]}) \\
	\end{align*}
	
	Let us show that $\sigma_x$ has constant $\bar{\delta}_B$-coordinate. We adopt the notation $\sigma_x = (\sigma_\delta, \sigma_{\bar{\delta}})$ in the spherical coordinates. Let $\bar{\delta}_t$ be the $\bar{\delta}$-coordinate of $x=(x_\delta, x_{\bar{\delta}})$ in the spherical coordinates, and consider the curve $\tilde{\sigma}_x$ given by $\tilde{\sigma}_x = (\sigma_\delta, \bar{\delta}_t) \in  \{\bar{\delta} = \bar{\delta}_t\}$. Recall from Remark \ref{SymmetryX} that, on $B^0_n$, the vector field $Z$ has null $\bar{\delta}$ coordinate and that its $\delta$-coordinate $Z_\delta(y)$ depends only on the $\delta$-coordinate of $y$, i.e $Z_\delta(\sigma_x(t)) = Z_\delta(\tilde{\sigma}_x(t))$. Hence, using the notation $\dot{\sigma}_x=  \dot{\sigma}_{\delta} \overset{\perp}{+} \dot{\sigma}_{\bar{\delta}}$, we have for all $\tau \leq t$, 
	\begin{align*}
		L_Z(\sigma_x(\tau), \dot{\sigma}_x(\tau)) &= \frac{1}{2} \Vert \dot{\sigma}_x(\tau) - Z(\sigma_x(\tau)) \Vert^2 = \frac{1}{2} | \dot{\sigma}_{\delta}(\tau) - Z_\delta(\sigma_x(\tau)) |^2 + \frac{1}{2} \Vert \dot{\sigma}_{\bar{\delta}}(\tau) \Vert^2 \\
		&= \frac{1}{2} \Vert \dot{\tilde{\sigma}}_x(\tau) - Z(\tilde{\sigma}_x(\tau)) \Vert^2 + \frac{1}{2} \Vert \dot{\sigma}_{\bar{\delta}}(\tau) \Vert^2 = L_Z(\tilde{\sigma}_x(\tau), \dot{\tilde{\sigma}}_x(\tau)) + \frac{1}{2} \Vert \dot{\sigma}_{\bar{\delta}}(\tau) \Vert^2 
	\end{align*}
	We have $\alpha_1(\tilde{\sigma}_x) = \{x_n\}$. Thus, noting that $x= \sigma_x(t) = \tilde{\sigma}_x(t)$, we get
	\begin{align*}
		h_Z^{\rho_n\infty+t}(x_n, \sigma_x(t)) & =  h_Z^{\rho_n\infty+t}(x_n, \tilde{\sigma}_x(t)) \\
		&\leq \int_{-\infty}^t L_Z(\tilde{\sigma}_x(\tau), \dot{\tilde{\sigma}}_x(\tau)) \; d\tau  \\
		&\leq \int_{-\infty}^t L_Z(\tilde{\sigma}(\tau), \dot{\tilde{\sigma}}(\tau)) + \frac{1}{2} \Vert \dot{\sigma}_{\bar{\delta}}(\tau) \Vert^2  d\tau \\
		&= \int_{-\infty}^t L_Z(\sigma(\tau), \dot{\sigma}(\tau))  \; d\tau = h_Z^{\rho_n\infty+t}(x_n, \sigma(t))
	\end{align*}
	Therefore, there is equality everywhere and $\int_{-\infty}^t \frac{1}{2} \Vert \dot{\sigma}_{\bar{\delta}}(\tau) \Vert^2  d\tau =0$. By continuity of $\dot{\sigma}_{\bar{\delta}}$, we deduce that it is null and that $\sigma_{\bar{\delta}}$ is constant.
\end{proof}

\begin{rem}
	By Property \ref{SeparateClass} of Lemma \ref{SameClass}, note that for all curve $\gamma$ in $C_n$, being calibrated by $h_Z^\infty( z_\infty, \cdot)$ is equivalent to being calibrated by $h_Z^\infty( \partial C_n , \cdot) = h_Z^\infty( z_\infty, \cdot)$ in dimension higher than 2 and by $h_Z^\infty( z_n^- , \cdot) = -h_Z^\infty( z_\infty, z_n^-) + h_Z^\infty( z_\infty, \cdot)$ in the 2D case. Moreover, the result remains valid for the calibration by $h_Z^\infty( z_n^+ , \cdot) $.  
\end{rem}

\begin{lem} \label{CalibZ}
	We denote by $\phi_{-Z}$ the flow associated to the vector field $-Z$.
	\begin{enumerate}
		\item \label{CalibZRadial} For all $x \in B^0_n$, the curve $\sigma_x : \mathbb{R} \to B^0_n$ defined by $\sigma_x(t) = \phi_{-Z}^t(x)$ is calibrated by $h_Z^{\rho_n\infty}(x_n, \cdot)$.

		\item \label{CalibZThetaCoord} For all $x \in A_n$, the curve $\sigma_x : \mathbb{R} \to A_n$ defined by $\sigma_x(t) = \phi_{-Z}^t(x)$ is calibrated by $h_Z^{\infty}(\partial C_n, \cdot)$.
	\end{enumerate}
\end{lem}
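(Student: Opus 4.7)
The plan is a one-dimensional reduction that exploits the radial symmetry of $Z$ on $B^0_n$ and $A_n$, followed by a completion of the square that makes the minimality of the backward flow of $Z$ transparent.

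First I would handle the case $x \in B^0_n$. Proposition \ref{CalibExist} supplies a curve $\tilde{\sigma} : (-\infty, 0] \to M$ calibrated by the weak-KAM solution $h_Z^{\rho_n\infty}(x_n, \cdot)$ with $\tilde{\sigma}(0) = x$. Lemma \ref{Confined} keeps it inside $B^0_n$, and Lemma \ref{CalibAB} further pins it down: $\tilde{\sigma}$ has constant spherical coordinate $\bar{\delta}_B$ (equal to that of $x$), its $\alpha_1$-limit equals $\{x_n\}$, and $A_{L_Z}(\tilde{\sigma}) = h_Z^{\rho_n\infty}(x_n, x)$. Thus $\tilde{\sigma}$ lies entirely on the radial ray $R_x$ from $x_n$ through $x$, and minimizes the action among all curves from $x_n$ to $x$ (a fortiori, among radial ones).

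Next I would carry out the 1D computation on $R_x$. By the radial symmetry of $Z$ recorded in Remark \ref{SymmetryX}, the restriction of $Z$ to $R_x$ is a scalar field $Z_r(r)$ depending only on $r := \text{dist}(\cdot, x_n) \in [0, \delta_n)$, and for a radial curve $r(t)$ the Lagrangian reads $L_Z = \tfrac{1}{2}(\dot{r} - Z_r(r))^2$. Setting $V(r) := \int_0^r Z_r(s)\, ds$, the elementary identity
\begin{equation*}
\tfrac{1}{2}(\dot{r} - Z_r(r))^2 = \tfrac{1}{2}(\dot{r} + Z_r(r))^2 - 2\,\dot{r}\, Z_r(r)
\end{equation*}
combined with $\int_{-\infty}^{0} \dot{r}\, Z_r(r)\, dt = V(|x-x_n|) - V(0) = V(|x-x_n|)$ gives, for every radial curve $r$ with $r(-\infty) = 0$ and $r(0) = |x - x_n|$,
\begin{equation*}
A_{L_Z}(r) \;=\; \int_{-\infty}^{0} \tfrac{1}{2}\big(\dot{r} + Z_r(r)\big)^2\, dt \;-\; 2V(|x - x_n|).
\end{equation*}
The first term is non-negative and vanishes exactly when $\dot{r} = -Z_r(r)$, which is the ODE defining $\phi_{-Z}^t(x)$ along $R_x$. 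Hence the unique radial minimizer is $\sigma_x(t) = \phi_{-Z}^t(x)$; since $\tilde{\sigma}$ is a radial minimizer, $\tilde{\sigma} = \sigma_x$, so $\sigma_x$ is calibrated by $h_Z^{\rho_n\infty}(x_n, \cdot)$.

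The case $x \in A_n$ follows the same template, with the weak-KAM solution $h_Z^{\infty}(\partial C_n, \cdot) = h_Z^{\infty}(z_\infty, \cdot) + \text{const}$ (by Property \ref{SameClass2} of Lemma \ref{SameClass}) in place of $h_Z^{\rho_n\infty}(x_n,\cdot)$. Lemma \ref{CalibAB} forces calibrated curves to have constant $\bar{\delta}_A$ (that is, constant $\theta$ and constant transverse spherical angles) and $\alpha_1$-limit in $\partial C_n$; the 1D parameter is then the signed distance to $O_n$ on the fixed ray, and $Z$ again reduces to a scalar radial field depending only on this distance. The same completion of the square identifies $\phi_{-Z}^t(x)$ as the unique minimizer on the ray and hence as the calibrated curve. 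The main technical nuisance I expect is bookkeeping of the endpoint condition on $A_n$: one needs to check that following $\phi_{-Z}^t$ backwards in time from $x \in A_n$ actually hits $\partial C_n$ in infinite negative time (a fixed point of $Z$), so that the boundary term in the integration by parts of $\dot{r}\, Z_r$ is well-defined and matches the $\alpha$-limit produced by Lemma \ref{CalibAB}.
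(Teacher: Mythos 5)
Your proof is correct and reaches the same conclusion as the paper, but the final step is carried out by a genuinely different route. Both arguments begin identically: extract a backward calibrated curve $\tilde{\sigma}$ via Proposition \ref{CalibExist}, confine it to $B^0_n$ (or $A_n$) by Lemma \ref{Confined}, and invoke Lemma \ref{CalibAB} to deduce that $\tilde{\sigma}$ is purely radial with $\alpha_1(\tilde{\sigma})=\{x_n\}$ and $A_{L_Z}(\tilde{\sigma})=h_Z^{\rho_n\infty}(x_n,x)$. The paper then finishes by differentiating the calibration identity in $s$ (giving $\tfrac{d}{ds}u(\sigma(s))=L_Z(\sigma,\dot\sigma)$) and combining it with $du=\partial_vL_Z$ from Theorem \ref{CalibRegularity}; equating the two expressions for the derivative amounts to the autonomous energy constraint $H_Z(\sigma,du)=0$, which for the Mañé Lagrangian collapses to $\Vert\dot\sigma\Vert=\Vert Z\circ\sigma\Vert$, and the sign is then fixed separately via $\alpha_1(\sigma)=\{x_n\}$. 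Your completion-of-the-square in the $1$D reduced action gets both the modulus and the sign in one stroke, and is arguably more elementary since it only uses the variational structure and never re-invokes Theorem \ref{CalibRegularity}; it does, however, rely (implicitly) on the standard fact that $h_Z^{\rho_n\infty}(x_n,x)\leq A_{L_Z}(\gamma)$ for every curve $\gamma:(-\infty,0]\to M$ asymptotic to $x_n$ — the inequality you need to conclude $A_{L_Z}(\tilde\sigma)=-2V(|x-x_n|)$ — which is worth stating since it is what makes $\tilde\sigma$ a \emph{radial minimizer}. One small point to tidy: your argument establishes calibration of $\sigma_x$ only on $(-\infty,0]$; the paper takes $t>0$ arbitrary and runs the whole argument ending at $\phi_{-Z}^t(x)$, and you should do the same (replace $x$ by $\phi_{-Z}^T(x)$ for arbitrary $T>0$) so as to actually cover the claim that $\sigma_x:\mathbb{R}\to B^0_n$ is calibrated.
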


\begin{proof}
	We prove the first point, the second being analogous. Let $t>0$ be a positive time, and let $y = g_t(x) = \phi_{-Z}^t(x)$ be a point of $B^0_n$. Since $h^{\rho_n\infty}(x_n,\cdot)$ is a viscosity solution, we get from Proposition \ref{CalibExist} the existence of a calibrated curve $\sigma : (-\infty,t] \to M$. Hence, we infer from Lemma \ref{CalibAB} that $\alpha_1(\sigma) = \{x_n\}$, $\sigma$ has constant $\bar{\delta}$-coordinate, and that for all $s < t$
	\begin{equation*}
		h_Z^{\rho_n\infty}(x_n, \sigma(s))=  \int_{-\infty}^s \frac{1}{2} \Vert \dot{\sigma}(\tau) - Z(\sigma(\tau))\Vert^2 \; d\tau =  \int_{-\infty}^s \frac{1}{2} |\dot{\sigma}_\delta(\tau) - Z_\delta(\sigma(\tau))|^2 \; d\tau
	\end{equation*}
	Then,
	\begin{equation*}
		\frac{d}{ds} h_Z^{\rho_n\infty}(x_n, \sigma(s))= \frac{1}{2} \Vert \dot{\sigma}(s) - Z(\sigma(s))\Vert^2
	\end{equation*}
	Moreover, Theorem \ref{CalibRegularity} shows that $h_Z^{\rho_n\infty}(x_n, \cdot)$ is differentiable at $\sigma(s)$ for all $s<t$, yielding
	\begin{align*}
		\frac{d}{ds} h_Z^{\rho_n\infty}(x_n, \sigma(s))&= dh_Z^{\rho_n\infty}(x_n, \sigma(s)).\dot{\sigma}(s) = \partial_v L_Z(\sigma(s), \dot{\sigma}(s)). \dot{\sigma}(s) \\
		&= \langle \dot{\sigma}(s) - Z(\sigma(s)), \dot{\sigma}(s) \rangle
	\end{align*}
	where $\langle \cdot , \cdot \rangle$ is the scalar product associated to the norm $\Vert \cdot \Vert$. We deduce from these identities that
	\begin{align*}
		\langle \dot{\sigma}(s) - Z(\sigma(s)), \dot{\sigma}(s) \rangle &= \frac{1}{2} \Vert \dot{\sigma}(s) - Z(\sigma(s))\Vert^2 =  \left\langle \dot{\sigma}(s) - Z(\sigma(s)) , \frac{1}{2}\dot{\sigma}(s) - \frac{1}{2}Z(\sigma(s)) \right\rangle
	\end{align*}
	and
	\begin{align*}
		0 = \langle \dot{\sigma}(s) - Z(\sigma(s)), \dot{\sigma}(s) + Z(\sigma(s)) \rangle = \Vert  \dot{\sigma}(s) \Vert^2 -  \Vert  Z(\sigma(s)) \Vert^2
	\end{align*}
	We obtain
	\begin{equation*}
		|\dot{\sigma}_\delta(s) |= \Vert  \dot{\sigma}(s) \Vert = \Vert  Z(\sigma(s)) \Vert = |Z_\delta(\sigma(s))|
	\end{equation*}
	The vector field $Z$ is directed from $\partial B^0_n$ towards $x_n$, so that $Z_\delta$ is negative on $B^0_n \setminus \{x_n\}$. In particular, it is non-null and we deduce that either $\dot{\sigma}_\delta(s) = Z_\delta(\sigma(s))$ for all $s<t$, or $\dot{\sigma}_\delta(s) = -Z_\delta(\sigma(s))$ for all $s<t$. However, we have $\alpha_1(\sigma) = \{x_n\}$. Thus, for all $s<t$
	\begin{equation*}
		\dot{\sigma}_\delta(s) = - Z_\delta(\sigma(s)) \quad \text{and} \quad \dot{\sigma}(s) = - Z(\sigma(s))
	\end{equation*}
	which extends by continuity to $s=t$. Since $\sigma(t) = y$, we conclude that for all $s \leq t$, 
	\begin{align*}
		\sigma(s) = \phi_{-Z}^{s-t}(y) = \phi_{-Z}^{s-t}\circ \phi_{-Z}^t(x) = \phi_{-Z}^s(x) = \sigma_x(s)
	\end{align*}
	We proved that $\sigma_x$ is calibrated on $(-\infty,t]$ for arbitrary. Hence, it is calibrated on $\mathbb{R}$.
\end{proof}

We conclude this section with the...
\begin{proof}[Proof of Lemma \ref{2Dz+z-}]
	Set $x^\pm$ to be the points of $A_n$ written in the coordinates $(r,\theta,x_3,..,x_d)$ as $x^\pm = (r_n \pm \frac{3\delta_n}{2}, 0, .. , 0)$. We consider the curves 
	\begin{equation} \label{2Dz+z-Dem2}
		\sigma^\pm (t) = \phi_{-Z}^t(x^\pm) \quad \text{and} \quad \gamma^\pm(t) = \mathcal{R}_t \circ \sigma^\pm(t)
	\end{equation}
	From the dynamics of $-Z$, we infer that $\alpha(\sigma^\pm) = \{z_n^\pm\}$ and $\omega(\sigma^\pm) \subset \partial B_n$. And by Lemmas \ref{CalibZ} and \ref{CalibAB}, we deduce that $\sigma^\pm$ is calibrated by $h_Z^\infty(z_n^\pm,\cdot)$ and 
	\begin{equation} \label{2Dz+z-Dem1}
		\begin{split}
			h_Z^\infty(z_n^\pm,\partial B_n) &= \lim_{\substack{ s\to-\infty \\ t\to +\infty}} h_Z^{s,t}(\sigma^\pm(s), \sigma^\pm(t)) = \int_\mathbb{R} L_Z(\sigma^\pm(\tau), \dot{\sigma}^\pm(\tau)) \; d\tau  \\
			&= \int_\mathbb{R} \frac{1}{2} \Vert \dot{\sigma}^\pm(\tau) - Z(\sigma^\pm(\tau)) \Vert^2  \; d\tau = \int_\mathbb{R} \Vert Z(\sigma^\pm(\tau)) \Vert^2  \; d\tau 
		\end{split}
	\end{equation}
	Let us show that $\Vert Z(\sigma^\pm(\tau)) \Vert = \Vert \dot{\sigma}^\pm(\tau) \Vert$. We have by definition of the curves $\sigma^\pm$ that
	\begin{align*}
		\begin{cases}
			\dot{\sigma}^\pm (\tau) = -Z (\sigma^\pm(\tau)) \\
			\sigma^\pm(0) = x^\pm
		\end{cases}
	\end{align*}
	We adopt the notation of Lemma \ref{CalibAB} and we consider the coordinate $\delta$ defined by $x_\delta = d(O_n,x)$. The symmetries of the vector field $Z$ stated in Remark \ref{SymmetryX} show that $Z_\delta = \Vert Z\Vert$ and that it only depends on $x_\delta$ for $x \in A_n$. Hence, we deduce that $\sigma^\pm_\delta$ both verify the ODE
	\begin{align*}
		\begin{cases}
			\dot{\sigma}_\delta^\pm (\tau) = -Z_\delta (\sigma_\delta^\pm(\tau)) \\
			\sigma_\delta^\pm(0) = \frac{3\delta_n}{2}
		\end{cases}
	\end{align*}
	which has a unique solution. This yields the equalities $\sigma^+_\delta = \sigma^-_\delta$ and $\dot{\sigma}^+_\delta = \dot{\sigma}^-_\delta$. Going back to \eqref{2Dz+z-Dem1}, we conclude that
	\begin{align*}
		h_Z^\infty(z_n^+,\partial B_n) = \int_\mathbb{R} \Vert Z(\sigma^+(\tau)) \Vert^2  \; d\tau = \int_\mathbb{R} \Vert Z(\sigma^-(\tau)) \Vert^2  \; d\tau = h_Z^\infty(z_n^-,\partial B_n)
	\end{align*}
	
	In order to prove the equality for the Peierls barrier $h^\infty$ associated to $L$, we use Proposition \ref{CalibZX} which shows that the curves $\gamma^\pm$ defined in \eqref{2Dz+z-Dem2} are calibrated by $h^\infty(z_n^\pm,\cdot)$ and we use the analogous of identity \eqref{CalibZXDem2} to $h^\infty(z_n^\pm,\cdot)$ which gives
	\begin{align*}
		h_Z^\infty(z_n^\pm,\partial B_n) = \lim_{t\to +\infty} h_Z^{\infty+t}(z_n^\pm, \sigma^\pm(t)) = \lim_{t\to +\infty} h^{\infty+t}(z_n^\pm, \gamma^\pm(t)) = h^\infty(z_n^\pm, \partial B_n)
	\end{align*}
\end{proof}

\subsubsection{Proof of Theorem \ref{MainC1}} \label{SubsectionPfofReg}

We establish the $C^\infty$ regularity of $u_c(t,x)$. For that purpose, we define for all subset $F$ of $M$, the subset $RF$ of $\mathbb{R} \times M$ given by 
\begin{equation}
	RF := \cup_{s\in \mathbb{R}} \{s\} \times \mathcal{R}_z(F)
\end{equation}     

Hence, we obtain
\begin{equation}
	\begin{split}
		RB^0_n &= \bigcup_{s \in \mathbb{R}} \{s\} \times \mathcal{R}_s(B^0_n)\\
		R(B^0_n\setminus\{x_n\}) &= \bigcup_{s \in \mathbb{R}} \{s\} \times \mathcal{R}_s(B^0_n \setminus\{x_n\})\\
		RB_n &= \bigcup_{s \in \mathbb{R}} \{s\} \times \mathcal{R}_s(B_n) \\
		RA_n &= \mathbb{R} \times A_n \\
		RC_n &= \mathbb{R} \times C_n \\
		RD &= \mathbb{R} \times D
	\end{split}
\end{equation}

\begin{prop} \label{RegularityBn}
	The restriction of the solution $u_c$ to the closure of $RB^0_n$ is $C^\infty$ regular and all its derivatives are null on the boundary $\partial R(B^0_n \setminus \{x_n\})$.
\end{prop}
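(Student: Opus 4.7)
The plan is to obtain an explicit closed form for $u_c$ on $\overline{RB^0_n}$ from the calibrated curves of Subsection \ref{SuitableCalib}, and then read off both the smoothness and the boundary flatness directly from the properties of the bump function $\chi$.

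The first step will use Proposition \ref{Cnt} to write $u_c(t,x) = c_n + h^{\rho_n\infty+t}(x_n, x)$ on $RB^0_n$. Equation \eqref{CalibZXDem2} from the proof of Lemma \ref{CalibZX}, combined with the autonomy of $L_Z$ (and hence with Fathi's Theorem \ref{FathiTh}, which forces $h_Z^{\rho_n\infty+t} = h_Z^{\rho_n\infty}$), turns this into
\begin{equation*}
	u_c(t,x) = c_n + h_Z^{\rho_n\infty}\bigl(x_n,\, \mathcal{R}_t^{-1} x\bigr).
\end{equation*}
Since $(t,x) \mapsto (t, \mathcal{R}_t^{-1} x)$ is a smooth diffeomorphism sending $\overline{RB^0_n}$ onto $\mathbb{R} \times \overline{B^0_n}$ and $\partial R(B^0_n \setminus \{x_n\})$ onto $\mathbb{R} \times (\partial B^0_n \cup \{x_n\})$, it will suffice to prove the analogue statement for the \emph{autonomous} map $y \mapsto h_Z^{\rho_n\infty}(x_n, y)$ on $\overline{B^0_n}$.

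The second step will produce an explicit formula for that map. For any $y \in B^0_n$, the first point of Lemma \ref{CalibZ} identifies $\sigma_y(\tau) = \phi_{-Z}^\tau(y)$, $\tau \in (-\infty, 0]$, as a curve calibrated by $h_Z^{\rho_n\infty}(x_n, \cdot)$ with $\alpha$-limit $\{x_n\}$, and Lemma \ref{CalibAB} yields
\begin{equation*}
	h_Z^{\rho_n\infty}(x_n, y) = \int_{-\infty}^0 L_Z\bigl(\sigma_y(\tau), \dot{\sigma}_y(\tau)\bigr)\, d\tau = 2 \int_{-\infty}^0 \|Z(\sigma_y(\tau))\|^2\, d\tau,
\end{equation*}
since $\dot{\sigma}_y = -Z(\sigma_y)$ on $B^0_n$. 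The radial symmetry of $Z$ on $B^0_n$ recorded in Remark \ref{SymmetryX} forces $\sigma_y - x_n$ to be radial; with $r(\tau) = \|\sigma_y(\tau) - x_n\|$, the induced ODE $\dot{r} = \varsigma_n\, \chi(r^2/\delta_n^2)\, r$ together with the substitution $s = r^2$ will deliver the closed form
\begin{equation*}
	h_Z^{\rho_n\infty}(x_n, y) = F\bigl(\|y - x_n\|^2\bigr), \qquad F(u) := \varsigma_n \int_0^u \chi(s/\delta_n^2)\, ds.
\end{equation*}

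The third step concludes. Because $\chi$ is smooth with support in $[0,1]$, it is flat at both endpoints ($\chi^{(k)}(0) = \chi^{(k)}(1) = 0$ for every $k \geq 0$), hence $F$ is $C^\infty$ on $\mathbb{R}$ with $F^{(k)}(0) = F^{(k)}(\delta_n^2) = 0$ for every $k \geq 1$. Pulling back through the smooth change of variables gives
\begin{equation*}
	u_c(t, x) = c_n + F\bigl(\|x - \mathcal{R}_t(x_n)\|^2\bigr) \quad \text{on } \overline{RB^0_n},
\end{equation*}
which is manifestly $C^\infty$. On the two pieces of $\partial R(B^0_n \setminus \{x_n\})$, namely the central curve $\{(t, \mathcal{R}_t(x_n))\}$ and the rotating sphere $\{(t,x) : x \in \mathcal{R}_t(\partial B^0_n)\}$, the argument of $F$ equals $0$ and $\delta_n^2$ respectively, so by Faà di Bruno every partial derivative of $u_c$ vanishes at such points. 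I expect the main technical obstacle to be this explicit integral computation: one must carefully verify that Lemmas \ref{CalibZ} and \ref{CalibAB} apply right up to the critical loci $y = x_n$ and $y \in \partial B^0_n$ where $Z$ vanishes (and where the change of variable $d\tau = dr/(\varsigma_n\chi r)$ is singular), so that the antiderivative formula for $F$ is genuinely valid on all of $\overline{B^0_n}$.
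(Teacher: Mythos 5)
Your argument is correct, and it arrives at the conclusion by a genuinely different route than the paper. Both proofs ultimately rest on the same two Lemmas \ref{CalibZX} and \ref{CalibZ}, which identify the curves $\tau \mapsto \mathcal{R}_\tau \circ \phi_{-Z}^\tau(y)$ as calibrated by $h^{\rho_n\infty}(x_n,\cdot)$; from there the routes diverge. The paper works at the level of the \emph{differential}: it invokes Theorem \ref{CalibRegularity} to get $d_xu_c(t,x) = \partial_v L(t,x,\dot\gamma_{(t,x)}(t))$, simplifies this to $d_xu_c(t,x) = -2\,d\mathcal{R}_t.\,Z\circ\mathcal{R}_t^{-1}(x)$ and $\partial_t u_c = -\langle Y_t, d\mathcal{R}_t.Z\circ\mathcal{R}_t^{-1}\rangle$, and concludes smoothness and boundary flatness from the fact that $Z$, by construction (the bump $\chi$ flat at $0$ and $1$), vanishes to infinite order on $\partial B^0_n \cup \{x_n\}$. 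You instead \emph{integrate} the calibration identity along the explicit radial curve $\phi_{-Z}^\tau(y)$ and, with the substitution $s=r^2$, obtain the closed form $u_c(t,x) = c_n + F(\|x-\mathcal{R}_t(x_n)\|^2)$ with $F(u)=\varsigma_n\int_0^u\chi(s/\delta_n^2)\,ds$; smoothness and flatness of the derivatives then fall out of Faà di Bruno applied to $F\circ g$, since $F^{(k)}(0)=F^{(k)}(\delta_n^2)=0$ for $k\geq 1$. Your formula is consistent with the paper's, since $\nabla_y F(\|y-x_n\|^2)=2\varsigma_n\chi(\|y-x_n\|^2/\delta_n^2)(y-x_n)=-2Z(y)$. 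What you gain is an explicit antiderivative that makes the radial structure and the flatness manifest and leaves nothing implicit in the smoothness claim; what the paper's presentation gains is that it never needs the change of variable, and the same template for $d_xu_c$ carries over directly to the proof of Proposition \ref{RegularityAn} on $RA_n$. The technical obstacle you flag at the end (validity of the antiderivative up to the critical loci) resolves cleanly: the identity $h_Z^{\rho_n\infty}(x_n,y)=F(\|y-x_n\|^2)$ holds on the open punctured ball and both sides are continuous, so it extends to the closure; the Faà di Bruno argument then only needs $F$ to be smooth and flat at the two endpoints, which requires no analysis of the singular change of variable there.
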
      

\begin{proof}
	Fix an integer $n \geq 0$. We first focus on the set $R(B^0_n\setminus\{x_n\})$. The proof strategy involves constructing a foliation of $R(B^0_n\setminus\{x_n\})$ by calibrated curves. According to Proposition \ref{Cnt}, in this set, $u_c(t,x) =c_n+ h^{\rho_n\infty+t}(x_n, x)$. Thus, we will aim to prove the calibration for the barrier $h^{\rho_n\infty+t}(x_n, \cdot)$ on the set $\mathcal{R}_t(B^0_n)$. 
	
	Let $(t,x)$ be in $R(B^0_n\setminus\{x_n\})$. We set the curves $\sigma_y$ and $\gamma_{(t,x)} : \mathbb{R} \to M$ defined by
	\begin{equation}
		y=  \phi_{-Z}^{-t} \circ \mathcal{R}_t^{-1}(x), \quad \sigma_y(\tau) = \phi_{-Z}^\tau(y) \quad \text{and} \quad \gamma_{(t,x)}(\tau) = \mathcal{R}_\tau \circ \sigma_y(\tau)
	\end{equation}
	Lemma \ref{CalibZ} states that $\sigma_y$ is calibrated by $h_Z^{\rho_n\infty}(x_n,\cdot)$. Hence, Lemma \ref{CalibZX} states that $\gamma_{(t,x)}$ is calibrated by $h^{\rho_n\infty}(x_n,\cdot)$. Moreover, we have
	\begin{equation*}
		\gamma_{(t,x)}(t)= \mathcal{R}_t \circ \sigma_y(t) = \mathcal{R}_t  \circ \phi_{-Z}^t(y) = \mathcal{R}_t \circ \phi_{-Z}^t \circ\phi_{-Z}^{-t} \circ \mathcal{R}_t^{-1}(x) =x
	\end{equation*}
	Hence, we deduce from Theorem \ref{CalibRegularity} that $u_c$ is differentiable at $(t,x)$ and
	\begin{equation} \label{duc}
		\begin{split}
			du_c(t,x) &= (\partial_tu_c(t,x) , d_xu_c(t,x)) = \big(  -H (t,x,d_xu_c(t,x)) , \partial_vL (t,x, \dot{\gamma}_{(t,x)}(t) )  \big) \\
			&= \left(  -H (t,x,d_xu_c(t,x)) , \partial_vL \left(t,x, \left.\frac{d}{d\tau}\right|_{\tau =t}\big(\mathcal{R}_\tau \circ \phi_{-Z}^{\tau-t} \circ \mathcal{R}_t^{-1}(x)\big) \right)  \right) 
		\end{split}
	\end{equation}
	Therefore, $u_c$ is $C^\infty$ regular on $R(B^0_n\setminus\{x_n\})$.\\
	
	This formula extends to the closure of $R(B^0_n\setminus\{x_n\})$. Indeed, we have $Z_{|\partial  R(B^0_n\setminus\{x_n\})} = 0$ and for all $(t,x) \in \partial R(B^0_n\setminus\{x_n\})$, the curve $f_{t,\tau}(x) = \mathcal{R}_{t,\tau}(x)$ is calibrated and of null action. Hence, the restriction of $u_c$ to the closure of $R(B^0_n\setminus\{x_n\})$ is of $C^\infty$ regularity. And in particular, it is $C^\infty$ on $RB^0_n$ and in particular, at $(t,\mathcal{R}_t(x_n))$.
	
	Let us compute its derivative on the boundary $\partial RB^0_n$. For all $(t,x) \in RB^0_n$, if we set $y=  \phi_{-Z}^{-t} \circ \mathcal{R}_t^{-1}(x)$, we have
	\begin{align*}
		\left.\frac{d}{d\tau}\right|_{\tau =t}\big(\mathcal{R}_\tau \circ \phi_{-Z}^{\tau} (y)\big) &= \frac{d}{dt}\mathcal{R}_t \circ \phi_{-Z}^{t} (y) + d\mathcal{R}_t.\frac{d}{dt}\phi_{-Z}^{t} (y) \\
		&=\frac{d}{dt}\mathcal{R}_t \circ \mathcal{R}_t^{-1}(x) - d\mathcal{R}_t.Z  \mathcal{R}_t^{-1}(x)\\
		&= Y_t(x) - d\mathcal{R}_t.Z  \mathcal{R}_t^{-1}(x)
	\end{align*}
	where $Y_t$ is the vector field associated to the isotopy $\mathcal{R}_t$. Additionally, we know from \eqref{XFormula} and \eqref{LagMane} that
	\begin{align*}
		\partial_vL(t,x,v) = v- X_t(x) = v- Y_t(x) - d\mathcal{R}_t.Z  \mathcal{R}_t^{-1}(x)
	\end{align*}
	Thus, we obtain
	\begin{align*}
		 d_xu_c(t,x) &=  \partial_vL \left(t,x, \left.\frac{d}{d\tau}\right|_{\tau =t}\big(\mathcal{R}_\tau \circ \phi_{-Z}^{\tau-t} \circ \mathcal{R}_t^{-1}(x)\big) \right) \\
		 &= Y_t(x) - d\mathcal{R}_t.Z  \mathcal{R}_t^{-1}(x) - Y_t(x) - d\mathcal{R}_t.Z  \mathcal{R}_t^{-1}(x) = -2 d\mathcal{R}_t.Z  \mathcal{R}_t^{-1}(x)
	\end{align*}
	Moreover, we have the limits
	\begin{align*}
		\lim_{(t,x) \to \partial RB^0_n} d( B^0_n, \mathcal{R}_t^{-1}(x)) = 0 \quad \text{and} \quad \lim_{y \to B^0_n} Z =0
	\end{align*}
	where the last limit on $Z$ is in the $C^\infty$-topology. Therefore, we deduce that $d_xu_c$ converges to $0$ in the $C^\infty$ topology as $(t,x)$ converges to $\partial B^0_n$.
	
	We also compute $\partial_tu_c$ using the expression \eqref{ManeHamiltonianFormula} of the Hamiltonian $H$. This yields
	\begin{align*}
		\partial_tu_c(t,x) &=  -H (t,x,d_xu_c(t,x)) = - H( t,x,  -2 d\mathcal{R}_t.Z  \mathcal{R}_t^{-1}(x)) \\
		&= \frac{1}{2} \Vert -2 d\mathcal{R}_t.Z  \mathcal{R}_t^{-1}(x) + X_t(x) \Vert^2 - \frac{1}{2} \Vert X_t(x) \Vert^2 \\
		&= \frac{1}{2} \Vert Y_t(x) - d\mathcal{R}_t.Z  \mathcal{R}_t^{-1}(x) \Vert^2 - \frac{1}{2} \Vert Y_t(x) - d\mathcal{R}_t.Z  \mathcal{R}_t^{-1}(x) \Vert^2 \\
		&= -\langle Y_t(x), d\mathcal{R}_t.Z  \mathcal{R}_t^{-1}(x) \rangle
	\end{align*}
	which also converges to $0$ in the $C^\infty$ topology as $(t,x)$ goes to $\partial RB^0_n$. We proved that $du_c$ converges to $0$ in the $C^\infty$ topology as $(t,x) \in RB^0_n$ goes to the boundary $\partial RB^0_n$.	
\end{proof}
	
\begin{prop} \label{RegularityAn}
	The restriction of the solution $u_c$ to the closure of $RA_n := \mathbb{R} \times A_n$ is $C^\infty$ regular and all its derivatives are null on the boundary $\partial RB^0_n$.
\end{prop}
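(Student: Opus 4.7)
The plan is to follow the same strategy as in the proof of Proposition \ref{RegularityBn}, substituting the barrier $h^{\rho_n\infty+t}(x_n,\cdot)$ by $h^{\infty+t}(z_\infty,\cdot)$ and the foliation of $RB^0_n$ by radial $-Z$ orbits emanating from $x_n$ by the foliation of $RA_n$ by $-Z$ orbits emanating from $\partial C_n$. One first splits into two cases according to Proposition \ref{Cnt}. In the two-dimensional case, $u_c(t,x)$ is constant on each connected component of $RA_n$, so smoothness and vanishing of all derivatives on $\partial RA_n$ is immediate. From now on I focus on the case $d\geq 3$, where $u_c(t,x) = h^{\infty+t}(z_\infty,x)$ on $RA_n$. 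Using Property \ref{SeparateClass} of Lemma \ref{SameClass} with $F = \partial C_n$, one gets $h^{\infty+t}(z_\infty,x) = h^{\infty}(z_\infty,\partial C_n) + h^{\infty+t}(\partial C_n,x)$ for $(t,x)\in RA_n$, so that up to the additive constant $h^{\infty}(z_\infty,\partial C_n)$, studying $u_c$ on $RA_n$ reduces to studying the barrier $h^{\infty+t}(\partial C_n,x)$.

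Next I build a calibrating foliation. For each $(t,x)\in RA_n$, set
\begin{equation*}
    y = \phi_{-Z}^{-t}\circ\mathcal{R}_t^{-1}(x), \qquad \sigma_y(\tau) = \phi_{-Z}^\tau(y), \qquad \gamma_{(t,x)}(\tau) = \mathcal{R}_\tau\circ\sigma_y(\tau).
\end{equation*}
By construction $\gamma_{(t,x)}(t)=x$. Lemma \ref{CalibZ}\ref{CalibZThetaCoord} asserts that $\sigma_y$ is calibrated by $h_Z^{\infty}(\partial C_n,\cdot)$, and Lemma \ref{CalibZX} (whose proof carries over from $h^{\rho_n\infty}(x_n,\cdot)$ to $h^{\infty}(z_\infty,\cdot) = h^{\infty}(\partial C_n, \cdot) + \text{cst}$) then implies that $\gamma_{(t,x)}$ is calibrated by $h^{\infty}(\partial C_n,\cdot)$, hence by $h^{\infty}(z_\infty,\cdot)$.

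Applying Theorem \ref{CalibRegularity} at $(t,x)$ yields differentiability of $u_c$ at $(t,x)$ and gives exactly the same formula as in the proof of Proposition \ref{RegularityBn}:
\begin{equation*}
    du_c(t,x) = \left(-H(t,x,d_xu_c(t,x)),\; \partial_v L\bigl(t,x,\dot{\gamma}_{(t,x)}(t)\bigr)\right).
\end{equation*}
The same calculation as in that proof, using \eqref{XFormula}, $\partial_vL(t,x,v)=v-X_t(x)$, and the expression \eqref{ManeHamiltonianFormula} of $H$, shows
\begin{equation*}
    d_xu_c(t,x) = -2\,d\mathcal{R}_t.Z\circ\mathcal{R}_t^{-1}(x), \qquad \partial_t u_c(t,x) = -\bigl\langle Y_t(x),\, d\mathcal{R}_t.Z\circ\mathcal{R}_t^{-1}(x)\bigr\rangle.
\end{equation*}
Since these expressions are smooth in $(t,x)$ on $RA_n$, $u_c$ is $C^\infty$ on $RA_n$.

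It remains to prove that $u_c$ extends smoothly to $\overline{RA_n}$ with all derivatives vanishing on $\partial RA_n = (\mathbb{R}\times\partial B_n)\cup(\mathbb{R}\times\partial C_n)$. This follows from the fact that $Z$ vanishes to infinite order in the $C^\infty$-topology as one approaches either $\partial B_n$ or $\partial C_n$, because the bump function $\chi$ used in the definition \eqref{Mapg} of $Z$ on $A_n$ vanishes at both endpoints of its support $[0,1]$ together with all its derivatives. Consequently $d\mathcal{R}_t . Z\circ\mathcal{R}_t^{-1}(x) \to 0$ in the $C^\infty$-topology as $(t,x)\to \partial RA_n$, so that both $d_xu_c$ and $\partial_t u_c$ tend to $0$ in the $C^\infty$-topology. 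This smoothly matches the null-derivative boundary behavior of $u_c$ on $\partial RB^0_n$ established in Proposition \ref{RegularityBn}, the constant value of $u_c$ on $R(B_n\setminus\mathcal{R}_t(B^0_n))$, and the locally constant value of $u_c$ on $RD$. The only real obstacle is the verification that the calibration passes correctly from the autonomous $L_Z$ to the time-dependent $L$ through Lemma \ref{CalibZX}, and that SeparateClass allows replacing $z_\infty$ by $\partial C_n$; once these are granted, the rest of the argument is a direct transcription of the $B^0_n$-case.
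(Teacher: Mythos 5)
Your proof is correct and follows essentially the same route as the paper: the 2D case is immediate from constancy, and for $d\geq 3$ you use Lemma \ref{CalibZ}.2 and Lemma \ref{CalibZX} to foliate $RA_n$ by calibrated curves $\mathcal{R}_\tau\circ\phi_{-Z}^{\tau-t}\circ\mathcal{R}_t^{-1}(x)$, which is exactly what the paper invokes "analogously to Proposition \ref{RegularityBn}." You go slightly further than the paper in explicitly working out the reduction $h^{\infty+t}(z_\infty,\cdot)\to h^{\infty+t}(\partial C_n,\cdot)$ via Property \ref{SeparateClass} and in explaining why $Z$ vanishes to infinite order at both components of $\partial A_n$, which the paper leaves implicit; these are welcome clarifications, not a different argument.
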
  

\begin{proof}
	In the 2D case, we have shown in Proposition \ref{Cnt} that $u_c$ is constant on $RA_n$ which implies the result.
	
	In the 3D case, the same Proposition \ref{Cnt} asserts that for all $(t,x) \in RA_n$, $u(t,x) = h^{\infty+t}(z_\infty,x)$. Applying the second property of Lemma \ref{CalibZ}, we prove analogously to Proposition \ref{RegularityBn} that the curves $\gamma_{t,x}(\tau) = \mathcal{R}_\tau \circ \phi_{-Z}^{\tau-t} \circ \mathcal{R}_t^{-1}(x)$ are calibrated by $u_c$, and we obtain the same formula \eqref{duc} on $du_c$ which also converges to $0$ at the boundary $\partial RA_n$ of $RA_n$ in the $C^\infty$ topology.
\end{proof}

\begin{proof}[Proof of Theorem \ref{MainC1}]
	Gathering the results of Propositions \ref{Cnt}, \ref{RegularityBn} and \ref{RegularityAn}, we have shown that $u_c$ is $C^\infty$ regular on the set 
	\begin{align*}
		RD \cup \bigcup_{n\geq 0} \Big( RA_n \cup RB^0_n \cup (RB_n \setminus RB^0_n) \Big) = M \setminus \left( \bigcup_{n\geq 0} \partial RC_n \cup  \partial RB_n \cup \partial RB^0_n \right)
	\end{align*}
	In order to complete the proof of Theorem \ref{MainC1}, it suffices to prove $C^\infty$ regularity on $\partial RC_n$, $\partial RB_n$ and $\partial RB^0_n$.\\

	\textit{On $\partial RC_n$}. We have that $u_c$ is locally constant on $RD$ and all the derivatives of $du_{c|RA_n}$ converge to $0$ at the external boundary $\partial RC_n$ of $RA_n$. Hence, $u_c$ is $C^\infty$ at $\partial RC_n$ with null derivatives.

	\textit{On $\partial RB^0_n$ and $\partial RB_n$}. We have that $u_c$ is constant on $RB_n \setminus RB^0_n$, and all the derivatives of $du_{c|RA_n}$ and $du_{c|RB^0_n}$ converge to $0$ at their boundary, and in particular at $\partial RB^0_n$ and $\partial RB_n$. Hence, $u_c$ is $C^\infty$ at $\partial RB^0_n$ and $\partial RB_n$ with null derivatives.\\

	This concludes the proof of the main Theorem \ref{MainC1}.
\end{proof}

\begin{rem}
	\begin{enumerate}	
		\item Lemma \ref{CalibZ} can be understood as follows: The \Mane set $\tilde{\mathcal{N}}_Z$  corresponding to the autonomous component $Z$ of $X_t$ is symmetric with respect to the zero section of $TM$. The radial symmetry imposed on $Z$ yields a \Mane set that, when viewed radially, closely resembles what is observed in one-dimensional systems, specifically the phase portrait of a pendulum. A radial section of $\tilde{\mathcal{N}}_Z$ around $r_n$ is represented in Figure \ref{FigureMané}.
		
	\begin{figure}[!htbp]
		\centering
		\includegraphics[width=0.9\linewidth]{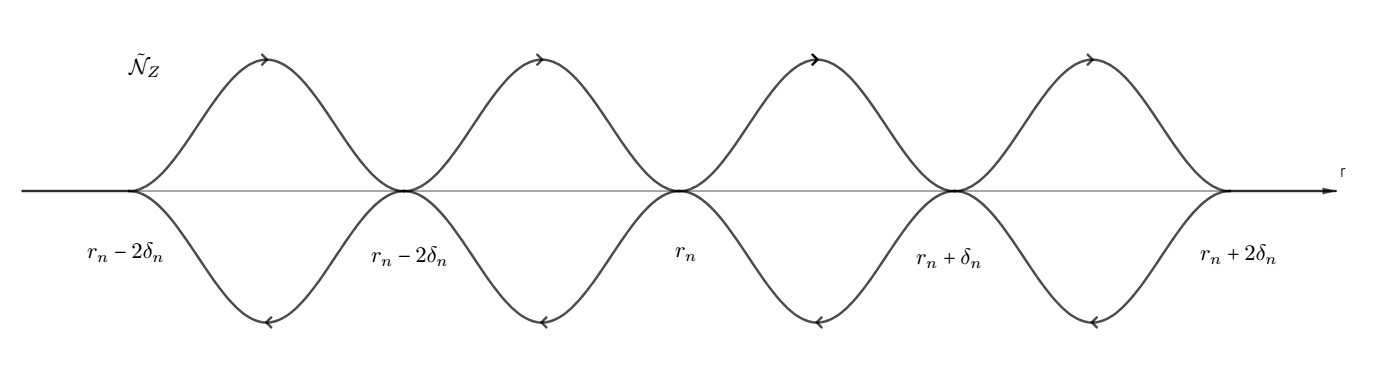}
		\caption{Radial Section at angle $\theta=0$ of the \Mane Set $\tilde{\mathcal{N}}_Z$.}
		\label{FigureMané}
	\end{figure}
		
		\item Note that the obtained $C^1$ regularity is highly unstable. A generic perturbation of the constants $c_n$ induces irregularities. Specifically, if the equality $c_n + h^{\rho_n\infty}(x_n,x) = h^\infty(z_\infty,x)$ occurs outside the Peierls set $\mathcal{A}_0$, then the loss of regularity is inevitable. In other words, the set of $C^1$ recurrent viscosity solutions is sparse within the non-wandering set $\Omega(\mathcal{T})$ of the Lax-Oleinik operator $\mathcal{T}$. 
		
		\item Similarly, any perturbation of the Hamiltonian that disrupts its symmetries may also result in a loss of regularity, and such a Hamiltonian may not possess any $C^1$ recurrent viscosity solution. There are examples of Hamiltonian systems with no regular elements in their non-wandering set $\Omega(\mathcal{T})$. The simplest example is the autonomous simple pendulum, where the unique element of $\Omega(\mathcal{T})$ is a Lipschitz weak-KAM solution. 
		
		\item Additionally, obtaining $C^{1,1}$ recurrent viscosity solutions is significantly easier than obtaining $C^2$ or more regular solutions. This is because if the infimum of two Peierls barriers occurs on the Peierls set, $C^{1,1}$ regularity is guaranteed. Achieving higher regularity requires modifying the Hamiltonian to ensure that convergence to such points occurs only via parabolic trajectories.
		
		\item From a symplectic perspective, the smooth recurrent locally viscosity solution $u_c$ gives rise to a Lagrangian submanifold, namely the graph of $du_c$ in the cotangent bundle $T^*M$, which is ($C^1$-) recurrent under the Hamiltonian flow $\phi_H$.
	\end{enumerate}
\end{rem}

\paragraph{Acknowledgement.} The author expresses deep gratitude to Marie-Claude Arnaud for her careful reading, and to Jaques Féjoz for very insightful discussions.

\bibliographystyle{alpha}
\addcontentsline{toc}{section}{References}
\bibliography{Biblio}

\end{document}